\newtheorem{theorem}[equation]{Theorem}
\newtheorem{lemma}[equation]{Lemma}
\newtheorem{proposition}[equation]{Proposition}
\newtheorem{corollary}[equation]{Corollary}
\newtheorem{conjecture}[equation]{Conjecture}
\newtheorem*{hypk}{\textbf{Hyp}$(k)$}
\newtheorem*{thm-extGroups1}{Theorem \ref{thm-extGroups1}}
\newtheorem*{thm-extGroups2}{Theorem \ref{thm-extGroups2}}
\theoremstyle{definition}
\newtheorem{definition}[equation]{Definition}
\newtheorem{construction}[equation]{Construction}
\theoremstyle{remark}
\newtheorem{example}[equation]{Example}
\newtheorem{remark}[equation]{Remark}
\newtheorem{observation}[equation]{Observation}
\numberwithin{equation}{section}
\newcommand{\gr}{\operatorname{gr}}
\newcommand{\inv}{^{-1}}
\newcommand{\R}{\mathbb{R}}
\newcommand{\Z}{\mathbb{Z}}
\newcommand{\N}{\mathbb{N}}
\newcommand{\Q}{\mathbb{Q}}
\newcommand{\C}{\mathbb{C}}
\newcommand{\e}{\varepsilon}
\renewcommand{\d}{\delta}
\renewcommand{\a}{\alpha}\renewcommand{\b}{\beta}
\renewcommand{\emptyset}{\varnothing}
\newcommand{\Sym}{\operatorname{Sym}}
\newcommand{\Endg}{\hyperref[def-homg]{\operatorname{END}}}
\newcommand{\Homg}{\hyperref[def-homg]{\operatorname{HOM}}}
\newcommand{\Id}{\operatorname{Id}}
\newcommand{\Hom}{\operatorname{Hom}}
\renewcommand{\matrix}[1]{\begin{bmatrix}#1\end{bmatrix}}
\newcommand{\Ext}{\operatorname{Ext}}
\renewcommand{\deg}{\operatorname{deg}}
\newcommand{\Kom}{\operatorname{Kom}}
\newcommand{\bn}[1]{\mathcal{T}\hskip-2pt\mathcal{L}_{#1}}
\newcommand{\Tot}{\operatorname{Tot}}
\newcommand{\Cob}{\operatorname{Cob}}
\newcommand{\Mat}{\operatorname{Mat}}
\newcommand{\TL}{\operatorname{TL}}
\newcommand{\Cone}{\operatorname{Cone}}
\renewcommand{\sl}{\mathfrak{sl}}
\newcommand{\Tw}{\operatorname{Tw}}
\newcommand{\smMatrix}[1]{\left[\begin{smallmatrix}#1\end{smallmatrix}\right]}
\newcommand{\FKzero}{\wpic{FKpic0}  }
\newcommand{\FKone}{\wpic{FKpic1} }
\newcommand{\FKtwo}{\wwpic{FKpic2} }
\newcommand{\FKpenult}{\wwpic{FKpicPenUlt}}
\newcommand{\FKult}{\wpic{FKpicUlt}}
\newcommand{\pic}[1]{
\begin{minipage}{.22in}
\begin{center}\includegraphics[scale=.3]{#1}\end{center}
\end{minipage}
}
\newcommand{\wpic}[1]{
\begin{minipage}{.38in}
\begin{center}\includegraphics[scale=.3]{#1}\end{center}
\end{minipage}
}
\newcommand{\wwpic}[1]{
\begin{minipage}{.58in}
\begin{center}\includegraphics[scale=.3]{#1}\end{center}
\end{minipage}
}
\newcommand{\wwwpic}[1]{
\begin{minipage}{.7in}
\begin{center}\includegraphics[scale=.3]{#1}\end{center}
\end{minipage}
}
\newcommand{\bpic}[1]{
\begin{minipage}{.38in}
\begin{center}\includegraphics[scale=.4]{#1}\end{center}
\end{minipage}
}
\newcommand{\mpic}[1]{
 \begin{minipage}{.14in}
\begin{center}\includegraphics[scale=.23]{#1}\end{center}
\end{minipage}
}
\newcommand{\wmpic}[1]{
 \begin{minipage}{.32in}
\begin{center}\includegraphics[scale=.25]{#1}\end{center}
\end{minipage}
}
\begin{document}

\title{A polynomial action on colored $\sl_2$ link homology}
\author{Matt Hogancamp}

\begin{abstract}
We construct an action of a polynomial ring on the colored $\sl_2$ link homology of Cooper-Krushkal, over which this homology is finitely generated.  We define a new, related link homology which is finite dimensional, extends to tangles, and categorifies a scalar-multiple of the $\sl_2$ Reshetikhin-Turaev invariant.  We expect this homology to be functorial under 4-dimensional cobordisms.  The polynomial action is related to a conjecture of Gorsky-Oblomkov-Rasmussen-Shende on the stable Khovanov homology of torus knots, and as an application we obtain a weak version of this conjecture.  A key new ingredient is the construction of a bounded chain complex which categorifies a scalar multiple of the Jones-Wenzl projector, in which the denominators have been cleared.
\end{abstract}

\maketitle


\section{Introduction}



In this paper, we address the problem of infinite dimensionality of colored link homology theories, specifically the colored $\sl_2$ link homology constructed by B.~Cooper and V.~Krushkal \cite{CK12a}.  Our motivation is the program of constructing categorifications of the Reshetikhin-Turaev link invariants which are functorial under 4-dimensional link cobordisms.  Functoriality requires that the homology of the unknot have finite total rank, which fails for most colored link homology theories.   Indeed B.~Webster \cite{W10a,W10b} has categorified the Reshetikhin-Turaev invariants for links $L\subset S^3$ for general Lie algebras $\mathfrak{g}$, but Webster's homology of the $V$-colored unknot is finite dimensional only when $V$ is a \emph{minuscule} representation of $\mathfrak{g}$.   In the special case of $\mathfrak{g}=\sl_N$, there are many categorifications of the Reshetikhin-Turaev invariants (see \S \ref{subsec-otherLieAlgs} for a discussion).  These, too, tend to be infinite dimensional unless one restricts to minuscule representations (exterior powers $\Lambda^i(\C^N)$ of the natural representation).  Roughly speaking, infinite complexes are required to categorify denominators appearing in the Reshetikhin-Turaev tangle invariant.

There is one exception to this rule, given by Khovanov's colored $\sl_2$ homology \cite{Kh05}.  Khovanov's colored homology avoids infinite complexes by categorifying the colored link invariant, and not its extension to tangles.  In this paper we prefer not to sacrifice the extension to tangles, and instead fix the problem of infinite complexes in a different way.  Essentially we provide a categorical analogue of clearing denominators for Cooper-Krushkal homology.

\subsection{A categorical analogue of clearing denominators}
For a definition of the Temperley-Lieb algebra $\TL_n$ and the Jones-Wenzl projectors $p_n\in \TL_n$ see \S \ref{sec-TL}.  One can define an integral form $\TL^\Z_n\subset \TL_n$, which is the $\Z[q,q\inv]$-subalgebra generated by diagrams (crossingless matchings of $2n$ points in a rectangle).  The Jones-Wenzl projector is not an element of $\TL_n^\Z$, but from the explicit recursion (\ref{eq-JWrecursion}) one can see that:
\begin{equation}\label{introDenomClearedEq}
\prod_{k=2}^n (1-q^{2k})p_n\in\TL_n^\Z
\end{equation}
Thus, we say that $p_n$ has denominators of the form $1-q^{2k}$.

In \S \ref{subsec-tangleCat} we recall Bar-Natan's category  $\bn{n}$ of tangles and dotted cobordisms.  This would be denoted by $\Mat(({\Cob^3_\bullet}(n)_{/l})$ in \cite{B-N05}.  Equivalently, we could work with modules over Khovanov's rings $H^n$ \cite{Kh02}.  There is an isomorphism from the split Grothendieck group of $\bn{n}$ to $\TL_n^\Z$, which implies that a bounded complex over $\bn{n}$ has a well-defined Euler characteristic $\chi(A)\in \TL_n^\Z$.  In contrast, unbounded complexes do not all have a well-defined Euler characteristic.  Nonetheless, all of the unbounded complexes considered in this paper have a well-defined Euler characteristic, which takes values in the algebra obtained from $\TL_n^\Z$ by extending scalars to Laurent series $\Z[q\inv]\llbracket q\rrbracket$ (see \S \ref{subsec-eulerChar}).   Homotopy equivalent complexes have equal Euler characteristics.  We say that a complex categorifies its Euler characteristic, and that an equivalence of complexes categorifies the corresponding identity in $\TL_n$, \emph{et cetera}.

The main ingredient in Cooper-Krushkal homology is the construction in \cite{CK12a} of family of complexes $P_n\in\Kom(n)$ which categorify the Jones-Wenzl projectors.  This requires unbounded complexes, in order to categorify the denominators $(1-q^{2k})\inv = \sum_{i=0}^\infty q^{2ki}$.  It is natural to ask whether the rescaled version of $p_n$ given in expression (\ref{introDenomClearedEq}) admits a categorification by a bounded complex.  As we will see, the answer is yes.

The heart of this paper is the construction, in section \S \ref{sec-quasiprojectors}, of complexes $Q_n$ which categorify the elements $(1-q^{2n})p_n\in \TL_n$ in which certain denominators have been cleared.  The complex $Q_n$ is built out of $P_{n-1}=\pic{projector}$ in the following way:
\[
Q_n = \Big(\FKzero\rightarrow \FKone \rightarrow \cdots \rightarrow \FKult\rightarrow \FKult\rightarrow \cdots \rightarrow \FKone\rightarrow \FKzero\Big)
\]
Each picture above denotes a chain complex $E_i$ over $\bn{n}$, and each arrow corresponds to a certain chain map $E_i\rightarrow E_{i+1}$.  See Definition \ref{def-symmetricSeq} for details. The above notation means that $Q_n$ is equal to a direct sum of the $E_i$ (grading shifts omitted), endowed with a differential which is the sum of  the differentials internal to each $E_i$, the arrows connecting adjacent $E_i$'s, and morphisms corresponding length $>1$ arrows pointing to the right.  Such a complex is called a convolution (Definition \ref{def-convolution}), and can also be thought of as an iterated mapping cone
\[
Q_n = \Cone\bigg(E_{1-2n}\rightarrow \Cone\Big(\cdots \rightarrow \Cone\Big(E_{-2}\rightarrow \Cone(E_{-1}\rightarrow E_0)\Big)\cdots \Big)\bigg).
\]
These complexes $Q_n$ are of fundamental importance. For instance, the Cooper-Krushkal projectors $P_n$ can be recovered from $Q_n$ by a categorical analogue of multiplication by $(1-q^{2n})\inv = \sum_{k= 0}^\infty q^{2nk}$: 
\begin{theorem}\label{introPandQthm}
There is a chain map $\partial_n\in \Endg(Q_n)$ of bidegree $\deg(\partial_n)=(2n-1,-2n)$ such that
\begin{equation}
\label{introKoszulEq}
P_n \simeq \Z[u_n]\otimes Q_n \ \ \ \ \text{with differential} \ \ \ \ 1\otimes d_{Q_n}+u_n\otimes \partial_n
\end{equation}
where $u_n$ is a formal indeterminate of bidegree $(2-2n,2n)$.
\end{theorem}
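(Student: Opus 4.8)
The plan is to construct the operator $\partial_n$ explicitly, to verify that it is a chain map with $\partial_n^2=0$ so that $1\otimes d_{Q_n}+u_n\otimes\partial_n$ really is a differential, and then to identify the resulting complex $\widetilde P_n:=(\Z[u_n]\otimes Q_n,\,1\otimes d_{Q_n}+u_n\otimes\partial_n)$ with $P_n$ by checking that $\widetilde P_n$ has the properties that characterize the categorified Jones--Wenzl projector. As a consistency check, the Euler characteristic is automatically correct: the powers of $u_n$ occupy even homological degrees, so $\chi(\Z[u_n])=\sum_{k\ge0}q^{2nk}=(1-q^{2n})\inv$, and altering a differential does not change Euler characteristics, whence $\chi(\widetilde P_n)=(1-q^{2n})\inv(1-q^{2n})p_n=p_n=\chi(P_n)$.

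First I would construct $\partial_n$. The bidegree $(2n-1,-2n)$ is precisely the overall shift between the two extreme $\FKzero$ terms of the convolution defining $Q_n$, which are two copies of one complex built from $P_{n-1}$. The natural candidate for $\partial_n$ is the canonical identification of the leftmost $\FKzero$ with the rightmost one, extended by zero on the remaining summands and then corrected by lower-order terms (components that decrease the convolution filtration by less than the full width) so as to make $d_{Q_n}\partial_n+\partial_n d_{Q_n}=0$; pinning down these corrections is a finite computation with the dotted-cobordism maps labelling the arrows of $Q_n$. Since $\partial_n$ has homological degree $2n-1$, equal to the width of the convolution $Q_n$, its square would have to raise homological degree by $2(2n-1)$, which is impossible; hence $\partial_n^2=0$ for free and the twisted differential of $\widetilde P_n$ squares to zero.

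Next I would invoke the characterization of $P_n$: up to homotopy, $P_n$ is the unique complex in which the identity tangle $\mathbf 1_n$ appears exactly once, in bidegree $(0,0)$, with all other summands factoring through fewer than $n$ strands, and which satisfies turnback annihilation $e_i\otimes P_n\simeq0$ for $i=1,\dots,n-1$. For $\widetilde P_n$ the normalization is immediate from the explicit form of $Q_n$: the only copy of $\mathbf 1_n$ in bidegree $(0,0)$ sits in the rightmost $\FKzero$, and the summands $u_n^k\otimes(-)$ with $k\ge1$ lie in strictly positive $q$-degree. For a turnback $e_i$ with $i<n-1$, it acts within the copy of $P_{n-1}$ inside each building block $E_j$, so $e_i\otimes E_j\simeq0$ by turnback annihilation for $P_{n-1}$; a convolution of contractible complexes is contractible, so $e_i\otimes Q_n\simeq0$ and hence $e_i\otimes\widetilde P_n\simeq0$.

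The last step, which I expect to be the main obstacle, is the turnback $e_{n-1}$. Here $e_{n-1}\otimes Q_n$ is \emph{not} contractible --- were it, $Q_n$ would already satisfy all the projector axioms, contradicting $\chi(Q_n)\ne\chi(P_n)$ --- so the contraction of $e_{n-1}\otimes\widetilde P_n$ must genuinely use the twist. The plan is to compute a minimal model of $e_{n-1}\otimes Q_n$: since it has vanishing Euler characteristic, it is assembled from a single complex $M_n$ equipped with the zero differential, and one then shows that $e_{n-1}\otimes\partial_n$ induces, up to homotopy, an isomorphism between the two copies of $M_n$. Consequently $\Z[u_n]\otimes(e_{n-1}\otimes Q_n)$ with twist $u_n\otimes(e_{n-1}\otimes\partial_n)$ has an invertible leading term and is therefore contractible, giving $e_{n-1}\otimes\widetilde P_n\simeq0$. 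With the normalization and all turnback annihilations established, uniqueness of the categorified Jones--Wenzl projector forces $\widetilde P_n\simeq P_n$, which is the assertion of the theorem. The substantive difficulty lies in the diagrammatics of this last case --- understanding how $e_{n-1}$ slides past the cups, caps, and $P_{n-1}$-blocks making up $Q_n$, and how $\partial_n$ is transformed under that simplification --- whereas the chain-map verification in the first step is routine, and the normalization and $i<n-1$ cases are bookkeeping.
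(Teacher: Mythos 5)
Your candidate for $\partial_n$ is essentially the paper's (Definition \ref{connectingDiffDef}: $\partial_n=-\eta_n\circ\e_n$, project onto the leftmost term $t^{1-2n}E_{1-2n}$ of the convolution and re-include it as the rightmost term $E_0$); in fact no lower-order corrections are needed, since $E_0$ is a subcomplex and $t^{1-2n}E_{1-2n}$ a quotient complex of $Q_n$, so the composite is automatically closed. The decisive gap is in your treatment of $e_{n-1}$. Your premise that $e_{n-1}\otimes Q_n$ is not contractible is false: $Q_n$ kills \emph{all} turnbacks (Corollary \ref{cor-QkillTurnbacks}), and there is no contradiction with $\chi(Q_n)\neq\chi(P_n)$, because turnback-killing alone does not characterize the projector --- $Q_n$ fails the normalization axiom (JW1)/(CK1), containing \emph{two} copies of $1_n$ (one in $E_0$, one in $E_{1-2n}$) rather than one, just as $(1-q^{2n})p_n$ kills turnbacks without being $p_n$. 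Consequently the structure you plan to exhibit (a minimal model $M_n\oplus M_n$ with $e_{n-1}\otimes\partial_n$ an equivalence between the copies) does not exist, and the key step of your argument is left without a proof. Even the natural salvage --- prove $e_{n-1}\otimes Q_n\simeq 0$ directly and deduce contractibility of $\Z[u_n]\otimes(e_{n-1}\otimes Q_n)$ --- needs care: by Observation \ref{obs-contracibleWarning}, a twisted complex $\Z[u]\otimes(\text{contractible})$ need not be contractible; here it is only because $u_n$ has strictly negative homological degree and $Q_n$ is bounded above (Remark \ref{remark-sumIsProduct}, Theorem \ref{thm-ringSDR}).

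Your normalization step is also not ``immediate'': $\Z[u_n]\otimes Q_n$ contains infinitely many shifted copies of $1_n$ among its chain groups (one in each $u_n^k\otimes E_0$ and each $u_n^k\otimes E_{1-2n}$), so it is not of the form $\Cone(N\to 1_n)$ with $\tau(N)<n$, and the homotopy-invariant axiom (CK1) can only be verified after cancelling the extra copies. That cancellation is precisely the paper's entire proof: the component of $u_n\otimes\partial_n$ from $u_n^k\otimes t^{1-2n}E_{1-2n}$ to $u_n^{k+1}\otimes E_0$ is $\mp\Id$ of $q^{2n(k+1)}(P_{n-1}\sqcup 1_1)$, and Gaussian elimination (Proposition \ref{prop-gauss}) of all these identities collapses the periodic complex (\ref{eq-periodicPn}) onto a convolution of the Cooper--Krushkal sequence relative to $P_{n-1}$, which is a strong Cooper--Krushkal projector by Theorem \ref{thm-CKrecursion}. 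That single observation disposes of the normalization and all turnback computations simultaneously; verifying the axioms term by term, as you propose, forces you to redo the hard diagrammatic content of \cite{CK12a} and, as it stands, does so starting from an incorrect prediction of what that computation yields.
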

Here, $\Homg(A,B)$ denotes the chain complex generated by all bihomogenous linear maps between chain complexes, with differential $f\mapsto d_B\circ f-(-1)^{\deg_h(f)}f\circ d_A$.  The bidegree is $\deg(f) = (\deg_h(f),\deg_q(f))$.  Theorem \ref{introKoszulEq} makes it obvious that $\Z[u_n]$ acts on $P_n$.  In fact, $u_n$ includes $\Z[u_n]\otimes Q_n$ as subcomplex of itself, with quotient $Q_n$ (this is an imprecise statement, since $\bn{n}$ is not an abelian category).  More precisely:

\begin{theorem}\label{introPandQthm2}
Let $U_n\in\Endg(P_n)$ denote the chain map coming from the action of $u_n$ on the complex (\ref{introKoszulEq}).  Then
\begin{equation}
\label{introKoszulEq1}
Q_n\simeq \Cone(U_n)
\end{equation}
\end{theorem}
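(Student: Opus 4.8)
The plan is to deduce Theorem~\ref{introPandQthm2} directly from Theorem~\ref{introPandQthm}, essentially by recognizing $U_n$ as multiplication by the variable $u_n$ and computing the cone of that multiplication in the standard way. Working in the model supplied by Theorem~\ref{introPandQthm}, write $\tilde P_n := \Z[u_n]\otimes Q_n$ for the complex with differential $D = 1\otimes d_{Q_n} + u_n\otimes\partial_n$, so that $P_n\simeq\tilde P_n$ and, under this equivalence, $U_n$ corresponds to the endomorphism ``multiply by $u_n$'' of $\tilde P_n$. A homotopy-commutative square of complexes whose vertical arrows are homotopy equivalences induces a homotopy equivalence of the mapping cones of its horizontal arrows, and $U_n$ and multiplication-by-$u_n$ fit into such a square via $P_n\simeq\tilde P_n$; hence it suffices to prove $\Cone\bigl(u_n\colon\tilde P_n\to\tilde P_n\bigr)\simeq Q_n$.

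First I would observe that multiplication by $u_n$ commutes with $D$, since $D(u_n\xi) = u_n D(\xi)$; therefore $u_n$ is a chain endomorphism of $\tilde P_n$, it is injective, its image is the subcomplex $u_n\tilde P_n = \bigoplus_{k\geq 1}u_n^k Q_n$, and it restricts to an isomorphism of complexes $\tilde P_n\xrightarrow{\ \cong\ }u_n\tilde P_n$ (up to the grading shift prescribed by $\deg(u_n)=(2-2n,2n)$, which I suppress). Moreover the quotient $\tilde P_n/u_n\tilde P_n$ is, as a bigraded object, the summand $u_n^0 Q_n = Q_n$, and the differential $D$ descends on it to precisely $d_{Q_n}$, because $D(u_n^0\otimes x) = u_n^0\otimes d_{Q_n}(x) + u_n^1\otimes\partial_n(x)\equiv u_n^0\otimes d_{Q_n}(x)\pmod{u_n\tilde P_n}$. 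Thus there is a short exact sequence of complexes $0\to\tilde P_n\xrightarrow{\,u_n\,}\tilde P_n\to(Q_n,d_{Q_n})\to 0$, and — this is the crucial point — it is split in each bidegree, the splitting being the honest projection onto the direct summand in the decomposition $\tilde P_n = Q_n\oplus u_n\tilde P_n$ by powers of $u_n$.

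The final ingredient is a general fact that I would isolate as a lemma: for a short exact sequence $0\to A\xrightarrow{f}B\xrightarrow{g}C\to 0$ of complexes over an additive category which splits in each degree — say by $s\colon B\to A$, $t\colon C\to B$ with $sf=\Id$, $gt=\Id$, $fs+tg=\Id$ — the evident chain map $\Cone(f)\to C$, $(a,b)\mapsto g(b)$, is a homotopy equivalence; a homotopy inverse is $c\mapsto\bigl(s(td_C - d_B t)(c),\, tc\bigr)$ and a contracting homotopy on $\Cone(f)$ is $(a,b)\mapsto(sb,0)$, the homotopy identities following from the splitting relations together with $d^2=0$. Applying this lemma to the split short exact sequence above yields $\Cone(u_n\colon\tilde P_n\to\tilde P_n)\simeq(Q_n,d_{Q_n})$, and transporting back along $P_n\simeq\tilde P_n$ gives $Q_n\simeq\Cone(U_n)$.

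I expect the genuine difficulty to be bookkeeping rather than conceptual. One must check that ``quotient complex'' and ``split in each degree'' are legitimate in the non-abelian additive category $\bn{n}$ — they are, because the relevant splitting is the projection onto an honest direct summand — and one must track the grading shift coming from $\deg(u_n)=(2-2n,2n)$ so that the cone is formed with the right shift and $Q_n$ appears unshifted. The invariance of mapping cones under homotopy equivalence, used to move between $P_n$ and the model $\tilde P_n$, is standard but should be recalled in the language of convolutions used here.
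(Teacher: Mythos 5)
Your proposal is correct and matches the paper's (largely implicit) argument: in the body the statement is Corollary \ref{cor-QfromP}, which the paper treats as immediate from Theorem \ref{thm-PfromQ} because $U_n$ is by construction the degreewise-split inclusion $u_n\cdot(\Z[u_n]\otimes Q_n)\hookrightarrow \Z[u_n]\otimes Q_n$ with quotient $(Q_n,d_{Q_n})$ — exactly the split short exact sequence you exhibit (and which the paper handles in the $n=2$ case by cancelling the identity components of the cone via Gaussian elimination, Proposition \ref{prop-gauss}, which is operationally the same as your split-cone lemma). Your lemma and its explicit homotopy formulas check out, so this is a correct filling-in of the paper's \qed.
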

The presentation of $Q_n$ as the mapping cone on an endomorphism makes it obvious that the exterior algebra $\Lambda[\partial_n]$ acts on $Q_n$, and we easily recover the statement of Theorem \ref{introPandQthm}.  The dual relationship between Theorem \ref{introPandQthm} and Theorem \ref{introPandQthm2} is precisly that of Koszul duality between modules over polynomial and exterior algebras.  

On the level of Euler characteristic, equivalence (\ref{introKoszulEq}) becomes $\chi(P_n) = (1-q^{2n})\inv \chi(Q_n)$ and equivalence (\ref{introKoszulEq1}) becomes $\chi(Q_n) = (1-q^{2n})\chi(P_n)$.  Thus, the constructions in Theorems \ref{introPandQthm} and \ref{introPandQthm2} provide categorical analogues of division and multiplication by $(1-q^{2n})$.  

Recall that we are looking for a bounded complex which categorifies the scalar multiple of $p_n$ appearing in expression (\ref{introDenomClearedEq}).  Such a complex is provided by a tensor product of the $Q_k$'s, for $2\leq k\leq n$.  To see this, first let $P_k'$ denote the complex $\Z[u_k]\otimes Q_k$ with differential as in (\ref{introKoszulEq}).  Let $(-)\sqcup 1_i:\Kom(j)\rightarrow \Kom(i+j)$ denote the functor which places $i$ parallel strands to the right of each diagram.  From the unique characterization of Cooper-Krushkal projectors, one can see that $(P_k'\sqcup 1_{n-k})\otimes P_n'\simeq P_n'$.  In particular
\[
P_n\simeq (P_2'\sqcup 1_{n-2})\otimes(P_3'\sqcup 1_{n-3})\otimes \cdots \otimes P_n'
\]
Written differently this is:
\begin{corollary}
We have
\begin{equation}
\label{introKoszulEq2}
P_n\  \simeq \ \Z[u_2,u_3,\ldots, u_n]\otimes K_n \ \ \ \ \text{with differential} \ \ \ \ 1\otimes d_{K_n}+\sum_{k=2}^n u_k\otimes \partial_k
\end{equation}
where $K_n := (Q_2\sqcup 1_{n-2})\otimes(Q_3\sqcup 1_{n-3})\otimes \cdots\otimes Q_n$.
\end{corollary}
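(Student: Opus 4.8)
The plan is to read off the corollary from the homotopy equivalence
\[
P_n\ \simeq\ (P_2'\sqcup 1_{n-2})\otimes(P_3'\sqcup 1_{n-3})\otimes\cdots\otimes(P_{n-1}'\sqcup 1_1)\otimes P_n'
\]
established just above, by unwinding the definition $P_k'=\Z[u_k]\otimes Q_k$ and moving all the polynomial tensor factors to the front. Throughout, $P_k'$ denotes the complex $\Z[u_k]\otimes Q_k$ with differential $1\otimes d_{Q_k}+u_k\otimes\partial_k$, which by Theorem~\ref{introPandQthm} is homotopy equivalent to $P_k$; here $u_k$ has bidegree $(2-2k,2k)$ and $\partial_k\in\Endg(Q_k)$ has bidegree $(2k-1,-2k)$.

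First I would recall why the displayed equivalence holds. The uniqueness characterization of the Cooper--Krushkal projectors gives the absorption property $(P_k\sqcup 1_{n-k})\otimes P_n\simeq P_n$ for $2\le k\le n$; since $(-)\sqcup 1_i$ and $(-)\otimes(-)$ take homotopy equivalences to homotopy equivalences, the same holds with $P_k,P_n$ replaced by $P_k',P_n'$. Applying this for $k=n-1,n-2,\ldots,2$ (using associativity of $\otimes$) peels the factors off one at a time from the left of $P_n'$, and the final $P_n'\simeq P_n$ yields the displayed equivalence.

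Next I would compute the right-hand side as an honest complex. The functor $(-)\sqcup 1_{n-k}$ is additive and $\Z$-linear, hence commutes with $-\otimes_\Z\Z[u_k]$, so $P_k'\sqcup 1_{n-k}\cong\Z[u_k]\otimes(Q_k\sqcup 1_{n-k})$ with differential $1\otimes d_{Q_k\sqcup 1_{n-k}}+u_k\otimes(\partial_k\sqcup 1_{n-k})$. Since $u_k^m$ lies in homological degree $m(2-2k)$, which is even, each complex $\Z[u_k]$ is concentrated in even homological degrees, so permuting the factors $\Z[u_2],\ldots,\Z[u_n]$ past one another and past the complexes $Q_j\sqcup 1_{n-j}$ introduces no Koszul signs. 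Collecting these factors therefore gives an isomorphism of complexes
\[
(P_2'\sqcup 1_{n-2})\otimes\cdots\otimes P_n'\ \cong\ \Z[u_2,\ldots,u_n]\otimes K_n,
\]
and a direct expansion of the tensor-product differential identifies it with $1\otimes d_{K_n}+\sum_{k=2}^n u_k\otimes\partial_k$, where $d_{K_n}$ is the total differential of $K_n$ and $\partial_k$ now denotes the endomorphism of $K_n$ acting as $\partial_k\sqcup 1_{n-k}$ on the $k$-th tensor factor, extended by the graded Leibniz rule. Composing with the equivalence of the previous step gives (\ref{introKoszulEq2}).

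The one point needing genuine care is checking that the right-hand side really is a complex with exactly this differential, i.e.\ that $d^2=0$ with no correction terms. Expanding $\bigl(1\otimes d_{K_n}+\sum_k u_k\otimes\partial_k\bigr)^2$, the $u_k$-linear terms vanish because each $\partial_k$ is a chain map on $Q_k$, hence on $K_n$; and for $j\ne k$ the terms $u_ju_k\otimes\partial_j\partial_k$ and $u_ku_j\otimes\partial_k\partial_j$ cancel because the $u$'s commute while $\partial_j$ and $\partial_k$, having odd homological degree and acting on distinct tensor factors of $K_n$, anticommute. This sign bookkeeping, together with tracking the bidegree shifts hidden inside $\sqcup$ and $\otimes$, is the bulk of the (routine) work; everything else is formal.
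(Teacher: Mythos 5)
Your proof is correct and follows essentially the same route as the paper: the introduction derives the corollary exactly by iterating the absorption $(P_k'\sqcup 1_{n-k})\otimes P_n'\simeq P_n'$ and then "writing differently" the resulting tensor product, and your contribution is just to fill in the routine rewriting (collecting the $\Z[u_k]$ factors, noting the shifts are homologically even so no Koszul signs arise, and checking the form of the differential). For the record, the paper's rigorous body version of this statement is Theorem \ref{thm-standardCx}, which additionally replaces $K_n$ by a homotopy-equivalent \emph{bounded} complex via an inductive deformation-retract construction, at the cost of allowing higher-order terms $f\otimes\partial_f$ for nonlinear monomials $f$.
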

On the level of Euler-characteristic, this equivalence becomes:
\[
\chi(P_n) = \prod_{k=2}^n (1-q^{2k})\inv \chi(K_n)
\]
That is to say,
\[
\chi(K_n) = \prod_{k=2}^n (1-q^{2k})\chi(P_n),
\]
as desired.  We have seen that this element lies in the integral form $\TL_n^\Z$.  This fact is categorified by the following, which is proven in \S \ref{subsec-standardCx}:
\begin{theorem}\label{thm-introKnBdd}
The complex $K_n = (Q_2\sqcup 1_{n-2})\otimes(Q_3\sqcup 1_{n-3})\otimes \cdots\otimes Q_n$ is homotopy equivalent to a bounded complex.
\end{theorem}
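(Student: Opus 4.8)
The plan is to induct on $n$. The base case $n=2$ is immediate: $Q_2$ is a finite convolution of the complexes $E_i$ of Definition \ref{def-symmetricSeq}, each of which is built out of $P_1 = 1_1$ (the one-strand identity) together with finitely many turnback diagrams, hence is bounded; so $K_2 = Q_2$ is bounded. For the inductive step, the definition of $K_n$ gives at once the recursion
\[
K_n\ \simeq\ (K_{n-1}\sqcup 1_1)\otimes Q_n ,
\]
since $K_{n-1}\sqcup 1_1 = (Q_2\sqcup 1_{n-2})\otimes\cdots\otimes(Q_{n-1}\sqcup 1_1)$, and by the inductive hypothesis I may replace $K_{n-1}\sqcup 1_1$ by a bounded complex $B$ over $\bn{n}$, supported on the first $n-1$ strands.

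The one structural input I need beyond the inductive hypothesis is the absorption property $B\otimes(P_{n-1}\sqcup 1_1)\simeq B$, equivalently $K_{n-1}\otimes P_{n-1}\simeq K_{n-1}$. This follows from the single fact that $Q_{n-1}\otimes P_{n-1}\simeq Q_{n-1}$, applied to the last tensor factor of $K_{n-1}=(Q_2\sqcup 1_{n-3})\otimes\cdots\otimes Q_{n-1}$. To see $Q_{n-1}\otimes P_{n-1}\simeq Q_{n-1}$: the cone of the unit map $1_{n-1}\to P_{n-1}$ is annihilated (on either side) by $P_{n-1}$, by the defining idempotency property of the Cooper--Krushkal projector; writing $Q_{n-1}\simeq\Cone(U_{n-1}\colon P_{n-1}\to P_{n-1})$ as in Theorem \ref{introPandQthm2}, this cone is then also annihilated by $Q_{n-1}$, so $Q_{n-1}\otimes\Cone(1_{n-1}\to P_{n-1})\simeq 0$ and the canonical map $Q_{n-1}=Q_{n-1}\otimes 1_{n-1}\to Q_{n-1}\otimes P_{n-1}$ is a homotopy equivalence.

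Now expand $B\otimes Q_n$ along the convolution presentation $Q_n = (E_{1-2n}\to\cdots\to E_0)$: this exhibits $K_n$ as a length-$2n$ iterated mapping cone whose vertices are the complexes $B\otimes E_i$. The $E_i$ not involving $P_{n-1}$ are bounded turnback complexes, so the corresponding $B\otimes E_i$ are bounded. The remaining $E_i$ are composites of turnback diagrams with one or two copies of $P_{n-1}\sqcup 1_1$; for these I simplify using (i) the absorption property above, which deletes the copy of $P_{n-1}$ glued directly to $B$, and (ii) the fact that $P_{n-1}$ annihilates any turnback between strands $j,j+1$ with $j\le n-2$, which kills several of these vertices outright. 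After this reduction the only potential unboundedness is carried by the two innermost vertices, which still contain a copy of $P_{n-1}$ separated from $B$ by the turnback $e_{n-1}$ on the last pair of strands. The final step is a Gaussian elimination on the remaining total complex: the structure maps of the convolution pair the unbounded tail of each such vertex with a matching unbounded tail in a neighbouring vertex, and cancelling the resulting contractible summands --- a cancellation forced, at the level of Euler characteristics, by the fact that $\chi(K_n)=\prod_{k=2}^n(1-q^{2k})\,p_n$ lies in the integral form $\TL_n^{\Z}$ --- leaves a finite iterated cone of bounded complexes, hence a bounded complex.

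I expect the last step, the Gaussian elimination among the $P_{n-1}$-containing vertices, to be the main obstacle: carrying it out honestly requires the explicit differential of $Q_n$ from Definition \ref{def-symmetricSeq}, both to identify the component maps between those vertices and to write down the homotopies contracting the unbounded summands, since homotopy equivalences of the individual vertices alone do not suffice. A cleaner route --- presumably the one taken in \S\ref{subsec-standardCx} --- is to write down a bounded ``standard complex'' $\kappa_n$ with $\chi(\kappa_n)=\prod_{k=2}^n(1-q^{2k})p_n$ together with an explicit homotopy equivalence $K_n\simeq\kappa_n$; either way, the real work is the bookkeeping of the turnback patterns in the $E_i$ and their interaction with $P_{n-1}$. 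The routine points --- that a finite iterated mapping cone of bounded complexes is bounded, and that a convolution may be simplified by replacing its vertices with homotopy equivalent ones and transporting the twisted differential --- I would take for granted.
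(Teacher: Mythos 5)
Your setup --- induction on $n$, the recursion $K_n\simeq(K_{n-1}\sqcup 1_1)\otimes Q_n$, and the absorption $K_{n-1}\otimes P_{n-1}\simeq K_{n-1}$ --- matches the paper's argument (Construction \ref{const-standardCx} and Theorem \ref{thm-standardCx}). The gap is in your final step, and it stems from a misreading of the convolution presentation of $Q_n$. Every term $E_i$ of the symmetric Cooper--Krushkal sequence (Definition \ref{def-symmetricSeq}) contains exactly \emph{one} copy of $P_{n-1}$, sitting on the first $n-1$ strands precisely where $K_{n-1}\sqcup 1_1$ gets glued on; there are no terms ``not involving $P_{n-1}$,'' no terms with two copies, and no terms in which the projector is separated from $K_{n-1}$ by the turnback $e_{n-1}$. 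Consequently Proposition \ref{prop-projAbsorb} gives a deformation retract of \emph{every} vertex $(K_{n-1}\sqcup 1_1)\otimes E_i$ onto the bounded complex obtained by replacing the white box $P_{n-1}$ by $K_{n-1}$, and Theorem \ref{convSdrthm} performs all $2n$ of these retractions simultaneously while transporting the twisted differential. The result (Lemma \ref{lemma-symProjReplacement}) is a symmetric projector relative to $K_{n-1}$: a finite convolution of bounded complexes, hence bounded. No inter-vertex cancellation is needed.

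The step you flag as the ``main obstacle'' --- a Gaussian elimination pairing unbounded tails of neighbouring vertices, ``forced at the level of Euler characteristics'' --- is therefore not only unexecuted but based on a phantom difficulty; and the justification offered would not suffice in any case, since an identity of Euler characteristics in $\TL_n^\Z$ cannot certify that an unbounded complex is homotopy equivalent to a bounded one (a contractible unbounded summand is invisible to $\chi$). Your auxiliary claim (ii), that $P_{n-1}$ annihilates several of the vertices outright, is also incorrect: the turnback patterns occurring in the $E_i$ all involve the $n$-th strand and are exactly the ones \emph{not} killed by $P_{n-1}\sqcup 1_1$ --- otherwise $Q_n$ would collapse to its end terms. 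So the proposal is sound up through the reduction to $(K_{n-1}\sqcup 1_1)\otimes Q_n$, but the decisive boundedness argument is missing, and the correct completion is vertex-by-vertex projector absorption rather than any cancellation between vertices.
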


\begin{remark}
We know that the result of Theorem \ref{thm-introKnBdd} is not optimal.  For example $Q_3$ is homotopy equivalent to a bounded complex (see Example \ref{example-Q3}).  On the other hand $Q_k$ is not homotopy equivalent to a bounded complex for $k\geq 4$.  For a discussion on precisely which tensor products of the $Q_k$'s are bounded, see \S \ref{subsec-improvingBddness}.
\end{remark}

There is another special property that the Euler characteristic of $K_n$ has, namely that it is a scalar multiple of an idempotent.  In general, if $e\in A$ idempotent element of an algebra, and $\a$ is a scalar, then any multiple $f=\a e$ satisfies $f^2=\a f$.  It turns out that this property has a categorical analogue as well:
\begin{theorem}\label{thm-introQnProps}
The complexes $Q_k$ satisfy
\begin{enumerate}
\item $Q_n^{\otimes 2}\simeq Q_n\oplus t^{1-2n}q^{2n}Q_n$.
\item $(Q_k\sqcup 1_{n-k})\otimes Q_n\cong Q_n\otimes (Q_k\sqcup 1_{n-k})$.
\end{enumerate}
In particular $K_n^{\otimes 2}$ is equivalent to a direct sum of $2^{n-1}$ copies of $K_n$, with grading shifts.
\end{theorem}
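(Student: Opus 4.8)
The plan is to reduce both parts to the presentation $Q_m\simeq\Cone(U_m)$ of Theorem \ref{introPandQthm2} --- where $U_m\colon P_m\to P_m$ is the chain map recording the $u_m$-action, of bidegree $\deg(U_m)=(2-2m,2m)$ --- together with the absorption property $(P_k\sqcup1_{m-k})\otimes P_m\simeq P_m\simeq P_m\otimes(P_k\sqcup1_{m-k})$ of Cooper--Krushkal projectors (the case $k=m$ being idempotency $P_m^{\otimes 2}\simeq P_m$). Two facts do the work. \emph{(i) The diagonal endomorphism $\widetilde U_m=\smMatrix{U_m & 0 \\ 0 & U_m}$ of $\Cone(U_m)$ is null-homotopic}, via the homotopy $\smMatrix{0 & \mathrm{id} \\ 0 & 0}$ (a one-line matrix computation). \emph{(ii) Under the absorption equivalences, the relevant $u$-actions agree up to homotopy}: concretely $U_m\otimes\mathrm{id}\simeq\mathrm{id}\otimes U_m\simeq U_m$ under $P_m^{\otimes 2}\simeq P_m$, and the two endomorphisms of $P_n$ induced from the left and right $u_k$-actions on $P_k\sqcup1_{n-k}$ agree up to homotopy under $(P_k\sqcup1_{n-k})\otimes P_n\simeq P_n\simeq P_n\otimes(P_k\sqcup1_{n-k})$. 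Fact (ii) is really a statement about the endomorphism complexes $\Endg(P_m)$, and I expect it to be the main obstacle (see below).

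For part (1): since $\otimes$ is additive, $Q_n^{\otimes 2}=Q_n\otimes\Cone(U_n)=\Cone\bigl(\mathrm{id}_{Q_n}\otimes U_n\colon Q_n\otimes P_n\to Q_n\otimes P_n\bigr)$. Taking cones in fact (ii) identifies $Q_n\otimes P_n\simeq\Cone(U_n)=Q_n$, carrying $\mathrm{id}_{Q_n}\otimes U_n$ to the diagonal endomorphism $\widetilde U_n$ of $\Cone(U_n)$; by fact (i) this is null-homotopic, so $Q_n^{\otimes 2}\simeq\Cone\bigl(0\colon Q_n\to Q_n\bigr)$, a direct sum of $Q_n$ with a second copy of $Q_n$ shifted by $\deg(U_n)$ lowered by one in homological degree, i.e.\ by $t^{1-2n}q^{2n}$ --- exactly the equivalence in (1). (This is Koszul-dual to the statement that $\Lambda[\partial_n]$ acts on $Q_n$.)

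For part (2): run the same argument with $Q_k\sqcup1_{n-k}=\Cone(U_k\sqcup\mathrm{id})$. Expanding the double cone $\Cone(U_k\sqcup\mathrm{id})\otimes\Cone(U_n)$ in the two orders and using absorption of $P_k\sqcup1_{n-k}$ by $P_n$ from both sides, both $(Q_k\sqcup1_{n-k})\otimes Q_n$ and $Q_n\otimes(Q_k\sqcup1_{n-k})$ become the iterated cone built from the $u_n$- and $u_k$-actions on $P_n$; the two outcomes coincide up to homotopy by fact (ii) (the left and right $u_k$-actions on $P_n$ being homotopic). This gives the homotopy equivalence; upgrading it to the isomorphism stated in (2) is done by performing the construction on the minimal models, where the absorption equivalences are realized by explicit mutually inverse inclusion/projection maps. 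The final clause then follows formally: for any $j,k$, part (2) applied inside the first $\max(j,k)$ strands and padded with $1$'s shows the tensor factors of $K_n$ commute up to isomorphism, so $K_n^{\otimes 2}\cong\bigotimes_{k=2}^n(Q_k\sqcup1_{n-k})^{\otimes 2}$; distributing the tensor product over the $n-1$ binary splittings from part (1) and reordering factors into the order $2,3,\dots,n$ exhibits $K_n^{\otimes 2}$ as a direct sum over $S\subseteq\{2,\dots,n\}$ of grading-shifted copies of $K_n$, i.e.\ $2^{n-1}$ copies.

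As indicated, the technical heart is fact (ii): one must understand $\Endg(P_n)$ (and of $P_k\sqcup1_{n-k}$) well enough to see that the natural $u$-actions agree up to homotopy under the absorption equivalences --- this is where the $\Ext$-group and endomorphism-ring computations are used. Everything else is formal mapping-cone bookkeeping, with one mild extra subtlety --- arranging the equivalences in part (2) to be genuine isomorphisms --- for the strict commutativity statement.
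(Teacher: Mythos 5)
Your proposal follows essentially the same route as the paper's proof of this theorem (Theorem \ref{thm-QnProps} in the body): reduce everything to $\Cone(U_i)\otimes\Cone(U_j)$, absorb the extra projector via $P_n\otimes P_n\simeq P_n$, and use the fact that $U_k\otimes\Id\simeq\Id\otimes U_k\simeq U_k$ under absorption (your fact (ii) is exactly Lemma \ref{lemma-mapMerging}). Your fact (i) -- the diagonal action of $U_n$ on its own cone is null-homotopic -- is a slightly slicker packaging of the paper's elementary similarity transform in the case $i=j$, and your derivation of the $2^{n-1}$ count is the same.

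The one step you gloss over is the point where the paper does real work beyond Lemma \ref{lemma-mapMerging}. When you transport $\Id_{Q_n}\otimes U_n$ across the absorption equivalence, or replace the edge maps of the double cone $\Cone(U_i)\otimes\Cone(U_j)$ by $U_i$ and $\pm U_j$, you do not land exactly on the diagonal endomorphism $\widetilde U_n$ (resp.\ on a strict bicomplex): Theorem \ref{thm-diffPerturbing} only lets you do this at the cost of introducing a length-two (off-diagonal) component $h$ with $[d,h]=[U_i,U_j]$. Your fact (i) kills $\diag(U_n,U_n)$, but the map you actually obtain is $\diag(U_n,U_n)$ plus such a correction, and a priori different corrections give different convolutions. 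The paper disposes of this by observing that any two choices of $h$ differ by a cycle of bidegree $(3-2i-2j,\,2i+2j)$, which is a boundary because $\Ext^{3-i,i}(P_n,P_n)=0$ (Corollary \ref{cor-extGroups}(5), total degree $3$); the same vanishing shows the correction in your part (1) is null-homotopic, so one may take $h=0$. So you need the degree-$3$ $\Ext$ vanishing not only inside ``fact (ii)'' but also in the ``formal mapping-cone bookkeeping'' itself. Finally, the extra effort you anticipate for upgrading (2) to a strict isomorphism is not needed: the paper proves both sides are homotopy equivalent to literally the same (or visibly isomorphic) convolution, and the body of the paper states (2) as a homotopy equivalence.
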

This is proven in \S \ref{subsec-quasiProjProps}.

\subsection{The polynomial action on $P_n$}
Note that any chain complex $C\in\Kom(n)$ is an $(R,R)$-bimodule, where $R=\Z[x_1,\ldots,x_n]/(x_1^2,\cdots, x_n^2 )$.  The action of $x_i$ is given by the identity cobordism with a dot on one of the sheets or, equivalently, multiplication by $X$ in Khovanov homology.  The actions of $x_i$ on $P_n$ are homotopic up to sign.  We will pick our favorite of these endomorphisms and call it $U_1^{(n)}$.  For instance, $U_1^{(n)}:=\pic{projector_SWdot}\in\Endg(P_n)$.  This induces an action of $\Z[u_1]$ on $P_n$.  The equivalence (\ref{introKoszulEq2}) allows us to extend this action to an action of a larger polynomial ring:
\begin{definition}\label{def-introPoly}
For $2\leq k\leq n$, let $U_k^{(n)}\in\Endg(P_n)$ denote the map induced from the action of $u_k$ on the complex (\ref{introKoszulEq2}).  As in the preceding remarks, put $U_1^{(n)}:=\pic{projector_SWdot}$.
\end{definition}
The homology of $\Endg(P_n)$ is graded commutative \cite{H12a}, so these chain maps define an action of $\Z[u_1,\ldots,u_n]$ on $P_n$, up to homotopy.  We will see in a moment that the maps $U_k^{(n)}$ do not depend on any choices, up to homotopy and sign.  But first, the following illustrates the usefulness of equivalence (\ref{introKoszulEq2}) in computations.
\begin{theorem}\label{thm-introGorThm}
There is a representative for $P_n$ such that $\Endg(P_n)$ deformation retracts onto a differential bigraded $\Z[u_1,\ldots,u_n]$-module $W_n=\Z[u_1,\ldots,u_n]/(u_1^2)\otimes \Lambda[\xi_2,\xi_3,\ldots,\xi_n]$ with differential satisfying:
\begin{enumerate}
\item $d(u_k)=0$ for each $k=1,2,\ldots,n$.
\item $d(\xi_k)\in 2u_1u_k+\Z[u_2,\ldots,u_{k-1}]$ for each $k=2,3,\ldots,n$.
\end{enumerate}
The data of the deformation retract $\Endg(P_n)\rightarrow W_n$ are $\Z[u_1,\ldots,u_n]$-equivariant, where $u_k$ acts on $\Endg(P_n)$ via post-composition with $U_k^{(n)}$.
\end{theorem}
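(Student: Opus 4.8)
The plan is to work inside the Koszul model of~(\ref{introKoszulEq2}). Fix the representative of $P_n$ to be $P_n' := \Z[u_2,\ldots,u_n]\otimes K_n$ with differential $1\otimes d_{K_n}+\sum_{k=2}^{n}u_k\otimes\partial_k$, where $K_n$ has been replaced by a bounded complex as in Theorem~\ref{thm-introKnBdd}. The key structural point is that $P_n'$ is \emph{on the nose} a dg module over $S:=\Z[u_2,\ldots,u_n]$, the $u_k$ acting by multiplication on the left tensor factor; this commutes with $d_{P_n'}$ because $d_{P_n'}$ is $S$-linear. Together with the dot operator $u_1$, which squares to zero since a doubled dot is $X^2=0$, this makes $\Endg(P_n')$ a dg module over $\Z[u_1,\ldots,u_n]/(u_1^2)$, and a direct computation identifies it, as such, with $S\otimes\Endg(K_n)$ --- after a harmless completion in the $u$-direction, boundedness of $K_n$ guaranteeing convergence --- carrying the perturbed differential $[d_{K_n},-]+\sum_{k=2}^{n}u_k\otimes[\partial_k,-]$, with the $u_1$-action inherited from $\Endg(K_n)$. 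Because everything that follows takes place among dg modules over $\Z[u_1,\ldots,u_n]/(u_1^2)$, any deformation retraction we construct will automatically be $\Z[u_1,\ldots,u_n]$-equivariant; the one remaining point, that this strict $u_k$-action agrees up to homotopy and sign with post-composition by the $U_k^{(n)}$ of Definition~\ref{def-introPoly}, is forced by graded-commutativity of $H(\Endg(P_n))$ \cite{H12a}.

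It then suffices to deformation retract $\Endg(K_n)$, as a complex over $\Z[u_1]/(u_1^2)$, onto the exterior algebra $\Lambda[\xi_2,\ldots,\xi_n]$ compatibly with the operators $[\partial_k,-]$, and to transfer the $u_k$-perturbation along the retraction. I would build this by induction on $n$. From $P_n\simeq\Z[u_n]\otimes Q_n$ one gets $\Endg(P_n)\simeq\Z[u_n]\otimes\Endg(Q_n)$ with a $u_n\otimes[\partial_n,-]$-perturbation, while Theorem~\ref{introPandQthm2} and the presentation $Q_n\simeq\Cone(U_n)$ display $\Endg(Q_n)$ in its Koszul-dual exterior guise: beyond a small model for the $\Endg(P_{n-1})$-type data carried by the cup--cap building blocks $E_i$ of $Q_n$, it acquires exactly one new anticommuting generator $\xi_n$, paired with $\partial_n$ up to the dot, so that $[\partial_n,-]$ sends $\xi_n$ into bidegree $(0,2)$, the bidegree of $u_1$. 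Alternatively one can peel off the tensor factors $Q_k\sqcup 1_{n-k}$ of $K_n$ one at a time using the quasi-idempotence and commutativity of Theorem~\ref{thm-introQnProps}. Either way, a Gaussian-elimination / homological-perturbation argument collapses the bounded complex $\Endg(K_n)$ onto a free $\Z[u_1]/(u_1^2)$-module with basis the squarefree monomials in $\xi_2,\ldots,\xi_n$; the resulting rank $2^n$ over $\Z[u_2,\ldots,u_n]$ is consistent with $K_n^{\otimes 2}$ being a direct sum of $2^{n-1}$ shifted copies of $K_n$.

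Finally one reads off the differential on $W_n=\Z[u_1,\ldots,u_n]/(u_1^2)\otimes\Lambda[\xi_2,\ldots,\xi_n]$. That $d(u_k)=0$ is immediate: each $u_k$ is either a coordinate indeterminate of the Koszul model or the dot, and in either case acts by an honest chain map, hence is a cycle in $\Endg(P_n)$. The content is the formula $d(\xi_k)\in 2u_1u_k+\Z[u_2,\ldots,u_{k-1}]$. The terms in $\Z[u_2,\ldots,u_{k-1}]$ are forced by the interaction of $\xi_k$ with $d_{K_n}$ and with the lower operators $\partial_j$, $j<k$; degree considerations alone pin them down to $\Z$-combinations of products $u_iu_j$ with $2\le i,j\le k-1$. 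Switching on the $j=k$ perturbation $u_k\otimes[\partial_k,-]$ then adds to $d(\xi_k)$ the element $u_k\cdot[\partial_k,-](\xi_k)$, so the whole theorem hinges on the identity $[\partial_k,-](\xi_k)=\pm 2u_1$ in the small model. Establishing this is the main obstacle: it reduces to a local computation in the dotted cobordism calculus of $\bn{n}$ in which the factor $2$ emerges from a neck-cutting relation concealed in $\partial_k$, and one must track signs carefully to be sure the coefficient is $\pm 2$ rather than $0$ or some other integer. This is a genuinely integral phenomenon --- over $\Q$ the $2$ would be a unit and invisible --- and it is exactly the $2$-torsion responsible for the application to the Gorsky--Oblomkov--Rasmussen--Shende conjecture, which is why getting its size right over $\Z$ is the crux of the argument.
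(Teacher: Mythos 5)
Your overall shape (work in the periodic/Koszul model, induct on $n$, produce one new odd generator $\xi_k$ per step, isolate the coefficient $2$ of $u_1u_k$ as the crux) matches the paper, but there are two genuine gaps. First, the structural claim that ``a direct computation identifies'' $\Endg(P_n')$ with $S\otimes\Endg(K_n)$ is false at the chain level: with $P_n'=\bigoplus_{f\in S}f\otimes K_n$, the complex $\Endg(P_n')$ is a \emph{product over source monomials} $\prod_{f}\Homg(f\otimes K_n,\,P_n')$, which strictly contains the $S$-linear part $S\otimes\Endg(K_n)$ (maps may act differently on different summands), and no completion repairs this. Showing that the inclusion of the $S$-linear part is an equivalence is exactly the nontrivial content, and the paper sidesteps it differently: it first contracts $P_n$ onto a strong projector and applies Proposition \ref{prop-resProp} to get a deformation retract $\Endg(P_n)\rightarrow\Homg(1_n,P_n)$; since the source is now the single object $1_n$, the complex $\Homg(1_n,\Z[u_n]\otimes(\cdots))$ genuinely decomposes as $\Z[u_n]\otimes\Homg(1_n,\cdots)$. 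Without some substitute for this reduction (which uses the unit $\iota$ and turnback-killing, not just $S$-linearity of the differential), your passage to $\Endg(K_n)$ does not get off the ground.

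Second, you explicitly leave unproven the identity $[\partial_k,-](\xi_k)=\pm 2u_1$, which you correctly identify as the crux; but your guess at its source (a neck-cutting ``concealed in $\partial_k$'', a purely local computation) is not how it arises, and the generator $\xi_n$ does not come from the Koszul duality of Theorem \ref{introPandQthm2}. In the paper both $\xi_n$ and the coefficient $2u_1u_n$ are produced simultaneously by the partial-trace adjunction $\Homg(1_n,P_n)\cong\Homg(1_{n-1},qT(P_n))$ (Theorem \ref{thm-partialTraceHom}): applying $T$ to the periodic complex (\ref{eq-periodicPn}), contracting the turnback terms, and then delooping the resulting circles via Lemma \ref{lemma-delooping} collapses each period to a two-term complex $yP_{n-1}\xrightarrow{\;2\,u_1+\delta_n\;}P_{n-1}$; the factor $2$ is the ``$2\cdot$dotted identity'' of the Reidemeister~1 computation in that lemma, the cone structure supplies $\xi_n$, and dot-hopping (Corollary \ref{cor-dotHop}) normalizes the dot to $u_1$. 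That is the step you would need to supply, together with an equivariance argument (Theorem \ref{thm-ringSDR}, Remark \ref{remark-sdrPreservesStruct}) rather than the assertion that equivariance is automatic.
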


This is proven in \S \ref{subsec-endSimplification}.  As a corollary, we have
\begin{theorem}\label{thm-introExtGrpsAndQs}
For $1\leq k\leq n$, the group of chain maps $t^{2-2k}q^{2k}P_n\rightarrow P_n$ modulo chain homotopy is isomorphic to $\Z$, generated by $U_k^{(n)}$.  The mapping cones satisfy $\Cone(U_k^{(n)})\simeq (Q_k\sqcup 1_{n-k})\otimes P_n$.  In particular,  $Q_n$ is uniquely characterized by the equivalence $Q_n\simeq \Cone(U_n^{(n)})$.
\end{theorem}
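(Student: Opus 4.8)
The plan is to deduce everything from Theorem \ref{thm-introGorThm}. Fix the representative of $P_n$ provided there, so that $\Endg(P_n)$ deformation retracts $\Z[u_1,\ldots,u_n]$-equivariantly onto $W_n=\Z[u_1,\ldots,u_n]/(u_1^2)\otimes\Lambda[\xi_2,\ldots,\xi_n]$. Homotopy classes of chain maps $t^{2-2k}q^{2k}P_n\to P_n$ form a single bigraded piece of $H(\Endg(P_n))=H(W_n)$, namely the one in bidegree $\deg(U_k^{(n)})$, which by the equivariance clause is the bidegree of the monomial $u_k\in W_n$. I would then run a short bidegree computation on the $\Z$-basis $\{u_1^{\e}u_2^{a_2}\cdots u_n^{a_n}\xi_S\}$ of $W_n$: solving the two degree equations shows $u_k$ is the only basis element in its bidegree. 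Moreover, since by clause (2) each $d\xi_j$ lies in $2u_1u_j+(u_2,\ldots,u_{j-1})^2$, every term of every $d\xi_j$ is divisible by $u_1$ or by $u_iu_{i'}$ for some $2\le i\le i'\le j-1$; hence no element of $W_n$ can have differential a nonzero multiple of $u_k$. As $d(u_k)=0$ this gives $H(W_n)\cong\Z$ in bidegree $\deg(u_k)$, generated by $[u_k]$, and equivariance identifies this generator with $U_k^{(n)}$, proving the first assertion. (For $k=1$ the same count applies, with $u_1$ the unique monomial of its bidegree.)

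For the mapping cones, regroup the variables in the equivalence (\ref{introKoszulEq2}): writing $\Z[u_2,\ldots,u_n]=\Z[u_k]\otimes\Z[u_j:j\neq k]$ exhibits $P_n$ as $\Z[u_k]\otimes N'$ with differential $1\otimes d_{N'}+u_k\otimes\partial_k$, where $N'=\big(\Z[u_j:j\neq k]\otimes K_n,\ d_{N'}\big)$ and, by Definition \ref{def-introPoly}, the induced $u_k$-action is $U_k^{(n)}$. The argument passing from Theorem \ref{introPandQthm} to Theorem \ref{introPandQthm2} (Koszul duality between the polynomial and exterior sides) then yields $\Cone(U_k^{(n)})\simeq N'$. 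It remains to identify $N'$ with $(Q_k\sqcup 1_{n-k})\otimes P_n$. Tensoring the equivalence $\Z[u_k]\otimes Q_k\simeq P_k$ of (\ref{introKoszulEq}) with $1_{n-k}$ and then with $P_n$, and using Jones--Wenzl absorption $(P_k\sqcup 1_{n-k})\otimes P_n\simeq P_n$, produces an equivalence $\Z[u_k]\otimes\big((Q_k\sqcup 1_{n-k})\otimes P_n\big)\simeq P_n$ under which (this is the point requiring care) $u_k$ again acts as $U_k^{(n)}$. Comparing with $P_n\simeq\Z[u_k]\otimes N'$ and applying $(-)\otimes_{\Z[u_k]}\Z$ (equivalently, passing to cones of the $u_k$-actions) gives $N'\simeq(Q_k\sqcup 1_{n-k})\otimes P_n$, hence $\Cone(U_k^{(n)})\simeq(Q_k\sqcup 1_{n-k})\otimes P_n$.

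Taking $k=n$, the strands $1_{n-n}$ are empty, so $\Cone(U_n^{(n)})\simeq Q_n\otimes P_n$. Finally $Q_n\otimes P_n\simeq Q_n$: tensoring $\Z[u_n]\otimes Q_n\simeq P_n$ with $P_n$ and using $P_n^{\otimes 2}\simeq P_n$ gives a $\Z[u_n]$-equivariant equivalence $\Z[u_n]\otimes(Q_n\otimes P_n)\simeq P_n\simeq\Z[u_n]\otimes Q_n$, and setting $u_n=0$ yields the claim. Thus $Q_n\simeq\Cone(U_n^{(n)})$, the asserted characterization (which, unlike the original definition of $Q_n$, makes no reference to $P_{n-1}$); equivalently, this is Theorem \ref{introPandQthm2} once one notes that the endomorphism $U_n$ there and $U_n^{(n)}$ both arise as the $u_n$-action in a decomposition of $P_n$ over $\Z[u_n]$, hence agree up to homotopy and sign by the first part.

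The bidegree count is routine. The crux is the second step, in two respects. First one needs the ``cone of multiplication by $u_k$ equals the derived quotient by $u_k$'' principle inside the homotopy category of Bar-Natan's category, which is not abelian; but this is exactly the mechanism already used for Theorems \ref{introPandQthm}--\ref{introPandQthm2}, so it is available. Second, and more delicate, one must verify that the various $u_k$-actions in play --- on (\ref{introKoszulEq2}), on $\Z[u_k]\otimes Q_k\simeq P_k$ pushed up to $P_n$, and (for $k=n$) on $P_n^{\otimes 2}$ via $U_n\otimes\mathrm{id}$ --- all represent the same class, and not merely an integer multiple of $U_k^{(n)}$, since the first part only determines them up to a scalar. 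This should hold because all of these decompositions are obtained by tensoring the same equivalences (\ref{introKoszulEq}), so the actions coincide on the nose for compatible choices of representatives; carefully tracking that compatibility is the main work.
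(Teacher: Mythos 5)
Your treatment of the first two assertions is essentially the paper's. The $\Ext$ computation is Corollary \ref{cor-extGroups}: the paper collapses the bigrading to $\deg_s=\deg_h+\deg_q$, so $\deg_s(u_k)=2$ and $\deg_s(\xi_k)=3$, notes that $W_n$ has nothing in degree $\deg_s=1$, and concludes no multiple of $u_k$ can bound; your divisibility argument for the terms of $d(\xi_j)$ is an acceptable substitute, and the equivariance of the retract does pin $[U_k^{(n)}]$ to $\pm[u_k]$. The cone identification is Corollary \ref{cor-QnAreCones}; there the paper sidesteps your ``compatibility of $u_k$-actions'' worry by working directly with the presentation of $P_n$ as the target of a deformation retract from $(P_2'\sqcup 1_{n-2})\otimes\cdots\otimes P_n'$ with $P_k'=\Z[u_k]\otimes Q_k$, so that $U_k^{(n)}$ \emph{is by definition} the action inherited from the factor $P_k'$ and the cone computation localizes in that factor; your regrouping of $\Z[u_2,\ldots,u_n]$ is the same computation in different clothing (and note it silently assumes $k\geq 2$; the case $k=1$ needs its own one-line argument via $Q_1=\Cone(\mathrm{dot})$ and dot-hopping).

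The genuine gap is in the uniqueness claim, and it is precisely the step you defer as ``the main work.'' Uniqueness means: \emph{every} symmetric projector $\tilde Q_n$ satisfies $\tilde Q_n\simeq\Cone(U_n^{(n)})$. Corollary \ref{cor-QfromP} gives, for arbitrary $\tilde Q_n$, a projector $P_n\simeq\Z[u_n]\otimes\tilde Q_n$ and an endomorphism $U_n$ (multiplication by $u_n$) with $\Cone(U_n)\simeq\tilde Q_n$; the issue is whether $[U_n]$ is a \emph{generator} of $\Ext^{2-2n,2n}(P_n,P_n)\cong\Z$ rather than a proper multiple $k[f]$ — if $|k|\geq 2$ then $\Cone(kf)\not\simeq\Cone(f)$ and the characterization fails. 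Your route (identifying this $u_n$-action with the $U_n^{(n)}$ of Construction \ref{const-standardCx} through the chain of equivalences) is exactly the bookkeeping you acknowledge but never carry out, so as written the claim is unproved. The paper's Theorem \ref{thm-QnUniqueness} settles it by a different and self-contained mechanism: writing $[U_n]=k[f]$, the long exact sequence of $\Homg(P_n,\Cone(kf))$ contains
\[
\Ext^{0,0}(P_n,P_n)\buildrel k\over\longrightarrow \Ext^{2-2n,2n}(P_n,P_n)\longrightarrow \Ext^{2-2n,2n}(P_n,\tilde Q_n),
\]
i.e.\ $\Z\buildrel k\over\longrightarrow\Z\to(\ast)$, and a degree count in a sub-quotient of $\Z[u_1,\ldots,u_{n-1},\xi_2,\ldots,\xi_n]$ shows $(\ast)=0$, forcing $k=\pm1$. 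You should either import that argument or genuinely perform the equivariant tracking; without one of the two, the final sentence of the theorem does not follow from what precedes it.
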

For the uniqueness statement, see Theorem \ref{thm-QnUniqueness}.

\subsection{Application to link homology}
\label{introLinkHomSection}
Let us outline the construction of our family of link homology theories, explained in detail in \S \ref{subsec-quasiHomology}.  First:

\begin{definition}\label{def-introQuasiProj}
Recall the maps $U_k^{(n)}$ from Definition \ref{def-introPoly}.  For any sequence $1\leq i_1,\ldots,i_r\leq n$, let $P_n(i_1,\ldots,i_r)$ denote the tensor product of complexes $\Cone(U_{i_k}^{(n)})$.  For the empty sequence, we put $P_n(\emptyset):=P_n$.  We call these complexes $P_n(\mathbf{i})$ \emph{quasi-projectors}.
\end{definition}  

Let $D$ be an oriented tangle diagram, together with a finite number of marked points $x_1,\ldots,x_k$ on $D$, away from the crossings, such that there is at least one $x_i$ on each component of the underlying link.  Let $\mathcal{K} = \{K_1,K_2,\ldots\}$ be a family of complexes of the form $K_n = P_n(\mathbf{i}_n)$ for some sequences $\mathbf{i}_n$.  Associated to these data we associate a chain complex $\llbracket D; \mathcal{K} \rrbracket$ as follows: replace an $n$-colored component of $D$ with $n$ parallel copies of itself, insert the appropriate $K_n$ near each marked point, and define $\llbracket D; \mathcal{K} \rrbracket$ using the planar composition operations on Bar-Natan's categories (formally similar to tensor product).  

By a decorated tangle we will mean a pair $(T,V)$, where $T \subset B^3$ is a colored, framed, oriented tangle together $V \subset T$ is a finite set of points in the interior of $T$.  We regard these modulo (framed) isotopy of pairs.  We prove the following in \S \ref{subsec-quasiHomology}:
\begin{theorem}\label{introQuasiLocalThm}
The chain homotopy type of $\llbracket D; \mathcal{K} \rrbracket$ is an invariant of the underlying decorated tangle.  This invariant satisfies:
\begin{enumerate}
\item Suppose $D$ and $D'$ are identical, except that $D'$ has one fewer marked point than $D$, on a component colored by $n$.  Then $\llbracket D; \mathcal{K} \rrbracket$ is chain homotopy equivalent to a direct sum of copies of $\llbracket D'; \mathcal{K} \rrbracket$ with degree shifts, depending only on $n$.
\item If $D$ and $D'$ differ only in the orientation, then $\llbracket D;\mathcal{K}\rrbracket \simeq \llbracket D';\mathcal{K}\rrbracket$ up to an overall degree shift.
\item For some choices of $\mathcal{K}$, $\llbracket D;\mathcal{K}\rrbracket $ is homotopy equivalent to a bounded complex.
\end{enumerate}
\end{theorem}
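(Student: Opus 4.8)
The plan is to build $\llbracket D;\mathcal{K}\rrbracket$ out of Cooper--Krushkal homology and then exploit the canonical nature of the endomorphisms $U_k^{(n)}$. The first step is the ``projector-level'' statement: inserting $P_n$ at every marked point of $D$ (allowing several marked points per component) produces a complex $\llbracket D;\{P_n\}\rrbracket$ whose homotopy type is an invariant of the underlying colored oriented tangle, and --- up to an overall grading shift --- of the underlying colored \emph{unoriented} tangle. This is essentially the content of \cite{CK12a}: the equivalences $P_n\otimes P_n\simeq P_n$, absorption of Reidemeister moves, and vanishing of turnbacks allow one to slide, merge, create and delete marked-point copies of $P_n$, while orientation enters only through the overall homological and quantum normalization shifts and through the self-duality of $\Sym^n\C^2$ (which holds for $\sl_2$).

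The heart of the argument is the \textbf{naturality of the polynomial action}. By Definition \ref{def-introQuasiProj} and Theorem \ref{thm-introExtGrpsAndQs}, $P_n(\mathbf{i})$ is the iterated mapping cone of $U_{i_1}^{(n)},\ldots,U_{i_r}^{(n)}$ on $P_n$, so $\llbracket D;\mathcal{K}\rrbracket$ is obtained from $\llbracket D;\{P_n\}\rrbracket$ by coning off copies of these endomorphisms placed at the marked points. Theorem \ref{thm-introExtGrpsAndQs} identifies $U_k^{(n)}$ as the unique chain map $t^{2-2k}q^{2k}P_n\to P_n$ up to homotopy and sign; consequently every homotopy equivalence arising from an elementary move --- planar isotopy, Reidemeister, and the sliding, merging, creation or deletion of a marked point --- intertwines the $U_k^{(n)}$'s up to homotopy and sign. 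Indeed, conjugating $U_k^{(n)}$ by such an equivalence yields a degree-$(2-2k,2k)$ self-map of a shift of $P_n$, hence an integer multiple of $U_k^{(n)}$, and the integer is $\pm1$ because the absorption equivalences are built so as to intertwine the $u_k$-actions in (\ref{introKoszulEq2}) (and, for $k=1$, one simply slides the dot). Graded commutativity of the homology of $\End(P_n)$ \cite{H12a} ensures the maps attached to distinct marked points commute up to homotopy and sign --- on the nose when the marked points lie on distinct components --- so the order of coning is immaterial. Combined with the projector-level invariance, this shows that the homotopy type of $\llbracket D;\mathcal{K}\rrbracket$ depends only on the decorated tangle. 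I expect this to be the main obstacle: one must verify the ``$\pm1$'' and rule out the degenerate possibility that some move kills a $U_k^{(n)}$.

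Granting invariance, the three numbered statements are short. For (1), use invariance to bring two marked points on an $n$-colored component together; they fuse into a single point carrying $P_n(\mathbf{i}_n)\otimes P_n(\mathbf{i}_n)$. From $\Cone(U_k^{(n)})\simeq(Q_k\sqcup 1_{n-k})\otimes P_n$, the splitting $Q_k^{\otimes 2}\simeq Q_k\oplus t^{1-2k}q^{2k}Q_k$ of Theorem \ref{thm-introQnProps}(1), the commutativity of the $Q$'s with each other and with $P_n$ (Theorem \ref{thm-introQnProps}(2), together with the naturality above), and $P_n^{\otimes 2}\simeq P_n$, one gets $\Cone(U_k^{(n)})^{\otimes 2}\simeq\Cone(U_k^{(n)})\oplus t^{1-2k}q^{2k}\Cone(U_k^{(n)})$; hence $P_n(\mathbf{i}_n)^{\otimes 2}$ is a direct sum of $2^{|\mathbf{i}_n|}$ copies of $P_n(\mathbf{i}_n)$ with grading shifts determined by $\mathbf{i}_n$, i.e. depending only on $n$. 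Substituting this decomposition at the fused marked point gives (1). Statement (2) is the orientation remark already made at the projector level, unaffected by the cone construction since the $U_k^{(n)}$ are orientation-independent. For (3), take $\mathcal{K}$ with $K_n=P_n(2,3,\ldots,n)$: using $\Cone(U_k^{(n)})\simeq(Q_k\sqcup 1_{n-k})\otimes P_n$, the commutativity just recalled, and the absorption $\Cone(U_n^{(n)})\otimes P_n\simeq\Cone(U_n^{(n)})\simeq Q_n$, one finds $P_n(2,\ldots,n)\simeq(Q_2\sqcup 1_{n-2})\otimes(Q_3\sqcup 1_{n-3})\otimes\cdots\otimes Q_n$, which is homotopy equivalent to a bounded complex by Theorem \ref{thm-introKnBdd}. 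Thus every inserted complex is bounded, and $\llbracket D;\mathcal{K}\rrbracket$, being the planar composition of these finitely many bounded complexes together with the two-term Khovanov complexes of the finitely many crossings, is homotopy equivalent to a bounded complex.
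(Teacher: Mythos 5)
Your proposal follows essentially the same route as the paper: invariance is reduced to the same local moves (Reidemeister away from marked points, fusion of marked points via quasi-idempotency of $P_n(\mathbf{i})$, orientation reversal via $r(U_k^{(n)})\simeq\pm U_k^{(n)}$, and the crossing slide via uniqueness of $U_k^{(n)}$ as a generator of a rank-one $\Ext$ group), and statements (1)--(3) are then derived exactly as in \S\ref{subsec-quasiHomology}. The one step you flag as the main obstacle is indeed where the paper works hardest: Proposition \ref{prop-Uslide} settles the $\pm 1$ by producing equivalences $\Endg(F(P_n,P_n))\simeq \Endg(P_n)\otimes\Endg(1_1)$ from both sides of the crossing, so that $F(U_k,\Id)$ and $F(\Id,U_k)$ are each seen to be generators of the same copy of $\Z$; filling in that computation is what your sketch still requires.
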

\begin{remark}
This new invariant categorifies the $\sl_2$ Reshetikhin-Turaev invariant (see \S \ref{subsec-eulerChar}) up to a scalar multiple depending only on the colors and numbers of marked points.  By choosing exactly 1 marked point on each component of $L$, we can forget the data of markings altogether.  The resulting unmarked invariant is defined for tangles, but respects gluing only up to a direct sum.  One may call such an invariant \emph{quasi-local}.
\end{remark}

\begin{remark}
To obtain an actual homology theory, one must apply the functor $\Hom(\emptyset, -)$ to $\llbracket D, \mathcal{K}\rrbracket$, which is defined only if the tangle underlying $D$ is actually a knot or link $L$. We will denote the homology of this complex of $\Z$-modules by $H_{\sl_2}(L;\mathcal{K})$.
\end{remark}

In the special case $\mathcal{P}=\{P_1,P_2\ldots,\}$ is the family of Cooper-Krushkal projectors, we call the homology $H_{\sl_2}(L;\mathcal{P})$ the Cooper-Krushkal homology of $L$.  Actually, this homology is dual to that constructed in \cite{CK12a} (see Observation \ref{obs-CKhomology}).  In \S \ref{subsec-CKhomology}, we prove that the $\Z[u_1,\ldots,u_n]$-action on $P_n$ descends to a well-defined action on link homology:
\begin{theorem}
Let $L\subset \R^3$ be a framed, oriented link whose components are colored $n_1,\ldots,n_r$.  Let $R(L)$ denote the tensor product
\[
R(L):=\bigotimes_{i=1}^r\Z[u_1,\ldots,u_{n_i}]
\]
graded so that $\deg(u_k)=(2-2k,2k)$.  Then the Cooper-Krushkal homology $H_{\sl_2}(L;\mathcal{P})$ is a well-defined isomorphism class of finitely generated, bigraded $R(L)$-modules.
\end{theorem}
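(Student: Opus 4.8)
The plan is to split the claim into two parts: well-definedness of the $R(L)$-action, which builds on the chain-homotopy invariance already recorded in Theorem \ref{introQuasiLocalThm}, and finite generation, which will follow from the Koszul-type model $(\ref{introKoszulEq2})$ together with the boundedness of the complexes $K_n$. First, fix a diagram $D$ for $L$ carrying exactly one marked point on each component, the $i$-th component being colored $n_i$, so that $\llbracket D;\mathcal{P}\rrbracket$ is the planar composition of the complexes $P_{n_1},\dots,P_{n_r}$ with a bounded complex built from the cable of $D$ by resolving crossings. Let the $i$-th tensor factor $\Z[u_1,\dots,u_{n_i}]$ of $R(L)$ act on the copy of $P_{n_i}$ inserted at the $i$-th marked point via the chain endomorphisms $U_1^{(n_i)},\dots,U_{n_i}^{(n_i)}$ of Definition \ref{def-introPoly}, and transport this along the planar composition and then along $\Hom(\emptyset,-)$, both of which are additive functors. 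These endomorphisms commute only up to homotopy, but because the homology of the endomorphism complex of $P_n$ is graded commutative \cite{H12a}, they descend to an honest, bigraded action of $R(L)$ on $H_{\sl_2}(L;\mathcal{P})$ with $\deg(u_k)=(2-2k,2k)$.

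For well-definedness I would argue as follows. By Theorem \ref{thm-introExtGrpsAndQs}, for $1\le k\le n$ the group of chain maps $t^{2-2k}q^{2k}P_n\to P_n$ modulo homotopy is free of rank one on $U_k^{(n)}$; hence each $U_k^{(n)}$ is determined by $P_n$ up to homotopy and sign, and changing a sign only replaces the module by an isomorphic one via the automorphism $u_k\mapsto -u_k$ of $R(L)$. It remains to see that the equivalences realizing invariance in Theorem \ref{introQuasiLocalThm} respect the action. Each such equivalence is assembled from local moves; before a Reidemeister move one may first isotope all marked points out of the disk in which the move occurs, so that the move's homotopy equivalence is the identity near every marked point and commutes strictly with the action, while the isotopies that transport marked points along their components carry each $U_k^{(n_i)}$ along canonically. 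Thus the invariance equivalences intertwine the $R(L)$-actions up to sign and homotopy, hence induce isomorphisms of bigraded $R(L)$-modules on homology, and $H_{\sl_2}(L;\mathcal{P})$ is a well-defined isomorphism class of bigraded $R(L)$-modules. Theorem \ref{thm-introGorThm} moreover identifies the action concretely on the model $W_n$.

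For finite generation, set $S:=\bigotimes_{i=1}^{r}\Z[u_2,\dots,u_{n_i}]$, a Noetherian subring of $R(L)$. By $(\ref{introKoszulEq2})$, each $P_{n_i}$ is homotopy equivalent, compatibly up to homotopy and sign with the action of the $U_k^{(n_i)}$, to $\Z[u_2,\dots,u_{n_i}]\otimes K_{n_i}$ with differential $1\otimes d_{K_{n_i}}+\sum_{k=2}^{n_i}u_k\otimes\partial_k$, the polynomial factor carrying the $u_k$-action. Since planar composition and $\Hom(\emptyset,-)$ are additive and commute with direct sums, substituting these models identifies $H_{\sl_2}(L;\mathcal{P})$, viewed as an $S$-module through the $U_k^{(n_i)}$ with $k\ge 2$, with the homology of a complex of the form $S\otimes C$ equipped with a twisted differential $1\otimes d_C+\sum u_k\otimes(\,\cdot\,)$, where $C=\Hom(\emptyset,\llbracket D;\{K_{n_i}\}\rrbracket)$. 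By Theorem \ref{thm-introKnBdd} the $K_{n_i}$ are homotopy equivalent to bounded complexes, so $\llbracket D;\{K_{n_i}\}\rrbracket$ is bounded and $C$ is a bounded complex of finitely generated abelian groups; hence $S\otimes C$ with its twisted differential is a bounded complex of finitely generated $S$-modules, whose homology is a finitely generated $S$-module because $S$ is Noetherian. As $S\subseteq R(L)$ this already shows $H_{\sl_2}(L;\mathcal{P})$ is finitely generated over $R(L)$; here the remaining generators $U_1^{(n_i)}$ are dots and act nilpotently, with square zero.

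I expect the main obstacle to be the well-definedness step: making the invariance equivalences of Theorem \ref{introQuasiLocalThm} genuinely compatible with the polynomial action, and in particular controlling the interaction of Reidemeister moves and marked-point isotopies with the endomorphisms $U_k^{(n)}$. Once that is in hand, finite generation is essentially formal, using only the boundedness of the $K_n$ from Theorem \ref{thm-introKnBdd} and the Koszul model $(\ref{introKoszulEq2})$.
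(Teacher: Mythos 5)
Your finite-generation argument is essentially the paper's: substitute the Koszul model $P_{n_i}\simeq \Z[u_2,\ldots,u_{n_i}]\otimes K_{n_i}$ into the planar composition, apply $\Homg(\emptyset,-)$ to get a free, finitely generated twisted complex over the Noetherian ring $S$, and conclude by Noetherianity. The well-definedness half, however, has two genuine gaps.

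First, the compatibility of the invariance equivalences with the $U_k^{(n)}$ is asserted rather than proven at exactly the point where it is hard. Isotoping marked points away from a Reidemeister disk handles the Reidemeister moves, but a marked point that is the unique one on its component must in general be slid \emph{past crossings} to compare two diagrams, and the equivalence realizing such a slide is the nontrivial one of Lemma \ref{lemma-Pslide}; saying the isotopy ``carries each $U_k^{(n_i)}$ along canonically'' begs the question. The paper's proof needs Proposition \ref{prop-Uslide} --- that $F(U_k,\Id_{P_n})\simeq \pm F(\Id_{P_n},U_k)$ --- which is established by computing the degree $(2-2k,2k)$ homology of $\Endg(F(P_n,P_n))$ via Lemma \ref{lemma-UslideHom} and Corollary \ref{cor-extGroups} and observing that both maps generate a copy of $\Z$. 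You flag this as the ``main obstacle'' but do not close it; without it the $R(L)$-module structure is not known to be diagram-independent.

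Second, your treatment of the resulting sign ambiguity is incorrect as stated. Two $R(L)$-module structures differing by $u_k\mapsto -u_k$ are \emph{not} in general isomorphic: the twist of a module by a ring automorphism need not be isomorphic to the module (e.g.\ $\Z[x]/(x-1)\not\cong\Z[x]/(x+1)$), so ``the automorphism $u_k\mapsto -u_k$ of $R(L)$'' does not by itself produce an isomorphism of the homology modules. The paper repairs this by working at the chain level: $C_{\mathcal{P}}(D)$ is \emph{free} as an $R$-module, and for a free module one can write down an explicit $\Z$-linear automorphism $\phi$ with $\phi(u_k m)=-u_k\phi(m)$; passing to homology then identifies the two module structures. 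You need this freeness argument (or an equivalent device) to finish.
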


Finally, the various link homologies can be related by spectral sequences (see \S \ref{subsec-CKhomology}):
\begin{theorem}
Fix a family $\mathcal{K}=\{K_1,K_2,\ldots,\}$ of quasi-projectors, and let $L\subset S^3$ be a colored, framed, oriented link.  There is a polynomial ring $R(L)$ and a spectral sequence of bigraded $R(L)$-modules $R(L)\otimes_\Z H_{\sl_2}(L;\mathcal{K})\Rightarrow H_{\sl_2}(L;\mathcal{P})$.
\end{theorem}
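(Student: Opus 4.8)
The plan is to establish the statement locally, in a neighbourhood of the marked points, and then propagate it to the whole diagram using the $\Z$-linearity of Bar-Natan's planar composition operations. The essential input is a Koszul-type description of $P_n$ in terms of an arbitrary quasi-projector $K_n = P_n(\mathbf{i}_n)$. Write $J_n\subseteq\{1,\ldots,n\}$ for the set of distinct indices occurring in the sequence $\mathbf{i}_n$, and set $B_n := \Z[u_k^{(n)} : k\in J_n]$, graded by $\deg u_k^{(n)}=(2-2k,2k)$. Starting from the convolution description~(\ref{introKoszulEq2}) — which exhibits $P_n$ as free over $\Z[u_2,\ldots,u_n]$ with fibre the particular quasi-projector $P_n(2,\ldots,n)$ — together with Theorem~\ref{thm-introExtGrpsAndQs} (identifying $\Cone(U_k^{(n)})\simeq (Q_k\sqcup 1_{n-k})\otimes P_n$), Theorem~\ref{introPandQthm} at $n=1$ (to accommodate the index $1$), and the absorption identities of the text preceding~(\ref{introKoszulEq2}), one obtains a homotopy equivalence of bigraded objects of $\bn{n}$
\[
P_n \ \simeq\ B_n\otimes_\Z K_n, \qquad\text{with differential}\qquad 1\otimes d_{K_n} + \sum_{k\in J_n} u_k^{(n)}\otimes\partial_k^{(n)},
\]
where $B_n$ acts by multiplication on the left tensor factor, the $\partial_k^{(n)}$ are the exterior generators coming from the cone factors $\Cone(U_k^{(n)})$ of $K_n$, and $K_n \cong P_n/(B_n)_+P_n$. (If $\mathbf{i}_n$ has a repeated index $k$, one first applies Theorem~\ref{thm-introQnProps}(1) to rewrite the repeated $\Cone(U_k^{(n)})$-factors, at the cost of grading-shifted direct summands; $B_n$ records each index once.)

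Next I would insert this local model into the construction of $\llbracket D;\mathcal{K}\rrbracket$. The planar composition operations are additive $\Z$-linear functors that preserve chain homotopies; moreover the planar composition of a convolution with any fixed complex is again a convolution, and a tensor product of convolutions is canonically a convolution over the product poset. Replacing each $P_{n_i}$ near its marked point by $B_{n_i}\otimes K_{n_i}$ therefore yields
\[
\llbracket D;\mathcal{P}\rrbracket \ \simeq\ R(L)\otimes_\Z \llbracket D;\mathcal{K}\rrbracket, \qquad R(L) := \bigotimes_i B_{n_i},
\]
as bigraded objects, with $R(L)$ acting by multiplication on the first factor and total differential $1\otimes d_{\llbracket D;\mathcal{K}\rrbracket} + \sum_i\sum_{k\in J_{n_i}} u_k^{(i)}\otimes\partial_k^{(i)}$. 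Applying the functor $\Hom(\emptyset,-)$ — legitimate because the tangle underlying $D$ is a link — produces complexes of bigraded $\Z$-modules $C_{\mathcal{P}}$ and $C_{\mathcal{K}}$ with homologies $H_{\sl_2}(L;\mathcal{P})$ and $H_{\sl_2}(L;\mathcal{K})$, and an isomorphism $C_{\mathcal{P}}\cong R(L)\otimes_\Z C_{\mathcal{K}}$ of bigraded $R(L)$-modules under which the differential is $1\otimes d_{C_{\mathcal{K}}} + \sum_{i,k} u_k^{(i)}\otimes\partial_k^{(i)}$.

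Finally I would filter $C_{\mathcal{P}}\cong R(L)\otimes_\Z C_{\mathcal{K}}$ by powers of the augmentation ideal $(R(L))_+$, i.e.\ by polynomial degree. Since the Cooper--Krushkal projectors are locally finite and bounded below in homological degree — hence so are $K_n$, $\llbracket D;\mathcal{K}\rrbracket$, and $C_{\mathcal{K}}$ — and since each of the finitely many variables $u_k^{(i)}$ has positive $q$-degree, this filtration is exhaustive and finite in each bidegree; the associated spectral sequence therefore converges strongly to $H_{\sl_2}(L;\mathcal{P})$. On the associated graded every term of the differential that raises polynomial degree dies, leaving $E_0$-differential $1\otimes d_{C_{\mathcal{K}}}$, so that
\[
E_1 \ \cong\ R(L)\otimes_\Z H_{\sl_2}(L;\mathcal{K}),
\]
with $d_1$ induced by the action of the $\partial_k^{(i)}$ on $H_{\sl_2}(L;\mathcal{K})$. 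All pages and differentials are $R(L)$-linear because the filtration is by $R(L)$-submodules and $R(L)$ acts through the first tensor factor, which gives the asserted spectral sequence of bigraded $R(L)$-modules. The one genuinely nonformal point is establishing the local convolution model $P_n\simeq B_n\otimes_\Z K_n$ for an arbitrary quasi-projector: one must check that the ``de-quasified'' variables act freely enough on $P_n$ that passing from $K_n$ back to $P_n$ is literally a Koszul resolution rather than merely a twisted complex up to coherent homotopy, and organize the grading shifts coming from repeated indices. For the freeness and equivariance I would work with the explicit representatives of Theorem~\ref{thm-introGorThm}, whose deformation-retract data are already $\Z[u_1,\ldots,u_n]$-equivariant; everything downstream — stability of convolutions under planar composition, the filtration, and its convergence — is then routine.
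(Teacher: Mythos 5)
Your proposal is correct and follows essentially the same route as the paper: unroll each $P_n$ as a free $R_n$-module over the quasi-projector $K_n$ with a polynomial-degree-raising twisted differential (the paper's Theorem \ref{thm-standardCx} / proof of Theorem \ref{thm-CKhomology}), propagate through planar composition and $\Hom(\emptyset,-)$, and filter by polynomial degree to get the spectral sequence of a filtered complex. The only cosmetic discrepancies are your labeling of the page as $E_1$ rather than $E_2$ and the remark that the complexes are bounded \emph{below} in homological degree (in this paper's conventions they are bounded above, and local finiteness of the filtration in each bidegree comes from the positive $q$-degree of the $u_k$ together with finiteness of each chain group), neither of which affects the argument.
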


\subsection{Connection to conjectures of Gorsky-Oblomkov-Rasmussen-Shende}
Recent work \cite{GOR12,GORS12} has indicated that the Khovanov and Khovanov-Rozansky homology of the torus knots are very interesting objects, and are related to affine Lie algebras.  This surprising connection itself has its origins in a fascinating conjecture \cite{ORS12} relating the triply-graded Khovanov-Rozansky homology of an algebraic link with the Hilbert scheme of points on its defining complex curve.  This is a very exciting area of research, and reflects that there is much to discover in the landscape of link homology.

Of specific importance to us is the following conjecture, which appears in \cite{GOR12}:
\begin{conjecture}\label{introGorConj}
Let $V_n=\Z[u_1,\ldots,u_n]\otimes \Lambda[\xi_1,\ldots,\xi_n]$ denote the differential bigraded algebra with bigrading $\deg(u_m) = (2-2m,2m)$, $\deg(\xi_m) = (1-2m,2+2m)$, and differential given by
\[
d(u_m)=0\hskip.5in d(\xi_m) = \sum_{i+j=m+1}u_iu_j
\]
for all $1\leq m\leq n$, together with the graded Leibniz rule.  Then the homology of $V_n$ is isomorphic to the limiting Khovanov homology of the $(n,r)$ torus links as $r\to \infty$.
\end{conjecture}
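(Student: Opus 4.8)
The strategy is to route both sides of Conjecture \ref{introGorConj} through the complex $P_n$ and then to invoke Theorem \ref{thm-introGorThm}. On the link-homology side, Sto\v{s}i\'{c}'s stabilization theorem provides a well-defined limiting bigraded group $\Kh(T(n,\infty))$, and by Rozansky's theorem the Bar-Natan complex of the infinite torus braid on $n$ strands is homotopy equivalent, after a grading shift and a suitable completion, to $P_n$; closing up and passing to the limit, $\Kh(T(n,\infty))$ is the homology of the planar closure $\widehat{P_n}\in\bn{0}$. Because $P_n$ is a self-dual idempotent complex (\cite{CK12a}), a standard folding/absorption argument — whose degree bookkeeping is precisely what the partial-trace hypotheses $T^k(P_n)$ of this paper control — identifies $\widehat{P_n}$, up to an overall bidegree shift depending only on $n$, with the derived endomorphism complex $\Endg(P_n)$. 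This reduces the conjecture to computing $\Endg(P_n)$.

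Now Theorem \ref{thm-introGorThm} carries the bulk of the load: for a suitable representative of $P_n$, $\Endg(P_n)$ deformation retracts onto the differential bigraded $\Z[u_1,\ldots,u_n]$-module $W_n=\Z[u_1,\ldots,u_n]/(u_1^2)\otimes\Lambda[\xi_2,\ldots,\xi_n]$ with $d(u_k)=0$ and $d(\xi_k)\in 2u_1u_k+\Z[u_2,\ldots,u_{k-1}]$. To line this up with the conjectural $V_n$, I would use the weight grading $w(u_1)=w(\xi_1)=0$ and $w(u_i)=w(\xi_i)=1$ for $i\ge 2$; the differentials of both $V_n$ and $W_n$ are then weight non-decreasing, the associated graded of each has differential $d(\xi_k)=2u_1u_k$ (together with $d(\xi_1)=u_1^2$ for $V_n$), and — since $u_1^2$ is a non-zero-divisor in $\Z[u_1,\ldots,u_n]$, so that $\xi_1$ may be cancelled against it — the two associated gradeds have a common homology $H_n^{\mathrm{lead}}$. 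Hence $\Kh(T(n,\infty))$ and $H(V_n)$ are both abutments of weight spectral sequences (finite, hence convergent, in each bidegree) with initial page $H_n^{\mathrm{lead}}$.

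The main obstacle is to go beyond leading order. Theorem \ref{thm-introGorThm} determines $d(\xi_k)$ only modulo $\Z[u_2,\ldots,u_{k-1}]$, whereas Conjecture \ref{introGorConj} demands the precise value $d(\xi_k)=\sum_{i+j=k+1}u_iu_j$ (the $t^{k-1}$-coefficient of $\big(\sum_i u_it^{i-1}\big)^2$). Pinning down these corrections would require tracking the deformation retract of Theorem \ref{thm-introGorThm} through the tensor factorization $P_n\simeq\Z[u_2,\ldots,u_n]\otimes K_n$ of (\ref{introKoszulEq2}) finely enough to recover the full chain homotopies, not just the first differential; I expect this to be the genuinely hard step and would not attempt it in general.

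What the argument nonetheless yields is the \emph{weak version}: $\Kh(T(n,\infty))$ is the homology of a differential bigraded module with exactly the generators, bidegrees, vanishing $d(u_k)=0$, and leading differential $d(\xi_k)\in 2u_1u_k+\Z[u_2,\ldots,u_{k-1}]$ predicted by Conjecture \ref{introGorConj}; in particular it is finitely generated over $\Z[u_1,\ldots,u_n]$ with $u_1$ acting nilpotently, and (through the weight spectral sequence) it is governed by the same leading-order homology as $H(V_n)$. Whenever the correction terms turn out to be compatible — e.g.\ for small $n$, where the bigrading leaves little or no freedom — a degree-forced rescaling automorphism of $\Z[u_1,\ldots,u_n]\otimes\Lambda[\xi_1,\ldots,\xi_n]$ identifies the differential of $W_n\otimes\Q$ with that of $V_n\otimes\Q$, yielding Conjecture \ref{introGorConj} over $\Q$ in those cases.
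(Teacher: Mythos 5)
The statement you are asked to prove is stated in the paper as a \emph{conjecture}, and the paper itself does not prove it: it explicitly says ``This is not yet a proof of Conjecture \ref{introGorConj}, since we are unable to give an explicit formula for $d(\xi_k)$. Moreover, we do not know if the Leibniz rule holds for $W_n$.'' Your reduction is essentially the paper's own route to its partial result: the identification of $\Kh(T(n,\infty))$ with the homology of $\Endg(P_n)$ is Corollary \ref{cor-gorCor} (via Rozansky's theorem, Theorem \ref{thm-partialTraceHom} and Proposition \ref{prop-resProp}), and the retraction $\Endg(P_n)\to W_n$ is Theorem \ref{thm-GorEnd}. You also correctly name the genuine obstruction --- the correction terms in $d(\xi_k)$ are only pinned down modulo $\Z[u_2,\ldots,u_{k-1}]$ --- so to that extent your proposal honestly delivers only the weak version, which is all the paper claims.

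There is, however, an additional gap in your intermediate ``weight spectral sequence'' step that you do not flag. You assert that the weight-associated-graded of $W_n$ has differential determined by $d(\xi_k)=2u_1u_k$ and hence agrees with the associated graded of $V_n$ after cancelling $\xi_1$ against $u_1^2$. But Theorem \ref{thm-GorEnd} only provides a $\Z[u_1,\ldots,u_n]$-\emph{equivariant} differential on $W_n$: the value of $d$ on a product $\xi_{i_1}\cdots\xi_{i_r}$ is \emph{not} determined by the values $d(\xi_{i_j})$ via the Leibniz rule, and the paper explicitly does not know whether the Leibniz rule holds (indeed the nonvanishing $A_\infty$ product $\mu_2(\xi_2,\xi_2)=u_2^3$ suggests the naive multiplicative structure is wrong). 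Consequently the $E_1$-page of your weight filtration on $W_n$ is not computable from the stated data, and the claimed common leading-order homology $H_n^{\mathrm{lead}}$ --- and hence the assertion that $\Kh(T(n,\infty))$ and $H(V_n)$ are abutments of spectral sequences with the same initial page --- is not established. The correct conclusion available from the paper's results is only that $\Kh(T(n,\infty))$ is the homology of \emph{some} equivariant differential on $W_n$ with the stated constraints on $d(\xi_k)$, not that it shares an $E_1$-page with $V_n$.
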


In this paper we make significant progress toward this conjecture.  It is known that the limiting Khovanov homology of $(n,r)$ torus links as $r\to \infty$ is isomorphic to the homology of the closure of the categorifiied Jones-Wenzl projector \cite{Roz10a}.  This is simply the Cooper-Krushkal homology of the $n$-colored unknot.  We know that this homology coincides with the homology of the differential bigraded algebra $\Endg(P_n)$.  The additional structure on $P_n$ afforded by the expression (\ref{eq-periodicPn}) allows us to greatly simplify this latter algebra (see Theorem \ref{thm-introGorThm} earlier in the introduction, and \S \ref{subsec-endSimplification} in the main body of the paper).  Specifically, we construct a projector $P_n$ and a deformation retract $\Endg(P_n)\rightarrow \Z[u_1,\ldots,u_n]/(u_1^2)\otimes \Lambda[\xi_2,\ldots,\xi_n]=:W_n$ with some $\Z[u_1,\ldots,u_n]$-equivariant differential.

This is not yet a proof of Conjecture \ref{introGorConj}, since we are unable to give an explicit formula for $d(\xi_k)$.  Moreover, we do not know if the Leibniz rule holds for $W_n$.  On the other hand, the deformation retract $\Endg(P_n)\rightarrow W_n$ endows $W_n$ with the structure of an $A_\infty$ algebra, the existence of which is not apparent in \cite{GOR12}.  We can check directly that $\mu_2(\xi_2,\xi_2)=u_2^3$ is nonzero, whereas the obvious multiplication in $W_n$ gives $\xi_2^2=0$.  So there is the possibility that the $A_\infty$ structure on $W_n$ is interesting.

\subsection{Other Lie algebras}
\label{subsec-otherLieAlgs}
Let $L\subset S^3$ be a framed, oriented link.  Fix a complex semi-simple Lie algebra $\mathfrak{g}$, for example $\mathfrak{g} = \sl_n(\C)$, and label the components of $L$ by finite dimensional irreducible representations of $\mathfrak{g}$.  The Reshetikhin-Turaev invariant of $L$ is an element of $\Z[q,q\inv]$, defined using the braiding operation on tensor products of representations of the quantum group $U_q(\mathfrak{g})$.  In case $\mathfrak{g}=\sl_2$, the corresponding link invariant is called the colored Jones polynomial.  The finite dimensional irreducible representations of $\sl_2$ are determined up to isomorphism by their dimension, so the colored Jones polynomial is naturally an invariant of framed, oriented links $L\subset S^3$ whose components are labelled by non-negative integers, called the colors.  The color $n$ corresponds to the $n+1$ dimensional representation.

More generally, the finite dimensional irreducible representations of $\sl_N(\C)$ up to isomorphism are indexed by partitions (that is, non-increasing sequences of integers) $\lambda = (\lambda_1,\ldots,\lambda_N)$ with $\lambda_N=0$.  The representation associated with $\lambda$ is denoted $L(\lambda)$.  If $\omega_i = (1,\ldots,1,0,\ldots,0)$ with $i$ ones and $n-i$ zeroes, then $L(\omega_i) = \Lambda^i(\C^N)$ is the $i$-th exterior power of the standard representation.  At the other extreme, if $\lambda=(n,0,\ldots,0)$, then $L(\lambda)=\Sym^n(\C^N)$.  The minuscule representations are precisely the exterior powers $\Lambda^i(\C^N)$, for $i=1,2,\ldots,N-1$.

We list a small sample of the many various ways in which the $\sl_N$ polynomial has been categorified, for various colors and various $N$.  For the uncolored $\sl_N$ invariant see \cite{Kh00, Kh04, KR08, MSV09,CauKam08b}, for the $\Lambda^i(\C^N)$-colored invariant see \cite{Wu09, Sus07, Man07}, for the $\Sym^i(\C^N)$ colored invariant $(N\in\{2,3\})$, see \cite{Kh05, CK12a,Roz10a,FSS12,Rose12}, for arbitrary colors and arbitrary $N$, see \cite{W10b, Cau12}. Except for \cite{Kh05}, these are all expected to be isomorphic or, at worst, related by Koszul duality \cite{SS11}.  In all of these examples except \cite{Kh05}, the homology of the $V$-colored unknot is infinite dimensional unless $V=\Lambda^i(\C^N)$.

All of the results in this paper concern the $\Sym^n(\C^2)$-colored $\sl_2$ link invariant, but we expect our results to extend without difficulty to the $\Sym^n(\C^N)$-colored $\sl_N$ link invariant, in an essentially obvious way.  The diagram $\pic{turnback}$ gets replaced by its web analogue: the letter ``I'', and Bar-Natan's cobordism categories get replaced by categores of $\sl_N$ matrix factorizations \cite{KR08} or foams \cite{MSV09}, or perhaps Soergel bimodules \cite{Kh07}.

\subsection{Organization of the paper}

In \S \ref{sec-TL} we recall Bar-Natan's categorification of the\\ Temperley-Lieb algebras $\TL_n$ and the Cooper-Krushkal categorification of the Jones-Wenzl projector $p_n\in \TL_n$.

In \S \ref{sec-quasiprojectors} we show how to construct complexes $Q_n$ over Bar-Natan's categories which categorify the expressions $(1-q^{2n})p_n\in\TL_n$ and find a unique characterization of the $Q_n$.  We then establish some basic properties, particularly the relationship with the Cooper-Krushkal projectors $P_n$, from which our polynomial action will originate.

In \S \ref{sec-linkHomology} we show that the $Q_n$ can be used to construct a new family of colored $\sl_2$ link homologies, and give a characterization of which of these give finite homologies.  These new homologies retain a close relationship with Cooper-Krushkal homology, in the form of a certain spectral sequence.  As a by-product, we will see the Cooper-Krushkal homology can be refined to an invariant which takes values in finitely generated bigraded modules over a polynomial ring, up to isomorphism.

Finally, the appendix \S\ref{sec-homAlg} introduces some basic notions in homological algebra such as convolutions and deformation retracts, and establishes some useful tools for simplifying some of the unbounded chain complexes which appear in this subject.

\subsection*{Acknowledgements}
This work forms a part of my Ph.D.~thesis, and I would like to thank my advisor, Slava Krushkal,  and also Ben Cooper for many helpful conversations.  I would also like to thank the Max Planck Institute for Mathematics in Bonn for their hospitality, support, and excellent working conditions.

\section{The Temperley-Lieb algebra and its categorification}
\label{sec-TL}
The $\sl_2$ quantum invariant for tangles is defined via a braid group action on the Temperley-Lieb algebras $\TL_n$ together with certain idempotent elements $p_n\in\TL_n$ called the \emph{Jones-Wenzl projectors}.   In this section we study the categorification of $p_n$ due to Cooper-Krushkal \cite{CK12a}, in the setting of Bar-Natan's categories \cite{B-N05} or, equivalently categories of modules over Khovanov's rings $H^n$ \cite{Kh02}.  We also set up some basic theory involving the Cooper-Krushkal projectors which we will use later.

\subsection{The Temperley-Lieb algebras and Jones-Wenzl projectors}
\label{TLsubSection}
Let $\TL^m_n$ be the $\C(q)$-vector space generated by properly embedded 1-submanifolds of the rectangle $[0,1]^2$ with boundary equal to a standard set of $m$ points $\{(k,(m+1))\:|\: k=1,\ldots,m\}$ on the ``top'' of the rectangle and $n$ points $\{(k,(m+1))\:|\: k=1,\ldots,m\}$ on the ``bottom'' of the rectangle.  Here $\C(q)$ is the field of rational functions in an indeterminate $q$.  We regard the generators modulo planar isotopy and the relation $D\sqcup U = (q+q\inv)D$, where $U$ is a circle disjoint from the rest of the diagram.  By a \emph{diagram} or a Temperley-Lieb diagram, we will simply mean the image of a 1-manifold with no circle components inside $\TL^m_n$.

We have a bilinear map $\TL^m_k\times \TL^k_n\rightarrow \TL^m_n$ given by vertical stacking, which we denote by $a\cdot b$, or simply $ab$.  This makes the collection of spaces $\TL^m_n$ into a $\C(q)$-linear category $\TL$ with objects given by non-negative integers and morphisms $n\rightarrow m$ given by elements of $\TL^m_n$.  In particular, composition makes the vector space $\TL_n:=\TL^n_n$ into a unital algebra, called the \emph{Temperley-Lieb algebra} on $n$ strands.  The identity is the diagram $1_n$ consisting of $n$-vertical strands.  For a diagram $a\in \TL^m_n$, define the \emph{through degree} $\tau(a)$ to be the minimal $k$ such that $a=b\cdot c$ with $b\in\TL^m_k$, $c\in\TL^k_n$.  For a linear combination $b=\sum_a f_a a$ of diagrams, let $\tau(b):=\max\{\tau(a)\:|\: f_a\neq 0\}$.  
\begin{figure}[ht]
	\centering $\bpic{TLex1} \cdot \bpic{TLex2} = \bpic{TLex3}  = (q+q\inv)\ \bpic{TLex4}$
	\caption{Multiplication in $\TL_4$.  Each of the diagrams above has through degree 2.} 
	\label{TLfig}
	\end{figure}

The following is classical \cite{Wz87,FK97}, and defines the Jones-Wenzl projectors $p_n\in \TL_n$:
\begin{theorem}\label{thm-JWproj}
There is a unique element $p_n\in \TL_n$ satisfying
\begin{itemize}
\item[(JW1)] $p_n = 1_n + a$ with $\tau(a)<n$.
\item[(JW2)] $a\cdot p_n = p_n\cdot b = 0$ whenever $\tau(a),\tau(b)<n$.
\end{itemize}
\end{theorem}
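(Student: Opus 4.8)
The plan is to dispatch uniqueness by a short ``absorption'' argument, and then prove existence by exhibiting $p_n$ via Wenzl's recursion and checking (JW1) and (JW2) by induction on $n$. For uniqueness: if $p,p'\in\TL_n$ both satisfy (JW1) and (JW2), write $p=1_n+a$ and $p'=1_n+a'$ with $\tau(a),\tau(a')<n$; then $pp'=p(1_n+a')=p$ by (JW2) for $p$ applied to $a'$, and symmetrically $pp'=(1_n+a)p'=p'$ by (JW2) for $p'$, so $p=p'$. (The same computation gives $p_n^2=p_n$.)

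For existence I would set $p_1=1_1$ and, given $p_{n-1}\in\TL_{n-1}$ satisfying the two axioms, define
\[
p_n\;=\;(p_{n-1}\sqcup 1_1)\;-\;\frac{[n-1]}{[n]}\,(p_{n-1}\sqcup 1_1)\,e_{n-1}\,(p_{n-1}\sqcup 1_1),
\]
where $x\sqcup 1_1$ adjoins a vertical strand on the right, $e_i\in\TL_n$ is the $i$-th Temperley--Lieb generator (cup-and-cap on strands $i,i+1$; satisfying $e_i^2=(q+q\inv)e_i$, $e_ie_{i\pm1}e_i=e_i$, $e_ie_j=e_je_i$ for $|i-j|\geq 2$), and $[k]=(q^k-q^{-k})/(q-q\inv)$. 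This is well-defined over $\C(q)$ since $[k]\neq 0$. Axiom (JW1) is then immediate: $p_{n-1}\sqcup 1_1=1_n+(a\sqcup 1_1)$ with $\tau(a\sqcup 1_1)=\tau(a)+1<n$, while the second summand factors through $e_{n-1}$ and so has through-degree $\leq n-2$; hence $p_n=1_n+(\text{through-degree}<n)$.

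For (JW2) I would first reduce to proving $p_ne_j=0$ for $1\leq j\leq n-1$: the span of $\{b:\tau(b)<n\}$ is spanned by diagrams of through-degree $<n$, each of which has an innermost pair of adjacent top endpoints joined by an arc and so equals $e_j\cdot(\text{a diagram})$ for some $j$, whence $p_nb=0$; the relation $ap_n=0$ then follows from the reflection anti-automorphism $\theta$, which fixes every $e_j$ and (by the uniqueness above applied one level down, together with the evident $\theta$-symmetry of the recursion) fixes $p_n$, so that $\theta(e_jp_n)=p_ne_j=0$ forces $e_jp_n=0$. To prove $p_ne_j=0$, write $\pi:=p_{n-1}\sqcup 1_1$: for $j\leq n-2$ one has $e_j=e_j'\sqcup 1_1$ with $e_j'\in\TL_{n-1}$ of through-degree $n-3<n-1$, so $\pi e_j=(p_{n-1}e_j')\sqcup 1_1=0$ by (JW2) for $p_{n-1}$, and both summands of $p_ne_j$ vanish; for $j=n-1$ one invokes the partial-trace identity $e_{n-1}\,\pi\,e_{n-1}=\tfrac{[n]}{[n-1]}\,e_{n-1}$, which gives $\pi e_{n-1}\pi e_{n-1}=\tfrac{[n]}{[n-1]}\pi e_{n-1}$, so the two terms of $p_ne_{n-1}=\pi e_{n-1}-\tfrac{[n-1]}{[n]}\pi e_{n-1}\pi e_{n-1}$ cancel exactly.

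The hardest part is the partial-trace identity $e_{n-1}(p_{n-1}\sqcup 1_1)e_{n-1}=\tfrac{[n]}{[n-1]}e_{n-1}$, itself proved by induction on $n$: one substitutes the recursion for $p_{n-1}$ inside $e_{n-1}(-)e_{n-1}$ and must keep the quantum-integer arithmetic exact, using $e_{n-1}^2=(q+q\inv)e_{n-1}$, $e_{n-1}e_{n-2}e_{n-1}=e_{n-1}$, the commuting relations, and identities such as $(q+q\inv)[n-1]-[n-2]=[n]$, so that the final cancellation holds on the nose. The only other point requiring (routine) care is the elementary lemma that a diagram of through-degree $<n$ always carries a top turnback and a bottom turnback, hence factors through some $e_j$ on the appropriate side.
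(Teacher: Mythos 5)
The paper does not prove this theorem---it is quoted as classical with references to Wenzl and Frenkel--Khovanov, and only the Frenkel--Khovanov form of the recursion (\ref{eq-JWrecursion}) is recorded---so your proposal stands on its own. Your uniqueness argument is correct, and the overall architecture of the existence proof (Wenzl's recursion, reduction of (JW2) to $p_ne_j=0$, the reflection anti-automorphism $\theta$ with $\theta(p_n)=p_n$ to transfer annihilation to the other side) is the standard classical route and is sound.

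However, the key lemma as you state it is false for $n\geq 4$. Writing $\pi=p_{n-1}\sqcup 1_1$, the correct partial-trace identity is
\[
e_{n-1}\,\pi\,e_{n-1}\;=\;\frac{[n]}{[n-1]}\,(p_{n-2}\sqcup 1_2)\,e_{n-1},
\]
not $\frac{[n]}{[n-1]}\,e_{n-1}$: closing off the last strand of $p_{n-1}$ yields $\frac{[n]}{[n-1]}p_{n-2}$, not a multiple of the identity. For $n=4$ the right-hand side is $\frac{[4]}{[3]}\bigl(e_3-\frac{1}{[2]}e_1e_3\bigr)$, and $e_1e_3\neq e_3$. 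The cases $n=2,3$ agree with your version only because $p_0$ and $p_1$ are identities, so the induction you describe on your statement would break at $n=4$. The corrected identity needs no separate induction: substitute Wenzl's recursion for $p_{n-1}$, use that $e_{n-1}$ commutes with $p_{n-2}\sqcup 1_2$, the relations $e_{n-1}^2=(q+q^{-1})e_{n-1}$ and $e_{n-1}e_{n-2}e_{n-1}=e_{n-1}$, idempotency of $p_{n-2}$, and $[2][n-1]-[n-2]=[n]$. Your application then goes through unchanged: multiplying on the left by $\pi$ and using the absorption $\pi\,(p_{n-2}\sqcup 1_2)=\pi$ (a consequence of (JW2) for $p_{n-1}$) still gives $\pi e_{n-1}\pi e_{n-1}=\frac{[n]}{[n-1]}\pi e_{n-1}$, hence $p_ne_{n-1}=0$. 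With that one correction the proof is complete.
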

We refer to axiom (JW2) by saying that $p_n$ \emph{kills turnbacks}.  Indeed, using the graphical notation in which we denote $a$ parallel strands by $\pic{aStrands}$ and $p_n:=\pic{nProjector}$, this axiom becomes equivalent to
\[
\wwpic{projectorWithTurnback1} =0 = \wwpic{projectorWithTurnback2} 
\]
for $0\leq i\leq n-2$.  Similarly, if $f\in \TL_n$ is such that $a\cdot f = 0$ (respectively $f\cdot a$) whenever $\tau(a)<n$, then we say $f$ kills turnbacks from above (respectively below).  An explicit description of $p_n$ is given by $p_1=1_1\in\TL_1$ together with the recursion
\begin{equation}\label{eq-JWrecursion}
p_n = \FKzero - \frac{[n-1]}{[n]}\FKone +\frac{[n-2]}{[n]}\FKtwo -\cdots \pm \frac{1}{[n]} \FKult
\end{equation}
where the white box denotes $p_{n-1}$ and $[k]=\frac{q^k-q^{-k}}{q-q\inv}$ is the quantum integer.  One can find this result in \cite{FK97} with a different sign convention.  

\subsection{The tangle categories}
\label{subsec-tangleCat}
In \cite{B-N05} Bar-Natan interprets the Temperley-Lieb diagrams as objects of a category in which the morphisms ensure that the Temperley-Lieb relations lift to isomorphisms.  The paper \cite{B-N05} contains an excellent exposition, and we refer the reader to \cite{H12a} for more details regarding our specific conventions.
\begin{definition}\label{def-cobCat}
For each integer $n\geq 0$, fix a standard set $B_n\subset \partial D^2$ of $2n$ points, and define a category $\Cob_n$ as follows:
\begin{itemize}
\item The objects of $\Cob_n$: symbols $q^{j} T$, where $T\subset D^2$ is a properly embedded 1-submanifold with boundary $\partial T = B_n$.
\item A morphism $f:q^iT\rightarrow q^j T'$ is a formal $\Z$-linear combination of cobordisms $T\rightarrow T'$ in $D^2\times [0,1]$, decorated with dots, regarded modulo (1) isotopy of the underlying surfaces (rel boundary), (2) dots are allowed to move freely about the components of the cobordism, and (3) the following local relations:
\vskip 4pt
	\begin{enumerate}
		\item $\bpic{sphere}\ =0$, $\bpic{sphere-dot}  = 1$, $\bpic{sphere-dot-dot} =0$, and $\bpic{sphere-dot-dot-dot} =0$\vskip6pt
		\item $\bpic{cylinder} =\bpic{1idemp} +\ \bpic{xidemp} $\vskip6pt
		\item $\bpic{sheetWithHandle}\  = 2\:\bpic{sheet-dot} $.\vskip6pt
	\end{enumerate}
Here, a cobordism $S:q^iT\rightarrow q^jT'$ is a properly embedded surface $S\in D^2\times I$ with boundary $\partial S = (T\times\{0\})\cup (T'\times \{1\})\cup (B_n\times [0,1])$.  The degree of $S:q^iT\rightarrow q^jT'$ is defined by
\[
\deg_q(S)=n+j-i-\chi(S) + 2(\#\text{ of dots}) 
\]
where $\chi(S)$ is the Euler characteristic of the surface $S$, and we allow only homogeneous morphisms of $\deg_q$ zero.
\end{itemize}
Composition of morphisms in $\Cob_n$ is induced by gluing of cobordisms, extended bilinearly to arbitrary morphisms.  Since Euler characteristic is additive under gluing, the composition of degree zero morphisms is again degree zero, so that $\Cob_n$ is well-defined.
\end{definition}

The categories $\Cob_n$ do not contain all direct sums, so we formally add them, obtaining a category whose objects are formal direct sums of objects of $\Cob_n$ and whose morphisms are matrices of morphisms of morphisms in $\Cob_n$ with the appropriate source and target.
\begin{definition}\label{def-TLcat}
Let $\bn{n}$ denote the category with objects the symbols $\bigoplus_{k=1}^r a_k$ where $a_i\in\Cob_n$ $(1\leq i\leq n)$.  Morphisms in $\bn{n}$ from $\bigoplus_{k=1}^r a_k\rightarrow \bigoplus_{l=1}^s b_l$ are matrices $f = (f_{ij})$ where $f_{ij}\in \Hom_{\Cob_n}(a_j, b_i)$.  Composition is given by matrix multiplication $(f\circ g)_{ij} = \sum_{k}f_{ik}\circ g_{kj}$.
\end{definition}

\begin{definition}\label{def-kom}
Let $\Kom(n):=\Kom(\bn{n})$ denote the category of potentially unbounded chain complexes over $\bn{n}$ with morphisms given by degree zero chain maps.  Similarly define $\Kom^+(n)$, $\Kom^-(n)$, $\Kom^b(n)\subset \Kom(n)$ to be the full subcategories of $\Kom(n)$ consisting of chain complexes which are bounded from below, respectively bounded from above, respectively bounded, in homological degree.  We denote homotopy equivalence of complexes by $\simeq$.
\end{definition}

We will always draw our diagrams in a rectangle, so that the set of $2n$ distinguished boundary points corresponds to $n$ standard points on the top $[0,1]\times \{1\}$,  respectively bottom $[0,1]\times \{0\}$ of $D^2 = [0,1]\times [0,1]$.  Various planar operations can be regarded as multilinear functors among the $\bn{n}$, via gluing tangles (resp. cobordisms) together in the obvious manner.
\begin{definition}\label{def-planarComp}
Let $\otimes: \bn{n}\times \bn{n}\rightarrow \bn{n}$ be the bilinear functor induced by vertical stacking, so that $a\otimes b$ is `$a$ on top of $b$.'  Let $\sqcup:\bn{n}\times \bn{m}\rightarrow \bn{n+m}$ be the bilinear functor induced by horizontal juxtaposition.  
Let $T:\bn{n}\rightarrow \bn{n-1}$ be the \emph{partial trace} functor:
\[
T(a) = 
\begin{minipage}{.45in}
\labellist
\small
\pinlabel $a$ at 38 40
\endlabellist
\begin{center}\includegraphics[scale=.3]{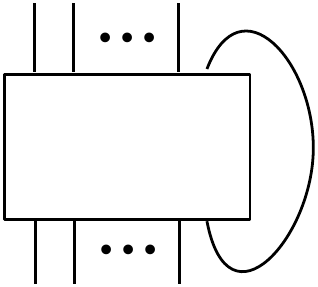}\end{center} 
\end{minipage}
\]
\end{definition}

\begin{proposition}\label{prop-planarCompExt}
Each of the (multi-linear) functors of Definition \ref{def-planarComp} has an extension to the relevant categories $\Kom^\pm(n)$ of semi-infinite chain complexes.
\end{proposition}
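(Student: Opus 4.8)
The plan is to observe that all three functors of Definition~\ref{def-planarComp} are assembled from \emph{additive} functors on the categories $\bn{\bullet}$, and that the only thing preventing a naive degreewise or totalized extension to unbounded complexes is the finiteness of the direct sums occurring in each homological degree --- an obstruction which disappears as soon as one restricts to complexes bounded on one side.

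First I would dispose of the partial trace $T\colon\bn{n}\to\bn{n-1}$, which is the trivial case. Since $T$ is a single additive functor applied separately in each homological degree, one simply sets $(TA)^k:=T(A^k)$ with differential $T(d_A^k)$; then $T(d^{k+1})\circ T(d^k)=T(d^{k+1}\circ d^k)=0$, so $TA$ is a complex, and $Tf:=(T(f^k))_k$ is a chain map whenever $f$ is, with composition and identities visibly preserved. No boundedness is needed, so $T$ extends to all of $\Kom(n)$, and in particular restricts to a functor $\Kom^{\pm}(n)\to\Kom^{\pm}(n-1)$ since applying $T$ degreewise does not enlarge the range of homological degrees.

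For the bifunctors $\otimes$ and $\sqcup$ I would use the standard totalization of a double complex. Given $A,B\in\Kom^{+}(n)$, with $A^i=0$ for $i<a_0$ and $B^j=0$ for $j<b_0$, form the double complex whose $(i,j)$-entry is $A^i\otimes B^j$ with the commuting differentials $d_A\otimes\Id$ and $\Id\otimes d_B$, and set
\[
(A\otimes B)^k:=\bigoplus_{i+j=k}A^i\otimes B^j,\qquad d\big|_{A^i\otimes B^j}:=d_A\otimes\Id+(-1)^{i}\,\Id\otimes d_B .
\]
The essential --- and only --- place at which boundedness enters is that, for each fixed $k$, only the finitely many indices with $a_0\le i\le k-b_0$ contribute, so $(A\otimes B)^k$ is a genuine finite object of $\bn{n}$; moreover it vanishes for $k<a_0+b_0$, so $A\otimes B\in\Kom^{+}(n)$. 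Reversing all inequalities gives the same statement for $A,B\in\Kom^{-}(n)$, and the identical discussion applies to $\sqcup$, landing in $\Kom^{\pm}(n+m)$. On morphisms one sets $(f\otimes g)\big|_{A^i\otimes B^j}:=f^i\otimes g^j$ (no Koszul sign is needed, as $f,g$ have homological degree $0$) and checks the routine double-complex identities: that this is a chain map, that composition is respected, and that identities go to identities. Restricting to complexes concentrated in homological degree $0$ recovers the original bifunctors, so these are honest extensions.

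I do not expect a genuine obstacle, only two bookkeeping points: (i) that $\otimes$ and $\sqcup$ are genuinely additive in each variable on $\bn{\bullet}$, so that $\bigl(\bigoplus_i a_i\bigr)\otimes b\cong\bigoplus_i(a_i\otimes b)$ and the entrywise totalization makes sense --- this is immediate from the $\Mat(-)$-style construction of $\bn{n}$ in Definition~\ref{def-TLcat}; and (ii) tracking the sign $(-1)^i$ carefully enough that the total differential squares to zero and the assignment is functorial. Finally I would record that these extensions are compatible with homotopies: a null-homotopy $h$ for $f$ yields a null-homotopy $h\otimes g$ (with the evident sign) for $f\otimes g$, and likewise $h\mapsto Th$ for $T$; hence $\otimes$, $\sqcup$, and $T$ descend to functors on the homotopy categories of $\Kom^{\pm}(n)$, which is the form in which they are used throughout the paper.
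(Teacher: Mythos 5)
Your proposal is correct and follows exactly the route the paper takes: the paper's proof simply invokes the standard fact that a multilinear functor between additive categories extends to semi-infinite complexes via the Koszul-sign totalization (citing \cite{H12a} for details), which is precisely the construction you spell out, including the key point that one-sided boundedness makes each homological degree a finite direct sum. Nothing is missing; you have merely written out the details the paper delegates to the reference.
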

\begin{proof}
It is an easy fact (see, for example, \cite{H12a}) that if $\mathscr{A}_i$, $\mathscr{B}$ are additive categories then any multilinear functor $\mathscr{A}_1\times\cdots \times \mathscr{A}_r\rightarrow \mathscr{B}$ extends naturally to a multilinear functor $\Kom^-(\mathscr{A})_1\times \cdots \times \Kom^-(\mathscr{A})_r\rightarrow \Kom^-(\mathscr{B})$ of categories of semi-infinite chain complexes and degree zero chain maps.  The precise formulae recall the definition of the tensor product of chain complexes of abelian groups via the usual Koszul sign rule on differentials.
\end{proof}

In this paper, whenever we write $f:A\rightarrow B$ or $f\in\Hom(A,B)$, we mean that $f$ is a chain map which is homogenous of homological and $q$-degree zero.  It is often convenient to assemble the maps of arbitrary degree (not necessarily compatible with the differentials) into a chain complex:
\begin{definition}\label{def-homg}
For complexes $A,B\in\Kom(n)$, let $\Homg_{\bn{n}}(A,B)$ denote the chain complex generated by bihomogeneous maps of arbitrary bidegree and differential given by the super-commutator $[d,f]=d_B\circ f - (-1)^{|f|}f\circ d_A$.  By an element of this hom complex we will always mean a bihomogeneous element, and we let $\deg(f) = (\deg_h(f), \deg_q(f))$ denote the bidegree.  We often write $\deg_h(f)=|f|$ and $\Homg = \Homg_{\bn{n}}$.
\end{definition}
The $\Homg$ complex is a bigraded abelian group
\[
\Homg^{i,j}(A,B)=\prod_{k\in\Z}\Hom_{\bn{n}}(q^jA_k, B_{k+i})
\]
with a differential of bidegree $(1,0)$.

The bidegree $(i,j)$ cycles (respectively, boundaries) of $\Homg^{i,j}(A,B)$ are precisely chain maps (respectively null-homotopic chain maps) $t^i q^jA\rightarrow B$, where $t$ and $q$ denote the functors $\Kom(n)\rightarrow \Kom(n)$ given by shifting upward in homological and $q$-degree, respectively.  Our convention for behavior of the differentials is $d_{tA} = -d_A$ and $d_{qA}=d_A$, appropriately interpreted. 
\begin{definition}
Let $\Ext^{i,j}(A,B)$ denote the $(i,j)$-th homology group of $\Homg(A,B)$, which is simply the group of chain maps $t^iq^jA\rightarrow B$ modulo chain homotopy.
\end{definition}

\subsection{Some mapping cone lemmas}
\label{subsec-cones}
It seems worthwhile to pause and recall some basic facts of mapping cones which will be useful in the sequel.

\begin{definition}\label{def-cone}
Suppose $A,B$ are chain complexes over an additive category, and $f:A\rightarrow B$ is a chain map (all arrows are assumed to have degree zero).  The \emph{mapping cone on $f$} is the chain complex $\Cone(f) = t\inv A\oplus B$ with differential
\[
d_{\Cone(f)} = \matrix{-d_A &\\ f& d_B}
\]
We will also write this as $\Cone(f) = (t\inv A \buildrel f\over \longrightarrow B)$, in anticipation for similar notation for convolutions (Definition \ref{def-convolution}).
\end{definition}

\begin{lemma}\label{lemma-mappingConeEquiv}
Let $A,B$ be chain complexes over an additive category.  A chain map $f:A\rightarrow B$ is a homotopy equivalence if and only if $\Cone(f)\simeq 0$.
\end{lemma}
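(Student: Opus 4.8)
The plan is to reduce everything to the standard observation that a chain complex $C$ over an additive category satisfies $C \simeq 0$ if and only if its identity morphism is null-homotopic, and then to read off the equivalence by writing a contracting homotopy of $\Cone(f)$ in block-matrix form relative to the decomposition $\Cone(f) = t^{-1}A \oplus B$.

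First I would record the reduction: if $C \simeq 0$ then $\mathrm{id}_C$ factors up to homotopy through the zero complex, hence $\mathrm{id}_C \simeq 0$; conversely, if $\mathrm{id}_C \simeq 0$ then the zero maps $C \to 0$ and $0 \to C$ are mutually inverse homotopy equivalences. Applying this to $C = \Cone(f)$, a degree $(-1,0)$ endomorphism is a matrix $h = \begin{bmatrix} a & g \\ c & e \end{bmatrix}$ in which, after undoing the shift $t^{-1}$, the block $g$ is exactly the data of a chain-degree-$0$ map $B \to A$, while $a$ and $e$ are degree $-1$ self-maps of $A$ and of $B$. Expanding the contraction equation $d_{\Cone(f)}\circ h + h\circ d_{\Cone(f)} = \mathrm{id}$ against the differential $\begin{bmatrix} -d_A & 0 \\ f & d_B \end{bmatrix}$ produces four equations, one per matrix entry.

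For the direction ``$\Cone(f) \simeq 0 \Rightarrow f$ is a homotopy equivalence'': the $(1,2)$ entry gives $d_A g = g\,d_B$, so $g \colon B \to A$ is a chain map; the $(1,1)$ entry rearranges to $gf - \mathrm{id}_A = \pm(d_A a + a\,d_A)$, so $gf \simeq \mathrm{id}_A$; and the $(2,2)$ entry rearranges to $fg - \mathrm{id}_B = \pm(d_B e + e\,d_B)$, so $fg \simeq \mathrm{id}_B$ (the $(2,1)$ entry plays no role). Hence $g$ is a homotopy inverse of $f$. For the converse, given a homotopy inverse $g$ with homotopies $a,e$ realizing $gf - \mathrm{id}_A = d_A a + a\,d_A$ and $fg - \mathrm{id}_B = d_B e + e\,d_B$, I would take $a, g, e$ as three of the four blocks and solve the leftover $(2,1)$ equation for $c$; a composite of the shape $c = \pm efa$ does the job, and checking it is a short manipulation using that $f$ is a chain map together with the two homotopy identities. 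Alternatively, the converse can be obtained formula-free: $\Cone(\mathrm{id}_A) \simeq 0$ via an explicit contraction, and the strictly commuting square with vertical maps $\mathrm{id}_A$ and $f$ induces a chain map $\Cone(\mathrm{id}_A) \to \Cone(f)$ which is a homotopy equivalence because both vertical maps are.

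The only step that is not pure bookkeeping is keeping the signs straight — those from the shift convention $d_{t^{-1}A} = -d_A$ and from the super-commutator defining the differential on the morphism complex — and, in the converse direction, pinning down the correction block $c$. I expect that sign chase, rather than any conceptual difficulty, to be the main thing requiring care.
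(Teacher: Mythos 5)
Your forward direction is correct and complete: writing a contracting homotopy of $\Cone(f)=t\inv A\oplus B$ as $h=\smMatrix{a & g\\ c & e}$ and expanding $d_{\Cone(f)}h+hd_{\Cone(f)}=\Id$ does show that $g$ is a chain map and a two-sided homotopy inverse of $f$, exactly as you say (the paper itself offers no proof here, only a citation, so there is nothing to compare against). The gap is in the converse. Given a homotopy inverse $g$ and \emph{arbitrary} homotopies $a,e$ with $gf-\Id_A=d_Aa+ad_A$ and $\Id_B-fg=d_Be+ed_B$, the leftover $(2,1)$ equation reads $d_Bc-cd_A=-(fa+ef)$: the degree $-1$ cycle $fa+ef\in\Homg(A,B)$ must be a \emph{boundary}. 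For bad choices of $a$ and $e$ this fails, so no universal formula for $c$ — in particular not $c=\pm efa$ — can work. Concretely, take $A=B$ to be the two-term complex $\Z\buildrel 0\over\to\Z$ with zero differential (over free abelian groups, say) and $f=g=\Id_A$; the two homotopy conditions then only require $a$ and $e$ to be cycles, so you may take $e=0$ and $a\colon A^1\to A^0$ the identity of $\Z$. Now $fa+ef=a$ is a nonzero cycle in $\Homg^{-1}(A,A)$ while $\Homg^{-2}(A,A)=0$, so the $(2,1)$ equation has no solution at all. (Directly: $D(-efa)=-(fa+ef)+fgfa+efgf$, and the error term $fgfa+efgf$ does not vanish in general.) Your ``formula-free'' fallback hides the same hole: the assertion that the induced map $\Cone(\Id_A)\to\Cone(f)$ is a homotopy equivalence because both vertical maps are is exactly Lemma \ref{lemma-coneInvariance}, whose proof in this paper uses both directions of Lemma \ref{lemma-mappingConeEquiv}, including the one you are trying to establish (``$\Cone(\phi)$ and $\Cone(\psi)$ are contractible since $\phi$ and $\psi$ are equivalences'').

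The statement is of course true; what is missing is the freedom to \emph{adjust} the homotopy before solving for $c$. One clean repair: replacing $e$ by $e+\zeta$ for any degree $-1$ cycle $\zeta\in\Endg(B)$ does not disturb the $(2,2)$ equation and changes the obstruction to $fa+ef+\zeta f$. Precomposition with $f$ is a homotopy equivalence $\Homg(B,B)\to\Homg(A,B)$ (a homotopy inverse is precomposition with $g$, using only that $fg$ and $gf$ are homotopic to identities), hence is surjective on homology in degree $-1$; so choose a cycle $\zeta$ with $\zeta f\equiv-(fa+ef)$ modulo boundaries, after which the $(2,1)$ equation becomes solvable for $c$. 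This adjustment — not the sign chase — is the actual content of the converse direction.
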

\begin{proof}
This is a standard property of mapping cones.  For an easily accessible proof, see \cite{Rose12}.
\end{proof}

\begin{lemma}\label{lemma-coneInvariance}
Suppose $A,A',B,B'$ are chain complexes over an additive category, and $f:A\rightarrow B$ is a chain map.  Let $\phi:A\rightarrow A'$ and $\psi:B\rightarrow B'$ be homotopy equivalences.  Then
\[
\Cone(f)\simeq \Cone(\psi\circ f\circ \phi\inv)
\]
where $\phi\inv$ denotes a homotopy inverse for $\phi$.
\end{lemma}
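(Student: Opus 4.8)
The plan is to reduce the statement to two special cases — modifying $f$ by an equivalence on the target, and by an equivalence on the source — and in each case exhibit an \emph{explicit} chain map between the two cones which is manifestly part of a morphism of mapping-cone sequences. First I would record a preliminary: homotopic chain maps have isomorphic mapping cones. Concretely, if $f_0,f_1\colon A\to B$ satisfy $f_0-f_1=d_B h+h d_A$, then the triangular matrix $\smMatrix{\Id_A & 0\\ h & \Id_B}$ is an invertible chain map $\Cone(f_0)\to\Cone(f_1)$; this is a one-line computation with $d_{\Cone(f)}=\smMatrix{-d_A&0\\f&d_B}$. I would also need the elementary fact (Lemma \ref{lemma-mappingConeEquiv} will be used implicitly) that a morphism of mapping-cone sequences which is a homotopy equivalence on both ``outer'' terms is a homotopy equivalence on the cones.

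Now the two cases. Given an equivalence $\psi\colon B\to B'$, the matrix $\smMatrix{\Id_A & 0\\ 0 & \psi}$ is an honest chain map $\Cone(f)\to\Cone(\psi\circ f)$ — commuting it past the differentials uses only that $\psi$ is a chain map — and it sits in the evident morphism of cone sequences ($\Id_A$ on $A$, $\psi$ on $B$), hence is an equivalence. Dually, for an equivalence $\phi\colon A\to A'$ and any $g\colon A'\to B'$, the matrix $\smMatrix{\phi & 0\\ 0 & \Id_{B'}}$ is an honest chain map $\Cone(g\circ\phi)\to\Cone(g)$, again part of a morphism of cone sequences ($\phi$ on the source, $\Id$ on the target), hence an equivalence.

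Assembling: apply the second case with $g=\psi\circ f\circ\phi^{-1}\colon A'\to B'$ to get $\Cone(\psi f\phi^{-1})\simeq\Cone\!\big((\psi f\phi^{-1})\circ\phi\big)$. Since $(\psi f\phi^{-1})\circ\phi=\psi f(\phi^{-1}\phi)\simeq\psi f$, the preliminary observation gives $\Cone\!\big((\psi f\phi^{-1})\circ\phi\big)\cong\Cone(\psi f)$, and finally the first case gives $\Cone(\psi f)\simeq\Cone(f)$. Chaining these yields $\Cone(\psi\circ f\circ\phi^{-1})\simeq\Cone(f)$, as claimed.

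The only genuinely non-formal ingredient is the assertion that a morphism of mapping-cone sequences which is an equivalence on the two outer terms is an equivalence on the cones. If one invokes the triangulated structure on the homotopy category $\HoKom$ of complexes over an additive category, this is one of the standard axioms and the proof is finished. To keep things self-contained one instead constructs a homotopy inverse by hand — for the postcomposition case, $\smMatrix{\Id & 0\\ \pm hf & \bar\psi}$, where $\bar\psi$ is a homotopy inverse of $\psi$ and $h$ a chosen homotopy $\bar\psi\psi\simeq\Id_B$ — and verifies the two homotopies $\smMatrix{0&0\\0&\pm h}$, $\smMatrix{0&0\\0&\pm k}$. This is routine, but it is the main obstacle: it carries the Koszul-sign bookkeeping, and to make the homotopy $g g'\simeq\Id$ close up one may first need to replace the chosen homotopies $h,k$ by compatible ones (so that $\psi h=k\psi$), i.e. the familiar strictification of a homotopy equivalence. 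I would present the triangulated-category argument as the main proof and relegate the hands-on version to a remark (or simply cite it, as is done for Lemma \ref{lemma-mappingConeEquiv}).
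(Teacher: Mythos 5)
Your proposal is correct, but it takes a different route from the paper. The paper writes down a single explicit chain map $\Phi=\smMatrix{\phi & 0\\ -\psi f h & \psi}:\Cone(f)\rightarrow\Cone(\psi f\phi\inv)$ (with $h$ a homotopy realizing $\Id_A-\phi\inv\phi$), and then proves $\Phi$ is an equivalence by showing $\Cone(\Phi)\simeq 0$: one reassociates $\Cone(\Phi)$ into $\Cone\big(\Cone(\phi)\rightarrow\Cone(\psi)\big)$, notes both inner cones are contractible, and applies Gaussian elimination twice. You instead factor the statement into three elementary steps (postcomposition by $\psi$, homotopy-invariance of the cone, precomposition by $\phi$), each realized by a diagonal or unitriangular matrix that is trivially a chain map. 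The gain is modularity: each of your maps is a one-line verification, and the preliminary about homotopic maps having isomorphic cones is a genuinely useful standalone fact. The cost is that all the real content gets concentrated in the one assertion you flag yourself — that a map of cone sequences which is an equivalence on the outer terms is an equivalence on the cones — and your hands-on justification of it runs into exactly the sign/strictification bookkeeping (choosing homotopies with $\psi h=k\psi$) that the paper's argument avoids. Note that the cleanest self-contained proof of your ``non-formal ingredient'' is precisely the paper's reassociation trick: the cone of $\diag(\Id_A,\psi)$ (or of $\diag(\phi,\Id_{B'})$) reassociates into a cone on a map between a contractible complex and $\Cone(\psi)$ (resp.\ $\Cone(\phi)$), and Gaussian elimination finishes it. So if you want to stay within the paper's toolkit rather than invoke the triangulated structure on the homotopy category, you may as well apply that trick directly to the composite map, which is what the paper does.
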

\begin{proof}
The differentials on $\Cone(f)$ and $\Cone(\psi f \phi\inv)$ are the matrices
\[
d_{\Cone(f)}=\matrix{-d_A & 0\\ f & d_B} \ \ \ \ \ \ d_{\Cone(\psi f \phi\inv )}=\matrix{-d_{A'} &0 \\ \psi f \phi\inv & d_{B'}} 
\]
Define a chain map $\Phi:\Cone(f)\rightarrow \Cone(\psi f\phi\inv)$ by the matrix
\[
\Phi = \matrix{\phi & 0 \\ -\psi f h & \psi}
\]
where $d_Ah+h d_A=\Id_{A}-\phi\inv \phi$.  Then one can check that $\Cone(\Phi)$ is contractible.  One way to see this is that $\Cone(\Phi)$ can be reassociated into a mapping cone $\Cone\Big(\Cone(\phi)\rightarrow \Cone(\psi)\Big)$.  Now, $\Cone(\phi)$ and $\Cone(\psi)$ are contractible since $\phi$ and $\psi$ are equivalences.  Thus, $\Cone(\Psi)\simeq 0$ by two applications of Gaussian elimination (Proposition \ref{prop-gauss}).  This implies that $\Phi$ is a homotopy equivalence.
\end{proof}

\subsection{Turnback killing and Cooper-Krushkal projectors}
\label{subsec-CKproj}
The turnback killing property plays an important role for Jones-Wenzl projectors, and the same is true of their categorified counterparts.
\begin{definition}\label{def-throughDeg}
Define the \emph{through-degree} of a complex $A\in \Kom(n)$ to be $\tau(A) = \max_a\{\tau(a)\}$, where $a$ ranges over all diagrams appearing as direct summands of chain groups of $A$.
\end{definition}
Note that if $a\in\bn{n}$ has $\tau(n)<n$, then $a$ is a direct sum of objects $e_{i_1}\otimes \cdots\otimes e_{i_r}$, where the $e_i$ are the usual Temperley-Lieb generators.  A chain complex $A$ satisfies $\tau(A)<n$ if and only if each chain group satisfies $\tau(A^i)<n$.
\begin{definition}\label{def-turnbackDeath}
We say that a complex $C\in\Kom^-(n)$ \emph{kills turnbacks from below} (resp.~\emph{above}) if $C\otimes N\simeq 0$ (resp.~$N\otimes C\simeq 0$) for each complex $N\in\Kom^-(n)$ with $\tau(N)<n$.  We say $C$ kills turnbacks if it kills turnbacks from above and below.
\end{definition}
\begin{proposition}\label{prop-turnback}
A complex $C\in \Kom^-(n)$ kills turnbacks from below (resp.~above) if and only if $C\otimes e_i\simeq 0$ (resp.~$e_i\otimes C\simeq 0$) for all Temperley-Lieb generators  $e_i := 1_{n-i}\sqcup e \sqcup 1_{i-1}$, where $e=\pic{turnback}$. 
\end{proposition}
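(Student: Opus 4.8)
The plan is to prove the two directions separately, the forward implication being essentially trivial and the reverse being the substance. For the forward direction: if $C$ kills turnbacks from below, then by definition $C\otimes N\simeq 0$ for every $N\in\Kom^-(n)$ with $\tau(N)<n$. Since each Temperley-Lieb generator $e_i$ is a single diagram with $\tau(e_i)=n-1<n$, viewing $e_i$ as a complex concentrated in homological degree $0$ gives $C\otimes e_i\simeq 0$ immediately. The reverse direction is the claim that killing each single generator $e_i$ suffices to kill every complex of low through-degree.

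For the reverse direction, I would argue in two stages. \emph{Stage one: objects.} Suppose $C\otimes e_i\simeq 0$ for all $i$. Let $a\in\bn{n}$ be a diagram with $\tau(a)<n$. As noted in the text (following Definition \ref{def-throughDeg}), any such $a$ is, up to isomorphism in $\bn{n}$, a direct summand of some $e_{i_1}\otimes\cdots\otimes e_{i_r}$ — more precisely $a$ factors through an object of through-degree $n-1$, hence is a retract of a tensor product beginning with some generator $e_{i_1}$. One then writes $a$ (up to a grading shift and summands) as $e_{i_1}\otimes b$ for a diagram $b$, so that $C\otimes a$ is a summand of $C\otimes e_{i_1}\otimes b\simeq (C\otimes e_{i_1})\otimes b\simeq 0\otimes b\simeq 0$; a retract of a contractible complex is contractible. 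The key input here is the associativity and additivity of $\otimes$ (from Definition \ref{def-planarComp} / Proposition \ref{prop-planarCompExt}), together with the fact that $-\otimes b$ is an additive functor and hence preserves homotopy equivalences and sends $0$ to $0$. \emph{Stage two: complexes.} Now let $N\in\Kom^-(n)$ with $\tau(N)<n$, so each chain group $N^j$ is a finite direct sum of diagrams of through-degree $<n$ (with grading shifts). By stage one, $C\otimes N^j\simeq 0$ for each $j$. The complex $C\otimes N$ is built from the layers $C\otimes N^j$; since $N$ is bounded above and $C$ is bounded above, $C\otimes N$ is a convolution (in the sense of the appendix) of the contractible complexes $C\otimes N^j$. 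A convolution of contractible complexes is contractible — this is the standard ``if all the pieces of a filtered complex are acyclic, so is the total complex'' argument, carried out homotopy-theoretically via repeated Gaussian elimination (Proposition \ref{prop-gauss}) or directly by assembling a contracting homotopy layer by layer, using boundedness to guarantee convergence. Hence $C\otimes N\simeq 0$, and $C$ kills turnbacks from below. The ``above'' case is identical with the roles of left and right tensor factors swapped.

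I expect the main obstacle to be stage two: making rigorous the claim that a one-sided-bounded convolution of contractible complexes is contractible. One must be careful that the contracting homotopies on the individual $C\otimes N^j$ can be glued into a genuine contracting homotopy of $C\otimes N$; the standard device is to filter $C\otimes N$ by the homological degree in the $N$ factor and induct, using that the associated graded pieces are contractible and that boundedness of $N$ from above ensures the filtration is exhaustive and the relevant sums are finite in each total degree. This is exactly the kind of bookkeeping the appendix (\S\ref{sec-homAlg}) is designed to handle, so in the write-up I would simply cite the relevant convolution lemma there rather than reproving it. A secondary, more cosmetic point is pinning down the precise sense in which a diagram $a$ of through-degree $<n$ is a retract of $e_{i_1}\otimes b$: this follows from the structure of $\bn{n}$ (the Temperley-Lieb relations lift to isomorphisms, and a crossingless matching with a cup on top or bottom literally equals such a tensor product), so it is routine but worth stating cleanly.
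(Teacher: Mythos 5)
Your proposal is correct and follows essentially the same route as the paper: the forward direction is immediate from $\tau(e_i)<n$, and the reverse direction first reduces to objects (any diagram of through-degree $<n$ decomposes into tensor products of the $e_i$) and then handles complexes by viewing $C\otimes N$ as a convolution of the contractible layers $C\otimes N_j$ and invoking Theorem \ref{convSdrthm}. No gaps.
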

\begin{proof}
If $C$ kills turnbacks from below, then $C\otimes e_i\simeq 0$, since $\tau(e_i)<n$ for all $i=1,\ldots,n-1$.

Conversely, suppose $C\otimes e_i\simeq 0$ for each $i=1,\ldots,n-1$. If $a\in\bn{n}$ is any diagram with $\tau(a)<n$, then $a$ is isomorphic to a direct sum of objects $e_{i_1}\otimes\cdots \otimes e_{i_r}$, since the $e_i$ generate the non-identity Temperley-Lieb diagrams.  It follows that $C\otimes a\simeq 0$ whenever $a\in\bn{n}$ satisfies $\tau(a)<n$.  If $N\in \Kom^-(n)$ has $\tau(N)<n$, then each chain group satisfies $\tau(N_i)<n$.  Then $C\otimes A = \Tot(\cdots \rightarrow C\otimes N_i\rightarrow C\otimes N_{i+1}\rightarrow \cdots)$ is contractible by Theorem \ref{convSdrthm}, so $C$ kills turnbacks from below, by definition.
\end{proof}

We now discuss the categorified Jones-Wenzl projectors, following Cooper-Krushkal \cite{CK12a}.  Our exposition differs from that in \cite{CK12a} in a few ways.  Firstly, we consider complexes which are bounded from above in homological degree, rather than below.  The reason for this choice is so that the homology of the uknot is naturally a unital algebra, rather than a counital coalgebra.  The two conventions are related by the contravariant duality functor $(-)^\vee:\Kom^\pm(n)\rightarrow \Kom^{\mp}(n)$ which reverses all degrees and flips cobordisms upside-down.  Secondly, we will work primarily with a definition which is slightly more general than the projectors considered in \cite{CK12a}.  We take the liberty of naming our objects after Cooper-Krushkal, despite these slight differences.
\begin{definition}\label{def-CKproj}
A \emph{Cooper-Krushkal projector} is a pair $(P_n,\iota)$ where $P_n\in\Kom^-(n)$ is a complex, and $\iota:1_n\rightarrow P_n$ is a chain map satisfying
\begin{itemize}
\item[(CK1)] $\Cone(\iota)$ is homotopy equivalent to a complex with through degree $<n$.  
\item[(CK2)] $P_n$ kills turnbacks.
\end{itemize}
The map $\iota$ is called the \emph{unit} of the projector $P_n$.
\end{definition}
We often drop the unit $\iota:1_n\rightarrow P_n$ from the notation, and simply call $P_n$ a Cooper-Krushkal projector.

Combining axioms (CK1) and (CK2), we see that
\begin{equation}\label{eq-PnConeIota}
P_n\otimes \Cone(1_n\rightarrow P_n)\simeq 0.
\end{equation}
This equivalence gives us a good notion of categorical idempotents.  Firsly, note that on the level of Euler characteristic, equivalence (\ref{eq-PnConeIota}) becomes $p_n(p_n-1)=0$ which characterizes the usual notion of idempotents.  Moreover (\ref{eq-PnConeIota}) implies $P_n\otimes P_n \simeq P_n$, but is stronger in the sense that the equivalence $P_n\rightarrow P_n\otimes P_n$ is induced from a map $1_n\rightarrow P_n$.  One important consequence is that the homotopy category of complexes such that $P_n\otimes C\simeq C$ is triangulated (see Remark \ref{remark-catIdemp}).

Let us relate our projectors to those considered in \cite{CK12a}.  Note that there is a projection $\Cone(\iota)\rightarrow t\inv 1_n$, and the mapping cone satisfies
\[
P_n \simeq \Cone\Big(t\Cone(\iota)\rightarrow 1_n)\Big)
\]
By axiom (CK1), there is a complex $N\simeq t\Cone(\iota)$ with $\tau(N)<n$.  By lemma \ref{lemma-coneInvariance} we thus have
\begin{equation}\label{eq-CKequivStrongCK}
P_n\simeq \Cone(N\rightarrow 1_n)
\end{equation}
Up to reversing the homological grading convention, the projectors originally considered by Cooper-Krushkal in \cite{CK12a} are all of this more restricted form on the right-hand side above.  They were called \emph{universal projectors} in \cite{CK12a}.  We will call them \emph{strong Cooper-Krushkal projectors}:
\begin{definition}\label{def-strongCKproj}
A \emph{strong Cooper-Krushkal projector} is a chain complex $P_n\in\Kom^-(n)$ such that
\begin{enumerate}
\item $P_n = \Cone(N\buildrel f\over\longrightarrow 1_n)$ for some chain complex $N\in\Kom^-(n)$ with $\tau(N)<n$.
\item $P_n$ kills turnbacks.
\end{enumerate}
\end{definition}
\begin{remark}
Definition \ref{def-strongCKproj} is not preserved under homotopy equivalences, in the sense that there exist complexes $C\in\Kom^-(n)$ which are homotopy equivalent to strong Cooper-Krushkal projectors, but which are \emph{not} strong Cooper-Krushkal projectors.  This is our main reason for defining Cooper-Krushkal projectors as we have.  Nonetheless, the existence of strong Cooper-Krushkal projectors is often useful for technical reasons, as in Proposition \ref{prop-projAbsorb}.
\end{remark}
The following is clear:
\begin{proposition}\label{prop-strongCKproj}
If $P_n\in\Kom^-(n)$ is a strong Cooper-Krushkal projector then $(P_n,\iota)$ is a Cooper-Krushkal projector, where $\iota:1_n\rightarrow P_n$ is the inclusion $1_n\rightarrow \Cone(N\rightarrow 1_n)$.\qed
\end{proposition}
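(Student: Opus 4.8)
The plan is to check axioms (CK1) and (CK2) of Definition \ref{def-CKproj} directly from the data of a strong Cooper-Krushkal projector. Axiom (CK2) is immediate, since it is verbatim condition (2) of Definition \ref{def-strongCKproj}. So the entire content is (CK1): that $\Cone(\iota)$ is homotopy equivalent to a complex of through degree $<n$, where $\iota\colon 1_n\to P_n=\Cone(N\xrightarrow{f}1_n)$ is the canonical inclusion.

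To do this I would unwind the cone construction of Definition \ref{def-cone}. Write $P_n=\Cone(f)=t\inv N\oplus 1_n$ with differential $\smMatrix{-d_N & 0\\ f & d_{1_n}}$; then $\iota=\smMatrix{0\\ 1}$ is the inclusion of the second summand, and $\Cone(\iota)=t\inv(1_n)\oplus P_n=t\inv(1_n)\oplus t\inv N\oplus 1_n$ carries the differential $\smMatrix{-d_{1_n} & 0 & 0\\ 0 & -d_N & 0\\ 1 & f & d_{1_n}}$ in that ordering of summands. The entry $1\colon t\inv(1_n)\to 1_n$ is an isomorphism of objects of $\bn{n}$ — precisely because the $1_n$-summand that the cone construction appends is identified with the target of $\iota$ — so Gaussian elimination (Proposition \ref{prop-gauss}) cancels $t\inv(1_n)$ against $1_n$. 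The only differential component out of $t\inv(1_n)$ landing in the surviving summand $t\inv N$ is zero, so no correction term is introduced, and $\Cone(\iota)\simeq t\inv N$. (Conceptually this is just the rotation of the distinguished triangle determined by $f$.) Since the through degree of a complex depends only on which diagrams occur as summands of its chain groups, and not on their homological placement, $\tau(\Cone(\iota))=\tau(t\inv N)=\tau(N)<n$, which is exactly (CK1). Combined with (CK2) and the fact that $\iota$ is by construction a degree-zero chain map, this shows $(P_n,\iota)$ is a Cooper-Krushkal projector.

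There is no real conceptual obstacle here — this is why the statement is labeled ``clear.'' The only point demanding a little care is bookkeeping: matching the Koszul signs of Definition \ref{def-cone} in the $3\times 3$ block differential above, and confirming that the hypotheses of Gaussian elimination are literally satisfied (the canceled entry is an honest isomorphism in $\bn{n}$, and the cancellation leaves the through-degree estimate $\tau(N)<n$ undisturbed).
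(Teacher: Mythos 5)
Your proof is correct. The paper states this proposition without proof (``The following is clear''), and your argument --- (CK2) is verbatim, and (CK1) follows from the standard triangle rotation $\Cone(1_n\rightarrow \Cone(N\rightarrow 1_n))\simeq t\inv N$ via Gaussian elimination, with the correction term vanishing because no differential component maps back into $t\inv N$ --- is exactly the justification the author leaves implicit.
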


As we have a slightly different set of axioms than in \cite{CK12a}, we will develop the theory from our point of view. 
\begin{proposition}\label{prop-tensorProj}
Suppose $(A,\iota_A)$ and $(B,\iota_B)$ are Cooper-Krushkal projectors in $\Kom(n)$.  Then $(A\otimes B, 1_A\otimes 1_B)$ is a Cooper-Krushkal projector.
\end{proposition}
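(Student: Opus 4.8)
The plan is to verify the two defining axioms (CK1) and (CK2) for the pair $(A\otimes B,\iota_{A\otimes B})$, where the unit $\iota_{A\otimes B}\colon 1_n\to A\otimes B$ is the map $\iota_A\otimes\iota_B$ under the canonical identification $1_n=1_n\otimes 1_n$. Note first that $A,B\in\Kom^-(n)$ implies $A\otimes B\in\Kom^-(n)$ (Proposition \ref{prop-planarCompExt}), and that the planar composition functors $A\otimes(-)$ and $(-)\otimes B$ are additive, strictly associative, and preserve homotopy equivalences; I will use these facts freely, along with the canonical isomorphisms $1_n\otimes C\cong C\cong C\otimes 1_n$.

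For axiom (CK2) I would invoke Proposition \ref{prop-turnback}: it suffices to check $(A\otimes B)\otimes e_i\simeq 0$ and $e_i\otimes(A\otimes B)\simeq 0$ for every Temperley--Lieb generator $e_i$. By associativity, $(A\otimes B)\otimes e_i\cong A\otimes(B\otimes e_i)$, and $B\otimes e_i\simeq 0$ since $B$ kills turnbacks; applying the functor $A\otimes(-)$ yields $(A\otimes B)\otimes e_i\simeq 0$. The identity $e_i\otimes(A\otimes B)\cong(e_i\otimes A)\otimes B\simeq 0$ is symmetric, using that $A$ kills turnbacks. Hence $A\otimes B$ kills turnbacks.

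For axiom (CK1) the key point is that $1_A\otimes\iota_B\colon A\to A\otimes B$ is itself a homotopy equivalence. Indeed, applying $A\otimes(-)$ to the mapping cone of $\iota_B$ gives $\Cone(1_A\otimes\iota_B)\simeq A\otimes\Cone(\iota_B)$; by axiom (CK1) for $B$ we may replace $\Cone(\iota_B)$ by a complex $N_B$ with $\tau(N_B)<n$, and then $A\otimes N_B\simeq 0$ because $A$ kills turnbacks from below (Definition \ref{def-turnbackDeath}). Thus $\Cone(1_A\otimes\iota_B)\simeq 0$, so $1_A\otimes\iota_B$ is a homotopy equivalence by Lemma \ref{lemma-mappingConeEquiv}. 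Now factor the unit as the composite $1_n\xrightarrow{\iota_A}A\xrightarrow{1_A\otimes\iota_B}A\otimes B$, whose composite equals $\iota_A\otimes\iota_B$. Applying Lemma \ref{lemma-coneInvariance} with $\phi=\Id_{1_n}$ and $\psi=1_A\otimes\iota_B$ gives $\Cone(\iota_A\otimes\iota_B)\simeq\Cone(\iota_A)$, which by axiom (CK1) for $A$ is homotopy equivalent to a complex of through-degree $<n$. This is precisely (CK1) for $A\otimes B$, and the proof is complete.

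The only substantive step — and the reason the statement is not purely formal — is showing that $1_A\otimes\iota_B$ is an equivalence: it is exactly here that the turnback-killing hypothesis on $A$ is used, to absorb the low-through-degree discrepancy $N_B$ between $B$ and $1_n$. Everything else is routine manipulation of mapping cones together with the associativity of $\otimes$ and the canonical unit isomorphisms.
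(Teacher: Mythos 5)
Your proof is correct, and for axiom (CK1) it takes a genuinely different route from the paper's. You first show that $\Id_A\otimes\iota_B\colon A\to A\otimes B$ is itself a homotopy equivalence, using the turnback-killing property of $A$ to contract $A\otimes\Cone(\iota_B)$, and then transport $\Cone(\iota_A)$ across this equivalence via Lemma \ref{lemma-coneInvariance}. The paper instead assembles $\Cone(\iota_A\otimes\iota_B)$ as a two-row convolution whose rows are $A\otimes\Cone(\iota_B)$ and $\Cone(\iota_A)$, contracts the connecting $-\Id_A$ by Gaussian elimination, and observes that each row is equivalent to a complex of through-degree $<n$ merely because such complexes form a tensor ideal. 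The trade-off is this: your argument proves more along the way --- namely $A\otimes B\simeq A$ via $\Id_A\otimes\iota_B$ and $\Cone(\iota_A\otimes\iota_B)\simeq\Cone(\iota_A)$, which is essentially a special case of projector absorbing (Proposition \ref{prop-turnbackTFAE}) obtained before that proposition is stated --- but it invokes axiom (CK2) for $A$ in order to verify (CK1) for the product; the paper's argument derives (CK1) for $A\otimes B$ from (CK1) for the factors alone, keeping the two axioms logically separate. Both arguments are complete; your identification of the equivalence $\Id_A\otimes\iota_B$ as the one substantive step is accurate.
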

\begin{proof}
Clearly $A\otimes B$ kills turnbacks, since $A$ and $B$ do.  Consider the following chain complex:
\[
C = \left(\begin{diagram}[size=3em] t\inv A & \rTo^{\Id_A\otimes \iota_B} & A\otimes B\\
& \rdTo^{-\Id_A} & \\
t\inv 1_n & \rTo^{\iota_A} & A  \end{diagram}\right)
\]
Contracting the isomorphism (Gaussian elimination, Proposition \ref{prop-gauss}) , we see that $C\simeq \Cone(\iota_A\otimes \iota_B)$.  On the other hand, the rows are $A\otimes \Cone(\iota_B)$ and $\Cone(\iota_A)$, each of which is homotopy equivalent to a complex with through-degree $<n$ by axiom (CK1) of definition \ref{def-CKproj}.  It follows that $\Cone(\iota_A\otimes \iota_B)$ is equivalent to a complex with through-degree $<n$, by Lemma \ref{lemma-coneInvariance}.  Thus $(A\otimes B,\iota_A\otimes\iota_B) $ is a Cooper-Krushkal projector.
\end{proof}

\begin{proposition}\label{prop-turnbackTFAE}
Let $C\in \Kom^-(n)$ be arbitrary, and let $(P_n,\iota)$ be a Cooper-Krushkal projector.  The following are equivalent:
\begin{enumerate}
\item $C$ kills turnbacks from below.
\item $C\otimes \Cone(\iota)\simeq 0$.
\item $C\otimes P_n\simeq C$.
\end{enumerate}
Similarly, the following are equivalent:
\begin{itemize}
\item[(4)] $C$ kills turnbacks from above.
\item[(5)] $\Cone(\iota)\otimes C \simeq 0$.
\item[(6)] $P_n\otimes C\simeq C$.
\end{itemize}
\end{proposition}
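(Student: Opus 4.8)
The plan is to prove the cyclic equivalences $(1)\Leftrightarrow(2)\Leftrightarrow(3)$ and then deduce the "above" version by an analogous argument (or by applying the duality functor $(-)^\vee$). All three statements concern tensoring $C$ with things built from a fixed Cooper-Krushkal projector $(P_n,\iota)$, so the key inputs are Proposition \ref{prop-turnback} (turnback killing is detected on the generators $e_i$), axiom (CK1) (so that $\Cone(\iota)$ is homotopy equivalent to a complex $N$ with $\tau(N)<n$), axiom (CK2), and Lemma \ref{lemma-coneInvariance}. I would set up the short exact triangle $t^{-1}1_n \to t^{-1}P_n \to \Cone(\iota)\to 1_n$, i.e. use the tautological identification coming from $\Cone(\iota) = (t^{-1}1_n \buildrel \iota \over\to P_n)$.

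First I would show $(2)\Leftrightarrow(3)$. Since $\Cone(\iota) = (t^{-1}1_n\to P_n)$, tensoring on the left with $C$ and using that $\otimes$ is exact on mapping cones gives $C\otimes\Cone(\iota) \simeq \Cone\big(C\otimes(t^{-1}1_n)\to C\otimes P_n\big) = \Cone(t^{-1}C \to C\otimes P_n)$, where the map is $\Id_C\otimes\iota$. By Lemma \ref{lemma-mappingConeEquiv}, this cone is contractible if and only if $\Id_C\otimes\iota: C = C\otimes 1_n \to C\otimes P_n$ is a homotopy equivalence, which is exactly statement $(3)$ (with the equivalence realized by that map). So $(2)$ and $(3)$ are equivalent essentially by definition of the cone together with Lemma \ref{lemma-mappingConeEquiv}.

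Next I would show $(1)\Leftrightarrow(2)$. For $(1)\Rightarrow(2)$: by (CK1) there is $N\simeq t\Cone(\iota)$ with $\tau(N)<n$; equivalently $\Cone(\iota)\simeq t^{-1}N$ with $\tau(t^{-1}N)<n$. If $C$ kills turnbacks from below, then by Definition \ref{def-turnbackDeath} (using $N\in\Kom^-(n)$ with $\tau(N)<n$) we get $C\otimes N\simeq 0$, hence $C\otimes\Cone(\iota)\simeq C\otimes t^{-1}N \simeq 0$, giving $(2)$. For $(2)\Rightarrow(1)$: suppose $C\otimes\Cone(\iota)\simeq 0$; I want $C\otimes e_i\simeq 0$ for every Temperley-Lieb generator $e_i$, which suffices by Proposition \ref{prop-turnback}. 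The idea is that $e_i\otimes P_n\simeq 0$ — this is a known consequence of (CK2) (the projector kills turnbacks, so $e_i\otimes P_n\simeq 0$), which one can also read off from (CK2) directly. Then tensoring the triangle $t^{-1}1_n\to P_n\to \Cone(\iota)$ on the right with $e_i$ and on the left with $C$ and using $C\otimes\Cone(\iota)\simeq 0$ together with $e_i\otimes P_n\simeq 0$ forces $C\otimes e_i \otimes (\text{something contractible})$... more carefully: from $\Cone(t^{-1}1_n\buildrel\iota\over\to P_n) = \Cone(\iota)$ we get, after tensoring with $C$ on the left and $e_i$ on the right and using that $C\otimes\Cone(\iota)\otimes e_i\simeq 0$, that the map $C\otimes e_i = C\otimes t^{-1}1_n\otimes e_i \to C\otimes P_n\otimes e_i$ is a homotopy equivalence; but $P_n\otimes e_i\simeq 0$ hence $C\otimes P_n\otimes e_i\simeq 0$, so $C\otimes e_i\simeq 0$, as desired. (One must be slightly careful about left versus right placement of $e_i$; since $e_i$ is not central, "kills turnbacks" in (CK2) means both $P_n\otimes e_i\simeq 0$ and $e_i\otimes P_n\simeq 0$, and we use the one on the appropriate side.)

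Finally, the statements $(4)\Leftrightarrow(5)\Leftrightarrow(6)$ follow by the mirror-image argument, tensoring on the other side throughout; alternatively, applying the contravariant duality functor $(-)^\vee$ of \S\ref{subsec-CKproj} interchanges "above" and "below" and sends a Cooper-Krushkal projector to one, reducing the second group of equivalences to the first. I expect the main obstacle to be the careful bookkeeping in $(2)\Rightarrow(1)$: making sure that the exactness of $\otimes$ on cones is applied on the correct side, that "$P_n$ kills turnbacks" is invoked in the form $P_n\otimes e_i\simeq 0$ (rather than the other side), and that the homotopy equivalence produced by Lemma \ref{lemma-mappingConeEquiv} is compatible with these tensorings — this is where Lemma \ref{lemma-coneInvariance} will be needed to replace complexes by homotopy equivalent ones without disturbing the cone structure.
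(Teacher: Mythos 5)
Your proposal is correct and follows essentially the same route as the paper: $(1)\Rightarrow(2)$ via (CK1), $(2)\Leftrightarrow(3)$ via the identification $C\otimes\Cone(\iota)\cong\Cone(\Id_C\otimes\iota)$ and Lemma \ref{lemma-mappingConeEquiv}, and the second triple by symmetry. Your separate direct argument for $(2)\Rightarrow(1)$ is just the composite of the paper's implications $(2)\Rightarrow(3)\Rightarrow(1)$ (the paper closes the cycle by noting that $C\simeq C\otimes P_n$ kills turnbacks because $P_n$ does), so it is redundant but harmless.
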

\begin{proof}
(1)$\Rightarrow$(2). Assume that (1) holds.  By axiom (CK2) $\Cone(\iota)$ is equivalent to a complex with $\tau<n$, hence $C\otimes \Cone(\iota)\simeq 0$, which is (2).

(2)$\Rightarrow$(3). Assume that (2) holds.  Note that $\Cone(\Id_C\otimes \iota)$ is isomorphic to the contractible complex $C\otimes \Cone(\iota)$.   It is a standard property of mapping cones that a chain map $f$ is a homotopy equivalence if and only if $\Cone(f)\simeq 0$.  Thus $\Id_C\otimes \iota$ defines a homotopy equivalence $C\simeq C\otimes P_n$, which is (3).

(3)$\Rightarrow$(1). Obviously, if $C\simeq C\otimes P_n$, then $C$ kills turnbacks from below.

A similar argument shows that (4), (5), and (6) are equivalent.
\end{proof}
We refer to the equivalence of (1) and (3) in the previous proposition as the \emph{projector absorbing} property.  As an immediate consequence we have:
\begin{corollary}\label{cor-CKprojAreIdemp}
Cooper-Krushkal projectors are idempotent: $P_n\otimes P_n\simeq P_n$.
\end{corollary}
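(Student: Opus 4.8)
The plan is to derive the corollary directly from the projector absorbing property established in Proposition~\ref{prop-turnbackTFAE}. The key observation is that a Cooper-Krushkal projector $P_n$ kills turnbacks by axiom (CK2), which in particular means $P_n$ kills turnbacks from below. This is exactly condition (1) of Proposition~\ref{prop-turnbackTFAE} with $C = P_n$.

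First I would note that since $(P_n,\iota)$ is a Cooper-Krushkal projector, axiom (CK2) asserts that $P_n$ kills turnbacks, hence a fortiori kills turnbacks from below. Next I would apply the equivalence of conditions (1) and (3) in Proposition~\ref{prop-turnbackTFAE}, taking $C := P_n$: condition (1) holds for $C = P_n$, so condition (3) holds as well, which reads $P_n \otimes P_n \simeq P_n$. That is precisely the assertion of the corollary.

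There is essentially no obstacle here — the corollary is an immediate specialization of the already-proven projector absorbing property, and the only thing to check is that $P_n$ itself satisfies the hypothesis ``kills turnbacks from below,'' which is built into its definition. One could alternatively argue directly from equation~(\ref{eq-PnConeIota}): the equivalence $P_n \otimes \Cone(1_n \to P_n) \simeq 0$ means $\Cone(\Id_{P_n} \otimes \iota)$ is contractible, so $\Id_{P_n} \otimes \iota$ is a homotopy equivalence $P_n \simeq P_n \otimes P_n$ by Lemma~\ref{lemma-mappingConeEquiv}. Either route is a one-line deduction from results already in hand.
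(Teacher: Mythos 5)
Your proof is correct and matches the paper's argument exactly: the paper's proof of this corollary is the one-liner ``Take $C = P_n$ in Proposition \ref{prop-turnbackTFAE},'' which is precisely your specialization of the projector absorbing property using axiom (CK2). Your alternative route via equation (\ref{eq-PnConeIota}) is just the (2)$\Rightarrow$(3) step of that proposition made explicit, so it is the same argument unfolded.
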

\begin{proof}
Take $C = P_n$ in Proposition \ref{prop-turnbackTFAE}.
\end{proof}
\begin{remark}\label{remark-catIdemp}
We may think of the functor $P_n\otimes (-)$ as a categorified projection operator.   Denote by $P_n\otimes \Kom^-(n)$ the image of this functor, that is, the full subcategory of $\Kom^-(n)$ consisting of complexes which are fixed by $P_n\otimes(-)$ up to homotopy equivalence.  Proposition \ref{prop-turnbackTFAE} implies that $C$ is an object of $P_n\otimes \Kom^-(n)$ if and only if $C$ annihilates $\Cone(\iota)$.  This latter condition is obviously closed under taking mapping cones, so the homotopy category of $P_n\otimes \Kom^-(n)$ is triangulated.  This is a desirable property of categorified idempotents.
\end{remark}
As another application of Proposition \ref{prop-turnbackTFAE}, we have:

\begin{corollary}\label{cor-CKprojAreUnique}
Cooper-Krushkal projectors are unique up to homotopy equivalence.
\end{corollary}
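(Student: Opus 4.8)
The plan is to read off uniqueness directly from the projector-absorbing property of Proposition \ref{prop-turnbackTFAE}. Let $(P_n,\iota)$ and $(P_n',\iota')$ be two Cooper-Krushkal projectors in $\Kom^-(n)$; the goal is to exhibit a homotopy equivalence $P_n\simeq P_n'$.

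First I would use that $P_n'$ kills turnbacks, in particular from below. Taking $C=P_n'$ in the equivalence (1)$\Leftrightarrow$(3) of Proposition \ref{prop-turnbackTFAE}, applied to the projector $P_n$, gives $P_n'\otimes P_n\simeq P_n'$. Symmetrically, $P_n$ kills turnbacks, in particular from above, so taking $C=P_n$ in the equivalence (4)$\Leftrightarrow$(6) of Proposition \ref{prop-turnbackTFAE}, applied to the projector $P_n'$, gives $P_n'\otimes P_n\simeq P_n$. Composing these two equivalences yields $P_n\simeq P_n'\otimes P_n\simeq P_n'$, which is the assertion.

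There is no genuine obstacle here: the argument is a two-line invocation of an already-established proposition, and the only points requiring a moment's care are (i) keeping straight which side of the tensor product each turnback-killing hypothesis controls, and (ii) the elementary fact that $(-)\otimes(-)$ on $\Kom^-(n)$ sends homotopy equivalences to homotopy equivalences, so that the two equivalences above may legitimately be composed. If one wanted the sharper statement that the resulting equivalence intertwines the units $\iota$ and $\iota'$, one would instead observe, as in the proof of (2)$\Rightarrow$(3) in Proposition \ref{prop-turnbackTFAE}, that $\Id_{P_n'}\otimes\iota$ and $\iota'\otimes\Id_{P_n}$ are themselves the relevant homotopy equivalences and track their induced maps on units; but this refinement is not needed for the statement as phrased.
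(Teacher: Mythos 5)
Your argument is exactly the paper's: two applications of the projector-absorbing property from Proposition \ref{prop-turnbackTFAE}, giving $P_n'\otimes P_n\simeq P_n'$ and $P_n'\otimes P_n\simeq P_n$, and then composing. The extra care you take about which side each turnback-killing hypothesis controls is correct and matches the intended reading of the paper's one-line proof.
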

\begin{proof}
If $(P_n,\iota)$ and $(P_n',\iota')$ are two Cooper-Krushkal projectors then $P_n'\otimes P_n\simeq P_n'$ and $P_n'\otimes P_n\simeq P_n$ by two applications of projector absorbing.
\end{proof}
\begin{remark}
In \S \ref{subsec-canMaps} we prove that any two Cooper-Krushkal projectors are, in fact,  \emph{canonically equivalent}.
\end{remark}

\begin{proposition}\label{prop-projCommutingProp}
Let $(P_n,\iota)$ be a Cooper-Krushkal projector.  We have $P_n\otimes A\simeq A\otimes P_n$ for all complexes $A\in \Kom^-(n)$.
\end{proposition}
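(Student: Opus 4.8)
The plan is to sandwich $A$ between two copies of $P_n$: I will show that $P_n\otimes A$ absorbs a copy of $P_n$ on its right and that $A\otimes P_n$ absorbs a copy of $P_n$ on its left, so that both $P_n\otimes A$ and $A\otimes P_n$ are homotopy equivalent to the common object $P_n\otimes A\otimes P_n$, hence to each other.

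First I would check that $P_n\otimes A$ kills turnbacks from below. By Proposition \ref{prop-turnback} it is enough to verify $(P_n\otimes A)\otimes e_i\simeq 0$ for each Temperley--Lieb generator $e_i$. Using associativity of $\otimes$, rewrite this as $P_n\otimes(A\otimes e_i)$. Since through-degree is non-increasing under planar composition, $\tau(A\otimes e_i)\le\tau(e_i)<n$, so $A\otimes e_i\in\Kom^-(n)$ has through-degree $<n$; as $P_n$ kills turnbacks by axiom (CK2), the complex $P_n\otimes(A\otimes e_i)$ is contractible. Therefore $P_n\otimes A$ kills turnbacks from below, and the projector-absorbing property (the equivalence of (1) and (3) in Proposition \ref{prop-turnbackTFAE}, applied to $C=P_n\otimes A$) yields $(P_n\otimes A)\otimes P_n\simeq P_n\otimes A$. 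The mirror argument shows that $A\otimes P_n$ kills turnbacks from above (check $e_i\otimes(A\otimes P_n)=(e_i\otimes A)\otimes P_n\simeq 0$, using $\tau(e_i\otimes A)<n$ and that $P_n$ kills turnbacks from above), and the equivalence of (4) and (6) in Proposition \ref{prop-turnbackTFAE} gives $P_n\otimes(A\otimes P_n)\simeq A\otimes P_n$. Combining these with associativity,
\[
P_n\otimes A\ \simeq\ P_n\otimes A\otimes P_n\ \simeq\ A\otimes P_n .
\]

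This proof is essentially formal once the turnback machinery is in place, so there is no serious obstacle; the only point requiring attention is bookkeeping: one must match the \emph{side} on which the auxiliary $P_n$ is tensored with the correct one-sided turnback-killing hypothesis (``from below'' for the right copy, ``from above'' for the left copy), and one must confirm that tensoring with a through-degree $<n$ complex on either side keeps the through-degree below $n$, which is immediate from the factorization description of $\tau$.
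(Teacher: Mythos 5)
Your proof is correct and follows essentially the same route as the paper's: both arguments sandwich $A$ as $P_n\otimes A\simeq P_n\otimes A\otimes P_n\simeq A\otimes P_n$, using the tensor-ideal property of through-degree $<n$ together with projector absorbing (Proposition \ref{prop-turnbackTFAE}). The only cosmetic difference is that you verify turnback-killing via the generators $e_i$ (Proposition \ref{prop-turnback}) where the paper checks $P_n\otimes A\otimes\Cone(\iota)\simeq 0$ directly; these are interchangeable by Proposition \ref{prop-turnbackTFAE}.
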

\begin{proof}
Note that the set of complexes $C$ with $\tau(C)<n$ forms a tensor ideal in $\Kom^-(n)$.  That is, if $\tau(C)<n$, then $\tau(A\otimes C)<n$ and $\tau(C\otimes A)<n$ for all $A\in\Kom^-(n)$.

Let $(P_n,\iota)$ be a Cooper-Krushkal projector, and let $A\in \Kom^-(n)$ be arbitrary.  From the Cooper-Krushkal axioms, and the above remarks, $A\otimes \Cone(\iota)$ is equivalent to a complex with through degree $<n$, hence $P_n\otimes A\otimes \Cone(\iota)\simeq 0$ since $P_n$ kills turnbacks.  Proposition \ref{prop-turnbackTFAE} now implies that $P_n\otimes A \simeq P_n\otimes A\otimes P_n$.  An entirely symmetric argument establishes that this latter complex is homotopy equivalent to $A\otimes P_n$, as well.  This completes the proof.
\end{proof}

The following simplifies the task of showing that a complex kills turnbacks.  We remark that its proof assumes the existence of Cooper-Krushkal projectors.

\begin{corollary}\label{cor-turnbackKillingSymmetry}
Let $C\in\Kom^{\pm}(n)$ be arbitrary.  Then $C$ kills turnbacks from above if and only if $C$ kills turnbacks from below.
\end{corollary}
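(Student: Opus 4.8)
The plan is to combine the projector-absorbing property (Proposition \ref{prop-turnbackTFAE}) with the commutativity $P_n\otimes A\simeq A\otimes P_n$ (Proposition \ref{prop-projCommutingProp}), first reducing the bounded-below case to the bounded-above one by duality. Throughout I fix a Cooper-Krushkal projector $(P_n,\iota)$, whose existence the proof takes for granted, as noted above.

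\textbf{The case $C\in\Kom^-(n)$.} Suppose $C$ kills turnbacks from below. By Proposition \ref{prop-turnbackTFAE} this is equivalent to $C\simeq C\otimes P_n$. Applying Proposition \ref{prop-projCommutingProp} to $C$ gives $C\simeq C\otimes P_n\simeq P_n\otimes C$, and by the second half of Proposition \ref{prop-turnbackTFAE} the equivalence $P_n\otimes C\simeq C$ says exactly that $C$ kills turnbacks from above. Running the same chain of equivalences in the other direction (equivalently, exchanging $\otimes$-left and $\otimes$-right throughout) gives the converse, so the two turnback-killing conditions agree for complexes bounded above.

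\textbf{The case $C\in\Kom^+(n)$.} Here I would pass to the contravariant duality functor $(-)^\vee:\Kom^+(n)\to\Kom^-(n)$ that flips cobordisms upside-down and reverses all degrees. Three bookkeeping facts are needed: it reverses the order of vertical stacking, $(A\otimes B)^\vee\cong B^\vee\otimes A^\vee$; it fixes each turnback $e_i$ up to isomorphism and, more generally, preserves the through-degree of every complex, so that $N\mapsto N^\vee$ restricts to a bijection on complexes with $\tau(N)<n$; and it is an equivalence onto its image, involutive up to natural isomorphism. Dualizing the defining equivalences $N\otimes C\simeq 0$ turns them into $C^\vee\otimes N^\vee\simeq 0$, whence $C$ kills turnbacks from above if and only if $C^\vee$ kills turnbacks from below, and symmetrically $C$ kills turnbacks from below if and only if $C^\vee$ kills turnbacks from above. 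Since $C^\vee\in\Kom^-(n)$, the previous case shows the two conditions on $C^\vee$ coincide, and threading the four equivalences together yields the corollary for $C$.

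\textbf{Where the difficulty lies.} The argument is essentially formal once Propositions \ref{prop-turnbackTFAE} and \ref{prop-projCommutingProp} are available; the only point requiring care is the $\Kom^+$ reduction. One cannot argue directly on $\Kom^+(n)$, because the commutativity statement — and indeed the tensor product $C\otimes P_n$ itself — needs $C$ bounded above (tensoring a bounded-below complex with the bounded-above complex $P_n$ need not converge degreewise). So the real content is verifying that $(-)^\vee$ faithfully interchanges ``above'' and ``below'' and preserves the tensor ideal of complexes of through-degree $<n$, both of which are immediate from its description as an upside-down flip.
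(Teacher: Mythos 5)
Your proof is correct and follows the paper's own argument: the bounded-above case is handled exactly as in the paper, by combining projector absorbing (Proposition \ref{prop-turnbackTFAE}) with $C\otimes P_n\simeq P_n\otimes C$ (Proposition \ref{prop-projCommutingProp}). For the bounded-below case the paper simply says it ``follows by symmetry''; your duality argument via $(-)^\vee$ is a faithful and correctly checked expansion of that remark, not a different route.
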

\begin{proof}
It suffices to prove the proposition in the case where $C\in \Kom^-(n)$.  The case $C\in \Kom^+(n)$ will follow by symmetry.  If $C$ kills turnbacks from below, then
\[
C\simeq C\otimes P_n \simeq P_n\otimes C,
\]
which kills turnbacks from above since $P_n$ does.  The first equivalence is projector absorbing, and the second is from Proposition \ref{prop-projCommutingProp}.  A similar argument shows that if $C$ kills turnbacks from above, then $C$ kills turnbacks from below as well.
\end{proof}

Finally, for technical reasons, we will want to strengthen the notion of projector absorbing.
\begin{proposition}[Projector absorbing]\label{prop-projAbsorb}
Suppose $A\in\Kom^-(n)$ kills turnbacks, and $(P_n,\iota)$ is a strong Cooper-Krushkal projector.  Then there are deformation retracts (Definition \ref{def-sdr}) $A\otimes P_n \rightarrow A$ and $P_n\otimes A \rightarrow A$.
\end{proposition}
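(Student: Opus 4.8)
The plan is to reduce to an explicit ``contraction'' form of Gaussian elimination for mapping cones, and then write the deformation retract down by hand.

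First I would use that $(P_n,\iota)$ is a \emph{strong} Cooper--Krushkal projector: by Definition \ref{def-strongCKproj} we may write $P_n = \Cone(N \buildrel f\over\longrightarrow 1_n)$ with $N\in\Kom^-(n)$ and $\tau(N)<n$. Tensoring on either side is additive and commutes with the cone construction on the nose (see Definition \ref{def-cone}), and $A\otimes 1_n = A = 1_n\otimes A$, so we get honest identities of complexes
\[
A\otimes P_n = \Cone\big(\Id_A\otimes f : A\otimes N \to A\big), \qquad P_n\otimes A = \Cone\big(f\otimes\Id_A : N\otimes A \to A\big).
\]
Next, since $A$ kills turnbacks (from below and from above) and $N\in\Kom^-(n)$ has $\tau(N)<n$, Definition \ref{def-turnbackDeath} gives $A\otimes N\simeq 0$ and $N\otimes A\simeq 0$. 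A complex homotopy equivalent to $0$ is contractible, so each of $M:=A\otimes N$ and $M:=N\otimes A$ carries a contracting homotopy $h$ with $d_M h + h d_M = \Id_M$, and replacing $h$ by $h d_M h$ we may further assume $h^2=0$. Thus the proposition reduces to the local fact that for \emph{any} contractible complex $M$ (with chosen contraction $h$, $h^2=0$) and \emph{any} chain map $g:M\to A$, the cone $\Cone(g)$ deformation retracts onto $A$ via the canonical inclusion.

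To prove the local fact, write $\Cone(g) = t\inv M\oplus A$ with differential $\smMatrix{-d_M & 0\\ g & d_A}$ and set
\[
\iota_A:A\to\Cone(g),\ a\mapsto(0,a);\qquad \pi:\Cone(g)\to A,\ (x,a)\mapsto a+ghx;\qquad H=\smMatrix{-h & 0\\ 0 & 0}.
\]
A direct check, using only $d_A g = g d_M$ together with $d_M h + h d_M = \Id_M$ and $h^2=0$, shows that $\iota_A$ and $\pi$ are chain maps, that $\pi\iota_A=\Id_A$, that $d_{\Cone(g)}H + H d_{\Cone(g)} = \Id_{\Cone(g)} - \iota_A\pi$, and that the side conditions $\pi H = 0$, $H\iota_A=0$, $H^2=0$ hold; this is exactly the data of a strong deformation retract (Definition \ref{def-sdr}). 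Specializing to $M=A\otimes N$ and $M=N\otimes A$, and noting that under the identifications above $\iota_A$ becomes $\Id_A\otimes\iota$ (resp. $\iota\otimes\Id_A$), yields the two required deformation retracts.

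The work here is entirely routine; the only points demanding care are the bookkeeping ones — arranging $h^2=0$ for the chosen contraction, and fixing the sign conventions in the cone so that $H$ produces $\Id-\iota_A\pi$ rather than its negative. One could alternatively try to cancel the contractible summand $A\otimes N$ in a single stroke using Gaussian elimination (Proposition \ref{prop-gauss}), but that would require $A\otimes N$ to split as a direct sum of two-term acyclic complexes over $\bn{n}$, which need not happen; building $H$ explicitly sidesteps this issue, so I would argue directly.
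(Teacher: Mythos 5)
Your argument is correct and is essentially the paper's own proof: the paper likewise writes $P_n\cong N\oplus 1_n$ with lower-triangular differential, tensors with $A$, observes that $A\otimes N\simeq 0$ because $A$ kills turnbacks and $\tau(N)<n$, and then cancels the contractible summand. Your closing caveat, however, misreads Proposition \ref{prop-gauss}: as stated in the appendix, Gaussian elimination cancels \emph{any} contractible summand $B$ equipped with a nulhomotopy $h$ satisfying $h^2=0$ (arranged via $h\mapsto hdh$, exactly as you do) — no splitting into two-term acyclic pieces is required — so your explicit data $(\pi,\iota_A,H)$ is precisely that proposition specialized to the case where the component of the differential from $A$ into $A\otimes N$ vanishes.
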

\begin{proof}
We will only prove the first statement.  The second is similar.  From the definitions, one sees that a strong Cooper-Krushkal projector $P_n$ can be written as
\[
P_n \cong N\oplus 1_n \ \ \ \  \ \text{with differential} \ \ \ \ \ 
\matrix{d_N & 0 \\ \d & 0}
\]
for some $\d\in\Homg^{1,0}(N,1_n)$, where $\tau(N)<n$, and where $\iota$ is the inclusion of the $1_n$ subcomplex.   Thus
\[
A\otimes P_n \cong (A\otimes N)\oplus A \ \ \ \  \ \text{with differential} \ \ \ \ \ 
\matrix{d_{A\otimes N} & 0 \\ \Id_A\otimes \d & d_A}
\]
If $A$ kills turnbacks, then $A\otimes N\simeq 0$, and we can apply Gaussian elimination (Proposition \ref{prop-gauss}), obtaining a deformation retract $A\otimes P_n\rightarrow A$ as in the statement.  
\end{proof}

\subsection{Canonical maps}
\label{subsec-canMaps}
We can use the existence of duals in $\bn{n}$ to compute some Hom complexes, and show that any two Cooper-Krushkal projectors $P_n, P_n'$ are canonically equivalent.  Most of the results in this section are explained in \cite{H12a} in more detail.
\begin{theorem}\label{thm-partialTraceHom}
For $M\in \Kom(\bn{n-1})$, $N\in\Kom(\bn{n})$, we have natural isomorphisms
\begin{enumerate}
\item $\Homg_{\bn{n}}(M\sqcup 1, N) \cong \Homg_{\bn{n-1}}(M, qT(N))$
\item $\Homg_{\bn{n}}(N,M\sqcup 1) \cong \Homg_{\bn{n-1}}(T(N), qM)$
\end{enumerate}
where $T(a) = \begin{minipage}{.45in}
\labellist
\small
\pinlabel $a$ at 38 40
\endlabellist
\begin{center}\includegraphics[scale=.3]{emptybox_partialTrace}\end{center} 
\end{minipage}$ denotes the partial trace functor.
\end{theorem}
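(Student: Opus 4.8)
The plan is to establish this as a $\Z$-linear duality/adjunction coming from the fact that $\bn{n}$ has duals for the "cap" and "cup" objects, so that adding a strand on the right, $(-)\sqcup 1$, and the partial trace $T$ are biadjoint up to a shift. First I would recall the local structure: in $\Cob_n$ there is a "zig-zag" (snake) identity relating the cup $\cup$ and cap $\cap$ objects, namely that capping off a strand and then re-introducing it is the identity on the relevant tangle, with a shift of $q$ coming from the degree convention in Definition \ref{def-cobCat} (the circle-removal relation $D \sqcup U = (q+q\inv)D$ is the decategorified shadow, but at the categorified level one gets the honest unit and counit cobordisms — the cup-cap saddle and the cap-cup saddle — satisfying the two triangle identities up to the stated grading shift). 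Concretely, for a single diagram $a \in \Cob_{n-1}$ and $b \in \Cob_n$ there is a natural bijection between cobordisms $a \sqcup 1 \to b$ and cobordisms $a \to qT(b)$, implemented by bending the rightmost sheet around using the zig-zag cobordisms; one checks this is inverse to the analogous bending in the other direction using the snake relations, and that it is degree-preserving by a Euler-characteristic bookkeeping against the formula $\deg_q(S) = n + j - i - \chi(S) + 2(\#\text{dots})$ — the extra strand contributes $+1$ to the "$n$" and the cap/cup carry the compensating $\chi$. This gives statement (1) at the level of the additive categories $\bn{n}$ and $\bn{n-1}$ on objects and morphisms, and statement (2) is entirely symmetric (bend the strand the other way, or apply the duality functor $(-)^\vee$).

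Next I would promote this to chain complexes. Since $(-)\sqcup 1$ and $T$ are additive (in fact bilinear) functors, they extend termwise to $\Kom$, and by Proposition \ref{prop-planarCompExt} (and the remark there about the Koszul sign rule) the natural isomorphism on $\bn{n}$ assembles into an isomorphism of the $\Homg$ complexes: on the chain group $\Homg^{i,j}_{\bn{n}}(M\sqcup 1, N) = \prod_k \Hom_{\bn{n}}(q^j(M\sqcup 1)_k, N_{k+i})$ apply the object-level isomorphism in each slot to land in $\prod_k \Hom_{\bn{n-1}}(q^{j}M_k, qT(N)_{k+i}) = \Homg^{i,j}_{\bn{n-1}}(M, qT(N))$, using that $(M\sqcup 1)_k = M_k \sqcup 1$ and $T(N)_{k+i} = T(N_{k+i})$. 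The only thing to verify is compatibility with the differential $[d,f] = d\circ f - (-1)^{|f|} f\circ d$: this follows because the object-level isomorphism is natural, i.e. it intertwines post-composition with $d_N$ (resp. $T(d_N)$) and pre-composition with $d_{M\sqcup 1}$ (resp. $d_M$), and naturality in particular respects the Koszul signs since those are dictated by homological degree alone and the functors $(-)\sqcup 1$, $T$ preserve homological degree.

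The main obstacle, and the step deserving the most care, is the grading/sign bookkeeping: pinning down exactly where the single shift by $q$ in statements (1) and (2) comes from (it should be traced to the "$+n$" term in $\deg_q(S)$ increasing by one when we pass from $\bn{n-1}$ to $\bn{n}$, balanced against the $\chi$ of the adjoining cap or cup), and confirming that the bending isomorphism is genuinely natural rather than natural-up-to-sign — i.e. that no stray $(-1)$ is introduced when sliding a saddle past the differential. I expect this to be routine but fiddly, and I would handle it by checking the two snake (zig-zag) identities explicitly in $\Cob_n$ once, citing \cite{H12a} for the detailed conventions, and then invoking naturality to get everything else for free. Statement (2) then follows either by the same argument with the strand bent in the opposite direction, or formally by applying the contravariant duality functor $(-)^\vee$ of \S\ref{subsec-CKproj}, which exchanges $\Homg(A,B)$ with $\Homg(B^\vee, A^\vee)$, commutes with $(-)\sqcup 1$, and sends $T$ to $T$.
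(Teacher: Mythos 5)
Your proposal is correct and is essentially the paper's own argument: the paper defines the two mutually inverse maps exactly as the mates under the biadjunction of $(-)\sqcup 1$ and $T$, with unit given by the cup cobordism $\Id_M\sqcup S$ and counit given by the saddle, and leaves the snake-identity and degree checks to the reader (citing \cite{H12a}), just as you do.
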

\begin{proof}[Idea of proof of (1)]
Let $\phi:\Homg_{\bn{n}}(M\sqcup 1, N) \rightarrow \Homg_{\bn{n}}(M, qT(N))$ be the map sending $f\in \Homg_{\bn{n}}(M\sqcup 1, N) $ to the composition
\begin{diagram}
\begin{minipage}{.3in}
\labellist
\small
\pinlabel $M$ at 35 40
\endlabellist
\begin{center}\includegraphics[scale=.3]{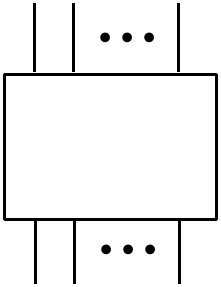}\end{center} 
\end{minipage}
& \rTo &
\begin{minipage}{.5in}
\labellist
\small
\pinlabel $M$ at 35 40
\endlabellist
\begin{center}\includegraphics[scale=.3]{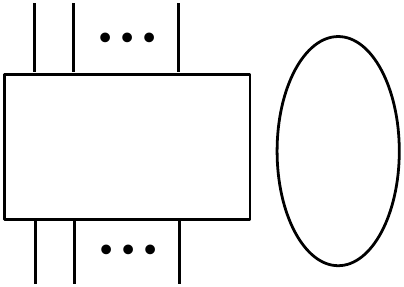}\end{center} 
\end{minipage}
& \rTo^{\begin{minipage}{.45in}
\labellist
\small
\pinlabel $f$ at 35 40
\endlabellist
\begin{center}\includegraphics[scale=.3]{emptybox_partialTrace}\end{center} 
\end{minipage}} &
\begin{minipage}{.5in}
\labellist
\small
\pinlabel $N$ at 30 40
\endlabellist
\begin{center}\includegraphics[scale=.3]{emptybox_partialTrace}\end{center} 
\end{minipage}
\end{diagram}
where the first map is $\Id_M\sqcup S$ where $S=\pic{cup}$ is the cup cobordism from $\emptyset$ to the unknot.  Let $\psi:\Homg_{\bn{n}}(M, qT(N))\rightarrow \Homg_{\bn{n}}(M\sqcup 1, N)$ to be the map which sends $g$ to the composition
\begin{diagram}
\begin{minipage}{.4in}
\labellist
\small
\pinlabel $M$ at 35 40
\endlabellist
\begin{center}\includegraphics[scale=.3]{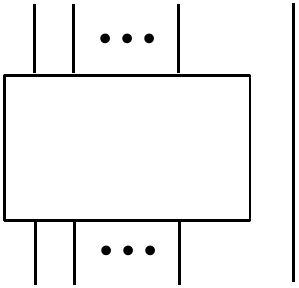}\end{center} 
\end{minipage}
& \rTo^{\begin{minipage}{.45in}
\labellist
\small
\pinlabel $g$ at 35 40
\endlabellist
\begin{center}\includegraphics[scale=.3]{emptybox-strand}\end{center} 
\end{minipage}} &
\begin{minipage}{.5in}
\labellist
\small
\pinlabel $N$ at 35 40
\endlabellist
\begin{center}\includegraphics[scale=.3]{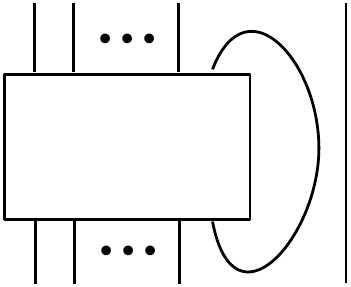}\end{center} 
\end{minipage}
& \rTo &
\begin{minipage}{.5in}
\labellist
\small
\pinlabel $N$ at 30 40
\endlabellist
\begin{center}\includegraphics[scale=.3]{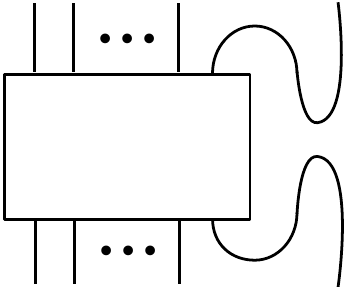}\end{center} 
\end{minipage}
\end{diagram}
where the first map is $g\sqcup 1$ and the second map is given by a saddle cobordism.  We leave it to the reader to check that $\psi$ and $\phi$ are inverse chain maps.  More details can be found in \cite{H12a}.  The proof of (2) is similar.
\end{proof}
\begin{corollary}\label{cor-homRotation}
Let $(-)^\vee:\bn{n}\rightarrow \bn{n}$ denote the contravariant functor which reverses $q$-degree shifts and reflects diagrams about a horizontal axis.  Then for each $a\in\bn{n}$ we have an isomorphism $\Homg_{\bn{n}}(A\otimes a,B)\cong \Homg(A,B\otimes a^\vee)$ which is natural in $A,B\in\Kom(n)$.
\end{corollary}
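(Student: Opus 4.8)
The statement is an instance of biadjointness: it asserts that the functor $(-)\otimes a\colon\Kom(n)\to\Kom(n)$ has $(-)\otimes a^\vee$ as a (left, and by symmetry two-sided) adjoint, at the level of the $\Homg$ complexes. The plan is to prove it exactly as Theorem~\ref{thm-partialTraceHom} is proven: exhibit explicit mutually inverse chain maps built from the cup/cap cobordisms witnessing that $a^\vee$ is a dual of $a$ in $\bn n$. First I would recall the relevant duality structure (spelled out in \cite{H12a}). The functor $(-)^\vee$ is a strict monoidal anti-equivalence, $(a\otimes b)^\vee = b^\vee\otimes a^\vee$ and $1_n^\vee = 1_n$, extending degreewise to $\Kom(n)$. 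For each object $a$ there are distinguished cobordisms, a coevaluation $\eta_a\colon 1_n\to a\otimes a^\vee$ and an evaluation $\varepsilon_a\colon a^\vee\otimes a\to 1_n$, obtained by joining the doubled diagram $a\otimes a^\vee$ (resp.\ $a^\vee\otimes a$) to $1_n$ by births and elementary saddles; once the $q$-shifts on $a^\vee$ are taken into account (this is exactly why $(-)^\vee$ reverses $q$-shifts) these have bidegree $(0,0)$. The one geometric input is that they satisfy the zigzag identities
\[
(\Id_a\otimes \varepsilon_a)\circ(\eta_a\otimes \Id_a) = \Id_a,\qquad (\varepsilon_a\otimes \Id_{a^\vee})\circ(\Id_{a^\vee}\otimes \eta_a) = \Id_{a^\vee},
\]
which hold in $\bn n$ because in each case both sides are cobordisms isotopic rel boundary. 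For general $a$ this reduces, via the monoidal compatibility of $(-)^\vee$ and the fact that every object of $\bn n$ is, up to isomorphism (contracting circles at the cost of $q^{\pm1}$-shifts), an iterated $\otimes$-product of Temperley--Lieb generators $e_i$, to the case $a=e_i$, where $e_i^\vee=e_i$ and the identities are immediate; or one simply cites the planar-algebra-with-duals formalism of \cite{H12a}.

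Granting this, I would define
\[
\phi\colon \Homg_{\bn n}(A\otimes a,\, B)\to \Homg_{\bn n}(A,\, B\otimes a^\vee),\qquad \phi(f) = (f\otimes \Id_{a^\vee})\circ(\Id_A\otimes \eta_a),
\]
\[
\psi\colon \Homg_{\bn n}(A,\, B\otimes a^\vee)\to \Homg_{\bn n}(A\otimes a,\, B),\qquad \psi(g) = (\Id_B\otimes \varepsilon_a)\circ(g\otimes \Id_a).
\]
These are maps of bigraded abelian groups; they are natural in $A$ and $B$ because each is given by pre- and post-composition with fixed morphisms; and they commute with the $\Homg$ differentials by the graded Leibniz rule, since $\eta_a$ and $\varepsilon_a$ are cycles (indeed honest cobordisms, $a$ and $a^\vee$ carrying zero differential). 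It then remains to verify $\psi\circ\phi=\Id$ and $\phi\circ\psi=\Id$. Expanding the definitions and using the interchange law of $\otimes$ to pull $f$ (resp.\ $g$) out past the idempotent tensor factors, one is left with exactly one of the two zigzag identities applied to the $a$-strands; this collapses the composite to $f$ (resp.\ $g$). Hence $\phi$ is the asserted natural isomorphism, and the corollary follows.

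The only non-formal ingredient is the verification that $a^\vee$ is genuinely biadjoint to $a$ with the stated units and counits — i.e.\ the zigzag identities in $\bn n$ — which is a matter of isotopy of cobordisms and is already recorded in \cite{H12a}; this, together with checking that the grading shifts really do match (the reason $(-)^\vee$ is defined to reverse $q$-shifts), is where I expect whatever small difficulty there is to lie. Everything else is bookkeeping, so the write-up should be short.
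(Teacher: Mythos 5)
Your proof is correct, and its geometric content is the same as the paper's; the difference is purely in organization. The paper's proof is a two-line reduction: it factors a circle-free diagram $a$ with $\tau(a)<n$ as a product of generators $e_{i_1}\otimes\cdots\otimes e_{i_r}$ (handling $1_n$, $q$-shifts, direct sums, and circles formally), and for a single $e_i$ it simply cites Theorem \ref{thm-partialTraceHom} twice — the composite of those two isomorphisms is exactly your $\phi$ and $\psi$ for $a=e_i$, with $\eta_{e_i}$ the cup-then-saddle cobordism $1_n\to e_i\otimes e_i$ and $\varepsilon_{e_i}$ its reflection. You instead re-derive the adjunction data from scratch, stating the coevaluation/evaluation and zigzag identities for general $a$; this effectively inlines the proof of Theorem \ref{thm-partialTraceHom} rather than using it as a black box. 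Both routes are fine. Your degree bookkeeping also checks out: for $a=e_i$ one has $e_i^\vee=e_i$ with no shift, and the cobordism $1_n\to e_i\otimes e_i$ (an $(n-2)$-sheeted product union a saddle and a birth) has $\deg_q=n-\chi=0$, consistent with the absence of any shift in the statement; the only thing I would add to your write-up is the explicit remark that $a$ and $a^\vee$ sit in a single homological degree with zero differential, so no Koszul signs interfere with the interchange-law manipulation in the verification of $\psi\circ\phi=\mathrm{Id}$.
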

\begin{proof}
If $e_i\in\bn{n}$ denotes the Temperley-Lieb generator (Definition \ref{def-turnbackDeath}) then we have $e_i^\vee=e_i$ and $(e_{i_1}\otimes \cdots\otimes e_{i_r})^\vee \cong (e_{i_r}\otimes \cdots \otimes e_{i_1})$.  Hence the general case follows from the case $a=e_i$, which in turn follows from two applications of Theorem \ref{thm-partialTraceHom}.  
\end{proof}

\begin{proposition}\label{prop-resProp}
Suppose $A\in\Kom(n)$ kills turnbacks and $P_n$ is a strong Cooper-Krushkal projector  (Definition \ref{def-strongCKproj}).  Then precomposition with $\iota:1_n\rightarrow P_n$ gives a deformation retract $\Homg(P_n,A)\simeq \Hom(1_n,A)$.
\end{proposition}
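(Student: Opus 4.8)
The plan is to follow the proof of the absorbing property (Proposition \ref{prop-projAbsorb}) closely, transporting it through the contravariant functor $\Homg(-,A)$. First I would recall, exactly as in that proof, that a strong Cooper-Krushkal projector can be written as $P_n\cong N\oplus 1_n$ as a bigraded object, with differential $\smMatrix{d_N & 0 \\ \delta & 0}$ for some $\delta\in\Homg^{1,0}(N,1_n)$, where $\tau(N)<n$ and $\iota:1_n\to P_n$ is the inclusion of the $1_n$-summand (this is immediate from Definition \ref{def-strongCKproj} and the cone description in \S\ref{subsec-CKproj}). Applying $\Homg(-,A)$ then identifies $\Homg(P_n,A)$, as a bigraded group, with $\Homg(N,A)\oplus\Homg(1_n,A)$, equipped with an upper-triangular differential whose off-diagonal entry is, up to sign, precomposition with $\delta$; under this identification precomposition with $\iota$ is precisely the projection onto the $\Homg(1_n,A)$-summand.

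The key step is then to prove that $\Homg(N,A)\simeq 0$, after which Gaussian elimination (Proposition \ref{prop-gauss}) applied to this contractible summand produces the desired deformation retract $\Homg(P_n,A)\to\Homg(1_n,A)$ with retraction equal to precomposition with $\iota$ --- exactly the way Gaussian elimination is used to conclude Proposition \ref{prop-projAbsorb}. To see $\Homg(N,A)\simeq 0$: since $\tau(N)<n$, each chain group $N_k$ is a finite direct sum of objects $e_{i_1}\otimes\cdots\otimes e_{i_r}$ with $r\ge 1$ (notation of Definition \ref{def-turnbackDeath}), and for such an object Corollary \ref{cor-homRotation}, together with $e_j^\vee=e_j$, gives
\[
\Homg(e_{i_1}\otimes\cdots\otimes e_{i_r},A)\;\cong\;\Homg(e_{i_1}\otimes\cdots\otimes e_{i_{r-1}},\,A\otimes e_{i_r}),
\]
while $A\otimes e_{i_r}\simeq 0$ because $A$ kills turnbacks (Proposition \ref{prop-turnback}); hence each $\Homg(N_k,A)\simeq 0$.

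Since $N\in\Kom^-(n)$ is bounded above, $\Homg(N,A)$ is a convolution (Definition \ref{def-convolution}) of the complexes $\Homg(N_k,A)$ with shifts, and being a convolution of contractible complexes it is itself contractible by the machinery of \S\ref{sec-homAlg} (Theorem \ref{convSdrThm}) --- the same move made at the end of the proof of Proposition \ref{prop-turnback}. Assembling these facts, Gaussian elimination of the contractible summand $\Homg(N,A)$ yields the deformation retract, and a square-zero contracting homotopy for $\Homg(N,A)$ makes the resulting data a deformation retract in the sense of Definition \ref{def-sdr}.

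The hard part is the contractibility of $\Homg(N,A)$; within that, the genuinely new input is the levelwise vanishing $\Homg(N_k,A)\simeq 0$, which is where the partial-trace adjunction (Corollary \ref{cor-homRotation}) and the turnback-killing hypothesis on $A$ are used. The passage from the individual chain groups to the full semi-infinite complex is routine but does require invoking the convolution tools of the appendix and checking the boundedness hypothesis ($N$, hence the convolution, is bounded above) under which those tools apply.
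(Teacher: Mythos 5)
Your proposal is correct and follows essentially the same route as the paper's proof: reduce to the levelwise vanishing of $\Homg(N_k,A)$ via Corollary \ref{cor-homRotation} and the turnback-killing hypothesis on $A$, then contract the infinitely many acyclic pieces using the convolution machinery of the appendix. The only caution is that $\Homg(N,A)$ is a type II (direct-product) convolution, so the infinite contraction is justified by Remark \ref{remark-generalizedConv} rather than the direct-sum form of Theorem \ref{convSdrThm}; this is exactly the point the paper's own proof flags.
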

\begin{proof}
The idea is to rewrite $\Homg(P_n,A)$ as a convolution (using direct product) of terms of the form $\Homg((P_n)_k,A)$, where $(P_n)_k$ denotes the $k$-th chain object.  The turnback killing property together with Corollary \ref{cor-homRotation} implies that most of these terms are contractible, and the only one which survives is $\Homg(1_n,A)$.  Performing the infinitely many contractions is justified by Remark \ref{remark-generalizedConv}.  For more details, see \cite{H12a}.
\end{proof}

In order to refer to homology classes of $\Endg(P_n)$ without choosing a specific representative, we need to show that any two Cooper-Krushkal projectors are not just homotopy equivalent, but \emph{canonically} equivalent.

\begin{definition}\label{def-canEquiv}
Let $(A_n,\iota_A)$ and $(B_n,\iota_B)$ be Cooper-Krushkal projectors.  Let us call a degree $(0,0)$ chain map $\phi:A_n\rightarrow B_n$ \emph{canonical} if $\phi\circ \iota_A \simeq \iota_B$.  
\end{definition}

\begin{theorem}\label{thm-canEquiv}
If $(A_n,\iota_A)$ and $(B_n,\iota_B)$ are Cooper-Krushkal projectors, then a canonical map $\phi:A_n\rightarrow B_n$ exists and is unique up to homotopy.  This map is a homotopy equivalence.  The composition of canonical maps is a canonical map.
\end{theorem}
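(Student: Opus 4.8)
The plan is to reduce all four assertions to a single statement: \emph{precomposition with the unit $\iota_A$ induces an isomorphism $\Ext^{i,j}(A_n,B_n)\xrightarrow{\ \sim\ }\Ext^{i,j}(1_n,B_n)$ for all $i,j$ and all Cooper-Krushkal projectors $(A_n,\iota_A)$, $(B_n,\iota_B)$.} Once this is known, existence and uniqueness of a canonical map fall out immediately by taking $(i,j)=(0,0)$: since $\iota_B$ is a bidegree $(0,0)$ chain map $1_n\to B_n$, it represents a class $[\iota_B]\in\Ext^{0,0}(1_n,B_n)$, and the isomorphism produces a unique class $\phi\in\Ext^{0,0}(A_n,B_n)$ with $\phi\circ\iota_A\simeq\iota_B$; that is precisely a canonical map, unique up to homotopy.

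To prove the $\Ext$-isomorphism I would argue as follows. By axiom (CK1) the complex $\Cone(\iota_A)$ is homotopy equivalent to a complex $N$ with $\tau(N)<n$, so $N$ is (up to homotopy) a complex whose chain objects are direct sums of tensor products $e_{i_1}\otimes\cdots\otimes e_{i_r}$ of Temperley--Lieb generators. Applying $\Homg(-,B_n)$ and using Corollary \ref{cor-homRotation} termwise, $\Homg(e_{i_1}\otimes\cdots\otimes e_{i_r},B_n)\cong\Homg(1_n,\,B_n\otimes e_{i_r}^\vee\otimes\cdots\otimes e_{i_1}^\vee)$, which is contractible because $B_n$ kills turnbacks (axiom (CK2)) and $\tau(e_{i_r}^\vee\otimes\cdots\otimes e_{i_1}^\vee)<n$. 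Assembling these, $\Homg(N,B_n)$ is a convolution of contractible complexes and hence contractible; the necessary (possibly infinite) contraction is justified exactly as in the proof of Proposition \ref{prop-resProp}. (Alternatively, one may first replace $A_n$ by a homotopy equivalent \emph{strong} Cooper-Krushkal projector, carrying the unit along as in the derivation of \eqref{eq-CKequivStrongCK}, and then quote Proposition \ref{prop-resProp} directly.) In either case $\Homg(\Cone(\iota_A),B_n)\simeq 0$, and the long exact sequence in $\Ext(-,B_n)$ attached to the triangle $1_n\xrightarrow{\iota_A}A_n\to\Cone(\iota_A)\to t\,1_n$ gives the asserted isomorphism.

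The remaining two assertions are then formal. For composition: if $\phi\colon A_n\to B_n$ and $\phi'\colon B_n\to C_n$ are canonical, then $(\phi'\circ\phi)\circ\iota_A\simeq\phi'\circ\iota_B\simeq\iota_C$, so $\phi'\circ\phi$ is canonical. For the homotopy equivalence: by symmetry of the construction there is a canonical $\psi\colon B_n\to A_n$; both $\psi\circ\phi$ and $\Id_{A_n}$ are then canonical endomorphisms of $A_n$, so by the uniqueness clause applied with $B_n$ replaced by $A_n$ (legitimate, since $A_n$ kills turnbacks) we get $\psi\circ\phi\simeq\Id_{A_n}$, and likewise $\phi\circ\psi\simeq\Id_{B_n}$; hence $\phi$ is a homotopy equivalence. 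The one genuinely nontrivial step is the vanishing $\Homg(\Cone(\iota_A),B_n)\simeq 0$: the turnback-killing hypothesis on $B_n$ is stated in terms of tensor products and must be transported into a statement about Hom-complexes via the rotation isomorphism of Corollary \ref{cor-homRotation}, after which one must contract an infinite convolution of contractible complexes (and, on the alternative route, track the unit through the passage to a strong projector). Everything after that is bookkeeping with mapping cones and $\Ext$-groups.
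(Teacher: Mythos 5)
Your argument is correct and follows essentially the same route as the paper: the paper simply quotes Proposition \ref{prop-resProp} to obtain the equivalence $\Homg(A_n,B_n)\simeq\Homg(1_n,B_n)$ given by precomposition with $\iota_A$, takes the preimage of $\iota_B$ to get existence and uniqueness, and then deduces the composition and homotopy-equivalence statements by exactly the formal bookkeeping you describe. Your unwinding of that equivalence via $\Homg(\Cone(\iota_A),B_n)\simeq 0$ is the proof of Proposition \ref{prop-resProp} in disguise, and your observation that one must first pass to a strong projector (or re-run the contraction argument directly) to apply it to an arbitrary $(A_n,\iota_A)$ is a point the paper's proof leaves implicit.
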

\begin{proof}
By Proposition \ref{prop-resProp}, precomposition with $\iota_A$ gives an equivalence $\Homg(A_n,B_n)\rightarrow \Homg(1_n,B_n)$.  Taking the preimage of $\iota_B\in \Homg(1_n,B_n)$ gives a chain map $\phi:A_n\rightarrow B_n$ which is uniquely characterized up to homotopy by $\phi\circ \iota_A \simeq \iota_B$.   Thus, canonical maps exist and are unique.

If $\phi:A_n\rightarrow B_n$ and $\psi:B_n\rightarrow C_n$ are canonical maps between Cooper-Krushkal projectors, then $\psi\circ \phi\circ\iota_A\simeq \psi\circ \iota_B\simeq \iota_C$.  Hence the composition of canonical maps is canonical.  This implies immediately that the canonical maps $A_n\rightarrow B_n$ and $B_n\rightarrow A_n$ are homotopy inverses, since any  a canonical map $A_n\rightarrow A_n$ is homotopic to $\Id_A$ by uniqueness.  This completes the proof.
\end{proof}

\subsection{The Cooper-Krushkal recursion}
\label{subsec-CKrecursion}
The Cooper-Krushkal axioms force $P_1\simeq 1_1\in\Kom(1)$.  For $n\geq 2$, the projector $P_n$ is related to $P_{n-1}$ by a formula which categorifies a well-known recursion for ordinary Jones-Wenzl projectors $p_n$.
\begin{definition}
Let $P_{n-1}\in\Kom(n-1)$ be a Cooper-Krushkal projector, and define the \emph{Cooper-Krushkal sequence} (relative to $P_{n-1}$) to be the following semi-infinite sequence of chain complexes and chain maps
\begin{equation}\label{eq-CKsequence}
\begin{diagram}
&&&&&&&&\underline{\FKzero} \\
&&&&&&&& \uTo \\ \\
q^{n-1}\FKult &\rTo & q^{n-2}\FKpenult &\rTo &\cdots & \rTo & q^2\FKtwo & \rTo & q\FKone\\
\uTo &&&&&&&&\\ \\
q^{n+1}\FKult &\lTo & q^{n+2}\FKpenult &\lTo &\cdots & \lTo & q^{2n-2}\FKtwo & \lTo & q^{2n-1}\FKone\\
&&&&&&&&\uTo \\ \\
q^{3n-1}\FKult &\rTo & q^{3n-2}\FKpenult &\rTo &\cdots & \rTo & q^{2n+2}\FKtwo & \rTo & q^{2n+1}\FKone\\
\uTo &&&&&&&&\\ \\
q^{3n+1}\FKult &\lTo & q^{3n+2}\FKpenult &\lTo &\cdots &&&&
\end{diagram}
\end{equation}\vskip7pt
where the white box denotes $P_{n-1}$ and the maps are\vskip7pt
\[
\wwpic{FKpicGeneric-} , \ \  \ \ \ \ \wwpic{FKpicGeneric+} , \ \ \  \ \ \ \ \wwpic{FKpicUlt_topDot} - \wwpic{FKpicUlt_bottomDot}  ,\  \ \ \text{ or } \ \ \ \wwpic{FKpicOne_topDot}  + \wwpic{FKpicOne_bottomDot} 
\]\vskip7pt
between adjacent terms.  We remind the reader that $\pic{isaddle}:q\pic{turnback}\rightarrow \pic{straightThrough}$ denotes the map corresponding to the saddle cobordism, and $\pic{topdot}: q^2\pic{turnback}\rightarrow \pic{turnback}$ denotes an identity cobordism with a dot on one of the sheets.  Since the gluing of diagrams together in the plane is functorial, it is clear how to interpret the indicated pictures as chain maps.
\end{definition} 

\begin{proposition}\label{prop-CKhomotopyCx}
Write the sequence (\ref{eq-CKsequence}) as $E_\bullet = (\cdots \buildrel\a_{-2}\over \longrightarrow E_{-1} \buildrel\a_{-1}\over \longrightarrow E_0)$.  Then $\a_{i-1}\circ \a_i\simeq 0$, so that $(E_\bullet,\a)$ defines an object of $\Kom(\Kom(n)_{/h})$.
\end{proposition}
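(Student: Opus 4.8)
The plan is to exploit that, up to grading shifts, only a handful of distinct local pictures occur among the maps $\a_i$ of (\ref{eq-CKsequence}): the two ``generic'' saddle maps $\wwpic{FKpicGeneric-}$ and $\wwpic{FKpicGeneric+}$, the difference $\wwpic{FKpicUlt_topDot}-\wwpic{FKpicUlt_bottomDot}$ of dotted identities at the terms of type $\FKult$, and the sum $\wwpic{FKpicOne_topDot}+\wwpic{FKpicOne_bottomDot}$ of dotted identities at the terms of type $\FKone$ (including the terminal term $E_0=\FKzero$). That each $\a_i$ is a chain map is a direct diagrammatic check, since the saddle or dot is performed away from the $P_{n-1}$ box and hence commutes with the internal differential of $P_{n-1}$ by functoriality of planar gluing. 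For the relation $\a_{i-1}\circ\a_i\simeq 0$, because the sequence is assembled by translating these local pictures there are only finitely many composition types to verify: a saddle followed by a saddle in the interior of a row, and a saddle composed with one of the dotted turnaround maps at a term of type $\FKone$ or $\FKult$, in either order. (No two consecutive maps are both dotted, since each term of type $\FKone$ or $\FKult$ has exactly one turnaround map among its two neighbours in the sequence.)

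For each such type I would stack the two cobordisms and simplify using the local relations of Definition \ref{def-cobCat}. Two consecutive saddles produce a cobordism containing an embedded tube; the neck-cutting relation (2) rewrites it as a sum of two dotted cobordisms, and in each summand the dotted sheet together with the turnback arcs flanking $P_{n-1}$ can be isotoped so that the composite chain map $E_i\to E_{i-2}$ visibly factors through a complex in which a turnback $e_j$ abuts $P_{n-1}$ among its last two strands (with extra strands adjoined so as to land in $\bn{n}$). Such a complex is contractible by the turnback-killing axiom (CK2) of Definition \ref{def-CKproj} together with Proposition \ref{prop-turnback}, whence the composite is null-homotopic; moreover a closed component is frequently produced at an intermediate stage, so that the sphere relations (1) already force the composite to vanish on the nose. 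The compositions involving a turnaround map are handled the same way: at a term of type $\FKult$ the adjoining saddle connects the two sheets carrying the dots of the difference, so that, a dot moving freely along a connected component, the two terms coincide and the composite is $0$; where this fails the difference is instead absorbed by the turnback-killing of $P_{n-1}$. At a term of type $\FKone$, neck-cutting the saddle against the dotted identities again produces terms in which a turnback is swallowed by $P_{n-1}$. Collecting the cases gives $\a_{i-1}\circ\a_i\simeq 0$ for all $i$, so $(E_\bullet,\a)$ is a genuine object of $\Kom(\Kom(n)_{/h})$.

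This is the categorical shadow of the internal consistency of the Frenkel--Khovanov recursion (\ref{eq-JWrecursion}), and an argument of this flavour already appears in \cite{CK12a}. The main obstacle is purely organizational: one must pin down the several saddle, dot, and turnback pictures together with their homological and $q$-degrees precisely enough that the ``a turnback abuts $P_{n-1}$'' reduction is manifestly valid and the signs in the $\pm$-combinations cancel as claimed; one cannot shortcut this by arguing that $\Homg(E_i,E_{i-2})$ is contractible, as in general it is not, so the null-homotopy genuinely has to be extracted from the particular composite. Each individual case is then an instance of one of the two mechanisms above --- a closed component evaluated by the sphere relations of Definition \ref{def-cobCat}, or a turnback absorbed by $P_{n-1}$ via axiom (CK2) --- and the real work lies in laying them out cleanly.
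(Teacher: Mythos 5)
Your proof is essentially the paper's: reduce to the finitely many local composition types (two consecutive saddles; a saddle against the difference of dots at the $\FKult$ corner; a saddle against the sum of dots at the $\FKone$ corner), and kill each composite either because, after an isotopy or an application of neck-cutting, it factors through a complex in which a turnback abuts $P_{n-1}$ --- contractible by (CK2) together with Proposition \ref{prop-turnback} --- or because the two terms of a $\pm$-combination become isotopic and cancel. Your remark that one cannot shortcut this by showing $\Homg(E_i,E_{i-2})$ is acyclic is also well taken.

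The one concrete defect is the parenthetical claim that no two consecutive maps are both dotted. That holds for $n\geq 3$ (each row of (\ref{eq-CKsequence}) has length $n-1\geq 2$), but fails for $n=2$: there $\FKone=\FKult=\pic{turnback}$, the rows are trivial, and every map except the terminal saddle is a sum or difference of dotted identities, so consecutive dotted maps do occur and are not covered by your case list. The fix is immediate --- the composite of the sum with the difference of dotted identities is $(\text{top dot})^2-(\text{bottom dot})^2=0$ on the nose, since two dots on a single sheet vanish by the relations of Definition \ref{def-cobCat} --- and the paper indeed disposes of $n=2$ separately by observing that (\ref{eq-P2}) is already an honest chain complex before running the case analysis for $n\geq 3$.
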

\begin{proof}
The proof splits up into cases.  First, note that when $n=2$ the Cooper-Krushkal recursion produces
\begin{equation}
\begin{diagram}\label{eq-P2}
P_2 & := &\Big( \cdots & \rTo^{\mpic{topdot}-\mpic{bottomdot}} & q^5 \pic{turnback} & \rTo^{\mpic{topdot}+\mpic{bottomdot}} & q^3 \pic{turnback} & \rTo^{\mpic{topdot}-\mpic{bottomdot}} & q \pic{turnback} & \rTo^{\mpic{isaddle}} & \underline{\pic{straightThrough}}\ \Big)
\end{diagram},
\end{equation}
which is already chain complex (not a homotopy complex).

In case $n\geq 3$, the proof that $\a_{i-1}\circ \a_i\simeq 0$ splits up into cases, depending on whether $\a_{i+1}$ and $\a_i$ are both saddle maps, or whether one of them is a sum or difference of dots.

\textbf{Case 1:}{ Both $\a_i$ and $\a_{i+1}$ are saddles.}  Up to flipping cobordisms upside down, $\a_{i+1}\circ \a_i$ is the composition of saddle cobordisms
\[
\a_{i+1}\circ \a_i =  \wwwpic{FKhoms6}\hskip.3in\text{or}\hskip.3in\a_{i+1}\circ \a_i =  \wwwpic{FKhoms0}.
\]
By isotopy invariance of morphisms, the saddle maps can be performed in any order, and so $\a_{i+1}\circ \a_i$ factors through the chain complex
\[
 \wwwpic{FKhoms7}\hskip.3in \text{or}\hskip.3in \wwpic{FKhoms5},
\]
each of which is contractible since $P_{n-1}$ kills turnbacks.  Hence $\a_{i+1}\circ \a_i\simeq 0$ in this case.

\textbf{Case 2:}{ $\a_i$ or $\a_{i+1}$ is a difference of dots.}  Up to flipping cobordisms upside down, the composition $\a_{i+1}\circ \a_i$ is
\[
\a_{i+1}\circ \a_i =\wwpic{FKpicUlt+_topDot} - \wwpic{FKpicUlt+_bottomDot},
\]
which is zero by isotopy of dots.

\textbf{Case 3:}{ $\a_i$ or $\a_{i+1}$ is a sum of dots.}  Up to flipping cobordisms upside down, the composition $\a_{i+1}\circ \a_i$ is
\begin{equation}\label{eq-sumOfDots}
\a_{i+1}\circ \a_i=\wpic{FKpicOne_saddle_topDot} + \wpic{FKpicOne_saddle_bottomDot} \simeq \wpic{FKpicOne_saddle_topDot} + \wpic{FKpicOne_saddle_leftDot},
\end{equation}
by isotopy of dots.  Now, this latter map is homotopic to zero by the following argument.  One one hand, from the neck cutting relation $\pic{cylinder} = \pic{1idemp} + \pic{xidemp}$ (Definition \ref{def-cobCat}) we have
\[
 \wwpic{projector_bottomIsaddle}\circ \wwpic{projector_bottomHsaddle} \simeq \wwpic{projector_bottomLeftDot} + \wwpic{projector_bottomRightDot} 
\]
On the other hand, this map is nulhomotopic, since it factors through the contractible complex 
\[
\wwwpic{projector_bottomTurnback}
\]
In particular (\ref{eq-sumOfDots}) is nulhomotopic.  This completes the proof.
\end{proof}
In the above proof, we obtained the following:
\begin{corollary}[Dot-hopping]\label{cor-dotHop}
If $C$ kills turnbacks, then the dotted identity maps $q^2C\rightarrow C$ satisfy the alternating property
\[
\wwpic{projector_bottomLeftDot} \simeq - \wwpic{projector_bottomRightDot} 
\]
and vertical reflections of these.  Here, we have put $\pic{projector}=C$.\qed
\end{corollary}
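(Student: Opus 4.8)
The plan is to isolate the argument that already appears inside Case~3 of the proof of Proposition~\ref{prop-CKhomotopyCx}. Write $C=\pic{projector}$ and fix the two adjacent sheets carrying the dots. Let $C'$ denote $C$ with the corresponding turnback diagram glued on, so that $\tau(C')<n$, and let $\wwpic{projector_bottomHsaddle}$ and $\wwpic{projector_bottomIsaddle}$ be the two saddle cobordisms, with their standard $q$-shifts from Definition~\ref{def-cobCat}, arranged so that their composite is a map $q^2C\to C$ which, tautologically, factors through $C'$.

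The first step is the local cobordism computation carried out in the body of the proof of Proposition~\ref{prop-CKhomotopyCx}: applying the neck-cutting relation $\pic{cylinder}=\pic{1idemp}+\pic{xidemp}$ of Definition~\ref{def-cobCat} to the tube obtained by stacking the two saddles identifies their composite with the sum of the two dotted identity maps,
\[
\wwpic{projector_bottomIsaddle}\circ\wwpic{projector_bottomHsaddle}\;\simeq\;\wwpic{projector_bottomLeftDot}\;+\;\wwpic{projector_bottomRightDot}.
\]
The second step invokes the hypothesis: because $C$ kills turnbacks, the complex $\wwwpic{projector_bottomTurnback}=C'$ is homotopy equivalent to a complex of through-degree $<n$ and is therefore contractible (Definition~\ref{def-turnbackDeath}, Proposition~\ref{prop-turnback}); hence the composite $\wwpic{projector_bottomIsaddle}\circ\wwpic{projector_bottomHsaddle}$, factoring through $C'$, is null-homotopic. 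Combining the two steps yields $\wwpic{projector_bottomLeftDot}\simeq-\wwpic{projector_bottomRightDot}$ as maps $q^2C\to C$, which is the asserted alternating property. The vertically reflected statement follows by conjugating with the contravariant duality functor $(-)^\vee$ that turns cobordisms upside down, together with the fact that $C$ kills turnbacks from the other side as well (Corollary~\ref{cor-turnbackKillingSymmetry}).

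There is essentially no obstacle here: the only non-formal input is the turnback-killing hypothesis, which is exactly what makes $C'$ contractible, and everything else is a bounded amount of local cobordism bookkeeping. The one point to keep straight is the sign and $q$-degree accounting --- the two saddles each carry a shift by $q$, so the composite has the bidegree matching the dotted identity maps $q^2C\to C$, and the signs produced by neck-cutting and by sliding dots between sheets are precisely what turn the relation into an \emph{alternating} one rather than a symmetric one.
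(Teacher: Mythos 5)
Your proof is correct and is essentially the paper's own argument: the corollary is extracted verbatim from Case 3 of the proof of Proposition \ref{prop-CKhomotopyCx}, where the composite of the two saddles is identified with the sum of the two dotted identities via neck-cutting and then shown to be null-homotopic because it factors through the contractible turnback complex. The only (harmless) imprecision is your phrasing that $C'$ is contractible because it is ``equivalent to a complex of through-degree $<n$''; the correct justification is simply that $C$ kills turnbacks, so $C\otimes e_i\simeq 0$ by Definition \ref{def-turnbackDeath}.
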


The Cooper-Krushkal sequence is not a bicomplex since the composition of successive maps is null-homotopic, rather than zero on the nose.  The notion of convolution replaces that of total complex in this situation (see also \S \ref{subsec-convolutions}):

\begin{definition}\label{def-convolution}
Let $E_i$ be chain complexes over an additive category and $\a_i:E_i\rightarrow E_{i+1}$ chain maps such that $\a_{i+1}\circ \a_i\simeq 0$ for all $i\in \Z$.  Any such sequence will be called a \emph{homotopy chain complex}, and will be denoted as
\begin{equation}\label{eq-homotopyCx}
E_\bullet = \cdots \buildrel\a_{i-1}\over \longrightarrow E_i \buildrel\a_{i}\over \longrightarrow E_{i+1} \buildrel \a_{i+1}\over \longrightarrow\cdots
\end{equation}
A convolution of a homotopy chain complex $E_\bullet$ is any chain complex which, as a graded object equals $\bigoplus_{i\in \Z} t^i E_i$ and whose differential $d$ satisfies the following conditions: if $d_{ij}\in\Homg^{1-i+j}(E_j,E_i)$ is the corresponding component of $d$, then
\begin{itemize}
\item $d_{ii}=(-1)^{i}d_{E_i}$.
\item $d_{i+1,i}=\a_i$.
\item $d_{ij}=0$ for $i<j$.
\end{itemize}
We will denote a convolution of (\ref{eq-homotopyCx}) by $M = \Tot(E_\bullet)$, or with  a parenthesized notation in which we write all of the degree shifts explicitly: 
\[
M = (\cdots \buildrel\a_{i-1}\over \longrightarrow t^i E_i \buildrel\a_{i}\over \longrightarrow t^{i+1}E_{i+1} \buildrel \a_{i+1}\over \longrightarrow\cdots )
\]
\end{definition}
One should think of the differential of a convolution of (\ref{eq-homotopyCx}) as a lower-triangular $\Z\times \Z$ matrix with $(-1)^i d_{E_i}$ on the diagonal and the $\a_i$ on the first subdiagonal.

\begin{theorem}[The Cooper-Krushkal recursion]\label{thm-CKrecursion}
If $P_{n-1}\in \Kom(n-1)$ is a Cooper-Krushkal projector, then there exists a convolution $P_n\in\Kom(n)$ of the Cooper-Krushkal sequence relative to $P_{n-1}$, and any such convolution is a strong Cooper-Krushkal projector (Definition \ref{def-strongCKproj}).
\end{theorem}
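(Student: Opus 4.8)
The plan is to prove existence and the projector properties separately. For \emph{existence} of a convolution, I would invoke the general machinery for building convolutions of homotopy chain complexes (the appendix's convolution apparatus, cf.\ Definition \ref{def-convolution} and Remark \ref{remark-generalizedConv}). By Proposition \ref{prop-CKhomotopyCx}, the Cooper-Krushkal sequence $(E_\bullet,\a)$ really is a homotopy chain complex, i.e.\ $\a_{i-1}\circ\a_i\simeq 0$. The obstruction to assembling a convolution lives in successive cohomology groups built from the maps $\Homg^{2-j}(E_j, E_i)$ for $i>j+1$; concretely one needs the higher homotopies $h_{i}$ witnessing $\a_{i+1}\a_i\simeq 0$ to themselves satisfy a coherence relation up to a further homotopy, and so on. Since the sequence is semi-infinite (bounded on the right, extending infinitely to the upper left), each chain object of the putative $P_n$ is a \emph{finite} direct sum --- in any fixed homological degree of $P_n$ only finitely many of the shifted $t^iE_i$ contribute, because the $E_i$ themselves live in a controlled range of homological degrees that marches off to $-\infty$ with $i$. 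This finiteness is what makes the iterated lifting of the differential well-posed: one constructs $d$ one subdiagonal at a time, each step requiring a nullhomotopy of a map that is automatically null in homology by the previous steps. I expect this to go through cleanly; the key input beyond Proposition \ref{prop-CKhomotopyCx} is simply that the relevant $\Homg$ complexes are where they need to be, which again follows from $\tau$-considerations.

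Next, any such convolution $P_n$ is a \emph{strong Cooper-Krushkal projector} in the sense of Definition \ref{def-strongCKproj}. For the first axiom, observe that the very top term $\underline{\FKzero} = 1_n$ (the $n$ parallel strands, with $P_{n-1}$ on the first $n-1$) sits at the far end of the sequence, and the map $E_{-1}\to E_0$ is a saddle $q\,\FKone \to \FKzero$. Actually $E_0$ is not literally $1_n$; rather $E_0 = P_{n-1}\sqcup 1_1$, which is itself $\Cone(N'\to 1_n)$ with $\tau(N')<n$ since $P_{n-1}$ is a strong projector by induction. So I would reassociate: $P_n = \Cone(\,\mathrm{Tot}(E_{<0}) \to E_0\,)$ where $E_{<0}$ is the convolution of everything below degree $0$, and then further reassociate using $E_0 \simeq \Cone(N'\to 1_n)$ to pull the $1_n$ out, writing $P_n \simeq \Cone(N \to 1_n)$ for $N$ a convolution of $\mathrm{Tot}(E_{<0})$ together with $t N'$ --- every summand of $N$ is either a $\FKi{i}$ with $i\geq 1$ (hence $\tau < n$, a genuine turnback picture) or comes from $N'$ (hence $\tau<n$), so $\tau(N)<n$. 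This gives axiom (1) of Definition \ref{def-strongCKproj}.

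For the turnback-killing axiom (2), I would use Proposition \ref{prop-turnback}: it suffices to show $P_n\otimes e_i \simeq 0$ and $e_i\otimes P_n\simeq 0$ for each Temperley-Lieb generator $e_i = 1_{n-i}\sqcup e \sqcup 1_{i-1}$. Here is where the particular shape of the sequence does the work. For $i\geq 2$, the turnback $e_i$ only touches strands among the first $n-1$, so $e_i$ commutes past into the region occupied by $P_{n-1}$, and $P_{n-1}$ (inductively a projector) kills it: every picture $\FKj{j}\otimes e_i$ in the sequence becomes contractible, and since each chain group of $P_n$ in a fixed degree is a finite sum of such terms, $P_n\otimes e_i \simeq 0$ by a finite sequence of Gaussian eliminations. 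The remaining, and genuinely interesting, case is $e_1 = 1_{n-1}\sqcup e$, the turnback on the last two strands: tensoring the whole Cooper-Krushkal sequence with $e_1$ and using the relations of $\Cob_n$ (neck-cutting (2) and the sphere relations (1) of Definition \ref{def-cobCat}) together with Corollary \ref{cor-dotHop} (dot-hopping), the saddle and dotted-identity maps telescope: each consecutive pair of terms is connected by maps whose composite with $e_1$ is, after neck-cutting, $\pm(\text{dotted turnback}) \mp (\text{dotted turnback})$, and the signs $(\pm)$ alternating as in the definition of the sequence are exactly engineered so that the resulting complex contracts. Concretely I would match up $q^{\bullet}\FKj{j}\otimes e_1$ in the ``rightward'' rows with the adjacent $q^{\bullet}\FKj{j}\otimes e_1$ in the ``leftward'' rows via an isomorphism coming from a dot, Gaussian-eliminate the pair, and check that what is left contracts --- this is the categorical shadow of the identity $e\cdot(\text{recursion for }p_n) = 0$, i.e.\ of $p_n$ killing the turnback on the last strand in $\TL_n$.

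\textbf{Main obstacle.} The delicate point is the last case: verifying $P_n\otimes e_1 \simeq 0$ (equivalently $e_1\otimes P_n\simeq 0$) by explicitly matching terms, getting the signs from the alternating $\pm$ pattern and from the Koszul signs in the convolution differential to cooperate, and confirming that the infinitely-many Gaussian eliminations are legitimate (justified, as in Proposition \ref{prop-resProp}, by Remark \ref{remark-generalizedConv} since the complex is bounded on the right). Note we get the ``from above'' version for free afterward by Corollary \ref{cor-turnbackKillingSymmetry}. Everything else --- existence of the convolution, axiom (1), and the $e_i$ with $i\geq 2$ part of axiom (2) --- I expect to be routine given the results already in hand.
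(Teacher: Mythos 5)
Your overall strategy (reprove the Cooper--Krushkal existence and turnback-killing results directly, then deduce the remaining axiom by reassociation) is reasonable in outline, but note that the paper does \emph{not} reprove these facts: its proof of Theorem \ref{thm-CKrecursion} cites \cite{CK12a} for both the existence of a convolution $C$ of the sequence (\ref{eq-CKsequence}) and the fact that any such convolution kills turnbacks from below, and the only new content is the observation that $C^\ast\otimes C$ (with $(-)^\ast$ the vertical reflection) kills turnbacks from both sides, so that a strong projector in $\Kom(n)$ exists, after which Corollary \ref{cor-turnbackKillingSymmetry} can legitimately be applied to $C$ itself. Measured against what a self-contained proof would require, your sketch has a genuine gap at the existence step: you assert that each higher component of the convolution differential is ``automatically null in homology by the previous steps.'' That is false as stated --- the previous steps only guarantee that the relevant obstruction is a \emph{cycle}; whether it is a \emph{boundary} is exactly the Massey-product obstruction (cf.\ the discussion following Definition \ref{def-convolutions2}), and killing it is where the real work of \cite{CK12a} lies. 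Indeed, in the parallel existence proof for $Q_n$ (Theorem \ref{thm-QnExistence}) the author must compute an $\Ext^{2}$ group and show it vanishes; nothing of the sort is automatic.

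Two further problems. Your appeal to Corollary \ref{cor-turnbackKillingSymmetry} to obtain turnback-killing from above ``for free'' is circular: as the paper explicitly remarks, the proof of that corollary presupposes that a Cooper--Krushkal projector in $\Kom(n)$ already exists, which is precisely what you are constructing. The paper's device $P_n':=C^\ast\otimes C$ is what breaks this circle; your proposal needs it (or a direct verification that $e_i\otimes C\simeq 0$, which does not follow ``by symmetry,'' since the sequence (\ref{eq-CKsequence}) is not invariant under vertical reflection). Finally, you correctly identify the contraction of $C\otimes e_1$ --- the turnback on the last two strands --- as the heart of the matter but do not carry it out; since that computation together with the vanishing of the convolution obstructions constitutes essentially the entire content of the theorem, the proposal as written establishes only the easy parts (axiom (1) of Definition \ref{def-strongCKproj} and the turnbacks not involving the last strand).
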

\begin{remark}
In [CK12, \S 7.4] the map between adjacent $\FKone$ terms is defined to be
\[
\wwpic{FKpicOne_bottomDot} - \wwpic{FKpicOne_midDot}
\]
rather than our 
\[
\wwpic{FKpicOne_bottomDot} + \wwpic{FKpicOne_topDot}.
\]
But by dot-hopping (Corollary \ref{cor-dotHop}) the two maps are homotopic and, by Theorem \ref{thm-diffPerturbing}, any convolution of one sequence is isomorphic to a convolution of the other. 
\end{remark}

\begin{proof}[Proof of Theorem \ref{thm-CKrecursion}]
Up to reversing the homological grading conventions, it is shown in \cite{CK12a} that a convolution $C\in\Kom^-(n)$ of the sequence (\ref{eq-CKsequence}) relative to $P_{n-1}$ exists and, and any such convolution kills turnbacks from below.

Let $(-)^\ast:\Kom^-(n)\rightarrow \Kom^-(n)$ denote the covariant functor which reflects diagrams vertically.  Then $C^\ast\in\Kom^-(n)$ kills turnbacks from above.  Thus, $P_n':=C^\ast\otimes C$ kills turnbacks from above and below and satisfies the axioms for a strong Cooper-Krushkal projector (Definition \ref{def-strongCKproj}).  The condition on $1_n$ summands is clear from inspection of the sequence (\ref{eq-CKsequence}).  Thus, a strong Cooper-Krushkal projector $P_n'$ exists.  Given this, Corollary \ref{cor-turnbackKillingSymmetry} says that $C$ kills turnbacks, so that tensoring with the vertical reflection was unnecessary after all.  Thus, $C$ is a strong Cooper-Krushkal projector.
\end{proof}

\subsection{A well-defined Euler characteristic}
\label{subsec-eulerChar}
The usual notion of Grothendieck group is trivial for the categories $\Kom^-(n)$ of semi-infinite complexes.  However, all of the complexes in this paper can be assumed to lie in some subcategory for which there is a well-defined Euler characteristic, which takes values in $\TL_n':=\Z[q\inv]\llbracket q\rrbracket \otimes_{\Z[q,q\inv]} \TL_n^\Z$, where $\TL_n^\Z\subset \TL_n$ is the $\Z[q,q\inv]$-subalgebra generated by diagrams.  This is discussed in [CK12, \S 2.7.1].  The fact that the Euler characteristic descends to the homotopy category is discussed in \cite{Ros11}.

By clearing denominators and expanding rational functions into power series, we obtain an inclusion of rings $\Q(q)\hookrightarrow \Z[q\inv]\llbracket q\rrbracket$, hence $\Q(q)\otimes_{\Z[q,q\inv]} \TL_n^\Z$ can be regarded as a subalgebra of $\TL_n'$ and $\TL_n$.  We say that a complex $A\in\Kom^-(n)$ \emph{categorifies} $a\in\TL_n'\cap \TL_n$ if $A$ has a well-defined Euler characteristic which equals $a$.

\section{Categorified Jones-Wenzl quasi-idempotents}
\label{sec-quasiprojectors}
One can see from the recursion (\ref{eq-JWrecursion}) that certain products of elements $(1-q^{2k})p_n$ are linear combinations of Temperley-Lieb diagrams with \emph{polynomial coefficients}, as opposed to \emph{rational} coefficients.  In this section our goal is to show how to categorify the expressions $(1-q^{2k})p_n$ in terms of certain complexes $Q_k$ over $\bn{k}$, in such a way that corresponding tensor products of the $Q_k$ are equivalent to bounded complexes.  We then establish some basic properties of the $Q_k$.  One can recover the Cooper-Krushkal projector $P_n$ as a periodic complex constructed from the $Q_k$, from which originates an action of the ring $\Z[u_1,\ldots,u_n]$ on the Cooper-Krushkal projector $P_n$.

Let us describe another motivation for this polynomial action, coming from the apparent periodicity in the Cooper-Krushkal recursion, described in \S \ref{subsec-CKrecursion}.  Consider the following diagram, in which each row is the Cooper-Krushkal sequence $E_\bullet$ and we have omitted all degree shifts:
\begin{equation}\label{eq-CKperiodicityMap}
\begin{diagram}
\cdots &\rTo & \FKtwo &\rTo & \FKone &\rTo & \FKzero & \rTo & 0 &  & &&\\
\cdots & & \dTo^{\Id} & & \dTo^{\Id} & & \dTo^{\wmpic{FKpic0-1} }   & & \dTo  &&&& \\
\cdots &\rTo &\FKtwo &\rTo & \FKone &\rTo & \FKone &\rTo^{\wmpic{FKpic1-2} }  & \FKtwo & \rTo &\cdots &\rTo & \FKone &\rTo & \FKzero
\end{diagram}
\end{equation}
The right-most nontrivial square commutes up to homotopy, and every other square commutes on the nose.  That is to say, (\ref{eq-CKperiodicityMap}) defines a map of homotopy complexes $q^{2n}E[2-2n]_\bullet\rightarrow E_\bullet$, where $[1]$ denotes the upward grading shift, $E[1]_k = E_{k-1}$.  It is one of the goals of this paper to realize this homotopy chain map as an honest chain map.  That is to say, we wish to add some more maps pointing to the right and (non-strictly) down such that (1) the rows become the projector $P_n$ and (2) the non-horizontal components define a chain map $U_n: t^{2-2n}q^{2n}P_n\rightarrow P_n$.  Constructing $U_n$ directly is quite difficult because of the higher differentials required to make the Cooper-Krushkal sequence a chain complex.  Nonetheless, if $U_n$ were to exist, then $\Cone(U_n)$ would be homotopy equivalent to a complex
\[
Q_n = \Big(q^{2n}\FKzero \rightarrow q^{2n-1}\FKone \rightarrow\cdots  \rightarrow q^{n+1}\FKult  \rightarrow q^{n-1}\FKult\rightarrow \cdots\rightarrow q\FKone  \rightarrow \underline{\FKzero}\Big)
\]
whose differential is a sum of arrows pointing non-strictly to the right, and whose length 0 and 1 components are understood.  In this section we construct $U_n$ indirectly by first constructing such a complex $Q_n$.  We then recover $P_n$ as a periodic complex built from $Q_n$, from which we can define the map $U_n$ as desired.  We first consider the case $n=2$.

\subsection{The case $n=2$}
\label{subsec-Q2}
In case $n=2$, the homotopy chain map (\ref{eq-CKperiodicityMap}) is the following honest chain map $U_2:t^{-2}q^4P_2\rightarrow P_2$:
\begin{equation} \label{eq-u2}
U_2 : = \left(\begin{diagram}[height=2.5em]
 \cdots & \rTo^{\mpic{topdot}+\mpic{bottomdot}} &  q^7 \pic{turnback} & \rTo^{\mpic{topdot}-\mpic{bottomdot}}  & q^5 \pic{turnback} & \rTo^{\mpic{isaddle}} & q^4{\pic{straightThrough}} & \rTo & 0 & & \\
\cdots & & \dTo^\Id & & \dTo^\Id & & \dTo^{\mpic{hsaddle}} & & \dTo & & \\
 \cdots & \rTo^{\mpic{topdot}+\mpic{bottomdot}} & q^7 \pic{turnback} & \rTo^{\mpic{topdot}-\mpic{bottomdot}} & q^5 \pic{turnback} & \rTo^{\mpic{topdot}+\mpic{bottomdot}} & q^3 \pic{turnback} & \rTo^{\mpic{topdot}-\mpic{bottomdot}} & q \pic{turnback} & \rTo^{\mpic{isaddle}} & \underline{\pic{straightThrough}}
\end{diagram}\ \right)
\end{equation}
By contracting the identity maps (Gaussian elimination, Proposition \ref{prop-gauss}) we see that $\Cone(U_2)$ deformation retracts onto the much simpler chain complex
\begin{equation}\label{eq-Q2}
\begin{diagram}
Q_2 &:= & q^4\pic{straightThrough} &\rTo^{\mpic{hsaddle}} & q^3\pic{turnback}& \rTo^{\mpic{topdot}-\mpic{bottomdot}} & q \pic{turnback} & \rTo^{\mpic{isaddle}} & \underline{\pic{straightThrough}}
\end{diagram}
\end{equation}
We can now recover $P_2$ as a periodic chain complex built out of copies of $Q_2$.  Indeed, let $P_2'$ denote the following semi-infinite chain complex:
\begin{equation}\label{eq-periodicP2}
\begin{diagram}[height=2.5em]
&&&& && q^4\pic{straightThrough} &\rTo^{\mpic{hsaddle}} & q^3\pic{turnback} & \rTo^{\mpic{topdot}-\mpic{bottomdot}} & q\pic{turnback} &\rTo^{\mpic{isaddle}} & \underline{\pic{straightThrough}}\\
 &&&&  &&&\rdTo^{-\Id}& &&&& \\
& & q^8\pic{straightThrough} &\rTo^{\mpic{hsaddle}} & q^7\pic{turnback} &\rTo^{\mpic{topdot}-\mpic{bottomdot}}& q^5\pic{turnback} &\rTo^{\mpic{isaddle}}& q^4\pic{straightThrough} &&&&\\
 &&&\rdTo^{-\Id}& &&&& &&&& \\
\dots & \rTo^{\mpic{topdot}-\mpic{bottomdot}} & q^9\pic{turnback} &\rTo^{\mpic{isaddle}} & q^8\pic{straightThrough} &&&& &&&&   
\end{diagram}
\end{equation}
One interprets the above diagram as a chain complex whose chain groups are given by summing along columns, the components of whose differential are indicated by the labelled arrows.  Contracting the identity maps, we see that in fact $P_2'$ deformation retracts onto $P_2$.  The signs above ensure that under the equivalence $P_2'\simeq P_2$, the obvious map $t^{-2}q^4P_2'\rightarrow P_2'$ (inclusion of a subcomplex) corresponds to the map (\ref{eq-u2}), and not its negative.

\subsection{The symmetric Cooper-Krushkal sequence}
\label{subsec-symmetricSeq}
It is remarkable that the above description of $P_2$ generalizes to all of the projectors $P_n$.

\begin{definition}\label{def-symmetricSeq}
Let $P_{n-1}\in\Kom(n-1)$ be a Cooper-Krushkal projector and define the \emph{symmetric Cooper-Krushkal sequence} relative to $P_{n-1}$ to be the following sequence $E_\bullet$ of chain complexes in $\Kom(n)$ and chain maps:
\vskip8pt
\begin{equation}\label{eq-symmetricSeq}
\begin{diagram}[height=3em]
q^{n-1}\FKult  & \rTo & q^{n-2} \FKpenult & \rTo &  \dots & \rTo & q^2\FKtwo & \rTo & q \FKone & \rTo &    \underline{\FKzero}\\
 \uTo   &&&&&&\\
q^{n+1} \FKult & \lTo & q^{n+2} \FKpenult  & \lTo &  \dots & \lTo & q^{2n-2}\FKtwo & \lTo &  q^{2n-1}  \FKone & \lTo &  q^{2n}  \FKzero
\end{diagram}
 \end{equation}\vskip8pt
where the white box denotes $P_{n-1}$.  The maps in this sequence are given by
\begin{itemize}
\item $\wwpic{FKpicGeneric-}$ between two terms in the top row.\vskip7pt
\item $\wwpic{FKpicGeneric+}$ between two terms in the bottom row.\vskip7pt
\item $\wwpic{FKpicUlt_topDot} - \wwpic{FKpicUlt_bottomDot} $ between the two terms in the left column. 
\end{itemize}
\end{definition}
We index this sequence as $E_{1-2n}\rightarrow \cdots\rightarrow E_0$, so that $E_0$ corresponds to the underlined term.  The word \emph{symmetric} refers to the fact that $q^{2n-i}E_{-i}=q^{i}E_{1-2n+i}$ for all $0\leq i\leq n-1$.
\begin{proposition}\label{symSeqHomotopyProp}
The symmetric Cooper-Krushkal sequence (\ref{eq-symmetricSeq}) is a homotopy chain complex.
\end{proposition}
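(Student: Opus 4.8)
The plan is to imitate the proof of Proposition \ref{prop-CKhomotopyCx} almost verbatim. Writing the symmetric Cooper-Krushkal sequence (\ref{eq-symmetricSeq}) as $E_\bullet = (E_{1-2n}\buildrel\a_{1-2n}\over\longrightarrow\cdots\buildrel\a_{-1}\over\longrightarrow E_0)$, with $E_0$ the underlined term and $E_{1-2n}=q^{2n}\FKzero$ the bottom-right term, I would verify $\a_{i+1}\circ\a_i\simeq 0$ for each $1-2n\le i\le -2$ by first classifying the consecutive pairs of arrows. Inspecting Definition \ref{def-symmetricSeq}, there are only two kinds of pair: a \emph{Type~I} pair, in which both $\a_i$ and $\a_{i+1}$ lie in the same row and are therefore ``generic'' saddle maps $\wwpic{FKpicGeneric-}$ or $\wwpic{FKpicGeneric+}$; and a \emph{Type~II} pair, straddling one of the two corners where the left column meets a row, in which exactly one of $\a_i,\a_{i+1}$ is the difference-of-dots map $\wwpic{FKpicUlt_topDot}-\wwpic{FKpicUlt_bottomDot}$ and the other is a generic saddle map. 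It is worth recording at this point that, in contrast with the full sequence (\ref{eq-CKsequence}), the symmetric sequence never uses the ``sum of dots'' map $\wwpic{FKpicOne_topDot}+\wwpic{FKpicOne_bottomDot}$, so the most delicate part of the earlier argument (Case~3 of the proof of Proposition \ref{prop-CKhomotopyCx}) is not needed here.

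For a Type~I pair I would argue exactly as in Case~1 of the proof of Proposition \ref{prop-CKhomotopyCx}: the composite $\a_{i+1}\circ\a_i$ is built from two saddle cobordisms supported in disjoint regions of the diagram, so by isotopy invariance of morphisms in $\Cob_n$ the two saddles may be performed in either order, whence $\a_{i+1}\circ\a_i$ factors through a chain complex in which a turnback is attached to the $P_{n-1}$-box; this complex is contractible because $P_{n-1}$ kills turnbacks (axiom (CK2) of Definition \ref{def-CKproj}), and therefore $\a_{i+1}\circ\a_i\simeq 0$. For a Type~II pair I would argue as in Case~2 of that proof: composing a generic saddle map with $\wwpic{FKpicUlt_topDot}-\wwpic{FKpicUlt_bottomDot}$ (on whichever side it occurs) produces a difference of two cobordisms that differ only by sliding a dot across a single connected sheet, hence are isotopic, so the composite vanishes on the nose. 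Running through all consecutive pairs in this way completes the proof.

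I do not expect any genuine obstacle: the only real work is the bookkeeping of the previous two paragraphs — verifying that every consecutive composition is of Type~I or Type~II and matches one of the two cases already handled for the Cooper-Krushkal sequence. The mild subtlety worth double-checking is the behaviour at the two ``corner'' terms $q^{n\pm 1}\FKult$, and the low-$n$ degeneracies (e.g. $\FKult=\FKpenult$ when $n=2,3$), to make sure that the two arrows meeting there really are one generic saddle and one copy of $\wwpic{FKpicUlt_topDot}-\wwpic{FKpicUlt_bottomDot}$, so that the Type~II analysis applies; the $n=2$ instance of the sequence is exactly the complex $Q_2$ of (\ref{eq-Q2}), where both corner compositions are visibly zero, which serves as a reassuring base case.
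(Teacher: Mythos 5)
Your proposal is correct and matches the paper's proof, which is literally just ``Similar to the proof of Proposition~\ref{prop-CKhomotopyCx}''; you have simply spelled out the case analysis that remark leaves implicit. Your observation that the sum-of-dots map (Case~3 of the earlier proof, the only case needing neck-cutting) never occurs in the symmetric sequence is accurate and is exactly why the adaptation is routine.
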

\begin{proof}
Similar to the proof of Proposition \ref{prop-CKhomotopyCx}.
\end{proof}

 \begin{definition}\label{def-relativeSymSeq}
 Suppose $K\in\Kom^-(n-1)$ is any chain complex which kills turnbacks.  Define the \emph{symmetric Cooper-Krushkal sequence relative to $K$} to be the sequence of chain complexes $E_i\in\Kom^-(n)$ and chain maps defined precisely as in Definition \ref{def-symmetricSeq}, with $P_{n-1}$ replaced everywhere by $K$.
 \end{definition}
 The proof that the symmetric Cooper-Krushkal sequence is a homotopy chain complex uses only the turnback killing property, hence it applies to $K$ as well.

\begin{definition}\label{def-symProj}
Suppose $K\in\Kom(n-1)$ kills turnbacks.  Call a complex $Q_n\in\Kom(n)$ a \emph{symmetric projector} relative to $K$ if it is a convolution of the sequence (\ref{eq-symmetricSeq}) relative to $K$.  Call $Q_n$ simply a \emph{symmetric projector} if it is a symmetric projector relative to some $P_{n-1}$.  By convention we also call $Q_1:=\Cone(b)\in\Kom(1)$ a symmetric projector, where $b=\pic{dottedIdSheet}:q^21_1\rightarrow 1_1$.
\end{definition}

\subsection{Exressing $P_n$ in terms of the $Q_n$}
\label{subsec-standardCx}
We now show how to obtain $P_n$ from $Q_n$.  Then, assuming the existence of $Q_2,\ldots,Q_n$, we construct a family of Cooper-Krushkal projectors $P_1,\ldots,P_n$ which is useful for calculations.

In the following definition, assume $n\geq 2$ and let $E_\bullet = E_{1-2n}\rightarrow \cdots \rightarrow E_0$ be the symmetric Cooper-Krushkal sequence (Definition \ref{def-symmetricSeq}) relative to $P_{n-1}\in\Kom(n-1)$.  We have $E_{1-2n} = q^{2n}P_{n-1}\sqcup 1_1$ and $E_0  = P_{n-1}\sqcup 1_1$.
\begin{definition}\label{connectingDiffDef}   If $Q_n = \Tot(E_\bullet)$ is a symmetric projector then we have chain maps $\eta_n:P_{n-1}\sqcup 1_1\rightarrow Q_n$ and $\e_n:Q_n\rightarrow t^{1-2n}q^{2n}P_{n-1}\sqcup 1_1$ given by the inclusion of $E_0$, respectively the projection onto $t^{1-2n}E_{1-2n}$.  Put $\partial_n:= - \eta_n\circ \e_n$.
\end{definition}
In the sequel we will often encounter expressions $A\otimes C$, where $A$ is a bigraded abelian group and $C\in\Kom(n)$ is a chain complex.  We interpret such expressions as an appropriate direct sums of shifted copies of $C$.  For example, if $x$ is an indeterminate of bidegree $(a,b)$ then
\[
\Z[x]\otimes C:= \bigoplus_{k\geq 0}(t^{ak}q^{bk})A
\]
whenever this direct sum exists.  For $f\in\Homg^{i,j}(C,D)$, it is clear how to interpret $x^k\otimes f$ as an element of $\Homg^{ak+i,bk+j}(\Z[x]\otimes C,\Z[x]\otimes D)$.  We will simplify such complexes using Theorem \ref{thm-ringSDR}.

\begin{theorem}\label{thm-PfromQ}  Let $u_n$ denote a formal indeterminate of bidegree $(2-2n,2n)$.  If $Q_n\in\Kom(n)$ is a symmetric projector $(n\geq 2)$, then the chain complex $\Z[u_n]\otimes Q_n$ with differential $1\otimes d_{Q_n}+u_n\otimes \partial_n$ is Cooper-Krushkal projector.
\end{theorem}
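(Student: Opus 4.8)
Write $P_n'$ for the complex $\Z[u_n]\otimes Q_n$ equipped with the differential $1\otimes d_{Q_n}+u_n\otimes\partial_n$. The plan is to exhibit $P_n'$ as a deformation retract of a convolution of the ordinary Cooper--Krushkal sequence (\ref{eq-CKsequence}) relative to $P_{n-1}$, and then to apply the Cooper--Krushkal recursion (Theorem \ref{thm-CKrecursion}) together with the fact that the Cooper--Krushkal axioms are invariant under homotopy equivalence. First the routine points: since $2-2n<0$ the summands $t^{(2-2n)k}q^{2nk}Q_n$ ($k\ge 0$) move downward in homological degree, so each chain group of $P_n'$ is a finite direct sum and $P_n'$ is a well-defined object of $\Kom^-(n)$; and $1\otimes d_{Q_n}+u_n\otimes\partial_n$ squares to zero because $\partial_n^2=0$ and $d_{Q_n}\partial_n+\partial_n d_{Q_n}=0$, both of which are immediate from $\partial_n=-\eta_n\circ\e_n$ once one notes that $\eta_n$ is the inclusion of the subcomplex $E_0\subseteq Q_n$, that $\e_n$ is the projection onto the quotient complex $t^{1-2n}E_{1-2n}$ (these being chain maps since a convolution differential is lower triangular), and that $\e_n\circ\eta_n=0$ because $E_0$ and $E_{1-2n}$ are distinct summands of $Q_n$ when $n\ge 2$.

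The heart of the proof generalizes the $n=2$ computation of \S \ref{subsec-Q2} (compare (\ref{eq-Q2})--(\ref{eq-periodicP2})). Since $Q_n$ is itself a convolution of the $2n$-term symmetric Cooper--Krushkal sequence $E_{1-2n}\to\cdots\to E_0$, reassociating the convolution of convolutions presents $P_n'$ as a convolution whose terms are the shifted copies of the $E_i$ indexed by $k\ge 0$. The only components of the differential linking distinct copies come from $u_n\otimes\partial_n$: these are the maps from the $E_{1-2n}$-summand of the $k$-th copy to the $E_0$-summand of the $(k{+}1)$-st copy, and since $E_{1-2n}=q^{2n}P_{n-1}\sqcup 1_1$, $E_0=P_{n-1}\sqcup 1_1$, and $\deg(u_n)=(2-2n,2n)$, each is an identity morphism up to the sign supplied by $\partial_n=-\eta_n\circ\e_n$. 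I would then eliminate all of these isomorphisms at once by Gaussian elimination (Proposition \ref{prop-gauss}); this is an infinite but locally finite family of cancellations, justified by the convolution/deformation-retract machinery of the appendix (Remark \ref{remark-generalizedConv}, Theorem \ref{convSdrThm}). Each correction term produced is a composite of two convolution maps with the inverse of one of these identities; by neck cutting and dot-hopping (Corollary \ref{cor-dotHop}) such composites are exactly the connecting maps appearing in (\ref{eq-CKsequence}). Tracking this carefully, the surviving complex $\widehat P_n$ is precisely a convolution of the Cooper--Krushkal sequence (\ref{eq-CKsequence}) relative to $P_{n-1}$, with the grading shifts and signs displayed there, and $P_n'$ deformation retracts onto $\widehat P_n$.

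Now $P_{n-1}$ is a Cooper--Krushkal projector, so by Theorem \ref{thm-CKrecursion} the convolution $\widehat P_n$ is a strong Cooper--Krushkal projector, hence (Proposition \ref{prop-strongCKproj}) a Cooper--Krushkal projector, with unit the inclusion $1_n\hookrightarrow\widehat P_n$. Transporting this unit along a homotopy inverse of the equivalence $P_n'\simeq\widehat P_n$ gives a chain map $\iota\colon 1_n\to P_n'$; axiom (CK2) for $(P_n',\iota)$ then holds because turnback-killing is preserved by homotopy equivalence, and axiom (CK1) holds because $\Cone(\iota)\simeq\Cone(1_n\hookrightarrow\widehat P_n)$ by Lemma \ref{lemma-coneInvariance}. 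Hence $P_n'=\Z[u_n]\otimes Q_n$ is a Cooper--Krushkal projector, as claimed. For concreteness, $\iota$ may be taken to be $1_n=1_{n-1}\sqcup 1_1\xrightarrow{\,\iota_{n-1}\sqcup 1\,}E_0\hookrightarrow P_n'$, where $\iota_{n-1}$ is the unit of $P_{n-1}$ and the last arrow is the inclusion of the subcomplex $E_0$ living inside the $k=0$ copy of $Q_n$, on which $\partial_n$ vanishes.

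I expect the genuine obstacle to lie in the middle step: identifying precisely which components of the differential are the identity isomorphisms to be cancelled, verifying that the simultaneous Gaussian elimination introduces no spurious higher differentials beyond those of an honest convolution, and matching the surviving connecting maps, grading shifts, and signs on the nose with (\ref{eq-CKsequence}) --- in particular reconciling everything with the convolution sign convention $d_{ii}=(-1)^i d_{E_i}$. The explicit $n=2$ case of \S \ref{subsec-Q2} is the template; the work is in carrying out the identification uniformly in $n$.
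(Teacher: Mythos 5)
Your proposal is correct and follows essentially the same route as the paper: the paper likewise presents $\Z[u_n]\otimes Q_n$ as the periodic staircase of copies of the symmetric sequence (its equation (\ref{eq-periodicPn})), cancels the $-\Id$ components joining the $E_{1-2n}$ summand of one copy to the $E_0$ summand of the next by Gaussian elimination, and identifies the result as a convolution of the Cooper--Krushkal sequence, so that Theorem \ref{thm-CKrecursion} applies. Your additional checks (well-definedness in $\Kom^-(n)$, $d^2=0$ via $\e_n\circ\eta_n=0$, the neck-cutting identification of the correction terms, and the transport of the unit) are all accurate and fill in details the paper leaves implicit.
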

\begin{remark}\label{remark-u1}
In case $n=1$ we run into a technical difficulty, which is that the direct sum $\Z[u_1]\otimes Q_1 = \bigoplus_{k\geq 0}q^{2k}Q_1$ is not finite in each homological degree, hence does not exist in $\Kom(1)$.  If we want to treat the variable $u_1$ similarly to $u_2,\ldots,u_n$, we could adjoin countable direct sums to $\bn{n}$, obtaining a category $\bn{n}^\oplus$.  Note that if $C\in\Kom^-(n)$ is any complex, then $\Z[u_1,\ldots,u_n]\otimes C$ exists in $\Kom(n)^\oplus:=\Kom(\bn{n}^\oplus)$.
\end{remark}

\begin{proof}[Proof of Theorem \ref{thm-PfromQ}]
Suppose $Q_n$ is a convolution of (\ref{eq-symmetricSeq}) relative to $P_{n-1}$, and let $C:=\Z[u_n]\otimes Q_n$ with differential $1\otimes d_{Q_n}+u_n\otimes \partial_n$.  Then $C$ is the total complex of a bicomplex as in:
\begin{equation}\label{eq-periodicPn}
\begin{diagram}[height=3em,width=1.5em]
&& &&&&&&   \Big( \FKzero & \rTo &\FKone & \rTo & \cdots & \rTo & \FKone & \rTo & \FKzero \Big) \\
&&&& &&&&&\rdTo^{-\Id}&   &&&&&&  \\
&&\Big( \FKzero & \rTo &\FKone & \rTo & \cdots &\rTo & \FKone & \rTo & \FKzero  \Big)&&&&&&  \\
&&&\rdTo^{-\Id}&  &&&&&& &&&&&& \\
\cdots &\rTo & \FKone &\rTo & \FKzero  \Big) &&&&&& &&&&&&
\end{diagram}
\end{equation}
  Contracting the isomorphisms in this expression using Gaussian elimination (Proposition \ref{prop-gauss}) produces a homotopy equivalent complex which is a Cooper-Krushkal projector by Theorem \ref{thm-CKrecursion}.  Thus $C$ is a Cooper-Krushkal projector.
\end{proof}

\begin{remark}
Here, the unit $\iota:1_n\rightarrow C$ is the composition $1_n\rightarrow P_{n-1}\sqcup 1_1\buildrel\phi\over \rightarrow C$, where $\phi = 1 \otimes \eta:P_{n-1}\sqcup 1_1\rightarrow \Z[u_n]\otimes Q_n$ is the inclusion of the right-most summand of (\ref{eq-periodicPn}).
\end{remark}

\begin{remark}\label{remark-sumIsProduct}
If $K\in\Kom^-(n)$, then $\Z[u_2,u_3,\ldots,u_n]\otimes K$ satisfies hypotheses of Theorem \ref{thm-ringSDR}.  Indeed, for $k>1$ the indeterminate $u_k$ has strictly negative homological degree.  If $K$ is bounded from above in homological degree, then $\bigoplus_{i\geq 0}u_k^i \otimes K$ is a finite direct sum in each homological degree.   Such direct sums are isomorphic to direct products in the category $\Kom^-(n)$, in which morphisms are \emph{degree preserving} chain maps.  A different argument takes care of the variable $u_1$ as well.
\end{remark}

If we can construct $Q_n$, we will have succeeded in constructing the map (\ref{eq-CKperiodicityMap}):
\begin{corollary}\label{cor-QfromP}
If $Q_n\in\Kom(n)$ is a symmetric projector, then there is a Cooper-Krushkal projector $P_n$ and a chain map $U_n:t^{2-2n}q^{2n}P_n\rightarrow P_n$ such that $\Cone(U_n)\simeq Q_n$.\qed
\end{corollary}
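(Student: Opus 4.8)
The plan is to let $P_n$ be the Cooper--Krushkal projector produced by Theorem~\ref{thm-PfromQ}: set $P_n:=\Z[u_n]\otimes Q_n$ with differential $1\otimes d_{Q_n}+u_n\otimes\partial_n$, where $u_n$ is the formal indeterminate of bidegree $(2-2n,2n)$ and $\partial_n$ is as in Definition~\ref{connectingDiffDef}. Since $u_n$ is central and this differential is $\Z[u_n]$-linear, multiplication by $u_n$ is a chain endomorphism of $P_n$ of bidegree $(2-2n,2n)$; equivalently it is a degree $(0,0)$ chain map $U_n\colon t^{2-2n}q^{2n}P_n\to P_n$, the map asserted by the corollary. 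It then remains only to identify $\Cone(U_n)$ up to homotopy, and this I would do by a direct computation patterned on the case $n=2$ worked out in (\ref{eq-u2})--(\ref{eq-periodicP2}).

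Write $P_n=\bigoplus_{k\geq 0}u_n^kQ_n$, absorbing the grading shifts into the symbol $u_n^k$. Under the canonical identification $t^{2-2n}q^{2n}P_n\cong u_nP_n=\bigoplus_{k\geq 1}u_n^kQ_n$ the map $U_n$ becomes the inclusion of this subcomplex, so on each summand it is the $(2-2n,2n)$-shifted identity $u_n^kQ_n\to u_n^kQ_n$. Hence $\Cone(U_n)=t^{1-2n}q^{2n}P_n\oplus P_n=\bigl(\bigoplus_{j\geq 1}t^{-1}u_n^jQ_n\bigr)\oplus\bigl(\bigoplus_{k\geq 0}u_n^kQ_n\bigr)$, and for every $j\geq 1$ the component of the differential joining $t^{-1}u_n^jQ_n$ to $u_n^jQ_n$ is an isomorphism; the only summand not occurring in such a pair is $u_n^0Q_n=Q_n$.

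I would then cancel all of these isomorphism pairs by Gaussian elimination (Proposition~\ref{prop-gauss}). For $n\geq 2$ this is legitimate since $u_n$ has negative homological degree and $Q_n$ is bounded above, so in each homological degree only finitely many summands occur and the elimination is organized by the filtration by powers of $u_n$ (this is exactly the kind of book-keeping handled by the tools of \S\ref{sec-homAlg}, and is the one point that merits care). After the cancellations the surviving complex is $Q_n$, and its differential is unchanged: the only component of $d_{P_n}$ leaving $u_n^0Q_n$ is $u_n\otimes\partial_n\colon Q_n\to u_n^1Q_n$, which maps into a cancelled summand, while there is no component of the differential returning into $u_n^0Q_n$ (the differential of the shifted factor $t^{1-2n}q^{2n}P_n$ preserves that factor, and $U_n$ sends $t^{-1}u_n^jQ_n$ into $u_n^jQ_n$, never into $u_n^0Q_n$), so Gaussian elimination introduces no correction term. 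This yields a deformation retract $\Cone(U_n)\to Q_n$, hence $\Cone(U_n)\simeq Q_n$, as desired. Conceptually this is just the Koszul-duality fact that the mapping cone of multiplication by $u_n$ on the free $\Z[u_n]$-complex $P_n$ recovers the quotient $P_n/u_nP_n=Q_n$; the main (and essentially only) obstacle is organizing the infinitely many Gaussian eliminations.
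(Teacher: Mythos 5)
Your proposal is correct and is exactly the argument the paper intends: the corollary is stated with an immediate \qed after Theorem \ref{thm-PfromQ}, with $P_n=\Z[u_n]\otimes Q_n$, $U_n$ the $u_n$-action, and $\Cone(U_n)\simeq Q_n$ obtained by cancelling the identity components, precisely as worked out explicitly for $n=2$ in \S\ref{subsec-Q2}. The only stylistic remark is that the infinitely many eliminations can be packaged as a single application of Proposition \ref{prop-gauss}, since the summands you cancel form one contractible block isomorphic to $\Cone(\Id_{u_nP_n})$, with no differential component returning to the surviving $u_n^0Q_n$ — which is the same observation you make.
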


\begin{corollary}\label{cor-QkillTurnbacks}
Any symmetric projector $Q_n$ kills turnbacks.
\end{corollary}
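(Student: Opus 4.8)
The plan is to derive this directly from Corollary \ref{cor-QfromP}. For $n \geq 2$, that corollary exhibits a Cooper-Krushkal projector $P_n$ together with a chain map $U_n : t^{2-2n}q^{2n}P_n \to P_n$ such that $Q_n \simeq \Cone(U_n)$; in particular $Q_n$ lies in $\Kom^-(n)$, since $P_n$ does. Because $P_n$ satisfies axiom (CK2), i.e.\ $P_n$ kills turnbacks, the turnback-killing of $Q_n$ should be a formal consequence.

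Concretely, let $N \in \Kom^-(n)$ be any complex with $\tau(N) < n$. The functor $(-)\otimes N : \Kom^-(n) \to \Kom^-(n)$ is additive, so (using Proposition \ref{prop-planarCompExt}) it preserves homotopy equivalences and commutes with the formation of mapping cones. Hence
\[
Q_n \otimes N \;\simeq\; \Cone(U_n)\otimes N \;\cong\; \Cone\big(U_n \otimes \Id_N\big),
\]
where $U_n \otimes \Id_N : t^{2-2n}q^{2n}(P_n \otimes N) \to P_n \otimes N$. Since $P_n$ kills turnbacks, both the source and target of $U_n \otimes \Id_N$ are contractible, so $U_n \otimes \Id_N$ is a homotopy equivalence (with the zero map as a homotopy inverse) and $\Cone(U_n \otimes \Id_N) \simeq 0$ by Lemma \ref{lemma-mappingConeEquiv}. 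Running the identical argument with $N \otimes (-)$ in place of $(-)\otimes N$ shows $N \otimes Q_n \simeq 0$ as well, so $Q_n$ kills turnbacks.

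The case $n = 1$ is handled separately and is trivial: $Q_1 = \Cone(b) \in \Kom^-(1)$, and there are no Temperley-Lieb diagrams $a \in \bn{1}$ with $\tau(a) < 1$, so the defining condition of Definition \ref{def-turnbackDeath} is vacuous.

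I do not expect a genuine obstacle here: once Corollary \ref{cor-QfromP} is available the proof is purely formal, and the only points requiring care are bookkeeping — confirming $Q_n \in \Kom^-(n)$ (so that Definition \ref{def-turnbackDeath} applies) and confirming that $(-)\otimes N$, extended to complexes, both preserves homotopy equivalences and carries $\Cone(U_n)$ to $\Cone(U_n \otimes \Id_N)$. If one wished to avoid invoking Corollary \ref{cor-QfromP}, the fallback is to tensor the symmetric Cooper-Krushkal sequence of Definition \ref{def-symmetricSeq} with $e_i$ directly and show the resulting convolution is contractible, using the same saddle-versus-dot case analysis (together with the distinction between $i = n-1$ and $i < n-1$) as in the proofs of Proposition \ref{prop-CKhomotopyCx} and Theorem \ref{thm-CKrecursion}; but the cone description makes this unnecessary.
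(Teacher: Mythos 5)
Your proof is correct and takes essentially the same route as the paper: the paper's proof is the one-line observation that $Q_n\simeq\Cone(U_n)$ (via Corollary \ref{cor-QfromP}) kills turnbacks because $P_n$ does, and your argument simply fills in the formal details of why a cone on a map between turnback-killing complexes kills turnbacks. The extra care about $n=1$ and about $(-)\otimes N$ commuting with cones is fine but not a different method.
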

\begin{proof}
If $Q_n$ exists then it is equivalent to a mapping cone $\Cone(U_n)=(t^{1-2n}q^{2n}P_n\buildrel U_n\over\longrightarrow P_n)$.   This complex kills turnbacks since $P_n$ does.
\end{proof}

Now we construct a nice family of Cooper-Krushkal projectors $P_1,\ldots,P_n$, assuming the existence of $Q_2,\ldots,Q_n$.  First, a lemma:

\begin{lemma}\label{lemma-symProjReplacement}
Let $C_{n-1}\in \Kom(n-1)$ be a strong Cooper-Krushkal projector, and suppose that we are given a symmetric projector $Q_n$ relative to $C_{n-1}$.  If $K\in\Kom(n-1)$ is any complex which kills turnbacks, then there is a symmetric projector $Q_n'$ relative to $K$ and a deformation retract $(K\sqcup 1_1)\otimes Q_n\rightarrow Q_n'$.
\end{lemma}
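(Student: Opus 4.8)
The plan is to absorb the projector $C_{n-1}$ into $K$ term by term along the symmetric Cooper-Krushkal sequence, and then re-assemble using the convolution machinery of the appendix. Write $Q_n=\Tot(E_\bullet)$, where $E_\bullet=(E_{1-2n}\to\cdots\to E_0)$ is the symmetric Cooper-Krushkal sequence relative to $C_{n-1}$ (Definitions~\ref{def-symmetricSeq} and~\ref{def-symProj}). Vertical stacking with the fixed complex $K\sqcup 1_1$ is an additive functor (Proposition~\ref{prop-planarCompExt}), so it carries homotopy chain complexes to homotopy chain complexes and convolutions to convolutions; hence $(K\sqcup 1_1)\otimes Q_n$ is a convolution of the homotopy chain complex $(K\sqcup 1_1)\otimes E_\bullet$. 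The first real step is to identify the terms of this latter sequence: every picture occurring as a term $E_i$ contains a single copy of the box, sitting in a fixed position adjacent to the edge along which the factor $K\sqcup 1_1$ is glued, with the rest of the diagram (strands, cups, caps, dots) not involving the box. Thus $E_i=(C_{n-1}\sqcup 1_1)\otimes D_i$ for a crossingless complex $D_i\in\Kom(n)$ (a single shifted diagram), and the corresponding term of the symmetric Cooper-Krushkal sequence relative to $K$ (Definition~\ref{def-relativeSymSeq}) is $E_i'=(K\sqcup 1_1)\otimes D_i$ with the \emph{same} $D_i$. Likewise, each connecting map $\alpha_i\colon E_i\to E_{i+1}$ — a saddle or a combination of dotted identity cobordisms, all supported away from the box — has the form $\alpha_i=\Id_{C_{n-1}\sqcup 1_1}\otimes\beta_i$ for a map $\beta_i\colon D_i\to D_{i+1}$, and the connecting maps of the sequence relative to $K$ are the same $\beta_i$'s, namely $\alpha_i'=\Id_{K\sqcup 1_1}\otimes\beta_i$.

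By the interchange law, $(K\sqcup 1_1)\otimes E_i=\bigl((K\otimes C_{n-1})\sqcup 1_1\bigr)\otimes D_i$. Since $K$ kills turnbacks and $C_{n-1}$ is a \emph{strong} Cooper-Krushkal projector, projector absorbing (Proposition~\ref{prop-projAbsorb}) supplies a deformation retract $K\otimes C_{n-1}\to K$; applying the additive functors $(-)\sqcup 1_1$ and then $(-)\otimes D_i$ yields a deformation retract $r_i\colon(K\sqcup 1_1)\otimes E_i\to E_i'$. Because the data of $r_i$ act only on the box region while $(K\sqcup 1_1)\otimes\alpha_i=\Id\otimes\beta_i$ acts only on the $D$-region, functoriality of $\otimes$ shows that $r_i$ strictly intertwines $(K\sqcup 1_1)\otimes\alpha_i$ with $\alpha_i'$ (and commutes with the homotopies). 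Assembling over $i$ gives a levelwise deformation retract of homotopy chain complexes $(K\sqcup 1_1)\otimes E_\bullet\to E_\bullet'$, where $E_\bullet'$ is the symmetric Cooper-Krushkal sequence relative to $K$.

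Finally, invoke the convolution and perturbation results of~\S\ref{sec-homAlg} (Theorem~\ref{convSdrThm}, Theorem~\ref{thm-diffPerturbing}): a levelwise deformation retract of a homotopy chain complex transports any convolution of the source onto a convolution of the target, together with a deformation retract between the two. Since the $r_i$ strictly intertwine the connecting maps, no correction appears on the sub-diagonal of the transported differential, so the output is genuinely a convolution of $E_\bullet'$ — hence a symmetric projector $Q_n'$ relative to $K$ (Definition~\ref{def-symProj}) — and the machinery produces the desired deformation retract $(K\sqcup 1_1)\otimes Q_n\to Q_n'$. (Alternatively, one may split each $(K\sqcup 1_1)\otimes E_i\cong E_i'\oplus(\text{contractible})$ and Gaussian-eliminate the contractible summands using Proposition~\ref{prop-gauss}.)

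The conceptual content is merely ``absorb $C_{n-1}$ into $K$ uniformly across the sequence'', and the main obstacle is the bookkeeping concentrated in the first two paragraphs: verifying that the box really does occupy a position transparent both to the $K\sqcup 1_1$ factor and to every connecting map $\alpha_i$ — so that the individual absorbing deformation retracts assemble into a morphism of homotopy chain complexes — and tracking the grading shifts so that the resulting convolution matches Definition~\ref{def-relativeSymSeq} on the nose. Once that is in place, the passage back to convolutions is routine given the appendix.
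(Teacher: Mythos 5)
Your proof is correct and follows essentially the same route as the paper: write $(K\sqcup 1_1)\otimes Q_n$ as a convolution whose terms each contain $K\otimes C_{n-1}$, apply the projector-absorbing deformation retract of Proposition \ref{prop-projAbsorb} term by term, and assemble via Theorem \ref{convSdrthm} to land on a convolution of the symmetric sequence relative to $K$. Your extra care in checking that the termwise retracts strictly intertwine the connecting maps (so the length-one part of the transported differential is exactly the sequence relative to $K$) is a detail the paper leaves implicit, but it is the same argument.
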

\begin{proof}
By definition, the symmetric projector $Q_n$ is a convolution of the form:
\[
Q_n = \bigg(\ \FKzero \rightarrow \FKone \rightarrow \cdots \rightarrow \FKult \rightarrow \FKult \rightarrow \cdots \rightarrow \FKone\rightarrow \FKzero\ \bigg)
\]
where $\pic{projector}=C_{n-1}$.  Let $\iota_C:1_{n-1}\rightarrow C_{n-1}$ denote the unit of $C_{n-1}$.  Let us denote $K$ graphically by $K=\pic{roundProjector}$, so that
 \begin{equation}\label{eq-symprojRoundSquare}
(K\sqcup 1_1)\otimes Q_n = \bigg(\ \wpic{FKpicZero_round-square}\rightarrow \wpic{FKpicOne_round-square}\rightarrow\cdots \rightarrow \wpic{FKpicUlt_round-square}\rightarrow\wpic{FKpicUlt_round-square}\rightarrow\cdots \rightarrow \wpic{FKpicOne_round-square}\rightarrow\wpic{FKpicZero_round-square}\ \bigg)
\end{equation}
By projector absorbing (Proposition \ref{prop-projAbsorb}) the round box absorbs the white box via a deformation retract $\pic{roundProjector-projector}\rightarrow \pic{roundProjector}$.  Applying this deformation retract to each term of the complex (\ref{eq-symprojRoundSquare}),  (using Theorem \ref{convSdrthm}) gives a deformation retract
\[
 (C_{n-1}\sqcup 1_1)\otimes Q_n \simeq \bigg(\ \wpic{FKpicZero_round}\rightarrow \wpic{FKpicOne_round}\rightarrow\cdots\rightarrow  \wpic{FKpicUlt_round}\rightarrow\wpic{FKpicUlt_round}\rightarrow\cdots \rightarrow  \wpic{FKpicOne_round}\rightarrow\wpic{FKpicZero_round}\ \bigg)
\]
This latter complex is a symmetric projector $Q_n'$ relative to $K$, as desired.
\end{proof}

\begin{construction}\label{const-standardCx}
Fix an integer $n\geq 2$ and assume that we are given symmetric projectors $Q_2,Q_3,\ldots,Q_n$.  Assume further that $Q_k$ is a symmetric projector relative to a strong Cooper-Krushkal projector $C_k$.  Let $P_k'=\Z[u_k]\otimes Q_k$ with differential $1\otimes d_{Q_n}+u_n\otimes \partial_n$ from Theorem \ref{thm-PfromQ}.  Now construct $P_1,\ldots,P_n$ by the following algorithm:
\begin{enumerate}
\item Put $P_1=1_1$.
\item Assume that $P_{n-1}$ has been constructed, and choose the data $(\pi,\sigma,h)$ of a deformation retract $P_{n-1}\otimes C_{n-1}\rightarrow P_{n-1}$.
\item Let $Q_n'$ denote the symmetric projector relative to $P_{n-1}$ constructed in Lemma \ref{lemma-symProjReplacement}.  That is, $Q_n'$ is the target of a deformation retract $(P_{n-1}\sqcup 1_1)\otimes Q_n\rightarrow Q_n'$, constructed from our chosen data $(\pi,\sigma,h)$.
\item Note that
\[
(P_{n-1}\sqcup 1_1)\otimes Q_n = \Z[u_n]\otimes ((P_{n-1}\sqcup 1_1)\otimes Q_n)
\]
with differential $1\otimes d + u_n\otimes \partial$.  Apply the deformation retract $(P_{n-1}\sqcup 1_1)\otimes Q_n\rightarrow Q_n'$ to each term using Theorem \ref{thm-ringSDR}, which applies by Remark \ref{remark-sumIsProduct}.  Define $P_n$ to be the target of this deformation retract.
\end{enumerate}
\end{construction}

\begin{theorem}\label{thm-standardCx}
Let $Q_2,\ldots,Q_n$ and $P_1,\ldots,P_n$ be as in Construction \ref{const-standardCx}.  Then there is a bounded complex $K_n\simeq (Q_2\sqcup 1_{n-2})\otimes (Q_3\sqcup 1_{n-3})\otimes \cdots \otimes Q_n$ such that
\begin{equation}\label{eq-specialPn}
P_n = \Z[u_2,u_3,\ldots,u_n]\otimes K_n \ \ \text{ with differential } \ 1\otimes K_n +\sum_{f>1}f\otimes \partial_f
\end{equation}
where the sum is over nonconstant monomials $f\in\Z[u_2,\ldots,u_n]$.
\end{theorem}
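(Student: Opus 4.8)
The plan is to induct on $n$, unravelling Construction \ref{const-standardCx} and peeling off the polynomial variables $u_n,u_{n-1},\dots,u_2$ one at a time. For the base case $n=2$ one has $P_1=1_1$ and (necessarily) $C_1\simeq 1_1$, so $Q_2$ is a symmetric projector relative to $1_1$, namely the explicit bounded four--term complex (\ref{eq-Q2}), and $P_2=\Z[u_2]\otimes Q_2$ with differential $1\otimes d_{Q_2}+u_2\otimes\partial_2$ by Theorem \ref{thm-PfromQ}. Thus $K_2:=Q_2$ is bounded and (\ref{eq-specialPn}) holds, the only nonconstant monomial of $\Z[u_2]$ carrying a nonzero map being $u_2$ itself.

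For the inductive step, assume the theorem for $n-1$: $K_{n-1}$ is bounded, $K_{n-1}\simeq (Q_2\sqcup 1_{n-3})\otimes\cdots\otimes Q_{n-1}$, and $P_{n-1}=\Z[u_2,\dots,u_{n-1}]\otimes K_{n-1}$ with differential $1\otimes d_{K_{n-1}}+\sum_{g>1}g\otimes\partial_g$. Since $Q_{n-1}$ kills turnbacks (Corollary \ref{cor-QkillTurnbacks}) and the complexes killing turnbacks from below form a tensor ideal, the tensor product description shows that $K_{n-1}$ kills turnbacks (Corollary \ref{cor-turnbackKillingSymmetry}). Now $Q_n$ is a symmetric projector relative to a strong Cooper--Krushkal projector $C_{n-1}$, hence a finite convolution $Q_n=\Tot(E_{1-2n}\to\cdots\to E_0)$ of the symmetric Cooper--Krushkal sequence (\ref{eq-symmetricSeq}). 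The structural point, visible from (\ref{eq-JWrecursion}), is that each term involves exactly one copy of $C_{n-1}$: one has $E_0\cong C_{n-1}\sqcup 1_1$, $E_{1-2n}\cong q^{2n}(C_{n-1}\sqcup 1_1)$, and for intermediate $i$ the term $E_i$ is obtained from $C_{n-1}\sqcup 1_1$ by composing with a single crossingless matching (a turnback), which we may take placed so as to leave the copy of $C_{n-1}$ adjacent to the factor it will be tensored with, using the vertical--reflection symmetry of (\ref{eq-symmetricSeq}) if necessary. Since $K_{n-1}$ kills turnbacks and $C_{n-1}$ is a strong projector, projector absorbing (Proposition \ref{prop-projAbsorb}) gives a deformation retract $(K_{n-1}\sqcup 1_1)\otimes(C_{n-1}\sqcup 1_1)=(K_{n-1}\otimes C_{n-1})\sqcup 1_1\to K_{n-1}\sqcup 1_1$; composing with the matchings above, each $(K_{n-1}\sqcup 1_1)\otimes E_i$ deformation retracts onto a \emph{bounded} complex $F_i$. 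Applying these retracts term by term (Theorem \ref{convSdrThm}) yields a complex $K_n:=\Tot(F_{1-2n}\to\cdots\to F_0)\simeq (K_{n-1}\sqcup 1_1)\otimes Q_n$; being a finite convolution of bounded complexes, $K_n$ is bounded. Using that $(-)\sqcup 1_1$ is an additive functor satisfying $(A\otimes B)\sqcup 1_1\cong (A\sqcup 1_1)\otimes(B\sqcup 1_1)$, the inductive hypothesis gives
\[
K_n\simeq (K_{n-1}\sqcup 1_1)\otimes Q_n\simeq (Q_2\sqcup 1_{n-2})\otimes(Q_3\sqcup 1_{n-3})\otimes\cdots\otimes(Q_{n-1}\sqcup 1_1)\otimes Q_n,
\]
which is the asserted tensor product; and $K_n$ again kills turnbacks, so the induction continues.

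It remains to identify the differential in (\ref{eq-specialPn}). Substituting $P_{n-1}=\Z[u_2,\dots,u_{n-1}]\otimes K_{n-1}$ and using that $u_2,\dots,u_{n-1}$ act only on the $K_{n-1}$ tensor factor---hence commute with $(-)\sqcup 1_1$, with composing matchings, and with $\partial_n$---the complex $(P_{n-1}\sqcup 1_1)\otimes Q_n$ of step (4) of Construction \ref{const-standardCx} is identified with $\Z[u_2,\dots,u_{n-1}]\otimes\big((K_{n-1}\sqcup 1_1)\otimes Q_n\big)$ carrying differential $1\otimes d+\sum_{g>1}g\otimes\partial_g$. The step--(4) deformation retract is assembled from the projector--absorbing retracts above, which are $\Z[u_2,\dots,u_{n-1}]$--equivariant; by the equivariance in Theorems \ref{convSdrThm} and \ref{thm-ringSDR} it carries this onto $\Z[u_2,\dots,u_{n-1}]\otimes K_n$ with differential of the form $1\otimes d_{K_n}+\sum_{g>1}g\otimes\partial_g$. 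Finally Theorem \ref{thm-ringSDR}, applied to the variable $u_n$ (the step--(4) complex being $\Z[u_n]\otimes(-)$ with differential $1\otimes d+u_n\otimes\partial_n$), produces $P_n=\Z[u_n]\otimes\Z[u_2,\dots,u_{n-1}]\otimes K_n=\Z[u_2,\dots,u_n]\otimes K_n$, whose perturbed differential is again a $\Z$--linear combination $1\otimes d_{K_n}+\sum_{f>1}f\otimes\partial_f$ over nonconstant monomials $f\in\Z[u_2,\dots,u_n]$, as claimed.

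The main obstacle is the bookkeeping of the last paragraph: one must check that the deformation retracts produced by Theorems \ref{convSdrThm} and \ref{thm-ringSDR} really are equivariant for the various polynomial actions, and that the perturbed differential stays a polynomial--coefficient combination of fixed maps $\partial_f$ rather than something more complicated. By contrast the boundedness of $K_n$ is comparatively soft, resting only on the observation that the intermediate terms of the symmetric Cooper--Krushkal sequence are single turnback diagrams carrying one copy of $C_{n-1}$, so that a single application of projector absorbing removes the unique unbounded factor.
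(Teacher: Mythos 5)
Your proposal is correct and follows essentially the same route as the paper: induction on $n$, with the inductive step given by tensoring $(K_{n-1}\sqcup 1_1)$ against the finite symmetric Cooper--Krushkal convolution defining $Q_n$ and absorbing the strong projector $C_{n-1}$ term by term (you re-derive inline what the paper isolates as Lemma \ref{lemma-symProjReplacement}), then tracking the polynomial variables via the equivariance of Theorems \ref{convSdrThm} and \ref{thm-ringSDR}. The only difference is cosmetic: you make explicit the boundedness argument (finite convolution of bounded terms) that the paper leaves implicit in the statement that $K_n$ is a symmetric projector relative to the bounded complex $K_{n-1}$.
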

\begin{proof}
Induction on $n\geq 1$.  The claim is vacuous in the base case $n=1$.  Assume by induction that statement (1) holds for $P_{n-1}$.  

From Construction \ref{const-standardCx} we assume that $Q_n$ is a symmetric projector relative to a strong Cooper-Krushkal projector $C_{n-1}$.  Pick a deformation retract $K_{n-1}\otimes C_{n-1}\rightarrow K_{n-1}$.  Applying this deformation retract to each term of
\[
P_{n-1}\otimes C_{n-1} = \Z[u_2,\ldots,u_{n-2}]\otimes (K_{n-1}\otimes C_{n-1})  \ \ \text{ with differential } 1\otimes d + \sum_{f>1}f\otimes \partial_f
\]
gives the data of a deformation retract $P_{n-1}\otimes C_{n-1}\rightarrow P_{n-1}$.

Let $P_n'=\Z[u_n]\otimes Q_n$ be as in Theorem \ref{thm-PfromQ}, so that
\[
(P_{n-1}\sqcup 1_1)\otimes P_n' = \Z[u_2,\ldots,u_n] \otimes ((K_{n-1}\sqcup 1_1)\otimes Q_n) \ \ \text{ with differential } 1\otimes d + \sum_{f>1}f\otimes \partial_f
\]
Now, $P_n$ is obtained from $(P_{n-1}\sqcup 1_1)\otimes P_n'$ by simply applying the deformation retract $P_{n-1}\otimes C_{n-1}\rightarrow P_{n-1}$ wherever possible.  By construction, this deformation retract is the result of simply applying the deformation retract $K_{n-1}\otimes C_{n-1}\rightarrow K_{n-1}$ wherever possible.  By Lemma \ref{lemma-symProjReplacement}, applying $K_{n-1}\otimes C_{n-1}\rightarrow K_{n-1}$ to $(K_{n-1}\sqcup 1_1)\otimes Q_n$ produces a symmetric projector $K_n$ relative to $K_{n-1}$.  Therefore, $P_n$ is simply the target of a deformation retract
\[
(P_{n-1}\sqcup 1_1)\otimes P_n'
 \rightarrow \Z[u_2,\ldots,u_n] \otimes K_n \ \ \text{ with differential } \  1\otimes d + \sum_{f>1}f\otimes \partial_f
\]
as in the statement.  This completes the proof.
\end{proof}

\begin{definition}\label{def-polyDef}
Suppose $P_n$ is as in Construction \ref{const-standardCx}.  Let $\Z[u_1,\ldots,u_n]\rightarrow \Endg(P_n)$ be the map of differential bigraded algebras in which $u_k$ $(k\geq 2)$ acts in the obvious way on (\ref{eq-specialPn}), and in which $u_1$ acts as the dotted identity $u_1 = \pic{projector_SWdot}$.  Let us denote the image of $u_k$ by $U_k^{(n)}$.  If $P_n'$ is any other Cooper-Krushkal projector, then conjugating by a canonical equivalence (Definition \ref{def-canEquiv}) $P_n\rightarrow P_n'$ gives a chain map $t^{2-2k}q^{2k}P_n'\rightarrow P_n'$ (well-defined up to homotopy, and may depend on the choice of $Q_2,\ldots,Q_n$), which we also denote by $U_k^{(n)}$, by abuse.  
\end{definition}
In light of Corollary \ref{cor-extGroups}, if we did not care about sign ambinguity, we could simply define $U_k^{(n)}$ to be a chain map which represents a generator of $\Ext^{2-2k,2k}(P_n,P_n)\cong \Z$.  We will occasionally abuse this notation and write $u_k = U_k^{(n)}$.

\begin{corollary}\label{cor-equiCor}
Let $P_n$ be as in Construction \ref{const-standardCx}. For each $a\in\bn{n}$ with $\tau(a)<n$, the homotopy which contracts $P_n\otimes a$ and $a\otimes P_n$ can be chosen to commute with the $\Z[u_2,\ldots,u_n]$-action.
\end{corollary}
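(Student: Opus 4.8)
The plan is to exploit the explicit presentation of $P_n$ from Theorem \ref{thm-standardCx} and reduce everything to a single application of the spreading theorem, Theorem \ref{thm-ringSDR}. Recall that for $P_n$ as in Construction \ref{const-standardCx} one has $P_n = \Z[u_2,\ldots,u_n]\otimes K_n$ with differential $1\otimes d_{K_n} + \sum_{f>1} f\otimes\partial_f$, where $K_n$ is a \emph{bounded} complex homotopy equivalent to $(Q_2\sqcup 1_{n-2})\otimes\cdots\otimes Q_n$, the sum runs over nonconstant monomials $f\in\Z[u_2,\ldots,u_n]$, and (Definition \ref{def-polyDef}) each $u_k$ acts by multiplication on the left tensor factor. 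Since the polynomial ring is commutative, multiplication by $f$ commutes with multiplication by each $u_k$, so $d_{P_n}$ is $\Z[u_2,\ldots,u_n]$-linear. For any object $a\in\bn{n}$, distributing the stacking functor over the (shifted) direct sum gives identifications of complexes-with-$\Z[u_2,\ldots,u_n]$-action
\[
P_n\otimes a \;=\; \Z[u_2,\ldots,u_n]\otimes(K_n\otimes a), \qquad a\otimes P_n \;=\; \Z[u_2,\ldots,u_n]\otimes(a\otimes K_n),
\]
with differentials $1\otimes d_{K_n\otimes a} + \sum_{f>1} f\otimes(\partial_f\otimes\Id_a)$ and $1\otimes d_{a\otimes K_n} + \sum_{f>1} f\otimes(\Id_a\otimes\partial_f)$ respectively; in both cases the action is on the left factor, hence commutes with every term of the differential, and it is carried to the action of $U_k^{(n)}\otimes\Id_a$, resp.\ $\Id_a\otimes U_k^{(n)}$, on the left-hand sides.

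Next I would record that, when $\tau(a)<n$, the bounded complexes $K_n\otimes a$ and $a\otimes K_n$ are contractible. Indeed, by Theorem \ref{thm-standardCx} we have $K_n\simeq(Q_2\sqcup 1_{n-2})\otimes\cdots\otimes Q_n$, whose last factor is a genuine symmetric projector and so kills turnbacks by Corollary \ref{cor-QkillTurnbacks}; therefore $K_n$ kills turnbacks from below, hence also from above by Corollary \ref{cor-turnbackKillingSymmetry}. So $K_n\otimes a\simeq 0\simeq a\otimes K_n$ by Definition \ref{def-turnbackDeath}. Fix contracting homotopies $h$ of $K_n\otimes a$ and $h'$ of $a\otimes K_n$, i.e.\ deformation retracts of these complexes onto $0$.

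The final step is to spread $h$ and $h'$ up the polynomial ring using Theorem \ref{thm-ringSDR}. Its hypotheses apply here exactly as recorded in Remark \ref{remark-sumIsProduct}: each $u_k$ with $k\geq 2$ has strictly negative homological degree and $K_n\otimes a$, $a\otimes K_n$ are bounded above, so $\Z[u_2,\ldots,u_n]\otimes(K_n\otimes a)$ is finite in each homological degree, agrees with the corresponding direct product, and the perturbation $\sum_{f>1}f\otimes(\partial_f\otimes\Id_a)$ is locally nilpotent. Applied to the deformation retract of $K_n\otimes a$ onto $0$ given by $h$, Theorem \ref{thm-ringSDR} yields a deformation retract of $\Z[u_2,\ldots,u_n]\otimes(K_n\otimes a) = P_n\otimes a$ onto $\Z[u_2,\ldots,u_n]\otimes 0 = 0$ all of whose structure maps — in particular the contracting homotopy — are $\Z[u_2,\ldots,u_n]$-equivariant. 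The identical argument with $h'$ handles $a\otimes P_n$.

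The only point that requires genuine care is the verification that Theorem \ref{thm-ringSDR} applies, i.e.\ the local finiteness and convergence of the perturbation expansion over the infinitely many monomials in $u_2,\ldots,u_n$; this is guaranteed by the boundedness of $K_n$ together with the negativity of the homological degrees of $u_2,\ldots,u_n$, precisely the situation handled in Remark \ref{remark-sumIsProduct}. Everything else is a matter of unwinding definitions. Note that no $u_1$-equivariance is asserted, which is exactly what lets us sidestep the finiteness subtleties attached to the homologically-degree-zero variable $u_1$ flagged in Remark \ref{remark-u1}.
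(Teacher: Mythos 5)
Your proposal is correct and follows the paper's own argument: identify $P_n\otimes a$ with $\Z[u_2,\ldots,u_n]\otimes(K_n\otimes a)$ carrying a differential $1\otimes d+\sum_{f>1}f\otimes\partial_f$, note that $K_n\otimes a$ and $a\otimes K_n$ are contractible because $K_n$ kills turnbacks, and then spread the contracting homotopy equivariantly over the polynomial ring via Theorem \ref{thm-ringSDR}. The extra care you take in checking the sum-equals-product hypothesis (Remark \ref{remark-sumIsProduct}) and in excluding $u_1$ is exactly the right bookkeeping, but it is the same proof.
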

\begin{proof}
Note that $K_n$ kills turnbacks since $Q_n$ does (Corollary \ref{cor-QkillTurnbacks}).  We have $P_{n}\otimes a = \Z[u_2,\ldots,u_{n}]\otimes(K_{n} \otimes a)$ with differential $1\otimes d +\sum_{f>1}f\otimes \partial_f'$.  If $a\in\bn{n}$ has through degree $\tau(a)<n$, then homotopy $h_a$ which contracts this complex can be chosen to commute with the $\Z[u_2,\ldots,u_{n}]$-action by Theorem \ref{thm-ringSDR}.
\end{proof}

\begin{remark}\label{remark-equivariance}
The fact that the homotopies which contract $P_k\otimes a\simeq 0$ and $a\otimes P_k\simeq 0$ can be chosen to be $\Z[u_2,\ldots,u_k]$-equivariant implies, together with Remark \ref{remark-sdrPreservesStruct}, that essentially any equivalence which uses only the turnback killing properties of $P_n$ can be chosen to be $\Z[u_2,\ldots,u_n]$-equivariant.
\end{remark}

\subsection{Simplification of $\Endg(P_n)$, connection to the GOR conjecture}
\label{subsec-endSimplification}
We will use the above construction of $P_n$ to simplify $\Endg(P_n)$ up to equivalence.  We will conclude, in particular, that any chain map $f\in\Endg^{i,j}(P_n)$ with $i+j=3$ is null-homotopic.  In the next section, we use this fact in an inductive argument for the existence of $Q_{n+1}$.

\begin{definition}\label{def-hom1P}
Assume $Q_2,\ldots,Q_n$ and $P_1,\ldots,P_n$ are as in Construction \ref{const-standardCx}.  For $k\in\{1,\ldots,n\}$  denote the complex $\Homg(1_k,P_k)$ simply by $E_k$.  We regard $E_k$ as a differential bigraded $\Z[u_1,\ldots,u_k]$-module, where $u_k$ acts as post-composition with $U^{(k)}_k$ from Definition \ref{def-polyDef}.
\end{definition}

\begin{definition}
Throughout, let $\xi_k$ denote a formal (odd) indeterminate of bidegree $(1-2k,2+2k)$.  Notation such as $\Z[u_k,\xi_k]$ will always denote the super-polynomial ring $\Z[u_k]\otimes_\Z \Lambda[\xi_k]$.
\end{definition}

\begin{proposition}\label{prop-partialTraceComp}
There is a deformation retract
\[
E_n \simeq \Z[u_n,\xi_n]\otimes_\Z E_{n-1}
\]
where the complex on the right has $\Z[u_n]$-equivariant differential determined by
\begin{enumerate}
\item $d(1\otimes \a) = 1\otimes d(\a)$
\item $d(\xi \otimes \a) = 2u_n\otimes (u_1\circ \a) + 1\otimes \d_n(\a) - \xi_n\otimes d(\a)$
\end{enumerate}
for all $\a\in E_{n-1}$, where $\d_n\in\Endg(E_{n-1})$ is some chain map of bidegree $(2-2n,2+2n)$.  The data of this deformation retract are $\Z[u_1,\ldots,u_n]$-equivariant.
\end{proposition}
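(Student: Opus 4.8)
The plan is to read $E_n=\Homg(1_n,P_n)$ off the periodic model $P_n\simeq\Z[u_n]\otimes Q_n$ of Theorem~\ref{thm-PfromQ}, whose differential is $1\otimes d_{Q_n}+u_n\otimes\partial_n$ with $\partial_n=-\eta_n\circ\e_n$ (Definition~\ref{connectingDiffDef}). Since $1_n$ is a one-term complex and $Q_n$ is bounded, $\Homg(1_n,\Z[u_n]\otimes Q_n)=\Z[u_n]\otimes\Homg(1_n,Q_n)$ with differential $1\otimes d_{\Homg(1_n,Q_n)}+u_n\otimes(\partial_n)_*$, so the problem reduces to (a) computing $\Homg(1_n,Q_n)$ and (b) identifying $(\partial_n)_*$ on the answer. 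For (a), write $Q_n=\Tot(F_\bullet)$ for the symmetric Cooper--Krushkal sequence $F_{1-2n}\to\cdots\to F_0$ of Definition~\ref{def-symmetricSeq} relative to $P_{n-1}$ (so $F_0=P_{n-1}\sqcup 1$ and $F_{1-2n}=q^{2n}(P_{n-1}\sqcup 1)$); then $\Homg(1_n,Q_n)=\Tot(\Homg_{\bn n}(1_n,F_i))$, and by Theorem~\ref{thm-partialTraceHom}(1) each term equals $\Homg_{\bn{n-1}}(1_{n-1},qT(F_i))$, so everything rests on the partial traces of the $F_i$.

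The key computation is that of $T$: closing the traced strand sends $F_0$ and $F_{1-2n}$ to $P_{n-1}$ with a disjoint circle, which deloops by neck cutting (Definition~\ref{def-cobCat}) to $(q+q\inv)P_{n-1}$; it sends each single-turnback $F_i$ to a $q$-shift of $P_{n-1}$; and it annihilates every $F_i$ with at least two turnbacks, since closing the traced strand consumes one turnback of the zig-zag and leaves a diagram of through-degree $<n-1$ attached to $P_{n-1}$, which is contractible because $P_{n-1}$ kills turnbacks (Proposition~\ref{prop-turnback}). Deleting these contractible terms by Gaussian elimination (Proposition~\ref{prop-gauss}, convolution form) leaves $\Homg(1_n,Q_n)$ equivalent to a convolution of at most six shifted copies of $E_{n-1}$: two from $F_0$, two from $F_{1-2n}$, and one each from the two single-turnback terms. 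The saddle maps $\FKone\to\FKzero$ and $\FKzero\to\FKone$ adjacent to $F_0$ and $F_{1-2n}$ in the sequence induce, after delooping, isomorphisms on one summand apiece, so a second Gaussian elimination cancels two pairs and yields $\Homg(1_n,Q_n)\simeq E_{n-1}\oplus\xi_n E_{n-1}=\Lambda[\xi_n]\otimes E_{n-1}$, where the ``$1$''-summand descends from $F_0$ and the ``$\xi_n$''-summand from $F_{1-2n}$; the homological shift $1-2n$ of the latter fixes $\deg_h(\xi_n)=1-2n$ and a $q$-degree count fixes $\deg_q(\xi_n)=2+2n$. Calling $\d_n$ the surviving connecting map, $d^2=0$ on the reduced complex forces $\d_n$ to be a chain map, and degree-counting gives $\deg(\d_n)=(2-2n,2+2n)$; on this model $d(1\otimes\a)=1\otimes d\a$ and $d(\xi_n\otimes\a)=1\otimes\d_n\a-\xi_n\otimes d\a$ (Koszul sign for the odd $\xi_n$).

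For (b), I would compute $(\partial_n)_*=-(\eta_n\circ\e_n)_*$ directly on the reduced model from Definition~\ref{connectingDiffDef}: $\e_n$ is projection onto the $F_{1-2n}$-summand, which after the simplifications is exactly the $\xi_n E_{n-1}$-summand, and $\eta_n$ is inclusion of the $F_0$-summand, which is the $E_{n-1}$-summand; hence $(\partial_n)_*$ kills $1\otimes\a$ and carries $\xi_n\otimes\a$ into the $1$-part. Tracking the delooping through which the surviving copy inside $\Homg_{\bn{n-1}}(1_{n-1},qT(F_{1-2n}))$ is identified with $E_{n-1}$ shows that copy enters through the \emph{dotted} cup---which is exactly post-composition with $U_1^{(n-1)}$, i.e.\ multiplication by $u_1$---while the left-column difference-of-dots map of the symmetric sequence supplies the second dot via the neck-cutting relation, accounting for a factor of $2$; thus $(\partial_n)_*(\xi_n\otimes\a)=2\otimes(u_1\a)$. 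Combining (a) and (b): on $E_n\simeq\Z[u_n,\xi_n]\otimes E_{n-1}$ we get $d(1\otimes\a)=1\otimes d\a$ and $d(\xi_n\otimes\a)=2u_n\otimes(u_1\a)+1\otimes\d_n\a-\xi_n\otimes d\a$, as asserted.

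For the equivariance: the isomorphism of Theorem~\ref{thm-partialTraceHom} is natural, hence intertwines post-composition by endomorphisms of $P_n$, so $U_n^{(n)}$ becomes multiplication by $u_n$ and, for $k<n$, $U_k^{(n)}$---which by Construction~\ref{const-standardCx} is inherited from $U_k^{(n-1)}$ on $P_{n-1}$---matches the $u_k$-action on $E_{n-1}$; and every homotopy equivalence used above (delooping, the turnback-killing contractions, the ensuing Gaussian eliminations) can be taken $\Z[u_1,\dots,u_{n-1}]$-equivariantly by Corollary~\ref{cor-equiCor} and Remark~\ref{remark-equivariance}, and is $\Z[u_n]$-equivariant on the nose, so the deformation retract data are $\Z[u_1,\dots,u_n]$-equivariant. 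I expect the main obstacle to be precisely step (b) together with the bookkeeping in step (a): showing that after the two rounds of Gaussian elimination the map $(\partial_n)_*$ has the clean form $\xi_n\otimes\a\mapsto 2\otimes(u_1\a)$ with no stray corrections leaking into $d(1\otimes\a)$ requires coherent choices of the contracting homotopies for the turnback terms and careful tracking of how delooping interacts with the saddle and dot maps of (\ref{eq-symmetricSeq}).
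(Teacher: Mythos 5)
Your overall route is the paper's: apply the partial-trace adjunction (Theorem \ref{thm-partialTraceHom}), contract every term of the traced symmetric sequence that still has a turnback hitting $P_{n-1}$, deloop the two circle terms coming from $F_0$ and $F_{1-2n}$, and cancel one delooped summand of each against the adjacent saddle. The only structural difference is that you factor out $\Z[u_n]$ and compute $\Homg(1_n,Q_n)$ before reinstating $u_n\otimes(\partial_n)_*$, whereas the paper traces the whole periodic complex (\ref{eq-periodicPn}) at once; these are interchangeable.

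There are, however, two inaccuracies in your step (b), which you rightly flag as the crux. First, the factor of $2$ does not come from the difference-of-dots map in the left column of (\ref{eq-symmetricSeq}): that map joins the two $\FKult$ terms, both of which lie among the turnback terms you already contracted away in step (a), so it contributes nothing to the reduced complex. The $2$ arises from the delooping itself: the composite (section onto the surviving summand of the delooped $T(F_{1-2n})$) followed by the $-\Id$ joining consecutive periods followed by (projection onto the surviving summand of the delooped $T(F_0)$) equals, by neck cutting, a sum of two dotted identities, i.e.\ twice a dot; this is precisely the content of Lemma \ref{lemma-delooping}, which the paper applies at each junction between periods. Second, the dot so produced sits on the strand of $P_{n-1}$ adjacent to the traced strand (the paper's $\pic{projector_SEdot}$), not on the first strand, so identifying it with $u_1=\pic{projector_SWdot}$ requires dot-hopping (Corollary \ref{cor-dotHop}); this holds only up to homotopy and sign, uses the turnback-killing property of $P_{n-1}$, and for the equivariance claim one must note that the replacement can be made $\Z[u_1,\ldots,u_n]$-equivariantly (Remark \ref{remark-equivariance}) and absorb the resulting sign into $\d_n$, as the paper does. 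Neither point derails your strategy, but as written the identification $(\partial_n)_*(\xi_n\otimes\a)=2\otimes(u_1\a)$ is asserted for the wrong reasons.
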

\begin{proof}
From Theorem \ref{thm-partialTraceHom}, we have
\[
E_n =\Homg(1_n,P_n) \cong \Homg(1_{n-1},qT(P_{n-1})
\]
where $T:\Kom(n)\rightarrow \Kom(n-1)$ is the partial trace functor (Definition \ref{def-planarComp}).  Naturality implies that this isomorphism is $\Z[u_1,\ldots,u_n]$ equivariant.  Let us simplify $qT(P_n)$.

Applying $qT(-)$ to the to the periodic complex ({\ref{eq-periodicPn}}) and contracting terms of the form $\wwwpic{FKpic_generic_partialTrace}$ (using Theorem \hyperref[convSdrthm]{\ref{convSdrthm}}), we see that $qT(P_n)$ deformation retracts onto:
\begin{equation}\label{eq-partialTraceSimp}
\begin{diagram}[height=3em,width=1.5em]
&&&&  &&  \Big( \wpic{FKpic_0_partialTrace} & \rTo &\wpic{FKpic_1_partialTrace} & \rTo & \wpic{FKpic_1_partialTrace} & \rTo & \wpic{FKpic_0_partialTrace} \Big) \\
&&&& &&  &\rdTo^{-\Id}&   &&&& \\
 && \Big( \wpic{FKpic_0_partialTrace} & \rTo &\wpic{FKpic_1_partialTrace} & \rTo & \wpic{FKpic_1_partialTrace} & \rTo & \wpic{FKpic_0_partialTrace} \Big) &&&& \\
 &&  &\rdTo^{-\Id}&   &&&& &&&& \\
\cdots  & \rTo & \wpic{FKpic_1_partialTrace} & \rTo & \wpic{FKpic_0_partialTrace} \Big) &&&& &&&&
\end{diagram}
\end{equation}
We can simplify this complex row-by-row (Lemma \ref{lemma-delooping}) obtaining a deformation retract of this complex onto (after inserting the correct degree shifts):
\begin{equation}\label{eq-partialTraceSimp2}
\Tot\left(
\begin{diagram}[height=2.5em]
 y P_{n-1} & \rTo^{\d_n} &  P_{n-1}\\
& \rdTo^{2\pic{projector_SEdot}} & \\
 xy P_{n-1} & \rTo^{\d_n} & x P_{n-1} \\
 & \rdTo^{2\pic{projector_SEdot}} &  \\
\vdots & \vdots& \vdots
\end{diagram}
\right)
\end{equation}
where $y = t^{1-2n}q^{2+2n}$ and $x = t^{2-2n}q^{2n}$.  The data of the above deformation retracts commute with the $\Z[u_1,\ldots,u_n]$-action by Remark \ref{remark-equivariance} and equivariance of retracts coming from Theorem \ref{thm-ringSDR}.

The complex (\ref{eq-partialTraceSimp2}) is a mapping cone on a chain map $\Delta = 2u_n\otimes \pic{projector_SEdot} + 1\otimes \d_n \in \Endg(\Z[u_n]\otimes P_{n-1})$.  By dot-hopping (Corollary \ref{cor-dotHop}), we have $\pic{projector_SEdot}\simeq \pm \pic{projector_SWdot}$, and so we can replace $\Delta$ by $\pm 2u_n\otimes u_1+1\otimes \d_n$.  The dot-hopping uses only the turnback killing property of $P_{n-1}$ hence can be done $\Z[u_1,\ldots,u_n]$-equivariantly.  Replace $\d_n$ by $\pm \d_n$ if necessary.  Now, apply the functor $\Homg(1_{n-1},-)$ to this deformation retract to obtain a deformation retract as in the statement.
\end{proof} 

The following lemma was used in the proof of Proposition \hyperref[prop-partialTraceComp]{\ref{prop-partialTraceComp}}:

\begin{lemma}\label{lemma-delooping}
The chain complexes
\begin{equation}\label{eq-deloopRetract1}
C_1\ :=\ \begin{diagram} q^2 \ \pic{deloopObj2} &\rTo^{\wmpic{unknotMerge} } & q \ \underline{\pic{deloopObj1}} \end{diagram}   \hskip.6in C_2\ := \ \begin{diagram} q\inv\  \underline{\pic{deloopObj1}}  &\rTo^{\wmpic{unknotSplit} } & q^{-2}  \ \pic{deloopObj2}\end{diagram} 
\end{equation}
deformation retract onto $\pic{1}$, and applying these equivalences row-by-row to the following chain complex gives a deformation retract:
\begin{equation}\label{eq-deloopRetract2}
\begin{minipage}{1.8in}
\begin{tikzpicture}
\node (a) at (3,-2) {$ q \ \pic{deloopObj2}$};
\node(b) at (6,-2) {$\pic{deloopObj1}$};
\node (c) at (3,0) {$\pic{deloopObj1}$};
\node (d) at (6,0) {$q\inv\ \pic{deloopObj2}$};
\path[->,>=stealth',shorten >=1pt,auto,node distance=1.8cm,
  thick]
(a) edge node[label={[shift={(-.15,0.0)}]$\wmpic{unknotMerge}$}] {} (b)
(c) edge node[label={[shift={(-.15,-0.25)}]$-\wmpic{arcUnknot_id}$}] {} (b)
(c) edge node[label={[shift={(0.0,0.0)}]$\wmpic{unknotSplit}$}] {} (d);
\end{tikzpicture}
\end{minipage}
\ \ \simeq\ \ 
\begin{minipage}{1.8in}
\begin{tikzpicture}
\node(b) at (6,-2) {$q\ \inv\pic{1}$};
\node (c) at (3,0) {$q \ \pic{1}$};
\path[->,>=stealth',shorten >=1pt,auto,node distance=1.8cm,
  thick]
(c) edge node[label={[shift={(-.15,-0.25)}]$2\: \wmpic{dottedIdSheet}$}] {} (b);
\end{tikzpicture}
\end{minipage}
\end{equation}
\end{lemma}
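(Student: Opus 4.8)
The plan is to dispatch the two assertions of the lemma in sequence. For the first — that $C_1$ and $C_2$ of \eqref{eq-deloopRetract1} each deformation retract onto $\pic{1}$ — I would use the delooping isomorphism $\pic{deloopObj2}\cong q\,\pic{deloopObj1}\oplus q\inv\,\pic{deloopObj1}$, whose two components, and their inverses, are built from the cup and cap cobordisms with and without a dot; mutual invertibility is exactly the sphere relations of Definition \ref{def-cobCat}(1), in particular $\bpic{sphere-dot}=1$. Rewriting the merge map of $C_1$ (resp.\ the split map of $C_2$) with respect to this decomposition, exactly one of its two matrix entries becomes an identity morphism while the other becomes a dotted identity, so Gaussian elimination (Proposition \ref{prop-gauss}) cancels the corresponding pair of summands and leaves a complex isomorphic to $\pic{1}$. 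I would keep explicit track of the resulting deformation retract data $(\pi_i,\sigma_i,h_i)$: the contracting homotopy $h_i$ is the inverse of the eliminated identity composed with a projection, so it is supported on exactly the summand that has been deleted. This data is what makes the second half of the argument work.

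For the second assertion, the complex on the left of \eqref{eq-deloopRetract2} is a convolution of the two rows $C_1$ and $C_2$ together with the one connecting morphism. Since homotopy equivalences of complexes extend to convolutions (Theorem \ref{convSdrthm}), applying the two retracts of the previous paragraph row by row produces a deformation retract onto the convolution of $q\,\pic{1}$ and $q\inv\,\pic{1}$, the only new piece of the differential being the induced connecting map. By the explicit formula for the homotopy equivalence furnished by Theorem \ref{convSdrthm} — equivalently, after reassociating the convolution as a mapping cone and invoking Lemma \ref{lemma-coneInvariance} — this induced map is the connecting morphism pre- and post-composed with the sections and projections $\sigma_i,\pi_i$ and corrected by the zigzag through $h_1,h_2$. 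Carrying out this composite, the cup/cap pieces hidden in the $\sigma_i,\pi_i$ and $h_i$ close the connecting cobordism up into the genus-one cobordism $\bpic{sheetWithHandle}$ on $\pic{1}$ (the homotopies split a small circle off the sheet and merge it back in).

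The last step is to evaluate this cobordism: by relation (3) of Definition \ref{def-cobCat} we have $\bpic{sheetWithHandle}=2\,\bpic{sheet-dot}$, equivalently one neck-cuts the tube as in $\bpic{cylinder}=\bpic{1idemp}+\bpic{xidemp}$ and collapses each of the two resulting closed pieces by $\bpic{sphere-dot}=1$. Hence the induced connecting differential is $\pm 2\,\pic{dottedIdSheet}$, and the minus sign attached to the connecting morphism in \eqref{eq-deloopRetract2} is there precisely so that the signs produced by the two Gaussian eliminations cancel against it, leaving exactly $2\,\pic{dottedIdSheet}$; this is the asserted deformation retract.

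I expect the main obstacle to be nailing down this factor of $2$ with the correct overall sign. The $2$ is forced by the handle-removal relation, but the genuine work is checking that the correction terms in the induced differential really assemble into the ``split a circle off and merge it back'' composite — rather than a null-homotopic map, or one off by a sign — which means carrying the explicit homotopy data of Theorem \ref{convSdrthm} through both Gaussian eliminations without error. By contrast, verifying that $C_1$ and $C_2$ are genuine chain complexes and tracking the $q$-degree shifts is routine.
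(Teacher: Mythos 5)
Your approach is correct and is exactly the computation the paper has in mind (its proof is left to the reader with a pointer to the Reidemeister~1 calculation, which is precisely your deloop--and--Gaussian-eliminate argument): delooping the circle, cancelling the identity component of the merge/split via Proposition \ref{prop-gauss}, and computing the induced connecting map $\pi\circ(-\Id)\circ\sigma$ from Theorem \ref{convSdrthm} all go through as you describe, and the sign bookkeeping works out to $+2$ times the dotted identity. The one point I would correct in your write-up is the geometric identification of the induced map. Since the connecting morphism is the \emph{identity} cobordism of the arc-with-circle, the circle's trace in the composite $\pi\circ(-\Id)\circ\sigma$ is capped off at both ends and is therefore a closed sphere disjoint from the arc sheet; no genus-one surface on the sheet ever appears, so relation (3) of Definition \ref{def-cobCat} is not what is being invoked. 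What actually happens is that $\sigma$ and $\pi$ each consist of a delooping cup/cap \emph{plus} a Gaussian-elimination correction term carrying a dot, so the composite expands into four terms: two vanish by the sphere relations $\bpic{sphere}=0$ and $\bpic{sphere-dot-dot}=0$, and the two cross-terms each evaluate a once-dotted sphere to $1$ and leave behind a dotted identity sheet, giving $-2$ times the dotted identity, which the sign on the connecting morphism converts to $+2$. The factor of $2$ thus comes from the two cross-terms of the delooping (morally the neck-cutting relation (2)) rather than from the handle relation (3); the two relations are of course equivalent in content, so your final answer is unaffected, but carrying out your own stated program of pushing the explicit homotopy data through the eliminations would produce this sum of sphere evaluations, not a handle.
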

\begin{proof}
This is a straightforward computation in Bar-Natan's categories, which we leave to the reader.  This computation also appears in the proof of invariance of Khovanov homology under the Reidemeister 1 move.
\end{proof}

\begin{theorem}\label{thm-GorEnd}
Assume that symmetric projectors $Q_2,\ldots,Q_n$ exist, and let $P_1,P_2,\ldots,P_n$ be as in Construction \ref{const-standardCx}.  Then there is a deformation retract from $\Endg(P_n)$ onto a differential bigraded $\Z[u_1,\ldots,u_n]$-module $W_n=\Z[u_1,\ldots,u_n,\xi_2,\xi_3,\ldots,\xi_n]/(u_1^2)$ with differential satisfying:
\begin{enumerate}
\item $d(u_k)=0$ for each $k=1,2,\ldots,n$.
\item $d(\xi_k)\in 2u_1u_k+\Z[u_2,\ldots,u_{k-1}]$ for each $k=2,3,\ldots,n$.
\item the inclusion of bigraded abelian groups $W_{n-1}\hookrightarrow W_n$ commutes with differentials.
\end{enumerate}
The data of this deformation retract can be chosen to be $\Z[u_1,\ldots,u_n]$-equivariant.
\end{theorem}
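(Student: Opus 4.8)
The plan is to first collapse $\Endg(P_n)$ onto the partial-trace complex $E_n := \Homg(1_n,P_n)$ of Definition \ref{def-hom1P}, and then to peel off the pairs of generators $(u_k,\xi_k)$ one at a time by iterating Proposition \ref{prop-partialTraceComp}. For the first step, take $P_n$ to be the strong Cooper--Krushkal projector of Construction \ref{const-standardCx}; it kills turnbacks, so Proposition \ref{prop-resProp} with $A=P_n$ gives a deformation retract
\[
\Endg(P_n)=\Homg(P_n,P_n)\ \longrightarrow\ \Homg(1_n,P_n)=E_n
\]
whose structure map is precomposition with the unit $\iota\colon 1_n\to P_n$. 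Since $u_k$ acts on both sides by \emph{post}composition with $U_k^{(n)}$ (Definition \ref{def-polyDef}), and pre- and post-composition commute, this map intertwines the $\Z[u_1,\dots,u_n]$-actions; the full data of the retract are built from the turnback-killing homotopies of $P_n$, hence can be chosen $\Z[u_2,\dots,u_n]$-equivariant by Remark \ref{remark-equivariance}, and $u_1$-equivariance (with $u_1=\pic{projector_SWdot}$) can be arranged afterwards by the usual straightening of a retract, using that the dotted identity is central since the homology of $\Endg(P_n)$ is graded-commutative \cite{H12a}. It therefore suffices to construct the asserted deformation retract of $E_n$ onto $W_n$.

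We induct on $n$, carrying along as part of the inductive hypothesis: (i) $d_{W_n}$ is $\Z[u_1,\dots,u_n]$-linear; (ii) $d_{W_n}(1)=0$, where $1\in W_n$ is the image of the cycle $\iota\in E_n$; and (iii) on $\xi_n\otimes_\Z W_{n-1}$ the differential is given by the analogue over $W_{n-1}$ of formula (2) of Proposition \ref{prop-partialTraceComp}, for some chain map $\bar\d_n\in\Endg(W_{n-1})$ of bidegree $(2-2n,2+2n)$. For $n=1$, $E_1=\Endg(1_1)$ in $\bn{1}$ is the Frobenius algebra $\Z[u_1]/(u_1^2)$ (dotted identity $=u_1$, with $u_1^2=0$ from the cobordism relations), concentrated in homological degree zero, which is $W_1$ with all of (i)--(iii) trivial. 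For the inductive step, Proposition \ref{prop-partialTraceComp} gives a $\Z[u_1,\dots,u_n]$-equivariant deformation retract $E_n\simeq\Z[u_n,\xi_n]\otimes_\Z E_{n-1}$; writing the target as the cone on $\Delta:=2u_n\otimes(u_1\circ{-})+1\otimes\d_n\in\Endg(\Z[u_n]\otimes E_{n-1})$, we extend the inductive retract $E_{n-1}\to W_{n-1}$ over $\Z[u_n]$ (Theorem \ref{thm-ringSDR}) and apply Lemma \ref{lemma-coneInvariance}, obtaining $E_n\simeq\Cone(\bar\Delta)$ with $\bar\Delta=2u_n\otimes(u_1\circ{-})+1\otimes\bar\d_n$, $\bar\d_n:=\pi\,\d_n\,\sigma$. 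This cone is precisely $W_n:=\Z[u_n,\xi_n]\otimes_\Z W_{n-1}=\Z[u_1,\dots,u_n,\xi_2,\dots,\xi_n]/(u_1^2)$ equipped with the differential described in (iii), and (i)--(ii) propagate because $d_{W_{n-1}}$ is $\Z[u_1,\dots,u_{n-1}]$-linear and the retract data are equivariant. All retracts in sight compose to a deformation retract of $E_n$ onto $W_n$.

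It remains to extract (1)--(3). Property (3) is immediate: the inclusion $W_{n-1}\hookrightarrow W_n$ is $\a\mapsto 1\otimes\a$, and $d_{W_n}(1\otimes\a)=1\otimes d_{W_{n-1}}(\a)$. Property (1): by (i)--(ii), $d_{W_n}(u_k)=d_{W_n}(u_k\cdot 1)=u_k\cdot d_{W_n}(1)=0$. Property (2): for $k\le n-1$ it follows from the inductive hypothesis together with (3), since then $d_{W_n}(\xi_k)=d_{W_{n-1}}(\xi_k)$; for $k=n$, evaluating (iii) at $1$ gives $d_{W_n}(\xi_n)=2u_1u_n+w$, where $w=\bar\d_n(1)$ is a homogeneous element of $W_{n-1}$ of bidegree $(2-2n,2+2n)$. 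A bidegree count now finishes the proof: on each generator $u_1,u_j,\xi_j$ the quantity $\deg_h+\deg_q$ equals $2,2,3$ respectively, so any monomial $u_1^{\varepsilon}\prod_{j\ge2}u_j^{a_j}\prod_{j\ge2}\xi_j^{b_j}$ of bidegree $(2-2n,2+2n)$ satisfies $2\varepsilon+2\sum a_j+3\sum b_j=4$, forcing $\sum b_j=0$ and then $\varepsilon+\sum a_j=2$; the homological relation $\sum_{j\ge2}(j-1)a_j=n-1$ with every $j\le n-1$ is incompatible with $\sum a_j\le 1$, so $\varepsilon=0$ and $w\in\Z[u_2,\dots,u_{n-1}]$. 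Composing with the retract $\Endg(P_n)\to E_n$ from the first step completes the argument.

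The main obstacle is bookkeeping rather than mathematics: one must choose the inductive hypothesis strong enough that the explicit shape of the differential and its $\Z[u_1,\dots,u_n]$-linearity survive iteration, and be careful that transporting the differential of Proposition \ref{prop-partialTraceComp} through the inner retract introduces no higher correction terms --- which is guaranteed by Lemma \ref{lemma-coneInvariance} applied to the ``cone'' presentation, the relevant perturbation being annihilated on both sides by the contracting homotopy. The one genuinely computational ingredient is the bidegree count pinning down $d(\xi_n)$ modulo $\Z[u_2,\dots,u_{n-1}]$.
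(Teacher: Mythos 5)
Your proposal is correct and follows essentially the same route as the paper: retract $\Endg(P_n)$ onto $E_n=\Homg(1_n,P_n)$ via Proposition \ref{prop-resProp}, then induct on $n$ using Proposition \ref{prop-partialTraceComp} with base case the unknot homology $\Z[u_1]/(u_1^2)$, and pin down $d(\xi_n)$ modulo $\Z[u_2,\ldots,u_{n-1}]$ by a bidegree count (which you carry out more explicitly than the paper does). The one small wrinkle is that the $P_n$ of Construction \ref{const-standardCx} is the periodic complex (\ref{eq-periodicPn}) and is not itself a \emph{strong} Cooper--Krushkal projector as Proposition \ref{prop-resProp} requires in its first slot, so one must first contract the isomorphisms to retract $P_n$ onto a strong projector $C_n'$ and pass through $\Homg(C_n',P_n)$, exactly as the paper does.
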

\begin{proof}
First, note that $P_n$ is a complex of the form (\ref{eq-periodicPn}) by construction.  Contracting the isomorphisms, we see that $P_n$ deformation retracts onto a strong Cooper-Krushkal projector $C_n'$.  Applying the functor $\Homg(-,P_n)$ gives a deformation retract $\Endg(P_n)\rightarrow \Homg(C_n',P_n)$, the data of which commute with the action of $\Z[u_1,\ldots,u_n]$ on the second argument.  Now, Proposition \ref{prop-resProp} gives a deformation retract 
\[
\Homg(C_n',P_n)\rightarrow \Homg(1_n,P_n) =: E_n
\]
The data of this deformation retract commute with the $\Z[u_1,\ldots,u_n]$-action, by naturality of the isomorphism of Corollary \ref{cor-homRotation}, and Remark \ref{remark-equivariance}.  Thus, it suffices to construct a deformation retract $E_n\rightarrow W_n$ as in the statement.  This will be accomplished by induction, with inductive step provided by Proposition \ref{prop-partialTraceComp}.  In the base case: $E_1= \Homg(\emptyset, q\cdot \text{unknot})$ is the Khovanov homology of the unknot, shifted up in $q$-degree.  This is isomorphic to $\Z[u_1]/(u_1^2)$, which proves the base case.  Assume by induction that we have constructed a deformation retract $E_{n-1}\rightarrow W_{n-1}$.

From Proposition \ref{prop-partialTraceComp}, there is a $\Z[u_1,\ldots,u_n]$-equivariant deformation retract from $E_n$ onto a complex $\Z[u_n,\xi_n]\otimes E_{n-1}$ with differential of the form:
\[
\Big(\xi_n\Z[u_n]\otimes E_{n-1}\Big)\oplus \Big(\Z[u_n]\otimes E_{n-1}\Big) \text{ with differential } \matrix{1\otimes d_{E_{n-1}} & 0 \\ 2u_n\otimes u_1 + 1\otimes \d_n & 1\otimes d_{E_{n-1}}}
\]
where we are using the Koszul sign rule to evaluate tensor products of morphisms on tensor products of graded spaces.  By the induction hypothesis, we have the $\Z[u_1,\ldots,u_{n-1}]$-equivariant data $(\pi,\sigma,h)$ of a deformation retract $E_{n-1}\rightarrow W_{n-1}$.  Applying this to each $\Z[u_n]\otimes E_{n-1}$ summand above (Theorem \ref{convSdrthm} in the special case of 2-term convolutions) gives a deformation retract from $\Z[u_n,\xi_n]\otimes E_{n-1}$ onto
\[
\Big(\xi\Z[u_n]\otimes W_{n-1}\Big)\oplus \Big(\Z[u_n]\otimes W_{n-1}\Big) \text{ with differential } \matrix{1\otimes d_{W_{n-1}} & 0 \\ 2u_n\otimes u_1 + 1\otimes \bar\d_n & 1\otimes d_{W_{n-1}}}
\]
where $\bar \d_n = \pi\circ \d_n\circ \sigma$.  The data of this deformation retract can be chosen to commute with the $\Z[u_1,\ldots,u_n]$-action by Remark \ref{remark-sdrPreservesStruct}.  As a bigraded $\Z[u_1,\ldots,u_n]$-module, this latter object is isomorphic to $\Z[u_n,\xi_n]\otimes W_{n-1}\cong W_n$.  The differential on $W_n$ satisfies the properties (1), (2), (3) of the statement, by inspection.  For example: $d(\xi_n)=2u_nu_1 + \bar\d_n(1)$.  For degree reasons, $\bar\d(1)\in W_{n-1}\subset W_n$ must be some quadratic polynomial in $u_2,\ldots,u_{n-1}$.  This completes the proof.
\end{proof}

\begin{corollary}\label{cor-extGroups}
Let $P_n$ be a Cooper-Krushkal projector, and assume that symmetric projectors $Q_k\in \Kom(k)$ exist for $2\leq k\leq n$.  Then the group $\Ext^{i,j}(P_n,P_n)$ of chain maps $t^iq^jP_n\rightarrow P_n$ modulo chain homotopy satisfies
\begin{enumerate}
\item $\Ext^{k-i,i}(P_n,P_n)=0$ for all $i$, if $k<0$.
\item $\Ext^{0-i,i}(P_n,P_n)=\Z$ for $i=0$ and zero otherwise.
\item $\Ext^{1-i,i}(P_n,P_n)=0$ for all $i$.
\item $\Ext^{2-i,i}(P_n,P_n)=\Z$ for $i=2,4,\ldots,2n$ and zero otherwise.
\item $\Ext^{3-i,i}(P_n,P_n)=0$ for all $i$.
\end{enumerate}
For a generator of $\Ext^{2-2k}{2k}(P_n,P_n)\cong \Z$ we may pick $[U_k^{(n)}]$ as in Definition \ref{def-polyDef}.
\end{corollary}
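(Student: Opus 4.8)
The plan is to reduce the computation to the explicit model furnished by Theorem~\ref{thm-GorEnd}. Since any two Cooper-Krushkal projectors are canonically equivalent (Theorem~\ref{thm-canEquiv}) and $\Ext^{i,j}(P_n,P_n)$ is a homotopy invariant, I may assume $P_n$ is the projector built in Construction~\ref{const-standardCx}, so that Theorem~\ref{thm-GorEnd} applies and gives a deformation retract $\Endg(P_n)\to W_n$ with $W_n=\Z[u_1,\ldots,u_n,\xi_2,\ldots,\xi_n]/(u_1^2)$. Thus $\Ext^{i,j}(P_n,P_n)\cong H^{i,j}(W_n)$. The crucial observation is that, as a bigraded abelian group, $W_n$ is free on the monomials $u_1^{\e}u_2^{a_2}\cdots u_n^{a_n}\xi_2^{b_2}\cdots\xi_n^{b_n}$ with $\e,b_k\in\{0,1\}$ and $a_k\geq 0$, that each $u_k$ has total degree $\deg_h+\deg_q=2$ while each $\xi_k$ has total degree $3$, and that the differential raises $\deg_h$ by $1$ and fixes $\deg_q$. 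Hence $W_n$ is supported in non-negative total degrees and splits as a direct sum of complexes indexed by total degree.

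Next I would go through the relevant total degrees one at a time. Total degree $<0$ is empty, which gives (1). Total degree $0$ contains only the monomial $1$, a cycle with no incoming differential, giving (2). Total degree $1$ is empty, since a monomial of total degree $1$ would require an odd number of $\xi$-factors and hence total degree $\geq 3$; this gives (3). Total degree $2$ is spanned by $u_1,\ldots,u_n$, all of which are cycles by part~(1) of Theorem~\ref{thm-GorEnd}, and no boundaries enter total degree $2$ because total degree $1$ vanishes; hence the homology in total degree $2$ is $\bigoplus_{k=1}^n\Z u_k$, and since $u_k$ has bidegree $(2-2k,2k)$ this is exactly (4). Total degree $3$ is spanned by $\xi_2,\ldots,\xi_n$, and here I claim the differential is injective: by part~(2) of Theorem~\ref{thm-GorEnd} one has $d(\xi_k)=2u_1u_k+r_k$ with $r_k\in\Z[u_2,\ldots,u_{k-1}]$, and since $u_1$ appears in none of the $r_k$ and the monomials $u_1u_2,\ldots,u_1u_n$ are distinct, a relation $\sum_k a_k\,d(\xi_k)=0$ forces $2a_k=0$, so $a_k=0$. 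Therefore there are no cycles in total degree $3$, which gives (5).

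For the last assertion, I would invoke that the retract $\Endg(P_n)\to W_n$ of Theorem~\ref{thm-GorEnd} is $\Z[u_1,\ldots,u_n]$-equivariant and sends $\Id_{P_n}$ to $1\in W_n$; consequently it sends $U_k^{(n)}=u_k\cdot\Id_{P_n}$ to $u_k$, which the previous paragraph identifies as a generator of $H^{2-2k,2k}(W_n)\cong\Z$. I do not expect a genuine obstacle here, as all the real work is done in Theorem~\ref{thm-GorEnd}; the only points that need attention are the degree bookkeeping that certifies the monomial lists in total degrees $0,1,2,3$ are complete, and the injectivity of the differential on the span of the $\xi_k$, for which the precise ``leading term'' shape $d(\xi_k)\in 2u_1u_k+\Z[u_2,\ldots,u_{k-1}]$ from Theorem~\ref{thm-GorEnd} is exactly what is needed.
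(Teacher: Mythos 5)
Your proof is correct and follows essentially the same route as the paper's: both pass to the model $W_n$ via Theorem \ref{thm-GorEnd}, collapse the bigrading to the total degree $\deg_s=\deg_h+\deg_q$, and read everything off from the fact that the $u_k$ have total degree $2$ and the $\xi_k$ total degree $3$, with the differential raising $\deg_s$ by $1$. Your handling of total degree $3$ (injectivity of $d$ on the span of the $\xi_k$ via the leading term $2u_1u_k$) and of the identification of $[U_k^{(n)}]$ with $u_k$ under the equivariant retract is in fact slightly more explicit than what the paper writes down.
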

\begin{proof}
From Theorem \ref{thm-GorEnd}, we see that there is some differential on $W_n=\Z[u_1,\ldots,u_n]/(u_1^2)\otimes \Lambda[\xi_2,\ldots, \xi_n]$ such that (1) $d(\xi_m) \neq 0$ $(1\leq m\leq n)$, and (2) the homology of $W_n$ is isomorphic to the homology of $\Endg(P_n)$ as bigraded $\Z[u_1,\ldots,u_n]$-modules.  The bigrading respects the algebra structure: $\deg(vv')=\deg(v)+\deg(v')$, but the differential does not necessarily respect the algebra structure.  Collapse the bigrading to the single grading $\deg_s = \deg_h + \deg_q$, so that $\deg_s(u_k)=2$ and $\deg_s(\xi_k) = 3$ for $k=1,\ldots,n$.

Clearly there are no elements $x\in W_n$ of degree $\deg_s(x)<0$ or $\deg_s(x) = 1$, and the only bihomogeneous elements with $\deg_s(x)=3$ are the multiples of $\xi_k$.  But none of the $\xi_k$ are cycles.  This proves statements (1), (3), and (5) of the Theorem. 

The only elements $x\in W_n$ with $\deg_s(x) = 0$ are multiplies of the identity.  If any of these were a boundary $d(h) = a\cdot 1$, then $\deg_s(h)=-1$ forces $h$ to be zero.  (2) follows.  Similarly, the only bihomogeneous elements $x\in W_n$ with $\deg_s(x)=2$ are multiples of some $u_k$, each of which is a cycle.  If any multiple of $u_m$ were a boundary, say $d(h) = a u_m$, then $\deg_s(h)=1$ forces $h=0$.  This shows that the $u_m$ generate homology groups isomorphic to $\Z$, which is  (4).
\end{proof}

We conclude this section with a result which relates the above computations with a conjecture in \cite{GOR12} regarding Khovanov homology of torus knots.
\begin{corollary}\label{cor-gorCor}
Let $H_{p,q}$ denote the Khovanov homology of the $p,q$-torus link (oriented as a positive braid; see \S \ref{subsec-quasiHomology} for our conventions regarding Khovanov homology).  Then there is a grading shift $F_{p,q}$ and a direct system $F_{p,q}H_{p,q}\rightarrow F_{p,q+1}H_{p,q+1}$ with limit $H_{p,\infty}$, where $H_{p,\infty}$ is the homology of the chain complex $W_p$ from Theorem \ref{thm-GorEnd}.
\end{corollary}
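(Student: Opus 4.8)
The plan is to assemble the statement from three inputs, only the last of which is new: the known stabilization of the Khovanov homology of torus links, identifying $H_{p,\infty}$ with the Khovanov homology of the closed-up Cooper-Krushkal projector $P_p$; the partial-trace adjunction, re-identifying that homology with $H(\Endg(P_p))$; and Theorem~\ref{thm-GorEnd}, which computes $H(\Endg(P_p))\cong H(W_p)$.

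First I would set up the direct system. Writing $T(p,q)$ as the closure of the $p$-strand braid $(\sigma_1\cdots\sigma_{p-1})^q$, concatenation with one more factor $\sigma_1\cdots\sigma_{p-1}$ (via the inclusion of the all-ones resolution) induces, after a suitable homological and quantum shift $F_{p,q}$, chain maps $F_{p,q}C_{p,q}\to F_{p,q+1}C_{p,q+1}$ between the Bar-Natan complexes $C_{p,q}$ of the planar closures of the $T(p,q)$. By the stabilization results underlying Rozansky's construction of the categorified projector \cite{Roz10a}, the shifts $F_{p,q}$ can be chosen so that the homotopy colimit over $q$ of the direct system $\{F_{p,q}C_{p,q}\}$ is homotopy equivalent to the $p$-fold partial trace $T^p(P_p)$, i.e.\ the annular closure of $P_p$. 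Filtered colimits of abelian groups are exact, so homology commutes with this colimit; hence $\{F_{p,q}H_{p,q}\}$ is a direct system whose limit $H_{p,\infty}$ equals $H(T^p(P_p))$, the Khovanov homology of the closure of $P_p$.

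Next I would re-identify $H(T^p(P_p))$ with $H(\Endg(P_p))$. Iterating the adjunction of Theorem~\ref{thm-partialTraceHom}(1) $p$ times (taking $M=1_{p-1}$, then $1_{p-2}$, and so on) yields an isomorphism of chain complexes $\Homg_{\bn{p}}(1_p,P_p)\cong\Homg_{\bn{0}}(\emptyset,\,q^p\,T^p(P_p))$, whose homology is, up to an overall grading shift, the Khovanov homology of the closure of $P_p$. Since $P_p$ kills turnbacks and may be taken to be a strong Cooper-Krushkal projector, Proposition~\ref{prop-resProp} provides a deformation retract $\Endg(P_p)=\Homg(P_p,P_p)\simeq\Homg(1_p,P_p)$. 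Combining these, $H(T^p(P_p))\cong H(\Endg(P_p))$ up to a shift, which I would absorb into the definition of $F_{p,q}$.

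Finally, Theorem~\ref{thm-GorEnd} furnishes a deformation retract $\Endg(P_p)\to W_p$, in particular a quasi-isomorphism, so $H(\Endg(P_p))\cong H(W_p)$; composing the three identifications gives $H_{p,\infty}\cong H(W_p)$. The genuinely new ingredient, Theorem~\ref{thm-GorEnd}, is already proved, so the main obstacle in a careful write-up lies in the first step: pinning down the shifts $F_{p,q}$ and invoking the torus-link stabilization results to identify the homotopy colimit of the shifted closures with $T^p(P_p)$, together with the bookkeeping needed to reconcile Bar-Natan's grading conventions (the $q^p$ in the partial-trace adjunction, the bounded-above versus bounded-below conventions, and the contravariant duality of \S\ref{subsec-CKproj}) across the three steps.
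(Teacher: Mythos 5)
Your proposal is correct and follows essentially the same route as the paper's proof: citing Rozansky's stabilization of the torus-braid complexes with limit $P_p$, then identifying the homology of the closure $T^p(P_p)$ with that of $\Endg(P_p)$ via Theorem \ref{thm-partialTraceHom} and Proposition \ref{prop-resProp}, and finally applying Theorem \ref{thm-GorEnd}. The extra care you flag about grading shifts and conventions is reasonable but does not change the argument.
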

\begin{proof}
Denote by $C_p$ the positive braid corresponding to the $n$-cycle $(1,2,\ldots,n)$.  Then $T_{p,q}$ is the braid closure of $C_p^q$.  It is known \cite{Roz10a} that the Khovanov complexes $\{\llbracket C_p^q\rrbracket\:|\: q\in\Z_{\geq 0}\}$ form a direct system up to homotopy and grading shifts, and that there is a limit given by the projector $P_p$.  Thus, the Khovanov homology of $C_p^q$ has stable limit (up to shift) as $q\to \infty$, given by the homology of $\Homg(\emptyset, T^p(P_p))$, where $T$ is the partial trace functor (Definition \ref{def-planarComp}).  By Theorem \ref{thm-partialTraceHom} and Proposition \ref{prop-resProp}, this latter homology is isomorphic to the homology of $\Endg(P_n)$, up to shift.  Finally, Theorem \ref{thm-GorEnd} says that the homology of $\Endg(P_n)$ is isomorphic to the homology of $\Z[u_1,\ldots,u_n,\xi_2,\ldots,\xi_n]/(u_1^2)$ with some $\Z[u_1,\ldots,u_n]$-equivariant differential.
\end{proof}
The conjecture in \cite{GOR12} is that $d(\xi_k) = \sum_{i+j=k+1}u_iu_j$, and that $d$ satisfies the graded Leibniz rule with respect to the obvious multiplication in $W_n:=\Z[u_1,\ldots,u_n,\xi_2,\ldots,\xi_n]/(u_1^2)$.  We do not yet have such a formula for $d(\xi_k)$, nor do we know that the differential can be assumed to respect the Leibniz rule with respect to the \emph{obvious} multiplication.  Nonetheless, the fact that $\Endg(P_n)$ deformation retracts (for appropriate choice of representative for $P_n$) onto $W_n$, hence $W_n$ inherits the structure of an $A_\infty$-algebra from $\Endg(P_n)$.  In particular, there is some multiplication on $W_n$ (associative up to homotopy) such that the Leibniz rule is satisfied.

\subsection{Existence and uniqueness of $Q_n$}
\label{subsec-QnExists}
The proof that $Q_n$ exists uses the following basic fact:

\begin{lemma}\label{lemma-obstThry}
Let $A,B,C$ be chain complexes over an additive category, and suppose we have linear maps $\a \in \Homg^1(A,B)$, $\b\in \Homg^1(B,C)$ which are homogeneous with homological degree 1.  Then a convolution $(A \buildrel \a\over\longrightarrow B\buildrel \b\over \longrightarrow C)$ exists if and only if $\a,\b$ are cycles and $\b\circ\a\in \Homg^2(A,C)$ is a boundary.  In particular, such a convolution exists if  $\a$ and $\b$ are cycles and $\Ext^2(A,C)\cong 0$. 
\end{lemma}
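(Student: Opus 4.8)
The plan is to reduce the statement to unwinding Definition~\ref{def-convolution} in the length-two case and reading off the resulting obstruction. A convolution of the homotopy chain complex $(A\xrightarrow{\a}B\xrightarrow{\b}C)$ is, by definition, a chain complex whose underlying graded object is a direct sum of shifted copies of $A$, $B$, $C$ and whose differential is the block lower-triangular matrix
\[
D=\begin{bmatrix} d_A & 0 & 0\\ \a & -d_B & 0\\ \gamma & \b & d_C\end{bmatrix},
\]
in which the diagonal entries are $\pm$ the internal differentials with signs fixed by the $(-1)^i$ convention of Definition~\ref{def-convolution}, the first sub-diagonal entries are the prescribed maps $\a$ and $\b$, and the only free datum is the ``length-two'' component $\gamma\colon A\to C$, a bihomogeneous map whose bidegree is exactly the one making it a legitimate entry of $D$ --- equivalently, the bidegree for which $[\partial,\gamma]$ lies in the bidegree of $\b\circ\a\in\Homg^2(A,C)$, where $\partial$ denotes the differential on the $\Homg$ complex of Definition~\ref{def-homg}. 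So the first thing I would do is record this normal form for a would-be convolution.

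Next I would expand the equation $D^2=0$ entry by entry. The diagonal blocks give $d_A^2=d_B^2=d_C^2=0$, which are automatic. The $(B,A)$-entry of $D^2$ is, up to the bookkeeping signs, $[\partial,\a]$, so it vanishes exactly when $\a$ is a cycle in $\Homg^1(A,B)$; likewise the $(C,B)$-entry vanishes exactly when $\b$ is a cycle in $\Homg^1(B,C)$. The remaining $(C,A)$-entry is $\b\circ\a\pm[\partial,\gamma]$, so it vanishes exactly when $\gamma$ is a nullhomotopy of $\b\circ\a$. Hence $D$ squares to zero --- i.e.\ a convolution on this graded object exists --- if and only if $\a$ and $\b$ are cycles and $\b\circ\a$ is a boundary in $\Homg^\bullet(A,C)$. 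This yields both directions of the biconditional: from a convolution we extract such a $\gamma$, exhibiting $\b\circ\a$ (which has homological degree $2$, being a composite of two degree-one maps) as a boundary in $\Homg^2(A,C)$; conversely, given cycles $\a,\b$ and any nullhomotopy of $\b\circ\a$, adjusting it by a sign if the conventions require and plugging it in for $\gamma$ produces a genuine chain complex, hence a convolution.

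For the final clause I would observe that if $\a$ and $\b$ are both cycles then, by the graded Leibniz rule for $\partial$, we have $[\partial,\b\circ\a]=[\partial,\b]\circ\a\pm\b\circ[\partial,\a]=0$, so $\b\circ\a$ is a cycle in $\Homg^2(A,C)$; if moreover $\Ext^2(A,C)\cong 0$ then every such cycle is a boundary, and the equivalence just proved supplies the convolution. I do not anticipate any real difficulty here: the lemma is little more than a careful restatement of the definition of a three-term convolution, and the only step needing genuine attention is the sign bookkeeping in the expansion of $D^2$ --- but that is precisely the bookkeeping that the conventions $d_{ii}=(-1)^id_{E_i}$ and $d_{tA}=-d_A$ (Definitions~\ref{def-convolution} and~\ref{def-homg}) were designed to make come out right, so I would simply invoke those rather than re-derive them.
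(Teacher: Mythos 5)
Your proof is correct and is essentially the paper's own argument: both write the would-be convolution as a $3\times 3$ lower-triangular differential with one free entry $\gamma\colon A\to C$ and read off from $D^2=0$ that the diagonal conditions are automatic, the subdiagonal conditions say $\a,\b$ are cycles, and the corner condition says $\gamma$ nullhomotopes $\b\circ\a$. Your explicit Leibniz-rule check that $\b\circ\a$ is a cycle (so that $\Ext^2(A,C)=0$ suffices) is a detail the paper leaves implicit, but nothing more.
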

\begin{proof}
Recall that a convolution is simply a direct sum of complexes with lower triangular differential.  Observe
\[
\matrix{d_A &0 &0\\ \a & d_B & 0\\ -h &\b & d_C}
\]
is a differential if and only if (1) $d_B\circ\a +\a\circ d_A=0$, (2) $d_C\circ\b + \b\circ d_B=0$, and (3) $\b\circ \a = d_C\circ h+h\circ d_A$.  This is precisely the statement of the lemma.
\end{proof}

\begin{theorem}\label{thm-QnExistence} For each integer $n\geq 2$ there exists a symmetric projector $Q_n\in \Kom(n)$.
\end{theorem}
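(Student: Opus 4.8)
The plan is to argue by induction on $n\geq 2$. The point of the induction is that when we construct $Q_n$ we may assume $Q_2,\dots,Q_{n-1}$ already exist, and this is exactly the hypothesis needed to invoke Corollary~\ref{cor-extGroups} with parameter $n-1$: for any Cooper--Krushkal projector $P_{n-1}$ we then know $\Ext^{i,j}(P_{n-1},P_{n-1})=0$ whenever $i+j<0$ or $i+j\in\{1,3\}$. For the base case $n=2$ one has $P_1\simeq 1_1$, and the symmetric Cooper--Krushkal sequence relative to $1_1$ is the four-term sequence appearing in (\ref{eq-Q2}); its consecutive composites already vanish on the nose, so its total complex is an honest chain complex, which is a symmetric projector $Q_2$ by definition.

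For the inductive step fix $n\geq 3$ and any Cooper--Krushkal projector $P_{n-1}$ (these exist by Theorem~\ref{thm-CKrecursion}), and let $E_\bullet=(E_{1-2n}\to\cdots\to E_0)$ be the symmetric Cooper--Krushkal sequence relative to $P_{n-1}$ (Definition~\ref{def-symmetricSeq}); it is a homotopy chain complex by Proposition~\ref{symSeqHomotopyProp}, with $E_0=P_{n-1}\sqcup 1_1$, $E_{1-2n}=q^{2n}(P_{n-1}\sqcup 1_1)$, and with the $2n-2$ intermediate terms being $q$-shifts of the turnback diagrams built from $P_{n-1}$. I would build a convolution $Q_n=\Tot(E_\bullet)$ by adjoining the terms one at a time from $E_0$ leftward — an iterated mapping cone — controlling the obstructions with the obstruction theory behind Lemma~\ref{lemma-obstThry} (iterated for longer homotopy chain complexes): having built a convolution of $E_{-k}\to\cdots\to E_0$, extending it by $E_{-k-1}$ requires solving, successively in the distance $m\geq 2$, for differential components $d_{i,-k-1}$, whose obstructions to existence are cohomology classes in $\Ext^{2-m,\,0}_{\bn n}(E_{-k-1},E_{-k-1+m})$ (the cases $m=1,2$ being automatic since $E_\bullet$ is a homotopy chain complex).

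The key computational input is that, by the partial-trace adjunction (Theorem~\ref{thm-partialTraceHom}) together with Corollary~\ref{cor-homRotation}, each $\Hom$-complex $\Homg_{\bn n}(E_a,E_b)$ with $a\neq b$ can be rewritten as a $\Hom$-complex between $P_{n-1}$-built objects one strand down; whenever one of $E_a,E_b$ is a turnback term the resulting object acquires a turnback meeting $P_{n-1}$, so it is contractible exactly as in the simplification performed in the proof of Proposition~\ref{prop-partialTraceComp}, and otherwise it reduces (after delooping) to a sum of shifted copies of $\Homg_{\bn{n-1}}(P_{n-1},P_{n-1})$. Tracking the $q$-shifts in the sequence (\ref{eq-symmetricSeq}) then forces every surviving obstruction class into an $\Ext$-group of $P_{n-1}$ of total degree $1$ or $3$ (or into one that vanishes for degree reasons). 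The decisive case is the obstruction to adjoining the final term $E_{1-2n}$, which lies in
\[ \Ext^{3-2n,\,0}_{\bn n}(E_{1-2n},E_0)\;\cong\;\Ext^{3-2n,\,2n-2}(P_{n-1},P_{n-1})\;\oplus\;\Ext^{3-2n,\,2n}(P_{n-1},P_{n-1}), \]
the isomorphism coming from Theorem~\ref{thm-partialTraceHom} and the delooping $T(P_{n-1}\sqcup 1_1)\simeq(q\oplus q^{-1})P_{n-1}$ (Lemma~\ref{lemma-delooping}); the two summands sit in total degrees $(3-2n)+(2n-2)=1$ and $(3-2n)+2n=3$, so both vanish by Corollary~\ref{cor-extGroups}(3) and (5). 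Hence all obstructions vanish, the convolution $Q_n=\Tot(E_\bullet)$ exists, and by definition it is a symmetric projector.

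The hard part will be making the third paragraph rigorous: setting up cleanly the (successive) obstruction theory for convolutions of a long — here finite — homotopy chain complex and identifying the obstructions with cohomology of the stated $\Hom$-complexes in the stated bidegrees; proving carefully that the partial trace of each turnback term, and of each turnback term tensored with further turnbacks, behaves as claimed; and doing the $q$-degree bookkeeping that pins the surviving obstructions to total degrees $1$ and $3$. The remaining ingredients — the base case and the appeals to Corollary~\ref{cor-extGroups}, Theorem~\ref{thm-partialTraceHom}, Corollary~\ref{cor-homRotation}, and Lemma~\ref{lemma-delooping} — are routine.
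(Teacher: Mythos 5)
Your overall strategy --- induction on $n$, with Corollary~\ref{cor-extGroups} applied to $P_{n-1}$ supplying the vanishing of obstruction groups, and the decisive obstruction landing (after Theorem~\ref{thm-partialTraceHom} and delooping) in $\Ext$-groups of $P_{n-1}$ of total degree $1$ and $3$ --- is exactly the paper's, and your treatment of the base case and of the final obstruction is essentially correct. The difference is in how the convolution of the first $2n-1$ terms $E_{2-2n}\to\cdots\to E_0$ is obtained. The paper does not build it by iterated obstruction theory: it observes that this truncation coincides with an initial segment of the (infinite) Cooper-Krushkal sequence, so a convolution of it exists by Theorem~\ref{thm-CKrecursion} (i.e.\ by the existence result of \cite{CK12a}), and then only the \emph{single} remaining obstruction to adjoining $E_{1-2n}$ needs to be checked via Lemma~\ref{lemma-obstThry}. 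That last obstruction has source the identity-type term $q^{2n}\FKzero$, which is precisely the situation in which your partial-trace argument works: tracing a turnback term against an identity-type term closes a turnback onto $P_{n-1}$, killing it.

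The genuine gap is in your third paragraph, where you assert that $\Homg_{\bn n}(E_a,E_b)$ is contractible ``whenever one of $E_a,E_b$ is a turnback term.'' This is false: writing $E_a=(P_{n-1}\sqcup 1_1)\otimes a$ and $E_b=(P_{n-1}\sqcup 1_1)\otimes b$, Corollary~\ref{cor-homRotation} identifies $\Homg(E_a,E_b)$ with $\Homg\bigl(P_{n-1}\sqcup 1_1,(P_{n-1}\sqcup 1_1)\otimes b\otimes a^\vee\bigr)$, and a turnback involving the $n$-th strand does \emph{not} annihilate $P_{n-1}\sqcup 1_1$ --- only turnbacks among the first $n-1$ strands do. Indeed the Hom complex between adjacent turnback terms contains the non-nullhomotopic saddle and dot maps $\a_i$ of the sequence itself, so it is certainly not contractible. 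Consequently the vanishing of the many intermediate obstruction groups $\Ext^{2-m,0}(E_{-k-1},E_{-k-1+m})$ with turnback source is not established by your argument; whether they vanish requires a separate case-by-case degree analysis that you have deferred, and this is precisely the content that the Cooper-Krushkal recursion packages away. To repair the proof, replace the term-by-term construction of the truncated convolution with an appeal to Theorem~\ref{thm-CKrecursion}, and compute the one remaining obstruction against the whole convolution $C=(t^{3-2n}q^{2n-2}\FKtwo\to\cdots\to\FKzero)$ as in the paper, where it reduces to $\Ext^{3-2n,2n}(P_{n-1},P_{n-1})=0$.
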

\begin{proof}
The proof that $Q_n$ exists proceeds by induction on $n\geq 2$.  The chain complex $Q_2$ is already defined in \S \ref{subsec-Q2}.  Assume by induction that $Q_m\in\Kom(m)$ exists for each $2\leq m\leq n-1$.   Write the sequence (\ref{eq-symmetricSeq}) as $E_{1-2n}\rightarrow \cdots \rightarrow E_{-1}\rightarrow E_0$.  By the Cooper-Krushkal recursion (Theorem \ref{thm-CKrecursion}) we can assume that there is a convolution
\[
M = \Big(t^{2-2n}E_{2-2n}\rightarrow \cdots \rightarrow t\inv E_{-1}\rightarrow E_0\Big)
\]
of all terms except for the left-most.  We seek a convolution of the form
\[
\Big(\:\underbrace{t^{1-2n}E_{1-2n}}_A \rightarrow \underbrace{t^{2-2n}E_{2-2n}}_B \rightarrow  \underbrace{t^{3-2n}E_{3-2n}\rightarrow \cdots \rightarrow t\inv E_{-1}\rightarrow E_0}_C\:\Big)
\]
We reassociate as indicated by the parentheses, so that $M$ is a convolution $M=(B\buildrel \b\over\longrightarrow C)$ for some degree $1$ cycle $\b\in\Homg^{1,0}(B,C)$.  The degree zero chain map $\a_{1-2n}:E_{1-2n}\rightarrow E_{2-2n}$ defines a degree 1 cycle $\a:\in\Homg^{1,0}(A,B)$.  By Lemma \ref{lemma-obstThry}, we must show that $\b\circ \a$ is a degree $(2,0)$ boundary.  In fact, we will show that the relevant homology group is zero.  Examining the symmetric Cooper-Krushkal sequence (\ref{eq-symmetricSeq}), we see that $A := t^{1-2n}q^{2n}\FKzero$, $B:=t^{2-2n}q^{2n-1}\FKone$, and 
$C:= \Big(t^{3-2n}q^{2n-2}\FKtwo\rightarrow \cdots \rightarrow t\inv q\FKone \rightarrow \FKzero\Big)$.  Abbreviate $\mathcal{F}(-,-):=\Homg(-,-)$ and $y = t^{1-2n}q^{2n}$, and compute:
\begin{eqnarray*}
\mathcal{F}(A,C)
& \buildrel(1)\over = & \mathcal{F}\Bigg(y\FKzero, \Big(t^{3-2n}q^{2n-2}\FKtwo\rightarrow \cdots \rightarrow t\inv q\FKone\rightarrow \FKzero\Big)\Bigg)\\
&\buildrel(2)\over\cong & y\inv\mathcal{F}\Bigg(\FKzero, \Big(t^{3-2n}q^{2n-2}\FKtwo\rightarrow \cdots \rightarrow t\inv q\FKone\rightarrow \FKzero\Big)\Bigg)\\
&\buildrel(3)\over\cong & qy\inv\mathcal{F}\Bigg(\pic{projector} \ , \Big(\ \pic{FKpicTwo_partialTrace}\ \ \ \  \rightarrow \cdots \rightarrow \pic{FKpicTwo_partialTrace}\ \ \ \ \rightarrow t\inv q\pic{FKpic_1_partialTrace}\ \ \ \rightarrow \pic{FKpic_0_partialTrace}\ \  \ \Big)\Bigg)\\
&\buildrel(4)\over\simeq & qy\inv\mathcal{F}\Bigg(\pic{projector} \ , \Big( t\inv q\pic{FKpic_1_partialTrace}\ \ \ \rightarrow \pic{FKpic_0_partialTrace}\ \  \ \Big)\Bigg)\\
&\buildrel(5)\over\simeq & qy\inv \mathcal{F}\Bigg(\pic{projector} \ , q\inv \pic{projector}\  \Bigg)\\
&\buildrel(6)\over\cong & y\inv \mathcal{F}\Big(\pic{projector},\pic{projector} \Big)
\end{eqnarray*}
Let us explain.  First, note that $\Homg$ is invariant under homotopy equivalence of its arguments.  (1) is by definition of $A$ and $C$. (2) and (6) hold by the easily proven fact that if $x$ and $y$  are shifts in bidegree, then $\Homg(xM,zN)\cong x\inv  z\Homg(M,N)$.  (3) holds by Corollary \hyperref[cor-homRotation]{\ref{cor-homRotation}} (we have omitted some of the grading shifts because of space limitations).  The equivalence (4) holds by contracting the contractible complexes of the form $\wwwpic{FKpic_generic_partialTrace} $ and using Theorem \hyperref[convSdrthm]{\ref{convSdrthm}}.  Finally, (5) holds by  Lemma \ref{lemma-delooping}).

By the above computation, we have an isomorphism of homology groups
\[
\Ext^{2,0}(A,C) \cong \Ext^{3-2n,2n}(P_{n-1},P_{n-1}).
\]
This group is zero by Corollary \ref{cor-extGroups}, which applies to $P_{n-1}$ since we assume that $Q_1,\ldots, Q_{n-1}$ exist.  This shows that a convolution $Q_n = (A\rightarrow B\rightarrow C)$ exists, and completes the proof.
\end{proof}

\begin{theorem}\label{thm-QnUniqueness}
Symmetric projectors are unique up to homotopy equivalence.
\end{theorem}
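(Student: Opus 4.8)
The plan is to express every symmetric projector, up to homotopy equivalence, as the mapping cone of a \emph{generator} of an infinite cyclic $\Ext$-group over a Cooper--Krushkal projector, and then conclude from uniqueness of Cooper--Krushkal projectors. Let $Q_n$ be a symmetric projector. By Theorem \ref{thm-PfromQ} the complex $P_n := \Z[u_n]\otimes Q_n$ with differential $1\otimes d_{Q_n}+u_n\otimes\partial_n$ is a Cooper--Krushkal projector, and by Corollary \ref{cor-QfromP} (the Koszul-dual statement that multiplication by $u_n$ on this complex has mapping cone $Q_n$) one has $Q_n\simeq\Cone(U_n)$, where $U_n: t^{2-2n}q^{2n}P_n\to P_n$ is this multiplication-by-$u_n$ map. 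Since $\Cone(f)$ depends, up to homotopy, only on the homotopy class of $f$, since $\Cone(f)\cong\Cone(-f)$, and since Cooper--Krushkal projectors are unique up to homotopy equivalence (Corollary \ref{cor-CKprojAreUnique}), it will suffice to show that $\Ext^{2-2n,2n}(P_n,P_n)$ is infinite cyclic and that $[U_n]$ is a generator of it: then given two symmetric projectors one obtains Cooper--Krushkal projectors $P_n\simeq P_n'$ and cone-maps $U_n,U_n'$ which, after transport along an equivalence $P_n'\to P_n$ (Lemma \ref{lemma-coneInvariance}), agree up to sign, whence $Q_n\simeq\Cone(U_n)\simeq\Cone(U_n')\simeq Q_n'$.

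The key point is that $P_n=\Z[u_n]\otimes Q_n$ is already a complex of the periodic form (\ref{eq-periodicPn}), so the proof of Theorem \ref{thm-GorEnd} applies to it verbatim (using that $Q_2,\ldots,Q_{n-1}$ exist by Theorem \ref{thm-QnExistence}), with $u_n$ acting by post-composition with $U_n$: it produces a $\Z[u_n]$-equivariant deformation retract of $\Endg(P_n)$ onto $W_n=\Z[u_1,\ldots,u_n,\xi_2,\ldots,\xi_n]/(u_1^2)$, where $u_n$ acts on $W_n$ by multiplication. Passing to homology, $H^\ast\Endg(P_n)\cong H^\ast(W_n)$ as bigraded $\Z[u_n]$-modules. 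Now, exactly as in the proof of Corollary \ref{cor-extGroups} (collapse to the single grading $\deg_s=\deg_h+\deg_q$), the only bihomogeneous elements of $W_n$ of bidegree $(0,0)$, resp. $(2-2n,2n)$, are the integer multiples of $1$, resp. of $u_n$, and neither is a boundary for degree reasons; since $u_n=u_n\cdot 1$, it follows that $H^{0,0}\Endg(P_n)=\Z[\Id_{P_n}]$, that $\Ext^{2-2n,2n}(P_n,P_n)\cong\Z$, and that the $u_n$-action carries the generator $[\Id_{P_n}]$ to a generator. But $u_n\cdot[\Id_{P_n}]=[U_n]$, so $[U_n]$ generates, as required, and the argument above closes.

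For $n=1$ the statement is trivial, since $Q_1=\Cone(b)$ is fixed by definition. The main obstacle is precisely the step ruling out that $[U_n]$ is a \emph{proper} multiple of a generator: the Euler characteristic is of no help, since $\chi(\Cone(mU_n))=(1-q^{2n})\chi(P_n)$ independently of $m$, so one genuinely needs the $\Z[u_n]$-module structure on $H^\ast\Endg(P_n)$, which is what the simplification of Theorem \ref{thm-GorEnd} supplies once one notices that $\Z[u_n]\otimes Q_n$ is already in periodic normal form. (Alternatively one can mimic the existence proof directly: any two convolutions of the symmetric Cooper--Krushkal sequence relative to a fixed $P_{n-1}$ differ by a class in $\Ext^{1,0}(A,C)\cong\Ext^{2-2n,2n}(P_{n-1},P_{n-1})$ — the Hom-complex computation from the proof of Theorem \ref{thm-QnExistence}, one cohomological degree below the obstruction used there — and this group vanishes by Corollary \ref{cor-extGroups} because $2n\notin\{2,4,\ldots,2(n-1)\}$; together with uniqueness of Cooper--Krushkal projectors and invariance of convolutions under homotopy equivalence of the underlying homotopy chain complex, this again gives the claim. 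I would present the mapping-cone argument as the main route, since it offloads the bookkeeping onto results already in hand.)
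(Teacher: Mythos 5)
Your argument is correct, and its outer skeleton is exactly the paper's: both reduce the theorem, via uniqueness of Cooper--Krushkal projectors and Lemma \ref{lemma-coneInvariance}, to showing that the map $U_n$ supplied by Corollary \ref{cor-QfromP} represents a \emph{generator} of $\Ext^{2-2n,2n}(P_n,P_n)\cong\Z$ rather than a proper multiple $k[f]$ of one. Where you genuinely diverge is in how that is verified. The paper examines the long exact sequence of $\Homg(P_n,\Cone(kf))$: since $\Cone(kf)\simeq Q_n$ and a variant of the \S\ref{subsec-endSimplification} computation gives $\Ext^{2-2n,2n}(P_n,Q_n)=0$, exactness of $\Z\buildrel k\over\longrightarrow \Z\rightarrow 0$ forces $k=\pm 1$. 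You instead use the $\Z[u_n]$-module structure on $H^*\Endg(P_n)$ directly: the retract of Theorem \ref{thm-GorEnd} is $u_n$-equivariant, carries $[\Id_{P_n}]$ to $\pm[1]$, hence carries $[U_n]=u_n\cdot[\Id_{P_n}]$ to $\pm[u_n]$, which generates $H^{2-2n,2n}(W_n)$. This is in effect the argument behind the last sentence of Corollary \ref{cor-extGroups}, upgraded to an arbitrary symmetric projector; it is cleaner conceptually (a module isomorphism sends $u_n\cdot(\text{generator})$ to $u_n\cdot(\text{generator})$) but it obliges you to justify that Theorem \ref{thm-GorEnd} and Proposition \ref{prop-partialTraceComp} apply to $\Z[u_n]\otimes Q_n$ for \emph{any} symmetric projector $Q_n$, not only the one arising in Construction \ref{const-standardCx}. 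Your word ``verbatim'' is slightly too quick, but the claim does hold: only the periodic form (\ref{eq-periodicPn}) and turnback-killing are used at the top level, and the lower-level complex $E_{n-1}$ is determined up to equivalence by canonical equivalence of Cooper--Krushkal projectors. Your parenthetical obstruction-theoretic alternative is the weaker of your two routes --- it fixes $P_{n-1}$ and the length-$\leq 1$ data, and for a $2n$-term convolution the higher components are not classified by a single $\Ext^{1,0}$-group without the reassociation and a further induction --- so you are right to demote it to a remark.
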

\begin{proof}
Suppose $P_n$ and $P_n'$ are Cooper-Krushkal projectors, and $U_n\in\Endg(P_n)$, $U_n'\in\Endg(P_n')$ represent generators of the degree $(2-2n,2n)$ homology groups (isomorphic to $\Z$).  Then any equivalence $P_n\rightarrow P_n'$ conjugates $U_n$ to $\pm U_n'$ up to homotopy, hence $\Cone(U_n)\simeq \Cone(U_n')$ by Lemma \ref{lemma-coneInvariance}.  Thus, it suffices to show that for any symmetric projector $Q_n$ there is a Cooper-Krushkal projector $P_n$ and a generator $[U_n]\in H^{2-2n,2n}(\Endg(P_n))\cong \Z$ such that $Q_n\simeq \Cone(U_n)$.  For each $Q_n$, the projector $P_n$ and map $U_n$ are provided by Corollary \ref{cor-QfromP}.  The class $[U_n]\in H^{2-2n,2n}(\Endg(P_n))$ is an integer multiple of a generator $[f]$, say $[U_n]=k[f]$.  We must show that $k=\pm 1$.  To see this, one can examine the long exact sequence in homology associated to the mapping cone:
\[
\Homg(P_n,\Cone(k f)) = \Cone\Big(\begin{diagram}\Endg(P_n) & \rTo^{ (k f) \circ (-)} &\Endg(P_n)\end{diagram} \Big)
\]
The relevant part of the long exact sequence looks like
\[
\begin{diagram}[height=2.5em]
\cdots & \rTo &  \Ext^{0,0}(P_n,P_n) & \rTo^{k[f]\circ (-)} & \Ext^{2-2n,2n}(P_n,P_n) & \rTo & \Ext^{2-2n,2n}(P_n,Q_n) & \rTo & \cdots  \\
 && \dTo^{\cong} && \dTo^{\cong} && \dTo^{\cong} && \\
\cdots & \rTo &  \Z & \rTo^k & \Z & \rTo & (\ast) & \rTo & \cdots 
\end{diagram}
\]
A slight modification of the arguments in \S \ref{subsec-endSimplification} shows that the homology of $\Homg(P_n,Q_n)$ is isomorphic to a sub-quotient of $\Z[u_1,\ldots,u_{n-1},\xi_2,\ldots,\xi_n]$.  For degree reasons, there can be no homology in degree $(2-2n,2n)$, and we see that the group ($\ast$) above is zero.  By exactness, this forces $k=\pm 1$, as desired.
\end{proof}

\begin{corollary}\label{cor-QnAreCones}
For $1\leq k\leq n$, let $U_k^{(n)}\in\Endg(P_n)$ represent a generator of the degree $(2-2k,2k)$ homology group, which is isomorphic to $\Z$ by Corollary \ref{cor-extGroups}.  Then $\Cone(U_k^{(n)})\simeq (Q_k\sqcup 1_{n-k})\otimes P_n$.
\end{corollary}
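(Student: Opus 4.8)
The plan is to normalize the choices, dispose of $k=1$ directly, and then induct on $n$, pushing the statement up the tower $P_1,\dots,P_n$ of Construction \ref{const-standardCx}.

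\textbf{Normalization and the case $k=1$.} By Theorem \ref{thm-canEquiv} any two Cooper--Krushkal projectors are canonically equivalent, and by Lemma \ref{lemma-coneInvariance} conjugating a chain map by homotopy equivalences does not change the homotopy type of its cone. Since $U_k^{(n)}$ is only specified up to homotopy and sign (Corollary \ref{cor-extGroups}), I may take $P_n$ and $U_k^{(n)}$ to be the explicit objects of Construction \ref{const-standardCx} and Definition \ref{def-polyDef}. For $k=1$ we have $Q_1\sqcup 1_{n-1}=\Cone\!\big(b\sqcup 1_{n-1}\big)$ with $b=\pic{dottedIdSheet}\colon q^21_1\to 1_1$, so $(Q_1\sqcup 1_{n-1})\otimes P_n\simeq\Cone\!\big((b\sqcup 1_{n-1})\otimes\Id_{P_n}\big)$; the map $(b\sqcup 1_{n-1})\otimes\Id_{P_n}$ is a dotted identity on $P_n$, which represents $\pm[U_1^{(n)}]$ because the dotted identities on the various strands of $P_n$ agree up to homotopy and sign and $\Ext^{0,2}(P_n,P_n)\cong\Z$ is generated by $U_1^{(n)}$ (Corollary \ref{cor-extGroups}). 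A second application of Lemma \ref{lemma-coneInvariance} finishes this case.

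\textbf{Induction on $n$ for $2\le k\le n$.} Base case $n=k$: the proof of Theorem \ref{thm-QnUniqueness}, together with Corollary \ref{cor-QfromP}, yields a Cooper--Krushkal projector and a generator $U_k$ of its $(2-2k,2k)$ homology with $Q_k\simeq\Cone(U_k)$; as $U_k^{(k)}$ is also such a generator (Corollary \ref{cor-extGroups}), Lemma \ref{lemma-coneInvariance} gives $\Cone(U_k^{(k)})\simeq Q_k$, and hence $\Cone(U_k^{(k)})\simeq Q_k\otimes P_k=(Q_k\sqcup 1_0)\otimes P_k$ by projector absorbing (Corollary \ref{cor-QkillTurnbacks}, Proposition \ref{prop-turnbackTFAE}). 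Inductive step ($k\le n-1$, result known for $n-1$): by Construction \ref{const-standardCx}, $P_n$ is the target of a deformation retract $\rho\colon(P_{n-1}\sqcup 1_1)\otimes P_n'\to P_n$, $P_n'=\Z[u_n]\otimes Q_n$, which by Corollary \ref{cor-equiCor} and Remark \ref{remark-equivariance} may be taken $\Z[u_2,\dots,u_n]$-equivariant, and under $\rho$ the endomorphism $U_k^{(n)}$ corresponds to $(U_k^{(n-1)}\sqcup 1_1)\otimes\Id_{P_n'}$. Hence by Lemma \ref{lemma-coneInvariance}, and since $\Cone$ commutes with the additive functors $(-)\sqcup 1_1$ and $(-)\otimes P_n'$,
\[
\Cone(U_k^{(n)})\ \simeq\ \big(\Cone(U_k^{(n-1)})\sqcup 1_1\big)\otimes P_n'.
\]
Plugging in the inductive hypothesis $\Cone(U_k^{(n-1)})\simeq(Q_k\sqcup 1_{n-1-k})\otimes P_{n-1}$, using $(A\otimes B)\sqcup 1_1=(A\sqcup 1_1)\otimes(B\sqcup 1_1)$ and $(Q_k\sqcup 1_{n-1-k})\sqcup 1_1=Q_k\sqcup 1_{n-k}$, and applying $\rho$ once more, the right-hand side becomes $(Q_k\sqcup 1_{n-k})\otimes\big((P_{n-1}\sqcup 1_1)\otimes P_n'\big)\simeq(Q_k\sqcup 1_{n-k})\otimes P_n$, as desired.

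\textbf{Main obstacle.} The delicate point is the identification, under $\rho$, of $U_k^{(n)}$ with $(U_k^{(n-1)}\sqcup 1_1)\otimes\Id_{P_n'}$ — equivalently, that $\rho$ can be chosen $u_k$-equivariant. This has to be unwound from Construction \ref{const-standardCx}: $\rho$ is built by applying, term by term, the $\Z[u_2,\dots,u_{n-1}]$-equivariant retract $P_{n-1}\otimes C_{n-1}\to P_{n-1}$ — equivariant because it uses only the turnback-killing of $P_{n-1}$ (Corollary \ref{cor-equiCor}, Remark \ref{remark-equivariance}) — while $u_n$ acts only on the outer $\Z[u_n]$-factor and is therefore respected automatically. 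The genuinely homological content, namely that $\Homg(P_k,Q_k)$ vanishes in bidegree $(2-2k,2k)$, is already absorbed into the base case via the proof of Theorem \ref{thm-QnUniqueness}.
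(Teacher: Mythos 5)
Your proof is correct and follows essentially the same route as the paper's: both extract the result from the presentation of $P_n$ in Construction \ref{const-standardCx} as a (retract of a) tensor product of the $P_j'=\Z[u_j]\otimes Q_j$, localize the $u_k$-action to the single tensor factor on which its mapping cone is $Q_k$, and finish with projector absorbing. The differences are organizational rather than substantive — you unroll the tensor decomposition as an induction on $n$ (getting the base case $\Cone(U_k^{(k)})\simeq Q_k$ from Theorem \ref{thm-QnUniqueness} instead of reading it off from $P_k'$ directly) and you supply explicit arguments for the $k=1$ case and for the $u_k$-equivariance of the retract, both of which the paper's proof leaves implicit.
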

\begin{proof}
In case $k=1$, the claim is obvious, so let us assume that  $1 < k\leq n$.  Let $Q_2,\ldots,Q_n$ and $P_1,\ldots,P_n$ be as in Construction \ref{const-standardCx}.  By Corollary \ref{cor-extGroups} the map $U_k^{(n)}\in\Endg(P_n)$ from Definition \ref{def-polyDef} represents a generator of the relevant homology group.  It is easy to see that $\Cone(U_k^{(n)})\simeq (Q_{k-1}\sqcup 1_1)$.   By construction, $P_n$ is the target of a deformation retract
\[
(P_2'\sqcup 1_{n-2})\otimes \cdots \otimes P_n'\rightarrow P_n
\]
where $P_k' = \Z[u_k]\otimes Q_k$ with differential $1\otimes d+u_k\otimes \partial_k$.  The $u_k$ action on $P_n$ is inherited from the action of $u_k$ on $P_k'$, hence
\begin{eqnarray*}
\Cone(U_k^{(n)}) &\simeq &  (P_2\sqcup 1_{n-2})\otimes \cdots \otimes \Cone(u_k)\otimes \cdots P_n'\\
 &\simeq &  (P_2\sqcup 1_{n-2})\otimes \cdots \otimes (Q_k\sqcup 1_{n-k})\otimes \cdots P_n'\\
\end{eqnarray*}
By projector absorbing, this latter complex is homotopy equivalent to $(Q_k\sqcup 1_{n-k})\otimes P_n$.  This completes the proof.
\end{proof}

As an application of uniqueness, we obtain an expression for $Q_3$:

\begin{example}\label{example-Q3}
Let $C\in \Kom(3)$ denote the following chain complex:
\[
\begin{minipage}{5.8in}
\begin{tikzpicture}
\tikzstyle{every node}=[font=\small]
\node (a) at (.5,0) {$q^6\ \pic{1_3}$};
\node at (3,.7) {$q^5\ \pic{e1}$};
\node (b) at (3.1,0) {$\oplus$};
\node at (3,-.7) {$q^5\ \pic{e2}$};
\node at (6,.7) {$q^4\ \pic{e1e2}$};
\node (c) at (6.1,0) {$\oplus$};
\node at (6,-.7) {$q^4\ \pic{e2e1}$};
\node at (9,.7) {$q^2\ \pic{e1e2}$};
\node (d) at (9.1,0) {$\oplus$};
\node at (9,-.7) {$q^2\ \pic{e2e1}$};
\node at (12,.7) {$q^5\ \pic{e1}$};
\node (e) at (12.1,0) {$\oplus$};
\node at (12,-.7) {$q^5\ \pic{e2}$};
\node (f) at (14.5,0) {$\underline{\pic{1_3}}$};
\path[->,>=stealth',shorten >=1pt,auto,node distance=1.8cm,
  thick]
(a) edge node[above] {$\a$}	  ([xshift = -6pt] b.west)
([xshift=6pt] b.east)  edge node[above] {$\b$}	 ([xshift = -9pt] c.west)
([xshift=6pt] c.east) edge node[above] {$\gamma$}	  ([xshift = -9pt] d.west)
([xshift=6pt] d.east) edge node[above] {$\b^\ast$}	  ([xshift = -9pt] e.west)
([xshift=6pt] e.east) edge node[above] {$\a^\ast$}	  (f);
\end{tikzpicture}
\end{minipage}
\]
where
\[
\a = \matrix{\mpic{1-to-e1} \\ \mpic{1-to-e2}},\hskip.3in
\b = \matrix{\mpic{e1-to-e1e2} & -\mpic{e2-to-e1e2}\\ -\mpic{e1-to-e2e1} & \mpic{e2-to-e2e1}},\hskip.3in
\gamma=\matrix{\mpic{e1e2_topDot} + \mpic{e1e2_bottomDot} & \mpic{e2e1-to-e1e2} \\ \mpic{e1e2-to-e2e1} &  \mpic{e2e1_topDot} + \mpic{e2e1_bottomDot} },
\]\vskip7pt
and $(-)^\ast:\bn{n}\rightarrow \bn{n}$ is the contravariant functor which is the identity on objects and flips cobordisms upside down.  In particular $\pic{hsaddle}^\ast = \pic{isaddle}$ and vice versa.  The reader is invited to compare this chain complex with the expression for $P_3$ in \cite{CK12a}. 

One can check that $C$ kills turnbacks.  By projector absorbing, we then have $(P_2\sqcup 1_1)\otimes C\simeq C$.  On the other hand, one can simplify $(P_2\sqcup 1_1)\otimes C$ by expanding $C$ into its chain groups and cancelling the turnbacks which hit $P_2$ (using Theorem \ref{convSdrthm} to contract all such terms).  The resulting complex is readily seen to be a convolution of the symmetric Cooper-Krushkal sequence relative to $P_2$.  Hence $C\simeq Q_3$ by Theorem \ref{thm-QnUniqueness}.
\end{example}
We remark that for $n\geq 4$ the complex $Q_n$ is not homotopy equivalent to a bounded complex.

\begin{definition}\label{def-quasiProjectors}
For any sequence $1\leq i_1,\ldots,i_r\leq n$, put
\[
P_n(i_1,\ldots,i_r):=\Cone(U_{i_1})\otimes \cdots\otimes \Cone(U_{i_r})
\]
where $U_k:=U_k^{(n)}$ is the chain map from Definition \ref{def-polyDef}.  By convention, associated to the empty sequence we have $P_n(\emptyset):=P_n$.  Since the $U_k$ are unique up to homotopy, different choices of $U_k$ give canonically isomorphic complexes $P_n(i_1,\ldots,i_r)$.  We call the complexes $P_n(i_1,\ldots,i_r)$ \emph{quasi-projectors}.
\end{definition}
In light of Corollary \ref{cor-QnAreCones}, these complexes can also be described in terms of tensor products of the $Q_k$.  In particular, $P_n(1,2,\ldots,n)\simeq (Q_1\sqcup 1_1)\otimes (Q_2\sqcup 1_{n-2})\otimes\cdots \otimes Q_n$.  In the remainder of \S \ref{sec-quasiprojectors} we study properties of these complexes.

\subsection{Quasi-idempotency and commuting properties}
\label{subsec-quasiProjProps}
If $e\in A$ is an idempotent of a $k$-algebra $A$ and $\a\in k$ is a scalar, then any $f=\a e$ satisfies $f^2=\a f$.  This property is called \emph{quasi-idempotency}.  Note that $Q_n\simeq (t^{1-2n}q^{2n}P_n\rightarrow P_n)$ categorifies a multiple of the Jones-Wenzl projector in the sense of \hyperref[subsec-eulerChar]{\S \ref{subsec-eulerChar}}.  It is natural to ask whether $Q_n$ is quasi-idempotent up to homotopy, and it is a pleasant surprise that it actually is.  First, a lemma:
\begin{lemma}\label{lemma-mapMerging}
Let $\mu:P_n\otimes P_n\rightarrow P_n$ be a canonical equivalence (Definition \ref{def-canEquiv}) and $\mu\inv:P_n\rightarrow P_n\otimes P_n$ a homotopy inverse.  Let $\Phi_1,\Phi_2:\Endg(P_n)\rightarrow \Endg(P_n)$ denote the maps
\[
\Phi_1(f)=\mu\circ(f\otimes \Id_{P_n})\circ \mu\inv \hskip.7in \Phi_2(f)=\mu\circ(\Id_{P_n}\otimes f)\circ \mu\inv
\]
for all $f\in \Endg(P_n)$.  Then $\Phi_1\simeq \Phi_2\simeq \Id_{\Endg(P_n)}$.
\end{lemma}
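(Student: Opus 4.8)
\emph{Proposal.} The plan is to push the whole statement across the chain homotopy equivalence $\Homg(P_n,P_n)\simeq\Homg(1_n,P_n)$ of Proposition \ref{prop-resProp}, which is precomposition with the unit $\iota\colon 1_n\to P_n$; on the $\Homg(1_n,P_n)$ side both claimed identities reduce to a two‑line calculation with canonical maps and the interchange law for the bifunctor $\otimes$. First I would reduce to the case that $P_n$ is a \emph{strong} Cooper--Krushkal projector, which is harmless: any $P_n$ is canonically equivalent to a strong one, and conjugating by such an equivalence carries $\mu$, $\mu^{-1}$, and hence $\Phi_1,\Phi_2$, to the analogous data for the new projector, so the conclusion transports. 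With $P_n$ strong, Proposition \ref{prop-resProp} applies with $A=P_n$, and I write $R\colon\Endg(P_n)\to\Homg(1_n,P_n)$, $R(f)=f\circ\iota$, for the resulting chain homotopy equivalence, with chosen homotopy inverse $S$.

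Next I would record two elementary consequences of the canonicity of $\mu$. Since $\iota\otimes\iota$ is the unit of the projector $P_n\otimes P_n$ (Proposition \ref{prop-tensorProj}) and $\mu$ is canonical, $\mu\circ(\iota\otimes\iota)\simeq\iota$; combining this with the interchange law $(\iota\otimes\Id_{P_n})\circ\iota=\iota\otimes\iota$ shows that $g:=\mu\circ(\iota\otimes\Id_{P_n})\colon P_n\to P_n$ is a degree $(0,0)$ canonical endomorphism of $P_n$, so $g\simeq\Id_{P_n}$ by uniqueness of canonical maps (Theorem \ref{thm-canEquiv}); the same argument gives $\mu\circ(\Id_{P_n}\otimes\iota)\simeq\Id_{P_n}$. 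Also $\mu^{-1}\circ\iota\simeq\mu^{-1}\circ\mu\circ(\iota\otimes\iota)\simeq\iota\otimes\iota$.

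With these in hand, for $f\in\Endg(P_n)$ I would compute, using the interchange law and then the first consequence,
\[
\begin{aligned}
R\bigl(\Phi_2(f)\bigr)&=\mu\circ(\Id_{P_n}\otimes f)\circ\mu^{-1}\circ\iota
\;\simeq\;\mu\circ(\Id_{P_n}\otimes f)\circ(\iota\otimes\iota)\\
&=\mu\circ(\iota\otimes\Id_{P_n})\circ(f\circ\iota)\;\simeq\;f\circ\iota=R(f),
\end{aligned}
\]
and symmetrically $R(\Phi_1(f))=\mu\circ(f\otimes\Id_{P_n})\circ\mu^{-1}\circ\iota\simeq\mu\circ\bigl((f\circ\iota)\otimes\iota\bigr)=\mu\circ(\Id_{P_n}\otimes\iota)\circ(f\circ\iota)\simeq f\circ\iota=R(f)$. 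Each step is pre‑ or post‑composition of a fixed chain homotopy with maps depending $\Z$‑linearly on $f$, so the steps assemble into chain homotopies $R\circ\Phi_1\simeq R\simeq R\circ\Phi_2$; composing with $S$ and using $SR\simeq\Id$ then gives $\Phi_1\simeq\Id_{\Endg(P_n)}\simeq\Phi_2$.

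The main points needing care are bookkeeping rather than substance: (a) confirming that $\Phi_1$ and $\Phi_2$ are genuine chain maps $\Endg(P_n)\to\Endg(P_n)$, so that ``$\Phi_i\simeq\Id$'' is meaningful and so that the final conjugation argument is valid — this follows from $\mu,\mu^{-1}$ being chain maps and $\Id_{P_n}\otimes(-)$, $(-)\otimes\Id_{P_n}$ being chain endomorphisms of the relevant $\Homg$ complexes; and (b) checking that the displayed string of homotopies is natural ($\Z$‑linear) in $f$, so it defines an honest homotopy of chain maps rather than a mere pointwise homotopy. Everything else is formal manipulation of canonical maps together with the interchange law, which I would not spell out in detail.
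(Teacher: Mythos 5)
Your argument is correct, and the two load-bearing facts are the same as in the paper's proof: uniqueness of canonical maps (Theorem \ref{thm-canEquiv}), which identifies $\iota\otimes\Id_{P_n}$ and $\Id_{P_n}\otimes\iota$ with $\mu\inv$ up to homotopy, and the interchange law for $\otimes$, which slides $f$ past $\iota$. Where you differ is in the packaging. The paper never leaves $\Endg(P_n)$: it observes that $\mu\inv$ is itself a canonical map $P_n\rightarrow P_n\otimes P_n$ (since $\mu\inv\circ\iota\simeq\iota\otimes\iota$), hence $\mu\inv\simeq\Id_{P_n}\otimes\iota$, substitutes this into $\Phi_1(f)$, applies interchange to get $\mu\circ(\Id_{P_n}\otimes\iota)\circ f$, and finishes with $\mu\circ(\Id_{P_n}\otimes\iota)\simeq\Id_{P_n}$ --- all justified by the same preliminary remark you flag in your point (b), that pre- and post-composition with homotopic maps yield homotopic operators on $\Homg$. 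You instead only identify $\mu\inv\circ\iota$ (rather than $\mu\inv$ itself) with $\iota\otimes\iota$, and then must invoke Proposition \ref{prop-resProp} to know that $R=(-)\circ\iota$ is a homotopy equivalence $\Endg(P_n)\rightarrow\Homg(1_n,P_n)$, which in turn forces the preliminary reduction to a strong Cooper--Krushkal projector and the conjugation bookkeeping that goes with it. Both routes are sound; the paper's is shorter because substituting for $\mu\inv$ directly makes the detour through $\Homg(1_n,P_n)$, and hence the dependence on strong projectors, unnecessary. If you want to streamline your write-up, note that your own observation $\mu\circ(\iota\otimes\Id_{P_n})\simeq\Id_{P_n}$ already shows $\iota\otimes\Id_{P_n}$ is a homotopy inverse to $\mu$, so it can simply replace $\mu\inv$ wholesale.
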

\begin{proof}
Let us first remark that $\Homg(-,-)$ respects homotopy in the following sense: suppose $A,B,A',B'$ are chain complexes over an additive category, and $\phi:A'\rightarrow A$, $\psi:B\rightarrow B'$ are chain maps.  Denote by $L_\psi:\Homg(A,B)\rightarrow \Homg(A,B')$ and $R_\phi:\Homg(A,B)\rightarrow \Homg(A',B)$ the chain maps defined by
\[
L_\psi(f)=\psi\circ f \hskip .7in R_\phi(f) = f\circ \phi
\]
for all $f\in\Homg(A,B)$.  If $\psi\simeq \psi_1$ (respectively $\phi\simeq \phi_1$), then $L_{\psi}\simeq L_{\psi_1}$ (respectively, $R_{\phi}\simeq R_{\phi_1}$).  In the present situation, we want to show that $L_{\mu}F R_{\mu\inv}\simeq \Id_{\Endg{P_n}}$ for some $F$.  By the preceding remarks, the validity of this relation is unchanged if we replace $\mu\inv$ by some $\phi\simeq \mu\inv$.

Denote the unit of $P_n$ by $\iota:1_n\rightarrow P_n$.  Then $\iota\otimes \iota:1_n\rightarrow P_n\otimes P_n$ is the unit of $P_n\otimes P_n$.  Thus, $\iota\otimes \Id_{P_n}$ and $\Id_{P_n}\otimes \iota$ are canonical equivalences $P_n\rightarrow P_n\otimes P_n$, as can easily be verified from the definitions (canonical equivalences are defined in Definition \ref{def-canEquiv}).  Uniquness of canonical equivalences (Theorem \ref{thm-canEquiv}) implies that
\[
\mu\inv \simeq \iota\otimes \Id_{P_n} \simeq \Id_{P_n}\otimes \iota
\]
Thus, $\Phi_1$ is homotopic to the map sending
\[
f\mapsto \mu\circ (f\otimes \Id_{P_n})\circ (\Id_{P_n}\otimes \iota) = \mu\circ (\Id_{P_n}\otimes \iota)\circ (\Id_{1_n}\otimes f)
\]
which is homotopic to the identity map on $\Endg(P_n)$, since $\mu\circ (\Id_{P_n}\otimes \iota)\simeq \Id_{P_n}$.  This shows that $\Phi_1$ is homotopic to the identity.  A similar argument shows that $\Phi_2$ is homotopic to the identity.
\end{proof}

\begin{theorem}\label{thm-QnProps}
Recall the complexes $P_n(i_1,\ldots,i_r)$ from Definition \ref{def-quasiProjectors}.  We have:
\begin{enumerate}
\item $P_n(i_1,\ldots,i_r)\simeq P_n(i_{\pi(1)},\ldots,i_{\pi(r)})$ for any permutation $\pi\in S_r$.
\item $P_n(k,k,i_1,\ldots,i_r)\simeq (1+t^{1-2k}q^{2k})P_n(k,i_1,\ldots,i_r)$.
\end{enumerate}
for all indices $1\leq k,i_1,\ldots,i_r\leq n$.  In particular, $Q_n^{\otimes 2} \simeq Q_n\oplus t^{1-2n}q^{2n}Q_n$.
\end{theorem}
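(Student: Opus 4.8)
The plan is to describe every quasi-projector $P_n(i_1,\ldots,i_r)$ as a single ``Koszul-type'' convolution of shifted copies of $P_n$, after which both statements reduce to elementary manipulations with exterior algebras. Throughout fix the strong Cooper--Krushkal projector $P_n$ from Construction~\ref{const-standardCx} and the maps $U_k := U_k^{(n)}\in\Endg(P_n)$ of bidegree $(2-2k,2k)$. By definition of the mapping cone, $\Cone(U_k)$ is the convolution $\big(t^{1-2k}q^{2k}P_n\buildrel U_k\over\longrightarrow P_n\big)$, which I will write as $\Lambda[\epsilon_k]\otimes P_n$ with $\epsilon_k$ an odd indeterminate of bidegree $(1-2k,2k)$ and ``$d\epsilon_k=U_k$''. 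Since $P_n$ kills turnbacks, so does each $\Cone(U_k)$ (a mapping cone of complexes that kill turnbacks), so Corollary~\ref{cor-CKprojAreIdemp} and Proposition~\ref{prop-turnbackTFAE} give $P_n\otimes P_n\simeq P_n$ and $\Cone(U_k)\otimes P_n\simeq\Cone(U_k)\simeq P_n\otimes\Cone(U_k)$. Finally recall $Q_n\simeq\Cone(U_n^{(n)})=P_n(n)$, e.g.\ by Theorem~\ref{thm-QnUniqueness} or Corollary~\ref{cor-QnAreCones}.

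For (1), it suffices to treat an adjacent transposition, since tensoring the resulting equivalence on both sides with fixed complexes is harmless; but one can do all cases at once. Tensoring the $\Cone(U_{i_j})$ together in order gives, as a convolution, $\Lambda[\epsilon_{i_1},\ldots,\epsilon_{i_r}]\otimes P_n^{\otimes r}$ in which the differential component attached to $\epsilon_{i_j}$ is $U_{i_j}$ applied to the $j$-th tensor factor. Iterating a canonical equivalence $\mu\colon P_n\otimes P_n\to P_n$ collapses $P_n^{\otimes r}\simeq P_n$; under this collapse the endomorphism ``$U_{i_j}$ on the $j$-th factor'' is transported to an endomorphism of $P_n$ which, by repeated application of Lemma~\ref{lemma-mapMerging} (the maps $\Phi_1,\Phi_2$ there being homotopic to $\Id$), is chain homotopic to $U_{i_j}$. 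Hence $P_n(i_1,\ldots,i_r)$ is homotopy equivalent to the convolution $\Lambda[\epsilon_{i_1},\ldots,\epsilon_{i_r}]\otimes P_n$ with $d\epsilon_{i_j}=U_{i_j}$, a description that depends only on the multiset $\{i_1,\ldots,i_r\}$ and is manifestly invariant under permuting the $\epsilon$'s. This is (1).

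For (2) and the ``in particular'' clause, apply the previous step to the sequence $(k,k,i_1,\ldots,i_r)$: $P_n(k,k,i_1,\ldots,i_r)\simeq\Lambda[\epsilon_k,\epsilon_k',\epsilon_{i_1},\ldots,\epsilon_{i_r}]\otimes P_n$ with $d\epsilon_k=d\epsilon_k'=U_k$. Now perform the invertible, degree-preserving change of generators $\eta:=\epsilon_k-\epsilon_k'$ (keeping $\epsilon_k',\epsilon_{i_1},\ldots,\epsilon_{i_r}$). Then $d\eta=U_k-U_k=0$, so the convolution splits as $\Lambda[\eta]\otimes\big(\Lambda[\epsilon_k',\epsilon_{i_1},\ldots,\epsilon_{i_r}]\otimes P_n\big)$ with trivial differential on the $\Lambda[\eta]$ factor; since $\deg\eta=(1-2k,2k)$ the factor $\Lambda[\eta]$ contributes $1+t^{1-2k}q^{2k}$, and the other factor is again $P_n(k,i_1,\ldots,i_r)$, giving $P_n(k,k,i_1,\ldots,i_r)\simeq(1+t^{1-2k}q^{2k})P_n(k,i_1,\ldots,i_r)$. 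Taking $k=n$ and the empty sequence, and using $Q_n\simeq P_n(n)$, yields $Q_n^{\otimes2}\simeq P_n(n,n)\simeq(1+t^{1-2n}q^{2n})P_n(n)\simeq Q_n\oplus t^{1-2n}q^{2n}Q_n$.

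The hard part is not the exterior-algebra arithmetic but making the ``$\Lambda[\epsilon_{i_1},\ldots,\epsilon_{i_r}]\otimes P_n$'' picture honest: $\bn{n}$ is not abelian, so these objects exist only as convolutions, and because $\Endg(P_n)$ is merely homotopy-(graded-)commutative \cite{H12a} the transported copies of the $U_{i_j}$ agree only up to homotopy, so after the $\mu$-collapse one obtains a ``homotopy bicomplex'' rather than a genuine one. Turning this into an honest convolution of the stated form --- and checking that the substitution $\eta=\epsilon_k-\epsilon_k'$ survives --- is exactly what the convolution and differential-perturbation machinery is for (Theorem~\ref{thm-diffPerturbing}, Lemma~\ref{lemma-coneInvariance}, and the homotopy-transport along $\mu$ provided by Lemma~\ref{lemma-mapMerging}). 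Once the homotopies are chosen coherently, the remaining degree-shift and Koszul-sign bookkeeping is mechanical. A more concrete variant of the reduction that sidesteps some of this: write $\Cone(U_a)\otimes\Cone(U_b)=\Cone\big(U_a\otimes\Id_{\Cone(U_b)}\big)$, absorb $P_n\otimes\Cone(U_b)\simeq\Cone(U_b)$, observe that $U_a\otimes\Id$ transports to the ``diagonal'' endomorphism of $\Cone(U_b)$ acting by $U_a$ on each $P_n$-summand, and note that the resulting iterated mapping cone is built symmetrically out of $U_a$ and $U_b$, hence is unchanged under swapping $a$ and $b$.
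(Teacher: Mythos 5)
Your overall strategy is the paper's: reduce to pairs, present $\Cone(U_i)\otimes\Cone(U_j)$ as a convolution of four shifted copies of $P_n\otimes P_n$, collapse $P_n\otimes P_n\simeq P_n$, transport the $U$'s along the collapse using Lemma \ref{lemma-mapMerging}, normalize the edge maps with Theorem \ref{thm-diffPerturbing}, and for $i=j$ split the square by the change of basis $\eta=\epsilon_k-\epsilon_k'$ (which is exactly the paper's elementary similarity transform). The exterior-algebra packaging is cosmetic.

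There is, however, one step where your write-up is genuinely incomplete, and you half-acknowledge it without supplying the fix. After collapsing and normalizing, the convolution $\Lambda[\epsilon_{i_1},\ldots,\epsilon_{i_r}]\otimes P_n$ with $d\epsilon_{i_j}=U_{i_j}$ is \emph{not} determined by this data: because the $U_{i_j}$ only supercommute up to homotopy, the differential necessarily carries higher-length components (already for $r=2$, a diagonal map $h$ with $[d,h]=[U_i,U_j]$), and a convolution of a homotopy complex is in general neither unique nor ``manifestly'' symmetric in the $\epsilon$'s. Theorem \ref{thm-diffPerturbing} lets you trade homotopic edge maps at the cost of perturbing these higher components; it does not tell you the homotopy type is independent of them. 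The missing ingredient is the vanishing $\Ext^{3-2i-2j,\,2i+2j}(P_n,P_n)=0$ from Corollary \ref{cor-extGroups}: any two admissible choices of the length-two component differ by a cycle of that bidegree, hence by a boundary, so the isomorphism type of the convolution does not depend on the choice. This is precisely what makes swapping $i$ and $j$ (and, for $i=j$, taking $h=0$ so that your substitution $\eta=\epsilon_k-\epsilon_k'$ actually splits the complex) legitimate. With that vanishing cited, and with part (1) for general $r$ reduced to adjacent transpositions as you indicate, your argument closes up and coincides with the paper's.
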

\begin{proof}
It is clear that $P_n(\mathbf{i}\cdot\mathbf{j})\simeq P_n(\mathbf{i})\otimes P_n(\mathbf{j})$ for all sequences $\mathbf{i},\mathbf{j}$.  It therefore suffices to prove the statements
\begin{enumerate}
\item $\Cone(U_i)^{\otimes 2}\simeq \Cone(U_i)\oplus t^{1-2i}q^{2i}\Cone(U_i)$ for all $1\leq i\leq n$.
\item $\Cone(U_i)\otimes \Cone(U_j)\simeq \Cone(U_j)\otimes \Cone(U_i)$ for all $1\leq i,j\leq n$.
\end{enumerate}
Let $y_k=t^{1-2k}q^{2+2k}$ denote the grading shift functor and compute:
\[
\Cone(U_i)\otimes \Cone(U_j)=
\Tot\left(
\begin{diagram}
y_iy_j P_n\otimes P_n   & \rTo^{U_i\otimes \Id} & y_j P_n\otimes P_n\\
\dTo^{-\Id\otimes U_j} && \dTo_{\Id\otimes U_j}\\
y_i P_n\otimes P_n   & \rTo^{U_i\otimes \Id} &  P_n\otimes P_n
\end{diagram}\ \ 
\right)
\]
By projector absorbing we have the data $(\pi,\sigma,h)$ of a deformation retract $P_n\otimes P_n\simeq P_n$.  Apply $\pi: P_n\otimes P_n\rightarrow P_n$ (using Theorem \hyperref[convSdrthm]{\ref{convSdrthm}}) to each term to obtain
\[
\Cone(U_i)\otimes \Cone(U_j) \simeq 
\Tot\left(
\begin{diagram}
y_iy_j P_n & \rTo^{\rho_1(U_i)} & y_j P_n\\
\dTo^{-\rho_2(U_j)} &\rdTo^{w} & \dTo_{\rho_2(U_j)}\\
y_i P_n & \rTo^{\rho_1(U_i)} & P_n
\end{diagram}\ \ \ \ \ \ \ 
\right)
\]
where $\rho_1,\rho_2:\Endg(P_n)\rightarrow \Endg(P_n)$ denote the maps
\[
\rho_1(f)=\pi\circ(f\otimes \Id)\circ \sigma\hskip1in   \rho_2(f)=\pi\circ(\Id\otimes f)\otimes \sigma
\]
By Lemma \ref{lemma-mapMerging} we have $\rho_1\simeq \rho_2\simeq\Id_{\Endg(P_n)}$, hence by Theorem \ref{thm-diffPerturbing} we can replace the horizontal maps by $U_i$ and the vertical maps with $\pm U_j$ at the expense of affecting the length-two component of the differential:
\[
\Cone(U_i)\otimes \Cone(U_j) \simeq 
\Tot\left(
\begin{diagram}
y_iy_j P_n & \rTo^{U_i} & y_j P_n\\
\dTo^{-U_j} &\rdTo^{h} & \dTo_{U_j}\\
y_i P_n & \rTo^{U_i} & P_n
\end{diagram}\ \ 
\right)
\]
for some $h\in \Endg(P_n)$ with $[d,h]=[U_i,U_j]$.  If $h'$ is any other choice of diagonal map, then $h-h'\in \Endg(P_n)$ is a cycle of bidegree $(3-2i-2j,2i+2j)$, hence a boundary by Corollary \ref{cor-extGroups}.  Thus the isomorphism type of the right-hand side above does not depend on $h$.  It follows that swapping $i$ and $j$ in the right-hand side of the above gives an isomorphic chain complex, which proves (2).  

Now, specialize to the case $i=j$.  Since the choice of $h$ is irrelevant, we may assume $h=0$, so that
\[
\Cone(U_i)\otimes \Cone(U_i)\simeq 
\Tot\left(
\begin{diagram}
y_iy_j P_n & \rTo^{U_i} & y_j P_n\\
\dTo^{-U_i} & & \dTo_{U_i}\\
y_i P_n & \rTo^{U_i} & P_n
\end{diagram}\ \ 
\right)
\]
After performing an elementary similarity transform to the matrix $\smMatrix{ d &0&0&0\\ U_i& -d & 0 &0 \\ -U_i & 0 & -d & 0 \\ 0 &U_i &U_i & d}$ (namely add the second row to the third while subtracting the third column from the second) we can replace the vertical maps with zeroes up to isomorphism.  The result will be a chain complex which is isomorphic to $\Cone(U_i)\oplus t^{1-2i}q^{2i}\Cone(U_i)$.  This proves (1).
\end{proof}

\subsection{Improving the statement of boundedness}
\label{subsec-improvingBddness}
Sections are \S \ref{subsec-improvingBddness} and \S \ref{subsec-torusBraids} can be skipped without interrupting the logical flow of the paper.

\begin{theorem}\label{thm-nilpotence}
Let $(i_1,\ldots,i_n)$ be be the sequence obtained from $(1,2,\ldots,n)$ by removing the indices $k$ for which $U_k\in\Endg(P_n)$ is nilpotent up to homotopy.  Then $P_n(i_1,\ldots,i_n)$ is homotopy equivalent to a bounded complex.
\end{theorem}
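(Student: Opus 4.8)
The plan is a downward induction on the number of mapping cones, resting on a single nilpotence lemma. Two preliminaries are needed. First, the index $1$ is always removed: by Theorem \ref{thm-GorEnd} there is a $\Z[u_1,\ldots,u_n]$-equivariant deformation retract $\Endg(P_n)\to W_n=\Z[u_1,\ldots,u_n]/(u_1^2)\otimes\Lambda[\xi_2,\ldots,\xi_n]$, under which $[U_1^{(n)}]$ acts on $H(\Endg(P_n))$ as multiplication by $u_1$; hence $[U_1^{(n)}]^2=[u_1^2]=0$, so $U_1^{(n)}$ is nilpotent up to homotopy. Second, the full quasi-projector $P_n(1,2,\ldots,n)$ is homotopy equivalent to a bounded complex: by Corollary \ref{cor-QnAreCones} (see the remark after Definition \ref{def-quasiProjectors}) it is equivalent to $(Q_1\sqcup 1_{n-1})\otimes K_n$, and $Q_1$ and $K_n$ are each homotopy equivalent to bounded complexes ($K_n$ by Theorem \ref{thm-introKnBdd}), so their tensor product is as well.

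Let $S=\{k_1,\ldots,k_p\}$ be the set of removed indices, so $1\in S$ and $\{i_1,\ldots,i_n\}=\{1,\ldots,n\}\setminus S$. Using permutation-invariance and multiplicativity of the quasi-projectors (Theorem \ref{thm-QnProps} and its proof), one has $P_n(1,2,\ldots,n)\simeq C_p$, where
\[
C_r:=P_n(i_1,\ldots,i_n)\otimes\Cone(U_{k_1}^{(n)})\otimes\cdots\otimes\Cone(U_{k_r}^{(n)}),\qquad C_0=P_n(i_1,\ldots,i_n).
\]
I would descend from $C_p$ to $C_0$ one factor at a time. Fix $r$ with $1\le r\le p$. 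Each factor $\Cone(U_k^{(n)})\simeq(Q_k\sqcup 1_{n-k})\otimes P_n$ (Corollary \ref{cor-QnAreCones}) kills turnbacks, since $P_n\otimes e_i\simeq 0$ for every Temperley--Lieb generator $e_i$ and $\{\tau<n\}$ is a tensor ideal; hence $C_{r-1}$ kills turnbacks, and projector absorbing (Proposition \ref{prop-turnbackTFAE}) gives an equivalence $\iota\colon C_{r-1}\xrightarrow{\sim}P_n\otimes C_{r-1}$. Conjugating the self-map $U_{k_r}^{(n)}\otimes\Id_{C_{r-1}}$ of $P_n\otimes C_{r-1}$ by $\iota$ and invoking Lemma \ref{lemma-coneInvariance} identifies $C_r=\Cone(U_{k_r}^{(n)}\otimes\Id)\simeq\Cone(v_r)$, where $v_r:=\iota^{-1}\circ(U_{k_r}^{(n)}\otimes\Id)\circ\iota\in\Endg(C_{r-1})$; and $v_r^{N}\simeq 0$ for suitable $N$, since $(U_{k_r}^{(n)})^{N}\simeq 0$ by the choice of $S$. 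The theorem thus follows, by downward induction starting from the bounded complex $C_p$, once we prove the following.

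\textbf{Key Lemma.} \emph{Let $v\in\Endg(C)$ satisfy $v^{N}\simeq 0$ for some $N\ge 1$. If $\Cone(v)$ is homotopy equivalent to a bounded complex, then so is $C$.} To see this, note that since $v^{N}\simeq 0$, homotopy-invariance of mapping cones (Lemma \ref{lemma-coneInvariance}) gives $\Cone(v^{N})\simeq\Cone(0)$, which is the honest direct sum of $C$ and a degree-shifted copy of $C$. On the other hand, $N-1$ applications of the octahedral axiom to the factorization of $v^{N}$ as a composite of $N$ shifted copies of $v$ exhibit $\Cone(v^{N})$ as an iterated mapping cone of $N$ degree-shifted copies of $\Cone(v)$, hence as homotopy equivalent to a bounded complex. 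Therefore $C$ is a direct summand, in the homotopy category, of a bounded complex; and since the homotopy category of bounded complexes over $\bn{n}$ is idempotent complete --- $\bn{n}$ itself being idempotent complete, as it is equivalent to a category of (finitely generated projective) modules over Khovanov's ring $H^n$, and idempotent completeness passing to bounded homotopy categories --- it follows that $C$ is homotopy equivalent to a bounded complex.

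The formal parts of this argument --- the Koszul-type reduction to the Key Lemma, and the octahedral manipulation inside it --- are routine given the results already in hand (projector absorbing, cone invariance, and the commuting/multiplicativity properties of the $Q_k$). The step I expect to require the most care is the final one: upgrading ``homotopy retract of a bounded complex'' to ``homotopy equivalent to a bounded complex,'' which is exactly the place where more than formal homological algebra enters. If one prefers to avoid invoking idempotent completeness of the abstract homotopy category, one can argue instead with minimal models: the chain objects of each $C_r$ are, in every fixed homological degree, finite direct sums of the rigid diagrams $q^{j}T$ of $\bn{n}$, bounded complexes of which possess essentially unique minimal representatives (obtained by Gaussian elimination), and a direct summand of such a minimal bounded complex is again bounded.
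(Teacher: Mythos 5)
Your argument is correct and follows essentially the same route as the paper's proof: establish boundedness of $P_n(1,\ldots,n)$, then strip off the cone factors $\Cone(U_k)$ with $U_k$ nilpotent one at a time, using that $\Cone(v^N)$ is simultaneously an iterated cone of finitely many shifted copies of $\Cone(v)$ (your octahedral-axiom step is exactly the paper's Lemma \ref{lemma-koszulPowers}) and, since $v^N\simeq 0$, a direct sum of two shifted copies of the underlying complex. The only substantive difference is that you explicitly justify the final passage from ``homotopy retract of a bounded complex'' to ``homotopy equivalent to a bounded complex,'' which the paper asserts without comment; flagging and filling that step (via idempotent completeness or minimal models) is a genuine improvement in rigor, not a deviation in strategy.
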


The key observation in the proof of this theorem is the following:
\begin{lemma}\label{lemma-koszulPowers}
For any chain complex $C\in\Kom(n)$ and any closed morphism $f\in\Endg(C)$, the mapping cone $\Cone(f^m)$ can be expressed as a total complex of a bicomplex:
\[
\Cone(f^m) \simeq \Tot\Big(\Cone(f)\rightarrow \lambda\Cone(f) \rightarrow \cdots \rightarrow \lambda^m\Cone(f)\Big)
\]
\end{lemma}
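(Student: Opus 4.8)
The plan is to realize the displayed iterated mapping cone explicitly as a ``staircase'' bicomplex built from grading-shifted copies of $\Cone(f)$, and then collapse it onto $\Cone(f^m)$ by Gaussian elimination. This is the categorical shadow of the elementary fact that $\Z[x]/(x^m)$ is an $m$-fold iterated extension of copies of $\Z[x]/(x)$; one could alternatively induct on $m$ via the octahedral axiom, as sketched at the end.

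\emph{Building the staircase.} Since $f$ is closed, I would first regard it as a degree $(0,0)$ chain map $\mu C\to C$, where $\mu$ is the grading shift associated with the bidegree of $f$, and observe that the shift appearing in the statement is $\lambda=t\inv\mu$. With this normalization $\lambda^i\Cone(f)=\Cone(\lambda^if)$ is, as a two-step complex, $t\inv\lambda^i\mu C\oplus\lambda^iC$, and one checks at once that its ``source'' summand $t\inv\lambda^i\mu C$ coincides with the ``target'' summand $\lambda^{i+1}C$ of $\lambda^{i+1}\Cone(f)$ as bigraded objects. I would then let $\alpha_i\colon\lambda^i\Cone(f)\to\lambda^{i+1}\Cone(f)$ be the composite of the projection $\lambda^i\Cone(f)\twoheadrightarrow t\inv\lambda^i\mu C$ (a chain map, being projection off the cone) with the inclusion $\lambda^{i+1}C\hookrightarrow\lambda^{i+1}\Cone(f)$ (a chain map, being inclusion of a subcomplex). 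One sees immediately that $\alpha_{i+1}\circ\alpha_i=0$ on the nose, since it factors as ``include a target summand, then project onto a source summand'', which is zero. Hence the $\alpha_i$ make $\big(\Cone(f)\to\lambda\Cone(f)\to\cdots\big)$ into a genuine complex of complexes, and its total complex $T:=\Tot\big(\Cone(f)\to\lambda\Cone(f)\to\cdots\big)$ exists; its differential squares to zero because the $\alpha_i$ are chain maps with $\alpha_{i+1}\alpha_i=0$.

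\emph{Collapsing.} In $T$ each $\alpha_i$ contributes a component of the total differential which, up to the homological shift built into $\lambda$, is the identity from the source summand of the $i$-th column onto the target summand of the $(i+1)$-st; in particular it is an isomorphism of the underlying bigraded objects. These $m-1$ isomorphisms link consecutive columns in a staircase, so they can be cancelled --- one at a time by Gaussian elimination (Proposition \ref{prop-gauss}), or all at once via Theorem \ref{convSdrthm}. After the cancellations the only surviving summands are the target summand $C$ of the first column and the source summand $t\inv\mu^mC$ of the last, and the surviving component of the differential between them is the $m$-fold composite of $f$, i.e.\ $f^m$ (here one uses associativity of composition, equivalently $f\circ f=f^2$). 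Thus $T\simeq(t\inv\mu^mC\buildrel f^m\over\longrightarrow C)=\Cone(f^m)$.

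\emph{Alternative and main obstacle.} One may instead induct on $m$: factoring $f^m$ as $\mu^mC\buildrel\mu^{m-1}f\over\longrightarrow\mu^{m-1}C\buildrel f^{m-1}\over\longrightarrow C$, the homotopy-categorical octahedral axiom (or a direct Gaussian elimination on a three-term convolution, cf.\ \S\ref{subsec-cones}) gives $\Cone(f^m)\simeq\Tot\big(\lambda^{m-1}\Cone(f)\to\Cone(f^{m-1})\big)$, into which one feeds the inductive description of $\Cone(f^{m-1})$ and reassociates, using that a convolution assembled from convolutions is again a convolution (Theorem \ref{thm-diffPerturbing}). Either way, the content is soft; the only real work is bookkeeping --- fixing $\lambda$ and the signs of the $\alpha_i$ so that the Gaussian eliminations are legitimate and the surviving map is $f^m$ exactly (not $\pm f^m$ with a stray shift), and recording the standard fact that the $m-1$ cancellations do not interfere, which the convolution machinery of the appendix handles cleanly. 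Making the shift and sign conventions consistent throughout is what I expect to consume the bulk of the (routine) effort.
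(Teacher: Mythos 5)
Your proposal is correct and follows essentially the same route as the paper: the paper's proof simply exhibits the same staircase bicomplex (rows $t\inv\lambda^{i}C\buildrel f\over\to\lambda^{i-1}C$ joined by $-\Id$ on the matching summands) and cancels those identity maps by Gaussian elimination (Proposition \ref{prop-gauss}), leaving $t\inv\lambda^m C\buildrel f^m\over\to C$. Your construction of the connecting maps $\alpha_i$ as projection-then-inclusion is exactly the paper's $-\Id$ diagonals, and the sign/shift bookkeeping you defer is likewise left implicit in the paper.
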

\begin{proof}
Observe:
\begin{equation}\label{eq-koszulPower}
\Cone(f^m)\simeq \Tot\left(
\begin{diagram}
t\inv\lambda C & \rTo^{f} & C\\
& \rdTo^{-\Id} & \\ 
t\inv \lambda^2 C & \rTo^{f} & \lambda C\\
& \rdTo^{-\Id} & \\
&& \\
\vdots &  & \vdots\\
&& \\
& \rdTo^{-\Id} & \\
t\inv \lambda^kC & \rTo^{f} & \lambda^{k-1}C
\end{diagram}\ \ \ \ \right)
\end{equation}
where $\lambda = t^{\deg_h(f)}q^{\deg_q(f)}$ is the degree shift necessary in order to have a degree $(1,0)$ differential.  We have written $\Tot$ to emphasize that the right-hand side above is the direct sum of shifted copies of $C$, whose total differential is is the sum all of the indicated morphisms together with the differential on each copy of $C$.  The homotopy equivalence comes from canceling the isomorphism using Gaussian elimination (Proposition \ref{prop-gauss}).
\end{proof}

\begin{proof}[Proof of Theorem \ref{thm-nilpotence}]
By Theorem \ref{thm-QnProps} we can assume that the indices $i_j$ are distinct, and ordered as we please.  Throughout, we write $C\simeq \text{bounded}$ whenever $C$ is homotopy equivalent to a bounded chain complex. By Theorem \ref{thm-standardCx}, we know that $P_n(1,2,\ldots,n) \simeq \text{bounded}$.  We must show that the indices corresponding to nilpotent $[U_k]$'s can be omitted.  Assume $P_n(k,i_1,\ldots,i_r)\simeq \text{bounded}$, and $U_k\in \Endg(P_n)$ satisfies $U_k^m\simeq 0$.  We will show that $P_n(i_1,\ldots,i_r) \simeq \text{bounded}$.

By construction, $P_n(k,i_1,\ldots,i_n) \simeq  \Cone(U_k) \otimes P_n(i_1,\ldots,i_n)$ which is $\simeq \text{bounded}$ by hypothesis.  By projector absorbing, we have $P_n(i_1,\ldots,i_n)\simeq P_n\otimes P_n(i_1,\ldots,i_r)$.  The action of $U_k$ on $P_n$ gives a closed morphism $f=U_k\otimes \Id\in\Endg(P_n\otimes P_n(i_1,\ldots,i_r))$.  Observe:
\begin{enumerate}
\item $\Cone(f)=\Cone(U_k)\otimes P_n(i_1,\ldots,i_r)\simeq \text{bounded}$. 
\item $f^m\simeq 0$ by hypothesis, so $\Cone(f^m)$ splits as two copies of $P_n\otimes P_n(i_1,\ldots,i_r)$, with degree shifts.
\item By Lemma \ref{lemma-koszulPowers}, $\Cone(f^m)$ is homotopy equivalent to a complex built out of finitely many copies of $\Cone(f)$.
\item Since $\Cone(f)\simeq \text{bounded}$ (by item (1)), it follows that $\Cone(f^m)\simeq \text{bounded}$. 
\end{enumerate}
Comparing items (2) and (4), the only possibility is that $P_n(i_1,\ldots,i_r)\simeq \text{bounded}$.  This completes the proof.
\end{proof}

The converse of Theorem \ref{thm-nilpotence} is also true, though we will not prove it.  Thus, Example \ref{example-Q3} shows that the action of $U_2$ on $P_3$ is nilpotent up to homotopy.  In general we have:
\begin{conjecture}\label{conj-nilpBdd}
For each $1\leq k\leq n$, let $U_k^{(n)}\in\Endg(P_n)$ denote the map from Definition \ref{def-polyDef}.  Then $[U_k^{(n)}]$ is nilpotent if and only if $k\leq (n+1)/2$.  Correspondingly, $P_n(i_1,\ldots,i_r)\simeq \text{ bounded}$ if and only if $\{i_1,\ldots,i_r\}$ contains each $k$ with $(n+1)/2< k\leq n$.
\end{conjecture}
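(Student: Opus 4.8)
The plan is to reduce the two assertions to one another and then to attack the nilpotence dichotomy through the model for $\Endg(P_n)$ provided by Theorem~\ref{thm-GorEnd}. For the reduction: by the first assertion the set $\{k:(n+1)/2<k\le n\}$ appearing in the boundedness clause is exactly the set of indices $k\in\{1,\ldots,n\}$ for which $[U_k^{(n)}]$ is not nilpotent, so Theorem~\ref{thm-nilpotence} yields the ``if'' direction of the boundedness clause and its converse (stated, though not proved, just after Theorem~\ref{thm-nilpotence} --- a proof of which would itself have to be supplied) yields the ``only if'' direction. It therefore suffices to prove that $[U_k^{(n)}]$ is nilpotent if and only if $2k\le n+1$. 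Since $H(\Endg(P_n))\cong\Ext^{*,*}(P_n,P_n)$ is graded-commutative \cite{H12a} and, by Theorem~\ref{thm-GorEnd}, finitely generated as a module over its subring $\Z[u_1,\ldots,u_n]$, the element $[U_k^{(n)}]=u_k$ is nilpotent if and only if $u_k$ acts nilpotently on this module, i.e.\ if and only if $u_k$ lies in the radical of its annihilator.

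I would then compute that radical using Theorem~\ref{thm-GorEnd}: replace $\Endg(P_n)$ by $W_n=\Z[u_1,\ldots,u_n,\xi_2,\ldots,\xi_n]/(u_1^2)$ with its $\Z[u_1,\ldots,u_n]$-equivariant differential $d$, where $d(u_k)=0$ and $d(\xi_m)\in 2u_1u_m+\Z[u_2,\ldots,u_{m-1}]$. Granting that $d$ is the Koszul differential attached to the sequence $d(\xi_2),\ldots,d(\xi_n)$ over $R:=\Z[u_1,\ldots,u_n]/(u_1^2)$ --- which would follow from the Leibniz-rule part of Conjecture~\ref{introGorConj} --- the ideal $I:=(d(\xi_2),\ldots,d(\xi_n))R$ annihilates every Koszul homology group and equals the annihilator of $H_0=R/I$, so the support of $H(W_n)$ is $V(I)\subseteq\operatorname{Spec}R$ and $u_k$ is nilpotent on $H(W_n)$ precisely when $u_k\in\sqrt{I}$. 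Assuming in addition the explicit formula $d(\xi_m)=\sum_{i+j=m+1}u_iu_j$ of Conjecture~\ref{introGorConj}, I would finish by a direct computation of $V(I)$: since $u_1^2=0$ we have $u_1=0$ on $V(I)$, and then the equations $d(\xi_m)=0$ reduce, after eliminating already-killed variables, to $u_i^2=0$ when $m=2i-1$ and to identities when $m$ is even; these successively force $u_2=0,u_3=0,\ldots$ for as long as $2i-1\le n$, i.e.\ up through $i=\lfloor(n+1)/2\rfloor$, while $u_k$ with $k>(n+1)/2$ remains unconstrained. Hence $u_k\in\sqrt{I}$ if and only if $k\le(n+1)/2$, which is the dichotomy.

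The main obstacle is that the input just invoked is precisely Conjecture~\ref{introGorConj}, which is open: Theorem~\ref{thm-GorEnd} pins $d(\xi_m)$ down only modulo $\Z[u_2,\ldots,u_{m-1}]$, and both halves of the dichotomy are sensitive to that undetermined part. The nilpotence of $U_k^{(n)}$ for $k\le(n+1)/2$ hinges on $d(\xi_{2k-1})$ acquiring a nonzero multiple of $u_k^2$ (this requires $2k-1\le n$), and the non-nilpotence of $U_k^{(n)}$ for $k>(n+1)/2$ hinges on no $d(\xi_m)$ with $m\le n$ contributing a $u_k^2$ term --- each of which is exactly a coefficient predicted by the GOR formula. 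One route toward an unconditional argument is the partial-trace recursion of Proposition~\ref{prop-partialTraceComp}, which presents $E_n=\Homg(1_n,P_n)$ as the mapping cone of $\Delta_n=2u_n\otimes u_1+1\otimes\d_n$ on $\Z[u_n]\otimes E_{n-1}$; chasing the resulting long exact sequence transfers structure from the $\Z[u_1,\ldots,u_{n-1}]$-module $H(E_{n-1})$ to $H(E_n)$ --- for instance a class $x\in H(E_{n-1})$ with $u_1x=0$ and $\bar\d_n(x)=0$ spans a $u_n$-free submodule of $\ker\bar\Delta_n$, forcing $U_n^{(n)}$ to be non-nilpotent --- and one would hope to control $\d_n$ well enough inductively to recover the full statement without the GOR conjecture. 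Pending such an argument, the statement must remain conjectural.
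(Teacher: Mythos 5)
This statement is labeled a conjecture in the paper, and the paper supplies no proof of it; the only supporting evidence given is Theorem \ref{thm-nilpotence} (the ``omit nilpotent indices'' direction of the boundedness clause), the unproved assertion immediately after it that the converse also holds, and Example \ref{example-Q3} (which settles the case $n=3$, $k=2$). So there is nothing in the paper for your argument to be measured against, and the right question is whether your proposal actually closes the statement. It does not, and you say so yourself; your self-diagnosis of where it breaks is accurate.

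Your reductions are sound as far as they go: the boundedness clause does follow from the nilpotence clause together with Theorem \ref{thm-nilpotence} and its converse (the latter being another unproved ingredient you correctly flag), and nilpotence of $[U_k^{(n)}]$ in the graded-commutative ring $\bigoplus_{i,j}\Ext^{i,j}(P_n,P_n)$ is equivalent to nilpotence of the $u_k$-action on $H(W_n)$ via the equivariant retract of Theorem \ref{thm-GorEnd}. But the two remaining steps are each equivalent to open parts of Conjecture \ref{introGorConj}. First, identifying the radical of the annihilator of $H(W_n)$ with $\sqrt{(d(\xi_2),\ldots,d(\xi_n))}$ uses that $d$ is a Koszul differential, i.e.\ the Leibniz rule for the \emph{obvious} multiplication on $W_n$ --- and the paper explicitly does not know this (it only knows $W_n$ carries \emph{some} $A_\infty$-structure, and even records that $\mu_2(\xi_2,\xi_2)=u_2^3\neq 0$ while $\xi_2^2=0$ in the obvious product, so the obvious multiplication is provably \emph{not} the transferred one). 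Second, even granting a Koszul model, the decisive coefficient --- whether $d(\xi_{2k-1})$ contains $u_k^2$ with unit coefficient, and whether no $d(\xi_m)$ with $m\leq n$ contains $u_k^2$ for $k>(n+1)/2$ --- lives entirely in the part of $d(\xi_m)$ that Theorem \ref{thm-GorEnd} leaves undetermined (the summand in $\Z[u_2,\ldots,u_{m-1}]$). Your suggested unconditional route through Proposition \ref{prop-partialTraceComp} would have to pin down $\bar\d_m(1)$ exactly, which is the same unsolved problem in different clothing. So the proposal is a reasonable conditional outline, not a proof, and the statement remains open exactly as the paper presents it.
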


\subsection{Relation with torus braids}
\label{subsec-torusBraids}
It is well-known that categorified idempotents are related to infinite torus braids \cite{Roz10a,Ros11, Cau12}.  It turns out that our quasi-idempotent complexes $Q_n$ are also related to torus braids.

Recall the chain complex appearing in the unoriented Bar-Natan-Khovanov skein relation: $\llbracket \pic{crossing}\rrbracket := q\pic{turnback}\buildrel\mpic{isaddle}\over\longrightarrow \underline{\pic{straightThrough}}$.  We will usually omit the braces; all of our pictures should be interpreted as objects of the appropriate $\Kom(n)$.
\begin{remark}
The complex which we associate to the unoriented crossing differs from the complexes (\ref{eq-posCrossing}) and (\ref{eq-negCrossing}) up to an overal degree shift.
\end{remark}
Denote $P_{n-1}$ graphically by a white box $\pic{projector}$ with $n-1$ strands attached to the top and bottom.  By expanding crossings and contracting a large contractible summand, one can show that $\wpic{projector-partialTwist}$ is homotopy equivalent to a convolution of a truncation of the homotopy chain complex (\hyperref[eq-symmetricSeq]{\ref{eq-symmetricSeq}}):
\begin{equation}\label{eq-partialTwist}
\wpic{projector-partialTwist} \simeq \Big(q^{2n-1}\FKone\rightarrow \cdots \rightarrow q^{n+1}\FKult \rightarrow q^{n-1}\FKult \rightarrow \cdots \rightarrow q \FKone\rightarrow \underline{\FKzero}\Big)
\end{equation}
Recall the partial trace functor $T:\bn{n}\rightarrow \bn{n-1}$.  By contracting terms of the form $\wwwpic{FKpic_generic_partialTrace}$ and delooping (Lemma \ref{lemma-delooping}) we see that
\[
\wpic{projector-ring} = T\Big(\wpic{projector-partialTwist}\Big) \simeq \Big(t^{2-2n}q^{2n-1}\pic{projector}\ \buildrel \b\over\longrightarrow  q\inv \pic{projector}\Big).
\]
Examining degrees, we see that $\b$ is a bidegree $(3-2n,2n)$ element of $\Endg(P_{n-1})$.  The condition that the above is a chain complexes implies that $\b$ is a cycle, which by Corollary \ref{cor-extGroups} we know must be a boundary.  It follows that:
\begin{proposition}
Let $\pic{projector}=P_{n-1} \in\Kom(n-1)$ denote a Cooper-Krushkal projector and $\pic{crossing}$ denote the two-term complex defined at the beginning of this section.  Then
\[
q\wpic{projector-ring}  \simeq \pic{projector}\oplus t^{2-2n}q^{2n}\pic{projector}
\]
\end{proposition}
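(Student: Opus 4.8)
The plan is to build on the two displayed equivalences immediately preceding the proposition, which already reduce the left-hand side to a mapping cone. Concretely, those computations give
\[
\wpic{projector-ring}\ \simeq\ \Cone(\b)\ =\ \bigl(t^{2-2n}q^{2n-1}\pic{projector}\ \buildrel \b\over\longrightarrow\ q\inv\pic{projector}\bigr),
\]
where $\b\in\Homg^{3-2n,2n}(P_{n-1},P_{n-1})$ is a cycle, being the differential of a two-term complex. So the entire proof reduces to showing that this cone splits as a direct sum of two shifted copies of $P_{n-1}$.

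The first step I would carry out is to promote ``$\b$ is a cycle'' to ``$\b$ is a boundary.'' By Corollary~\ref{cor-extGroups}(5), applied to $P_{n-1}$, the group $\Ext^{3-2n,2n}(P_{n-1},P_{n-1})$ vanishes; the hypothesis of that corollary is met because the symmetric projectors $Q_2,\dots,Q_{n-1}$ exist by Theorem~\ref{thm-QnExistence}. Hence $\b=[d,h]$ for some $h\in\Homg^{2-2n,2n}(P_{n-1},P_{n-1})$, i.e.\ $\b\simeq 0$. The second step is then to replace the connecting map $\b$ in the two-term convolution $\Cone(\b)$ by the homotopic map $0$: replacing a connecting map of a convolution by a homotopic one yields an isomorphic convolution (Theorem~\ref{thm-diffPerturbing}; cf.\ the remark after Theorem~\ref{thm-CKrecursion}), so $\Cone(\b)\cong\Cone(0)$. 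Since $\Cone(0\colon A\to B)=t\inv A\oplus B$ as a complex, and here $t\inv A=t^{2-2n}q^{2n-1}P_{n-1}$ and $B=q\inv P_{n-1}$, we obtain
\[
\wpic{projector-ring}\ \simeq\ t^{2-2n}q^{2n-1}P_{n-1}\ \oplus\ q\inv P_{n-1}.
\]
Multiplying this equivalence through by $q$ gives $q\,\wpic{projector-ring}\simeq P_{n-1}\oplus t^{2-2n}q^{2n}P_{n-1}=\pic{projector}\oplus t^{2-2n}q^{2n}\pic{projector}$, which is the claim.

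I do not expect a genuine obstacle here: the only substantive input is the $\Ext$-vanishing, which is already isolated in Corollary~\ref{cor-extGroups}, and everything else is bookkeeping of grading shifts and the standard behavior of mapping cones. The one point that needs care is precisely this grading bookkeeping — one must make sure the left-hand entry of the two-term complex carries the homological shift $t^{2-2n}$ (rather than $t^{1-2n}$), so that $\b$ indeed sits in bidegree $(3-2n,2n)$ and the corollary applies with $i=2n$. This is read off from $\wpic{projector-partialTwist}\simeq\bigl(q^{2n-1}\FKone\to\cdots\to\underline{\FKzero}\bigr)$ together with the delooping of Lemma~\ref{lemma-delooping} used in computing $T\bigl(\wpic{projector-partialTwist}\bigr)$.
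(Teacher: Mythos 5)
Your proof is correct and follows the paper's own argument essentially verbatim: the paper likewise reduces to the two-term convolution via the partial trace and delooping, observes that $\b$ is a cycle of bidegree $(3-2n,2n)$, invokes Corollary \ref{cor-extGroups} to conclude it is a boundary, and lets the cone split. Your extra care with the grading shift and the explicit appeal to Theorem \ref{thm-diffPerturbing} to replace $\b$ by $0$ just fills in what the paper leaves to ``It follows that.''
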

This result can be used to give an alternate construction of symmetric projectors:
\begin{proposition}
There is a chain map $\phi_n:t^{2-2n}q^{2n}\FKzero \rightarrow  \wpic{projector-partialTwist}$ such that $\Cone(\phi_n)\in\Kom(n)$ is homotopy equivalent to a symmetric projector, where $\pic{projector}=P_{n-1}$.
\end{proposition}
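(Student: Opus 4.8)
The plan is to construct the required convolution directly on top of the truncation $\wpic{projector-partialTwist}$, re-running the obstruction-theoretic argument from the proof of Theorem~\ref{thm-QnExistence}. By equivalence (\ref{eq-partialTwist}), $\wpic{projector-partialTwist}$ is homotopy equivalent to a convolution $N$ of the truncated symmetric Cooper-Krushkal sequence $E_{2-2n}\longrightarrow\cdots\longrightarrow E_0$ relative to $P_{n-1}$, with $E_0$ in homological degree $0$. Write $E_\bullet$ for the \emph{full} symmetric Cooper-Krushkal sequence $E_{1-2n}\longrightarrow E_{2-2n}\longrightarrow\cdots\longrightarrow E_0$ relative to $P_{n-1}$, so that $E_{1-2n}=q^{2n}\FKzero$. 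Reassociating $N$ to split off its first block, present it as a convolution $N=(B\buildrel\beta\over\longrightarrow C)$, where $B:=t^{2-2n}E_{2-2n}=t^{2-2n}q^{2n-1}\FKone$, where $C$ is a convolution of $E_{3-2n}\longrightarrow\cdots\longrightarrow E_0$ carrying the higher differentials inherited from $N$, and where $\beta\in\Homg^{1,0}(B,C)$ is a cycle (this reassociation is a standard feature of convolutions, as in the iterated-mapping-cone description from the introduction).

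Now set $A:=t^{1-2n}E_{1-2n}=t^{1-2n}q^{2n}\FKzero$, and let $\alpha\in\Homg^{1,0}(A,B)$ be the cycle determined by the chain map $\alpha_{1-2n}\colon E_{1-2n}\longrightarrow E_{2-2n}$ of the symmetric sequence (Definition~\ref{def-symmetricSeq}); it is a cycle since $E_\bullet$ is a homotopy chain complex (Proposition~\ref{symSeqHomotopyProp}). By Lemma~\ref{lemma-obstThry}, the sequence $(A\buildrel\alpha\over\longrightarrow B\buildrel\beta\over\longrightarrow C)$ admits a convolution provided $\beta\circ\alpha\in\Homg^{2,0}(A,C)$ is a boundary, for which it suffices that $\Ext^{2,0}(A,C)=0$. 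The key point is that the computation of $\Ext^{2,0}(A,C)$ carried out in the proof of Theorem~\ref{thm-QnExistence} applies verbatim to the present $C$: that computation proceeds by applying $\Homg(A,-)$, using the partial-trace adjunction (Corollary~\ref{cor-homRotation}), contracting the contractible summands of the form $\wwwpic{FKpic_generic_partialTrace}$ via Theorem~\ref{convSdrthm}, and delooping (Lemma~\ref{lemma-delooping}) — and each of these operations depends only on the individual terms $E_{3-2n},\dots,E_0$ and the turnback-killing property of $P_{n-1}$, not on the higher differentials packaged into $C$. One therefore obtains $\Ext^{2,0}(A,C)\cong\Ext^{3-2n,2n}(P_{n-1},P_{n-1})$, which vanishes by Corollary~\ref{cor-extGroups} (applicable to $P_{n-1}$ because $Q_2,\dots,Q_{n-1}$ exist by Theorem~\ref{thm-QnExistence}).

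Hence a convolution $Q:=(A\longrightarrow B\longrightarrow C)$ exists; by construction it is a convolution of the full symmetric Cooper-Krushkal sequence relative to $P_{n-1}$, i.e.\ a symmetric projector (Definition~\ref{def-symProj}) — in fact $Q\simeq Q_n$ by Theorem~\ref{thm-QnUniqueness}. Reassociating $Q$ by splitting off its leftmost term $A=t^{1-2n}E_{1-2n}$ exhibits it as a mapping cone $Q=\Cone(\phi_n')$ on a genuine (bidegree $(0,0)$) chain map $\phi_n'\colon t^{2-2n}q^{2n}\FKzero\longrightarrow N$. Finally, let $\theta\colon N\longrightarrow\wpic{projector-partialTwist}$ be the homotopy equivalence from the first paragraph; by Lemma~\ref{lemma-coneInvariance} we have $\Cone(\phi_n')\simeq\Cone(\theta\circ\phi_n')$, so $\phi_n:=\theta\circ\phi_n'\colon t^{2-2n}q^{2n}\FKzero\longrightarrow\wpic{projector-partialTwist}$ satisfies $\Cone(\phi_n)\simeq Q$, a symmetric projector, as required.

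The step I expect to be the main obstacle is the middle one: one must argue carefully that the $\Ext$-computation of the proof of Theorem~\ref{thm-QnExistence} is genuinely insensitive to \emph{which} convolution of the fixed terms $E_{3-2n},\dots,E_0$ one feeds in. Concretely, this amounts to checking that deleting contractible direct summands of a (finite) convolution, as licensed by Theorem~\ref{convSdrthm}, leaves the homotopy type of the resulting $\Homg$-complex unchanged regardless of the ambient higher differentials — routine, but worth spelling out, since everything downstream relies on it.
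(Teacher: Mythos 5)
Your proposal is correct, but it takes a genuinely different route from the paper. The paper's proof is a direct $\Ext$-computation: using the immediately preceding proposition (that $q\,T\big(\wpic{projector-partialTwist}\big)\simeq P_{n-1}\oplus t^{2-2n}q^{2n}P_{n-1}$) together with the partial-trace adjunction of Theorem \ref{thm-partialTraceHom}, it identifies $\Homg\big(t^{2-2n}q^{2n}\FKzero,\wpic{projector-partialTwist}\big)$ with $(1+t^{2n-2}q^{-2n})\Endg(P_{n-1})$ up to shift, concludes that the degree-$(0,0)$ $\Ext$-group is $\Z\oplus 0\cong\Z$, takes $\phi_n$ to be a generator, and then \emph{leaves to the reader} the verification that $\Cone(\phi_n)$ is a symmetric projector. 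You instead rebuild the full convolution $(E_{1-2n}\to\cdots\to E_0)$ on top of the truncation via the obstruction-theoretic argument of Theorem \ref{thm-QnExistence} and read off $\phi_n$ by reassociation. What your approach buys is precisely the step the paper omits: the cone is a symmetric projector \emph{by construction}, since its length-zero and length-one components are those of the symmetric sequence. What it costs is the extra check you correctly flag, namely that the $\Ext^{2,0}(A,C)$ computation is insensitive to which convolution of $E_{3-2n}\to\cdots\to E_0$ one uses. That check does go through, and the clean reason is worth recording: all convolutions of a fixed homotopy chain complex have the same length-one components by Definition \ref{def-convolution}, the contractions of the terms $\wwwpic{FKpic_generic_partialTrace}$ are licensed for any convolution by Theorem \ref{convSdrthm}, and since the two surviving terms $T(E_{-1})$ and $T(E_0)$ are \emph{adjacent} in the convolution, no Gaussian-elimination correction can alter the map between them (any such correction would have to factor through a term of strictly lower index, which the differential's lower-triangularity forbids). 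Hence the delooping step and the identification with $\Ext^{3-2n,2n}(P_{n-1},P_{n-1})\cong 0$ proceed exactly as in Theorem \ref{thm-QnExistence}. One could add that the two constructions agree up to sign and homotopy: your $\phi_n$ is a nonzero class in the group the paper computes to be $\Z$, and is in fact a generator since its leading component is $\alpha_{1-2n}$.
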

\begin{proof}
Theorem \ref{thm-partialTraceHom} implies that
\[
\Homg\Big(t^{2-2n}q^{2n}\FKzero, \wpic{projector-partialTwist}\Big) \cong t^{2n-2}q^{1-2n}\Homg\Big(\pic{projector},\wpic{projector-ring}\Big) \simeq (1+t^{2n-2}q^{-2n})\Endg(\pic{projector})
\]
Taking the degree $(0,0)$ homology groups gives
\[
\Ext^{0,0}\Big(t^{2-2n}q^{2n}\FKzero, \wpic{projector-partialTwist}\Big) \cong \Ext^{0,0}(P_{n-1},P_{n-1})\oplus \Ext^{2-2n,2n}(P_{n-1},P_{n-1})\cong \Z\oplus 0
\]
Let $\phi_n$ denote a generator of the $\Ext$ group on the left above.  We leave it to the reader that $\Cone(\phi_n)$ is equivalent to a symmetric projector (compare with (\ref{eq-partialTwist})).
\end{proof}
This result can be used to give an independent proof that the Cooper-Krushkal projectors are limits of powers of the full twist, as in \cite{Roz10a}.  Moreover, composing the map $\phi_n$ with $P_n$ gives a description of the map $U_n$ from Definition \ref{def-polyDef} as the composition
\[
t^{2-2n}q^{2n}\wpic{uTwistPic0} \buildrel\simeq\over\longrightarrow t^{2-2n}q^{2n}\wpic{uTwistPic1} \buildrel (\phi_n\otimes \Id)\over\longrightarrow \wpic{uTwistPic2} \buildrel\simeq\over\longrightarrow \wpic{uTwistPic0}
\]

\section{Local and quasi-local colored $\sl_2$ link homology}
\label{sec-linkHomology}
In this section we show that the action of $\Z[u_1,\ldots,u_n]$ on $P_n$ induces a well-defined action on the colored Khovanov homology due to Cooper-Krushkal \cite{CK12a}.  We establish properties of the symmetric projectors $Q_n$ and show that they can be used to define a new homology theory, in which they play a role analogous to $P_n$.  We use the adjective \emph{quasi-local} to mean that the link invariants which we construct can be defined for tangles, but they respect gluing of tangles only up to taking direct sums.  Another way of saying this is that we construct an invariant of pairs $(T,X)$ where $T$ is a tangle and $X\subset T$ is a finite collection of marked points.  There is a mild penalty for merging two marked points (see Proposition \ref{prop-mergingPoints}).

\subsection{A new family of link homologies}
\label{subsec-quasiHomology}
Recall the complexes associated to crossings in Khovanov homology:
\begin{eqnarray}
\Big\llbracket \wpic{+crossing3}\Big\rrbracket &:=&\bigg(0 \longrightarrow q^2\bpic{turnback}\buildrel\mpic{isaddle}\over\longrightarrow q\underline{\bpic{straightThrough}}\longrightarrow 0\bigg) \label{eq-posCrossing}\\
\Big\llbracket \wpic{minusCrossing3}\Big\rrbracket &:=& \bigg(0\longrightarrow q\inv \underline{\bpic{turnback}} \buildrel\mpic{isaddle}\over\longrightarrow q^{-2}\bpic{straightThrough}\longrightarrow 0\bigg) \label{eq-negCrossing}
\end{eqnarray}
where we have underlined the terms in homological degree zero.  Notice that the positive and negative crossings differ only in an overall degree shift.

\begin{definition}[The bracket complex]\label{def-brackeCx}
Fix as initial data a family $\mathcal{K} = \{K_n\in \Kom^-(n)\:|\: n=0,1,2,\ldots\}$ of complexes.  Let $D\subset D^2$ be an oriented tangle diagram whose components are labeled with non-negative integers, called the colors.  Assume $D$ is equipped with some marked points $\{v_i\}$ away from the crossings, with at least one on each component of the underlying tangle.  We will define a chain complex $\llbracket D; \mathcal{K}\rrbracket$ over the appropriate Bar-Natan's category as follows.  Obtain a new diagram by replacing an $n$-colored component by $n$-parallel copies of itself, with alternating orientations:
\[
\bigg\llbracket
\begin{minipage}{.6in}
\labellist
\small
\pinlabel $n$ at 2 22
\pinlabel $m$ at 78 22
\endlabellist
\begin{center}\includegraphics[scale=.4]{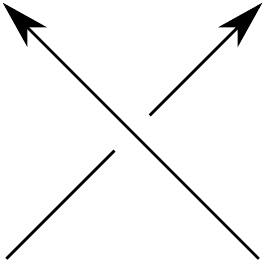}\end{center} 
\end{minipage}
\bigg\rrbracket :=
\Bigg\llbracket
\begin{minipage}{.8in}
\labellist
\small
\pinlabel $n$ at -3 -8
\pinlabel $m$ at 110 -10
\pinlabel {$\rotatebox{-45}{\Big\}}$} at 100 5
\pinlabel {$\rotatebox{45}{\Big\{}$} at 7 7
\endlabellist
\begin{center}\includegraphics[scale=.4]{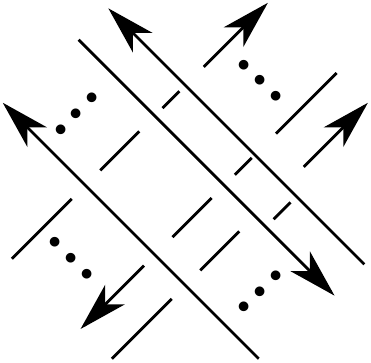}\end{center} 
\end{minipage}
\Bigg\rrbracket
\]
To each marked point we insert a labelled white box in a corresponding location in the cabled diagram:
\[
\bigg\llbracket
\begin{minipage}{.3in}
\labellist
\small
\pinlabel $n$ at -10 30
\endlabellist
\begin{center}\includegraphics[scale=.3]{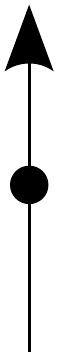}\end{center} 
\end{minipage}
; \mathcal{K} \bigg\rrbracket := \bigg\llbracket
\begin{minipage}{.45in}
\labellist
\small
\pinlabel $K_n$ at 35 40
\endlabellist
\begin{center}\includegraphics[scale=.3]{emptybox}\end{center} 
\end{minipage}
\bigg\rrbracket
\]
Now, by taking the planar composition of oriented crossings and $K_n$'s, we obtain an object $\llbracket D ;\mathcal{K} \rrbracket$ of some $\Kom^-(N)$ (recall that gluing of diagrams in the plane induces multilinear functors on Bar-Natan's categories $\bn{n}$; this categorical planar algebra structure is called a canopoly in \cite{B-N05}).  Strictly speaking, this planar composition requires an ordering on the set of crossings and marked points of $D$, but any two choices give canonically isomorphic complexes.
\end{definition}
In this paper we consider only the case where $\mathcal{K}$ is a collection of quasi-projectors (Definition \ref{def-quasiProjectors}).  That is, $\mathcal{K} = \{P_n(\mathbf{i}_n)\:|\:n=1,2,\ldots\}$ for some sequences $\mathbf{i}_n$.  In this case the chain homotopy type of $\llbracket D;\mathcal{K}\rrbracket$ is an invariant of the colored, framed, oriented, marked tangle represented by $D$.  We will prove this in steps, by establising a number of local relations that $\llbracket D; \mathcal{K}\rrbracket$ satisfies.  Below, $\mathcal{K}=\{K_1,K_2,\ldots \}$ denotes a fixed family of quasi-projectors.
\begin{proposition}\label{prop-reid2and3}
Away from the marked points, the bracket complex is invariant under the framed, colored Reidemeister moves
\[
\bigg\llbracket
\begin{minipage}{.35in}
\labellist
\small
\pinlabel $n$ at -13 40
\endlabellist
\begin{center}\includegraphics[scale=.4]{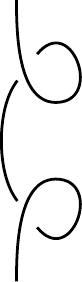}\end{center} 
\end{minipage}
\bigg\rrbracket
\simeq
\bigg\llbracket
\begin{minipage}{.35in}
\labellist
\small
\pinlabel $n$ at -10 30
\endlabellist
\begin{center}\includegraphics[scale=.4]{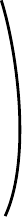}\end{center} 
\end{minipage}
\bigg\rrbracket
\simeq
\bigg\llbracket
\begin{minipage}{.35in}
\labellist
\small
\pinlabel $n$ at -13 40
\endlabellist
\begin{center}\includegraphics[scale=.4]{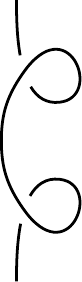}\end{center} 
\end{minipage}
\bigg\rrbracket
\hskip.6in
\bigg\llbracket
\begin{minipage}{.55in}
\labellist
\small
\pinlabel $n$ at -15 30
\pinlabel $m$ at 55 30
\endlabellist
\begin{center}\includegraphics[scale=.4]{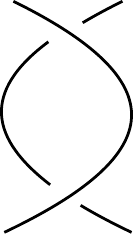}\end{center} 
\end{minipage}
\bigg\rrbracket
\simeq
\bigg\llbracket
\begin{minipage}{.55in}
\labellist
\small
\pinlabel $m$ at -15 30
\pinlabel $n$ at 50 30
\endlabellist
\begin{center}\includegraphics[scale=.4]{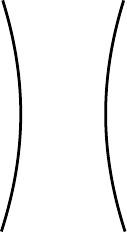}\end{center} 
\end{minipage}
\bigg\rrbracket
\]
and
\[
\bigg\llbracket
\begin{minipage}{.7in}
\labellist
\small
\pinlabel $n$ at 80 50
\pinlabel $m$ at 80 10
\pinlabel $k$ at -10 35
\endlabellist
\begin{center}\includegraphics[scale=.4]{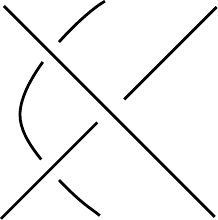}\end{center} 
\end{minipage}
\bigg\rrbracket
\simeq
\bigg\llbracket
\begin{minipage}{.7in}
\labellist
\small
\pinlabel $m$ at -15 50
\pinlabel $n$ at -15 10
\pinlabel $k$ at 75 35
\endlabellist
\begin{center}\includegraphics[scale=.4]{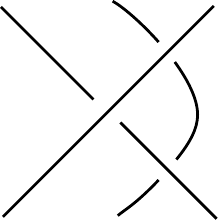}\end{center} 
\end{minipage}
\bigg\rrbracket
\]
with arbitrary orientations.
\end{proposition}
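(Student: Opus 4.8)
The plan is to reduce each colored Reidemeister move to its uncolored counterpart by cabling, exploiting the fact that away from the marked points the bracket complex involves no quasi-projectors. By Definition~\ref{def-brackeCx}, inside any region of $D$ containing no marked points the complex $\llbracket D;\mathcal{K}\rrbracket$ is assembled, via the planar composition (canopoly) structure on the categories $\bn{N}$, purely out of the two-term uncolored crossing complexes (\ref{eq-posCrossing}) and (\ref{eq-negCrossing}); no $K_n$ enters. All of the moves in the statement take place in such a region, so it suffices to prove the corresponding identity of the cabled \emph{local} tangle complexes and then glue. Gluing is legitimate because the planar composition functors of Definition~\ref{def-planarComp}, extended to chain complexes (Proposition~\ref{prop-planarCompExt}), are additive and therefore send chain homotopies to chain homotopies; a local homotopy equivalence thus propagates to a global one.

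First I would dispose of R2 and R3. Replacing the $n$- and $m$-colored strands of the colored R2 move by $n$, respectively $m$, parallel copies of themselves (with alternating orientations, as in Definition~\ref{def-brackeCx}) turns that move into a move of cabled tangle diagrams which can be performed by a finite sequence of ordinary uncolored Reidemeister~II moves — one for each of the $nm$ pairs consisting of a strand from the first bundle and a strand from the second — interspersed with planar isotopies. Invariance of uncolored Khovanov homology under RII, which is standard \cite{Kh00,B-N05}, together with the additivity of planar composition, gives the homotopy equivalence of the cabled complexes. The colored R3 move is treated identically: its cabling is realized by a finite sequence of uncolored RII and RIII moves (each elementary step being one strand sliding past the crossing of two others), and one invokes invariance of uncolored Khovanov homology under RII and RIII.

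The framed R1 move is handled by the same scheme, with one extra observation: because the move is \emph{framing-preserving}, its cabling is realized by a finite sequence of uncolored Reidemeister I, II, and III moves in which the type~I moves occur in cancelling pairs — one positive curl and one negative curl — so the prescribed degree shifts contributed by the uncolored RI moves cancel, and one again gets an honest homotopy equivalence of the cabled complexes (hence the three equivalences asserted in the statement). This is precisely why the \emph{unframed} colored R1 move is excluded here: removing a single cabled curl is an equivalence only after the cabled turnbacks it produces are absorbed into a projector via Proposition~\ref{prop-projAbsorb}, which requires a marked point to be present; that case is dealt with separately.

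The main obstacle is the bookkeeping in the cabling step: one must verify carefully that each colored move decomposes into the asserted sequence of uncolored moves while tracking the alternating orientations on the cables, so that the cabled crossings on the two sides of the move match up consistently with the strands entering and leaving the local region, and — for the framed R1 — that the grading shifts from the uncolored RI moves indeed cancel in pairs. This is routine but tedious; the analogous verification is carried out for the Cooper--Krushkal projectors in \cite{CK12a}, and that argument applies here without change since no quasi-projector appears in the region of the move.
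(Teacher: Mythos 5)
Your proposal is correct and is essentially the paper's argument: the paper's proof is the one-line observation that the statement "follows from repeated use of the invariance of the usual Khovanov tangle invariant under the Reidemeister moves," which is exactly the cabling-plus-uncolored-invariance reduction you spell out (and your extra care about planar composition propagating local homotopy equivalences is the right justification for the gluing step). One immaterial quibble: in your aside on why the unframed R1 is excluded, the obstruction is not "cabled turnbacks" produced by a single curl but the residual full twist on the $n$-cable, which is trivialized only by a turnback-killing insertion as in Proposition~\ref{prop-framingChange}; this does not affect the proof of the moves actually asserted.
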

\begin{proof}
This follows from repeated use of the invariance of the usual Khovanov tangle invariant under the Reidemeister moves.
\end{proof}

\begin{proposition}\label{prop-framingChange}
The dependence of $\llbracket D ;\mathcal{K}\rrbracket$ on framing is given by:
\[
\bigg\llbracket
\begin{minipage}{.4in}
\labellist
\small
\pinlabel $n$ at -10 50
\endlabellist
\begin{center}\includegraphics[scale=.3]{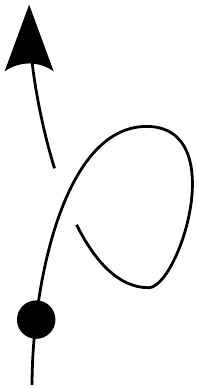}\end{center} 
\end{minipage}
; \mathcal{K} \bigg\rrbracket
\simeq
G_n
\bigg\llbracket
\begin{minipage}{.3in}
\labellist
\small
\pinlabel $n$ at -15 50
\endlabellist
\begin{center}\includegraphics[scale=.3]{upstrandWithDot}\end{center} 
\end{minipage}
; \mathcal{K} \bigg\rrbracket
\hskip.8in
\bigg\llbracket
\begin{minipage}{.4in}
\labellist
\small
\pinlabel $n$ at -10 50
\endlabellist
\begin{center}\includegraphics[scale=.3]{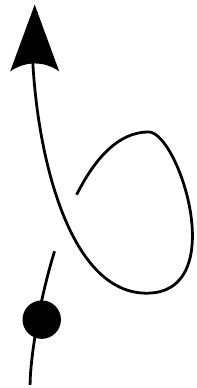}\end{center} 
\end{minipage}
; \mathcal{K} \bigg\rrbracket
\simeq
G\inv_n
\bigg\llbracket
\begin{minipage}{.3in}
\labellist
\small
\pinlabel $n$ at -15 50
\endlabellist
\begin{center}\includegraphics[scale=.3]{upstrandWithDot}\end{center} 
\end{minipage}
; \mathcal{K} \bigg\rrbracket
\]
where $G_n = t^{n^2/2}q^{-(n^2+2n)/2}$ for $n$ even and $G_n =  t^{(n^2-1)/2}q^{-(n^2+2n-3)/2}$ for $n$ odd.
\end{proposition}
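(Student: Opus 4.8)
The statement is local, so the first step is to strip away everything but a neighbourhood of the marked point. By Definition~\ref{def-brackeCx}, the complex $\llbracket(\text{positive curl},\{v\});\mathcal K\rrbracket$ is obtained from $\llbracket(\text{strand},\{v\});\mathcal K\rrbracket=K_n$ by replacing the $n$-colored strand near $v$ with $n$ cables (with the alternating orientations prescribed in Definition~\ref{def-brackeCx}) and composing, via Bar-Natan's planar operations, with the Khovanov complex of the $n$-cabled positive curl. Writing $\mathcal C_n^{\pm}\in\Kom(n)$ for the Khovanov complex of the $n$-cabled $\pm$ curl, it therefore suffices to prove
\[
\mathcal C_n^{\pm}\otimes K_n\ \simeq\ G_n^{\pm 1}\cdot K_n .
\]

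To do this I would first observe that $K_n=P_n(\mathbf i_n)=\Cone(U_{i_1})\otimes\cdots\otimes\Cone(U_{i_r})$ kills turnbacks: by Corollary~\ref{cor-QnAreCones} each factor $\Cone(U_{i_j})$ is homotopy equivalent to $(Q_{i_j}\sqcup 1_{n-i_j})\otimes P_n$, hence lies in the tensor ideal of turnback-killing complexes, which is closed under $\otimes$; in particular $e_i\otimes K_n\simeq 0$ for every Temperley--Lieb generator $e_i$ (Proposition~\ref{prop-turnback}, Corollary~\ref{cor-turnbackKillingSymmetry}). Next, using planar isotopy and invariance of the Khovanov tangle complex under the framed Reidemeister moves (Proposition~\ref{prop-reid2and3}), the cabled curl can be brought into the form of a product of elementary crossing complexes $\llbracket\sigma_i^{\pm 1}\rrbracket$ together with a single-strand curl on each of the $n$ cable strands. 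Now absorb these pieces into $K_n$ one at a time: each $\llbracket\sigma_i^{\pm 1}\rrbracket$ is a two-term complex one of whose chain groups is the turnback $e_i$, so $\llbracket\sigma_i^{\pm1}\rrbracket\otimes K_n$ is the mapping cone of a map out of $e_i\otimes K_n\simeq 0$, hence collapses to $K_n$ up to the grading shift recorded in (\ref{eq-posCrossing})--(\ref{eq-negCrossing}); and each single-strand curl contributes the usual single-strand Reidemeister~1 shift by the delooping computation of Lemma~\ref{lemma-delooping}. After all absorptions what remains is a single grading-shifted copy of $K_n$, giving $\mathcal C_n^{+}\otimes K_n\simeq G_n\cdot K_n$ for some shift $G_n$; the negative curl yields $G_n^{-1}$, which also follows formally since $\mathcal C_n^{+}\otimes\mathcal C_n^{-}\simeq 1_n$ (the composite curl is framed-trivial).

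The remaining — and only substantive — task is to evaluate $G_n$. It is a product over the $n^2$ crossings of the $n$-cabled curl of the elementary shifts above, together with the $n$ single-strand curl shifts. The point where the parity enters is that Definition~\ref{def-brackeCx} cables each component with \emph{alternating} orientations, so among the $n$ cable strands $\lceil n/2\rceil$ point one way and $\lfloor n/2\rfloor$ the other; the sign of each cabled crossing, and hence the grading shift it contributes when absorbed, depends on whether the two strands meeting there are like- or oppositely-oriented. Tallying these contributions with the single-strand shift read off from (\ref{eq-posCrossing}) produces exactly $G_n=t^{n^2/2}q^{-(n^2+2n)/2}$ for $n$ even and $G_n=t^{(n^2-1)/2}q^{-(n^2+2n-3)/2}$ for $n$ odd, the discrepancy between the two formulas reflecting the single unpaired cable strand present when $n$ is odd. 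I expect this grading bookkeeping to be the main obstacle; the structural part of the argument is otherwise formal, resting only on turnback-killing of $K_n$ and on Reidemeister invariance.
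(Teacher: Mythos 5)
Your proposal is correct and follows essentially the same route as the paper: reduce to showing the cabled curl tensored with $K_n$ is a grading-shifted copy of $K_n$, absorb the elementary crossing complexes one at a time using the fact that $K_n$ kills turnbacks (so each $X_i^{+}\otimes K_n\simeq qK_n$ and $X_i^{-}\otimes K_n\simeq tq^{-2}K_n$), and tally the positive/negative crossing counts of the alternatingly-oriented full twist to get the parity-dependent shift $G_n$. The only cosmetic difference is that the paper works directly with the full twist $\Tw_n$ (the single-strand curls contribute no shift with the normalizations (\ref{eq-posCrossing})--(\ref{eq-negCrossing}), by the delooping computation), whereas you carry them along in the bookkeeping.
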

\begin{proof}
Let $\Tw_n$ and $\Tw_n\inv$ denote the braids corresponding to the right-handed (respectively, left-handed) full twist on $n$ strands with alternating orientations.   We must check that $\llbracket \Tw_n^{\pm}\rrbracket \otimes K_n\simeq G_n^{\pm 1}(K_n)$, where $G_n$ is the stated grading shift functor.  By Reidemeister invariance of $\llbracket-\rrbracket$, we have $\llbracket \Tw_n\rrbracket\otimes \llbracket \Tw_n\inv\rrbracket \simeq 1_n$.  Thus, either of these equivalences implies the other.

Note that $\Tw_n$ is the $n$-cycle
\[
C = \wwpic{ncycle}
\]
composed with itself $n$ times.  This is a pure braid with $n(n-1)$ crossings.  Once we give $\Tw_n$ the alternating orientations ``up, down, up, $\ldots$, '' the number of positive and negative crossings in $\Tw_n$ is
\[
m_+(n) = \begin{cases}\frac{1}{2}(n^2-2n) & \text{ if $n$ is even} \\ \frac{1}{2}(n^2-2n+1) & \text{ if $n$ is odd}\end{cases}
\hskip1in
m_-(n) = \begin{cases}\frac{1}{2}n^2 & \text{ if $n$ is even}  \\ \frac{1}{2}(n^2-1)  & \text{ if $n$ is odd}\end{cases}
\]
Define complexes
\[
X^+:= \Big\llbracket \wpic{+crossing3}\Big\rrbracket\hskip.6in X^-:=\Big\llbracket \wpic{minusCrossing3}\Big\rrbracket\hskip.6in X_i^\pm:=1_{i-1}\sqcup X^\pm\sqcup 1_{n-i-1}
\]
as in (\ref{eq-posCrossing}) and (\ref{eq-negCrossing}).  Observe that $\llbracket\Tw_n \rrbracket\in\Kom(n)$ is a tensor product of $m_+(n)$ complexes of the form $X_i^+$ and $m_-(n)$ complexes of the form $X_i^-$, for various $i$.    Since $K_n$ kills turnbacks, we see directly from the definitions of the crossing $X_i^{\pm}$ that
\[
X_i^+\otimes K_n \simeq qK_n\hskip 1in X_i^-\otimes K\simeq  tq^{-2}K_n
\]
It follows that $\Tw_n\otimes K_n \simeq t^{m_-(n)}q^{m_+(n)-2m_-(n)} K_n$.  This proves the proposition, given our formulas for $m_{\pm}(n)$.
\end{proof}

\begin{proposition}\label{prop-orientationChange}
If $D$ and $D'$ are identical except for the choice of orientation, then $\llbracket D; \mathcal{K}\rrbracket$ and $ \llbracket D; \mathcal{K}\rrbracket $ are homotopy equivalent up to an overall degree shift.
\end{proposition}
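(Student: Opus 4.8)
The plan is to isolate the dependence of $\llbracket D;\mathcal{K}\rrbracket$ on the orientation of $D$ as a single overall bigraded shift, by factoring the bracket through an \emph{unoriented} version of itself. The starting observation is that, by (\ref{eq-posCrossing}), (\ref{eq-negCrossing}) and the remark following them, the complexes assigned to a positive and to a negative crossing both agree, up to a shift $t^{i}q^{j}$, with the two-term unoriented skein complex $\llbracket\,\pic{crossing}\,\rrbracket = \bigl(q\,\pic{turnback}\to\pic{straightThrough}\bigr)$ (the differential being the saddle): explicitly the positive crossing complex is $q\cdot\llbracket\,\pic{crossing}\,\rrbracket$ and the negative one is $tq^{-2}\cdot\llbracket\,\pic{crossing}\,\rrbracket$.

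Next I would unwind Definition \ref{def-brackeCx}: $\llbracket D;\mathcal{K}\rrbracket$ is a planar composition of three kinds of building blocks — the crossing complexes of the cabled diagram, the cup/cap cobordisms coming from the cabling operation, and the quasi-projectors $K_{n}$ inserted at the marked points — and among these only the crossing complexes depend on the orientation of $D$, since the cabling cobordisms carry no orientation data and each $K_{n}$ is simply an object of $\Kom^{-}(n)$ placed at a location in the cabled diagram determined by the marked point and by the (orientation-independent) color $n$. Replacing every crossing complex by $\llbracket\,\pic{crossing}\,\rrbracket$ thus yields a well-defined ``unoriented bracket'' $\llbracket D\rrbracket^{\mathrm{un}}_{\mathcal{K}}$, a function only of the underlying unoriented cabled diagram together with its colored components and marked points; and because planar composition is multilinear — hence commutes with degree shifts in each slot (cf.\ Proposition \ref{prop-planarCompExt} and the canopoly structure) — one obtains
\[
\llbracket D;\mathcal{K}\rrbracket \;\simeq\; t^{\,c_{-}}q^{\,c_{+}-2c_{-}}\;\llbracket D\rrbracket^{\mathrm{un}}_{\mathcal{K}},
\]
where $c_{+}$ and $c_{-}$ count the positive and negative crossings of the cabled diagram of $D$; the same identity holds for $D'$ with its own counts $c'_{\pm}$.

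To finish I would observe that $D$ and $D'$ have the same unoriented cabled diagram, the same colored components and the same marked points: cabling an $n$-colored strand produces $n$ parallel copies of it irrespective of its orientation, and once orientations are forgotten the two diagrams coincide. Hence $\llbracket D\rrbracket^{\mathrm{un}}_{\mathcal{K}} = \llbracket D'\rrbracket^{\mathrm{un}}_{\mathcal{K}}$, and comparing the two instances of the displayed equivalence gives $\llbracket D';\mathcal{K}\rrbracket\simeq t^{\,c'_{-}-c_{-}}q^{\,(c'_{+}-c_{+})-2(c'_{-}-c_{-})}\,\llbracket D;\mathcal{K}\rrbracket$, which is exactly the claim. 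I do not expect a genuine obstacle: the only step needing any care is the identification of the unoriented cabled diagrams of $D$ and $D'$ — equivalently, checking that the ``alternating orientations'' cabling prescription is compatible with reversing the orientation of a component — and this is routine bookkeeping with the cabling convention. Note that, unlike Proposition \ref{prop-framingChange}, no explicit formula for the shift is required here, so once the factorization through $\llbracket-\rrbracket^{\mathrm{un}}_{\mathcal{K}}$ is in place nothing further is needed.
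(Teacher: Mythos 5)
There is a genuine gap: the assertion that, among the building blocks of $\llbracket D;\mathcal{K}\rrbracket$, only the crossing complexes depend on the orientation is false. The quasi-projector $K_n$ is an object of $\Kom(n)$ drawn in a rectangle with $n$ endpoints on top and $n$ on the bottom, and the orientation of the marked strand is precisely what dictates how that rectangle is glued into the cabled diagram: reversing the orientation of a component has the local effect of replacing each inserted $K_n$ on that component by its image $r(K_n)$ under the rotation-by-$\pi$ functor $r:\Kom(n)\rightarrow\Kom(n)$ (this is exactly the picture the paper draws for the downward-oriented marked strand). Your ``unoriented bracket'' is therefore not well defined as stated: without an orientation there is no way to resolve the ambiguity between inserting $K_n$ and inserting $r(K_n)$ at a marked point, so the factorization $\llbracket D;\mathcal{K}\rrbracket\simeq t^{c_-}q^{c_+-2c_-}\llbracket D\rrbracket^{\mathrm{un}}_{\mathcal{K}}$ does not hold, and the final comparison of $D$ with $D'$ breaks down whenever $D$ has marked points.

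The missing content --- which is the bulk of the paper's own proof --- is the statement $r(K_n)\simeq K_n$ for every quasi-projector $K_n=P_n(i_1,\ldots,i_r)$. For $P_n$ itself this follows from uniqueness of Cooper-Krushkal projectors, since $r(P_n)$ is again one. For a nonempty sequence one has $r(P_n(i_1,\ldots,i_r))\cong r(P_n(i_r))\otimes\cdots\otimes r(P_n(i_1))$, so one needs both the commutativity of the factors up to homotopy (Theorem \ref{thm-QnProps}) and the equivalence $r(\Cone(U_i))\cong\Cone(r(U_i))\simeq\Cone(U_i)$; the latter uses that $r$ carries a generator of $\Ext^{2-2i,2i}(P_n,P_n)\cong\Z$ (Corollary \ref{cor-extGroups}) to a generator, together with Corollary \ref{cor-QnAreCones}. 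Your treatment of the crossings is fine and agrees with the paper's opening sentence, but it only disposes of the case in which $D$ has no marked points.
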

\begin{proof}
First, note that the complexes associated to positive (\ref{eq-posCrossing}) and negative crossings (\ref{eq-negCrossing}) differ only up to an overall degree shift, so the proposition holds if $D$ has no marked points.  If $D$ has marked points, then a change of orientation has the local effect of replacing some complexes $K_n$ by their rotated versions:
\[
\bigg\llbracket
\begin{minipage}{.3in}
\labellist
\small
\pinlabel $n$ at -15 50
\endlabellist
\begin{center}\includegraphics[scale=.3]{upstrandWithDot}\end{center} 
\end{minipage}
; \mathcal{K} \bigg\rrbracket
 = \bigg\llbracket
\begin{minipage}{.45in}
\labellist
\small
\pinlabel $K_n$ at 35 40
\endlabellist
\begin{center}\includegraphics[scale=.3]{emptybox}\end{center} 
\end{minipage}
\bigg\rrbracket\hskip.7in
\bigg\llbracket
\begin{minipage}{.3in}
\labellist
\small
\pinlabel $n$ at -15 50
\endlabellist
\begin{center}\includegraphics[scale=.3]{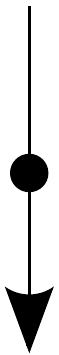}\end{center} 
\end{minipage}
; \mathcal{K} \bigg\rrbracket
\cong \Bigg\llbracket
\begin{minipage}{1in}
\labellist
\small
\pinlabel $K_n$ at 100 80
\endlabellist
\begin{center}\includegraphics[scale=.3]{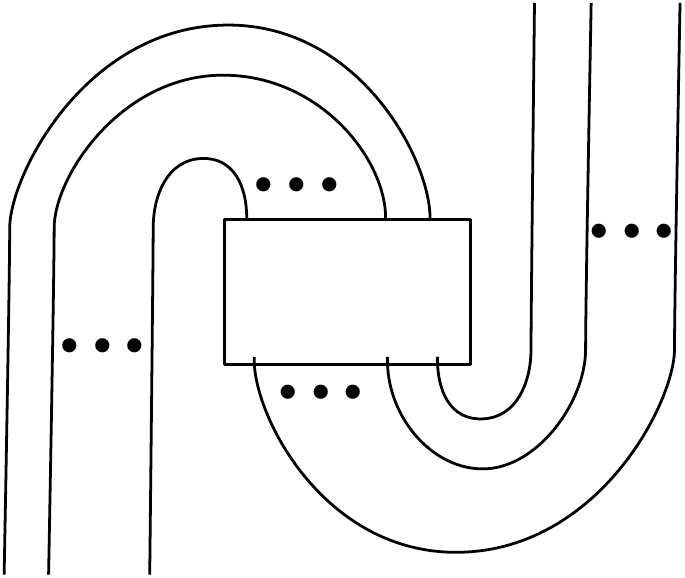}\end{center} 
\end{minipage}
\Bigg\rrbracket
\]
Let us prove that $K_n$ is equivalent to its rotation.  It will follow that a change of orientation preserves $\llbracket D;\mathcal{K}\rrbracket$ up to homotopy equivalence and a well-understood overall degree shift.

Let $r:\Kom(n)\rightarrow \Kom(n)$ be the covariant functor given by rotation by $\pi$ radians.  Note that $r^2$ is isomorphic to the identity functor, since isotopic tangles give isomorphic objects in $\bn{n}$.  Now, if $P_n\in\Kom(n)$ is a Cooper-Krushkal projector, then so is $r(P_n)$.  So $r(P_n)\simeq P_n$ by uniqueness of Cooper-Krushkal projectors.  This completes the proof in case $\mathbf{i}=\emptyset$.  Thus, we may assume $\mathbf{i}=(i_1,\ldots,i_r)$ is non-empty.  In this case, compute:
\[
r\Big(P_n(i_1,\ldots,i_r)\Big) = r\Big(P_n(i_1)\otimes\cdots \otimes P_n(i_r)\Big) \cong r(P_n(i_r))\otimes \cdots \otimes r(P_n(i_1))
\]
where we have used the graphically obvious fact that $r(A\otimes B)\cong r(B)\otimes r(A)$.   The $P_n(i_j)$ commute up to homotopy by Theorem \ref{thm-QnProps}, so it suffices to show that $r(P_n(i))\simeq P_n(i)$ for each $i=1,\ldots,n$.  That is to say, suppose $[U_i]\in \Ext^{2-2i,2i}(P_n,P_n)\cong \Z$ is a generator.  We must show that $r(\Cone(U_i))\simeq \Cone(U_i)$.

Note that $r$ is an automorphism of categories, and commutes with grading shifts.   It follows that $[r(U_i)]\in \Ext^{2-2i,2i}(r(P_n),r(P_n))$ is a generator.  Corollary \ref{cor-QnAreCones} now says $\Cone(U_i)\simeq \Cone(r(U_i))\simeq (Q_i\sqcup 1_{n-i})\otimes P_n$.   On the other hand, since $r$ is a linear functor, it commutes with mapping cones, and we have
\[
r(\Cone(U_i)) \cong \Cone(r(U_i)) \simeq \Cone(U_i),
\]
as desired. This completes the proof.
\end{proof}

In the following, we will regard any Laurent polynomial $f(q,t)\in\Z[q^{\pm 1},t^{\pm 1}]$ as a functor $\Kom(n)\rightarrow \Kom(n)$ which sends any $A\in\Kom(n)$ to a finite direct sum of copies of $A$ with degree shifts given by the terms of $f(q,t)$.
\begin{proposition}\label{prop-mergingPoints}
If $K_n = P_n(i_1,\ldots,i_r)$, then
\[
\bigg\llbracket
\begin{minipage}{.3in}
\labellist
\small
\pinlabel $n$ at -15 50
\endlabellist
\begin{center}\includegraphics[scale=.3]{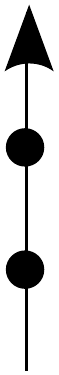}\end{center} 
\end{minipage}
; \mathcal{K} \bigg\rrbracket
 \simeq f(q,t) 
 \bigg\llbracket
\begin{minipage}{.3in}
\labellist
\small
\pinlabel $n$ at -15 50
\endlabellist
\begin{center}\includegraphics[scale=.3]{upstrandWithDot}\end{center} 
\end{minipage}
; \mathcal{K} \bigg\rrbracket
\]
where $f(q,t) = \prod_{j=1}^r(1+t^{1-2i_j}q^{2i_j})$.
\end{proposition}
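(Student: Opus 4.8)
The statement is local and will follow from the quasi-idempotency of Theorem~\ref{thm-QnProps}. First I would unwind Definition~\ref{def-brackeCx}: the two diagrams differ only in that one has two marked points on an $n$-colored arc (with no crossing between them) where the other has a single marked point. Since each marked point inserts a copy of $K_n$ via the planar (canopoly) composition, and since this planar composition is assembled from the multilinear functors of Proposition~\ref{prop-planarCompExt} together with grading shifts---hence preserves homotopy equivalences and commutes with finite direct sums---it suffices to establish the purely local equivalence
\[
K_n\otimes K_n\ \simeq\ f(q,t)\,K_n\qquad\text{in }\Kom(n),
\]
where $f(q,t)=\prod_{j=1}^r(1+t^{1-2i_j}q^{2i_j})$. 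Applying the planar composition functor to both sides of this equivalence then gives the Proposition, with the understood overall grading shifts depending only on the colors and the location of the points.

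Next I would prove the local equivalence by induction on $r$, writing $K_n=P_n(\mathbf i)$ with $\mathbf i=(i_1,\ldots,i_r)$. When $r=0$ this is just $P_n\otimes P_n\simeq P_n$ (Corollary~\ref{cor-CKprojAreIdemp}), and the empty product equals $1$. For the inductive step, recall from the proof of Theorem~\ref{thm-QnProps} that $P_n(\mathbf a\cdot\mathbf b)\simeq P_n(\mathbf a)\otimes P_n(\mathbf b)$ for all sequences, so $K_n\otimes K_n\simeq P_n(i_1,\ldots,i_r,i_1,\ldots,i_r)$. Using the commutation property Theorem~\ref{thm-QnProps}(1) rearrange this to $P_n(i_1,i_1,i_2,\ldots,i_r,i_2,\ldots,i_r)$, and apply the quasi-idempotency property Theorem~\ref{thm-QnProps}(2) to the leading pair to obtain
\[
(1+t^{1-2i_1}q^{2i_1})\,P_n(i_1,i_2,\ldots,i_r,i_2,\ldots,i_r).
\]
Now factor $P_n(i_1,i_2,\ldots,i_r,i_2,\ldots,i_r)\simeq P_n(i_1)\otimes P_n\big((i_2,\ldots,i_r)\cdot(i_2,\ldots,i_r)\big)$, apply the induction hypothesis to the length-$(r-1)$ sequence $(i_2,\ldots,i_r)$ to replace the second tensor factor by $\prod_{j=2}^r(1+t^{1-2i_j}q^{2i_j})\,P_n(i_2,\ldots,i_r)$, and reassemble $P_n(i_1)\otimes P_n(i_2,\ldots,i_r)\simeq P_n(i_1,\ldots,i_r)=K_n$. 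Collecting the prefactors yields $\prod_{j=1}^r(1+t^{1-2i_j}q^{2i_j})\,K_n=f(q,t)K_n$, completing the induction.

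I do not expect a genuine obstacle here: the argument is essentially bookkeeping once Theorem~\ref{thm-QnProps} is in hand. The one point deserving care is the combinatorics of the rearrangements---verifying that each slot $i_j$ contributes exactly one factor $(1+t^{1-2i_j}q^{2i_j})$, including when indices are repeated---and the passage from the local equivalence to the global one, which relies only on the fact that the bracket is obtained by applying additive (indeed multilinear) functors and grading shifts, so that both homotopy equivalence and the formal direct-sum interpretation of $f(q,t)$ are preserved.
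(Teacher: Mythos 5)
Your proposal is correct and follows essentially the same route as the paper: reduce to the local equivalence $K_n\otimes K_n\simeq f(q,t)K_n$, then deduce it from the commutation and quasi-idempotency statements of Theorem~\ref{thm-QnProps}. The paper compresses your induction into the remark that, since the factors $P_n(i_j)$ commute up to homotopy, it suffices to treat $r=1$, which is exactly the quasi-idempotency $\Cone(U_i)^{\otimes 2}\simeq(1+t^{1-2i}q^{2i})\Cone(U_i)$ established in the proof of that theorem; your explicit bookkeeping is a correct expansion of that step.
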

\begin{proof}
We must show that $P_n(\mathbf{i})^{\otimes 2}\simeq f(q,t)P_n(\mathbf{i})$, where $f(q,t)$ is as in the hypotheses.  In case $\mathbf{i}=\emptyset$, this reduces to idempotency of $P_n$.  So assume that $\mathbf{i}=(i_1,\ldots,i_r)$ is non-empty.  Recall that $P_n(i_1,\ldots,i_r)=P_n(i_1)\otimes \cdots\otimes P_n(i_r)$,  and the $P_n(i_j)$ commute up to equivalence by Theorem \ref{thm-QnProps}.  Thus, it suffices to assume that $r=1$.  Quasi-idempotency of $P_n(i)=\Cone(U_i)$ was proven in the proof of Theorem \ref{thm-QnProps}.
\end{proof}

We have our final local relation:
\begin{proposition}\label{prop-slidingPoints}
Marked points can be slid past crossings:
\[
\bigg\llbracket
\begin{minipage}{.6in}
\labellist
\small
\pinlabel $n$ at -10 20
\pinlabel $m$ at 90 20
\endlabellist
\begin{center}\includegraphics[scale=.3]{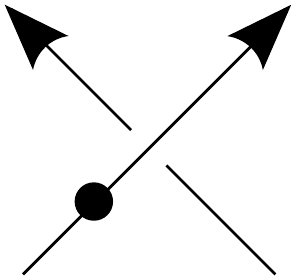}\end{center} 
\end{minipage}
; \mathcal{K} \bigg\rrbracket
\simeq
\bigg\llbracket
\begin{minipage}{.6in}
\labellist
\small
\pinlabel $n$ at -10 20
\pinlabel $m$ at 90 20
\endlabellist
\begin{center}\includegraphics[scale=.3]{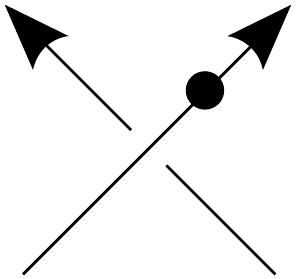}\end{center} 
\end{minipage}
; \mathcal{K} \bigg\rrbracket
\hskip.8in
\bigg\llbracket
\begin{minipage}{.6in}
\labellist
\small
\pinlabel $n$ at -10 20
\pinlabel $m$ at 90 20
\endlabellist
\begin{center}\includegraphics[scale=.3]{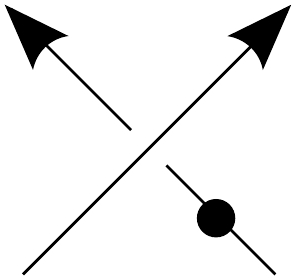}\end{center} 
\end{minipage}
; \mathcal{K} \bigg\rrbracket
\simeq
\bigg\llbracket
\begin{minipage}{.6in}
\labellist
\small
\pinlabel $n$ at -10 20
\pinlabel $m$ at 90 20
\endlabellist
\begin{center}\includegraphics[scale=.3]{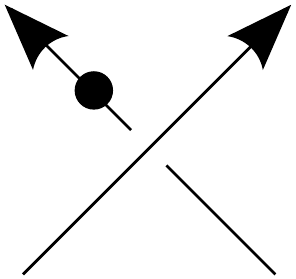}\end{center} 
\end{minipage}
; \mathcal{K} \bigg\rrbracket
\]
and similarly for the negative crossing.
\end{proposition}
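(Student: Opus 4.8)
The plan is to pass to the cabled picture and reduce the statement to a rigidity property of the maps $U_k^{(n)}$. After cabling, the $n$--coloured strand becomes a bundle of $n$ parallel strands carrying an inserted copy of $K_n=P_n(\mathbf i)$, the $m$--coloured strand becomes a bundle of $m$ parallel strands, and the crossing becomes the bracket $\llbracket\gamma_{n,m}\rrbracket\in\Kom^-(n+m)$ of the braid $\gamma_{n,m}$ in which the $n$--bundle passes rigidly over (or under) the $m$--bundle, with alternating orientations. Since the two diagrams in the statement agree outside a disc around the crossing and the marked point, and the bracket is built by planar composition (the canopoly of \cite{B-N05}), it is enough to prove the local identity
\[
(K_n\sqcup 1_m)\otimes\llbracket\gamma_{n,m}\rrbracket\ \simeq\ \llbracket\gamma_{n,m}\rrbracket\otimes(1_m\sqcup K_n),
\]
the $n$--bundle entering $\gamma_{n,m}$ on one side and exiting on the other. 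As $\gamma_{n,m}$ is a braid, $\llbracket\gamma_{n,m}\rrbracket$ is invertible up to homotopy with inverse $\llbracket\gamma_{n,m}\inv\rrbracket$ (Reidemeister $2$, Proposition~\ref{prop-reid2and3}), so this is equivalent to
\[
\llbracket\gamma_{n,m}\inv\rrbracket\otimes(K_n\sqcup 1_m)\otimes\llbracket\gamma_{n,m}\rrbracket\ \simeq\ 1_m\sqcup K_n .
\]

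Writing $K_n=\Cone(U_{i_1}^{(n)})\otimes\cdots\otimes\Cone(U_{i_r}^{(n)})$ (Definition~\ref{def-quasiProjectors}) and using that $\sqcup 1_m$ distributes over $\otimes$ and that tensoring with the invertible $\llbracket\gamma_{n,m}\rrbracket$ commutes with mapping cones, one reduces to sliding $\llbracket\gamma_{n,m}\rrbracket$ across one factor at a time. Thus it suffices to prove, for $C=P_n$ and for $C=\Cone(U_k^{(n)})$ $(1\le k\le n)$ and $\gamma=\gamma_{n,m}$, that $\llbracket\gamma\inv\rrbracket\otimes(C\sqcup 1_m)\otimes\llbracket\gamma\rrbracket\simeq 1_m\sqcup C$. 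For $C=P_n$ this says that the Cooper--Krushkal projector on the $n$--bundle slides past a transverse bundle; I would prove it one transverse strand at a time, using that $P_n$ is the homotopy colimit of the full--twist powers $\Tw_n^{k}$ on the $n$--bundle (as in \cite{Roz10a}; cf.\ \S\ref{subsec-torusBraids}): for each $k$ the braids $\Tw_n^{k}\sqcup 1_1$ and $1_1\sqcup\Tw_n^{k}$ are conjugate through $\gamma_{n,1}$ by an honest braid isotopy (slide the twist along the bundle past the strand), so the corresponding brackets are homotopy equivalent by Proposition~\ref{prop-reid2and3}, and passing to the colimit gives the claim. (Alternatively, avoiding \S\ref{subsec-torusBraids}: any turnback among the relocated block can be pushed backwards through $\gamma$ by Reidemeister $2$ and planar isotopy into a turnback among the original block, which $P_n$ kills; a delooping and partial--trace reduction as in Theorem~\ref{thm-partialTraceHom} then identifies the conjugated complex with $1_m\sqcup P_n$ by an argument parallel to the uniqueness of Cooper--Krushkal projectors, Corollary~\ref{cor-CKprojAreUnique}.)

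Granting the $C=P_n$ case, fix a homotopy equivalence $\Theta\colon\llbracket\gamma\inv\rrbracket\otimes(P_n\sqcup 1_m)\otimes\llbracket\gamma\rrbracket\xrightarrow{\ \simeq\ }1_m\sqcup P_n$. Since $C\sqcup 1_m=\Cone\bigl(U_k^{(n)}\sqcup 1_m\colon y(P_n\sqcup 1_m)\to P_n\sqcup 1_m\bigr)$ and tensoring with $\llbracket\gamma\inv\rrbracket$ and $\llbracket\gamma\rrbracket$ commutes with cones, Lemma~\ref{lemma-coneInvariance} identifies $\llbracket\gamma\inv\rrbracket\otimes(C\sqcup 1_m)\otimes\llbracket\gamma\rrbracket$ with $\Cone(\widetilde U)$ for an endomorphism $\widetilde U$ of $1_m\sqcup P_n$ of bidegree $(2-2k,2k)$ obtained by conjugating $\Id\otimes(U_k^{(n)}\sqcup 1_m)\otimes\Id$ by $\Theta$. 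Now $\Ext^{2-2k,2k}(1_m\sqcup P_n,1_m\sqcup P_n)\cong\Z$ by iterating Theorem~\ref{thm-partialTraceHom} together with Corollary~\ref{cor-extGroups} (the extra identity strands contribute only delooping factors $(q^2+1)^m$, whose bidegree-$(2-2k,2k)$ part has multiplicity one), and both $\widetilde U$ and $1_m\sqcup U_k^{(n)}$ represent $\pm$ a generator of this group -- the former because conjugation by equivalences preserves generators up to sign, the latter by naturality of the isomorphism of Theorem~\ref{thm-partialTraceHom}. Hence $\widetilde U\simeq\pm\,(1_m\sqcup U_k^{(n)})$, and Lemma~\ref{lemma-coneInvariance} gives $\Cone(\widetilde U)\simeq\Cone(1_m\sqcup U_k^{(n)})=1_m\sqcup\Cone(U_k^{(n)})=1_m\sqcup C$. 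Tensoring the factors back together yields $\llbracket\gamma_{n,m}\inv\rrbracket\otimes(K_n\sqcup 1_m)\otimes\llbracket\gamma_{n,m}\rrbracket\simeq 1_m\sqcup K_n$, as required.

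Finally, the dependence on the sign of the crossing and on orientations is routine: a positive and a negative crossing cable to the same braid up to an overall grading shift (compare~\eqref{eq-posCrossing} and~\eqref{eq-negCrossing}), and a change of orientation conventions only replaces $K_n$ by its $\pi$--rotation, which is homotopy equivalent to $K_n$ by the argument already given in the proof of Proposition~\ref{prop-orientationChange}. The main obstacle is the $C=P_n$ base case -- establishing that the cabled Cooper--Krushkal projector genuinely slides through the cabled crossing -- whether one makes the homotopy--colimit argument precise (compatibility of the sliding equivalences with the colimit maps) or sets up the turnback--pushing-plus-uniqueness argument carefully; everything else is bookkeeping with cones and the $\Ext$--computations already in hand.
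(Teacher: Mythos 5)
Your strategy coincides with the paper's in all essentials: reduce to sliding one factor $\Cone(U_k^{(n)})$ at a time, first establish that $P_n$ itself slides past the cabled crossing, then identify the transported endomorphism with $\pm U_k^{(n)}$ by showing both represent generators of an $\Ext$-group isomorphic to $\Z$, and finish with Lemma \ref{lemma-coneInvariance}. The paper packages exactly this as Lemma \ref{lemma-Pslide} (the base case, proved by your parenthetical alternative: Reidemeister~II transports turnbacks across the crossing, so $F(K,\Cone(\iota))\simeq 0$ when $K$ kills turnbacks), Lemma \ref{lemma-UslideHom} and Proposition \ref{prop-Uslide} (the $\Ext$ computation, carried out one transverse strand at a time via $\Endg(P_n\sqcup 1_1)\cong\Endg(P_n)\otimes\Endg(1_1)$ rather than with the whole $m$-bundle, which is only a bookkeeping difference). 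Your preferred route to the base case through the full-twist colimit of \cite{Roz10a} is heavier than needed and would require verifying compatibility of the sliding equivalences with the colimit maps; the turnback-transport argument you relegate to parentheses is the one actually used and is self-contained, so I would promote it.

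There is one concrete gap: the claim that the extra identity strands contribute ``multiplicity one'' in bidegree $(2-2k,2k)$, hence that $\Ext^{2-2k,2k}(1_m\sqcup P_n,\,1_m\sqcup P_n)\cong\Z$, fails for $k=1$. Since $\Endg(1_1)\cong\Z\oplus q^2\Z$ and $U_1^{(n)}$ has bidegree $(0,2)$, the decomposition $\bigoplus_j \binom{m}{j}\,\Ext^{0,2-2j}(P_n,P_n)$ contains, besides the $j=0$ summand generated by $U_1^{(n)}\sqcup\Id$, also $m$ copies of $\Ext^{0,0}(P_n,P_n)\cong\Z$ coming from a dot placed on one of the transverse strands; the group is $\Z^{m+1}$, and the step ``any two generators of $\Z$ agree up to sign'' breaks down. (For $k\geq 2$ your computation is correct: the $j\geq 1$ summands vanish by Corollary \ref{cor-extGroups}.) The paper sidesteps this by disposing of $k=1$ separately with the elementary fact that a dotted identity slides past a crossing up to sign, which is the first line of the proof of Proposition \ref{prop-Uslide}; with that patch your argument goes through.
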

We will postpone the proof of this proposition until \S \ref{subsec-slidingQprojectors}.  Granting this, we obtain the following:

\begin{theorem}\label{thm-quasiLocalHomology}
Let $\mathcal{K} = \{P_n(\mathbf{i}_n)\:|\: n=1,2,\ldots\}$ be a family of quasi-projectors.  The chain homotopy type of $\llbracket D; \mathcal{K} \rrbracket$ is an invariant of the colored, framed, oriented, marked tangle represented by $D$ up to framed isotopy.  This invariant satisfies:
\begin{enumerate}
\item A change of framing or orientation affects $\llbracket D;\mathcal{K}\rrbracket$ only up to an overall degree shift.
\item Suppose $D$ and $D'$ are identical, except that $D'$ has 1 fewer marked point than $D$, on a component colored by $n$.  Then $\llbracket D; \mathcal{K} \rrbracket$ is chain homotopy equivalent to a direct sum of copies of $\llbracket D'; \mathcal{K} \rrbracket$ with degree shifts, depending only on $n$.
\item For special choices of $\mathcal{K}$ (not depending on $D$), $\llbracket D;\mathcal{K}\rrbracket$ is homotopy equivalent to a bounded complex.
\end{enumerate}\qed
\end{theorem}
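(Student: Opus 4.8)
The plan is to deduce the theorem from the local statements already established --- Propositions \ref{prop-reid2and3}, \ref{prop-framingChange}, \ref{prop-orientationChange}, \ref{prop-mergingPoints}, and \ref{prop-slidingPoints} --- the only genuinely new ingredient being a Reidemeister-type recognition theorem for marked, framed, oriented tangle diagrams, after which the three numbered assertions follow by bookkeeping. Throughout we use that the planar composition operations on Bar-Natan's categories carry homotopy equivalences to homotopy equivalences, so that any local equivalence applied inside a small disc of the diagram induces an equivalence of the total bracket complexes.

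First I would prove the invariance claim. One checks that any two diagrams representing the same framed, oriented, marked tangle $(T,V)$ up to framed isotopy are connected by a finite sequence of local moves of three types: (i) planar isotopies of the diagram (which also account for sliding a marked point within a region free of crossings and of other marked points); (ii) the framed, colored Reidemeister $1$, $2$, and $3$ moves, performed away from the marked points; and (iii) moves sliding a marked point past a crossing on its own strand. This is the classical Reidemeister calculus for blackboard-framed tangles, enlarged by the moves that relocate the finite point set $V$ along $T$; since the ordering of points of $V$ along each component is preserved under isotopy of the pair, no ``marked point past marked point'' move is needed. Moves of type (i) leave $\llbracket D;\mathcal{K}\rrbracket$ unchanged on the nose, moves of type (ii) preserve its homotopy type by Proposition \ref{prop-reid2and3} (note that the framed R1 move displayed there carries no degree shift), and moves of type (iii) preserve its homotopy type by Proposition \ref{prop-slidingPoints}.

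Assertion (1) is then immediate: a change of framing is realized by inserting or deleting curls, which by Proposition \ref{prop-framingChange} multiplies $\llbracket D;\mathcal{K}\rrbracket$ by a power of the grading-shift functor $G_n$ on the relevant component, while a change of orientation changes $\llbracket D;\mathcal{K}\rrbracket$ by an overall degree shift by Proposition \ref{prop-orientationChange}. For assertion (2), given $D$ with two marked points on an $n$-colored component, I would use type-(i) moves together with Proposition \ref{prop-slidingPoints} to slide the two points together along the component until they are adjacent; locally the bracket then contains the tensor factor $K_n\otimes K_n = P_n(\mathbf{i}_n)^{\otimes 2}$. By Proposition \ref{prop-mergingPoints} we have $P_n(\mathbf{i}_n)^{\otimes 2}\simeq f_n(q,t)\,P_n(\mathbf{i}_n)$, where $f_n(q,t)=\prod_{j}(1+t^{1-2i_j}q^{2i_j})$ depends only on $n$ and the fixed family $\mathcal{K}$; applying this equivalence inside the small disc gives $\llbracket D;\mathcal{K}\rrbracket\simeq f_n(q,t)\,\llbracket D';\mathcal{K}\rrbracket$, that is, a direct sum of degree-shifted copies of $\llbracket D';\mathcal{K}\rrbracket$ with multiplicities depending only on $n$. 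For assertion (3), take $\mathcal{K}=\{P_n(1,2,\ldots,n)\}_{n\ge 1}$: by Theorem \ref{thm-standardCx} the complex $(Q_2\sqcup 1_{n-2})\otimes\cdots\otimes Q_n$ is homotopy equivalent to a bounded complex, and since $P_n(1,2,\ldots,n)\simeq(Q_1\sqcup 1_{n-1})\otimes(Q_2\sqcup 1_{n-2})\otimes\cdots\otimes Q_n$ with $Q_1$ a two-term complex, each such $K_n$ is homotopy equivalent to a bounded complex. As $D$ has finitely many crossings and marked points, $\llbracket D;\mathcal{K}\rrbracket$ is a planar composition of finitely many complexes each homotopy equivalent to a bounded one (the two-term crossing complexes and bounded models of the $K_n$), and since $\otimes$, $\sqcup$, and $T$ send bounded complexes to bounded complexes and preserve homotopy equivalences, $\llbracket D;\mathcal{K}\rrbracket$ is homotopy equivalent to a bounded complex.

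The main obstacle is the first step, namely making the Reidemeister recognition theorem for marked framed tangles precise and verifying that every framed isotopy of the pair $(T,V)$ decomposes into the listed local moves; this is routine but somewhat tedious. The one piece of substantive content, that a marked point can be slid past a crossing, has been isolated as Proposition \ref{prop-slidingPoints}, whose proof is deferred to \S\ref{subsec-slidingQprojectors}.
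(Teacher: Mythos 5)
Your proposal is correct and follows essentially the same route as the paper, which states the theorem with a \qed precisely because it is the assembly of Propositions \ref{prop-reid2and3}, \ref{prop-framingChange}, \ref{prop-orientationChange}, \ref{prop-mergingPoints}, and \ref{prop-slidingPoints} via the standard Reidemeister calculus for marked framed tangle diagrams. Your treatment of part (3), taking $\mathcal{K}=\{P_n(1,\ldots,n)\}$ and invoking Theorem \ref{thm-standardCx} together with the factorization through the bounded complex $K_n$, is exactly the intended argument.
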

See Theorem \ref{thm-nilpotence} (respectively Conjecture \ref{conj-nilpBdd}) what we know (respectively expect) to be true about boundedness of the complexes $P_n(\mathbf{i})$.

\subsection{Sliding quasi-idempotents past crossings}
\label{subsec-slidingQprojectors}
This section is dedicated to the proof of Proposition \ref{prop-slidingPoints}.

It will be useful to set up some notation, which we will use throughout the rest of this section.  Fix an integer $n\geq 1$.  For each $1\leq k\leq n$, let $U_k:t^{2-2k}q^{2k}P_n\rightarrow P_n$ represent a fixed generator of $\Ext^{2-2k,2k}(P_n,P_n)\cong \Z$.  $X_n:= \llbracket\wpic{crossingsBridge}\rrbracket\in \Kom(n+1)$ in which the ``under'' strands have been given alternating orientations.  Define a bilinear functor $F(A,B):=(A\sqcup 1_1)\otimes X_n\otimes (1_1\otimes B)$.  That is to say:
\[
F(A,B) = \left\llbracket
\begin{minipage}{.6in}
\labellist
\small
\pinlabel $A$ at 55 110
\pinlabel $B$ at 55 35
\endlabellist
\begin{center}\includegraphics[scale=.3]{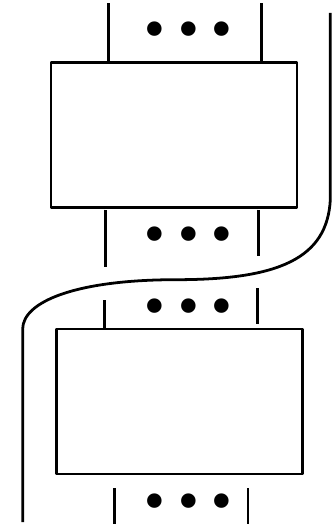}\end{center} 
\end{minipage}
\right\rrbracket
\]
\begin{lemma}[Sliding projectors past crossings]\label{lemma-Pslide}
Let $(P_n,\iota)$ be a Cooper-Krushkal projector, and suppose $K\in\Kom^-(n)$ kills turnbacks.  Then $F(\Id_K,\iota):F(K,1_n)\rightarrow F(K,P_n)$ is a homotopy equivalence.  Similarly, $F(\iota,\Id_K)$ is a homotopy equivalence.
\end{lemma}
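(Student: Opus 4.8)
\textbf{Proof strategy for Lemma \ref{lemma-Pslide}.} The plan is to reduce the statement to the projector-absorbing property (Proposition \ref{prop-turnbackTFAE}, or better Proposition \ref{prop-projAbsorb}), after first understanding $F(K,1_n)$ and $F(K,P_n)$ well enough. The key observation is that the functor $F(A,B) = (A\sqcup 1_1)\otimes X_n\otimes (1_1\otimes B)$ is built from a bunch of crossings $X_n$ sandwiched between $A$ (cabled, with an extra strand) and $B$. If I expand all the crossings in $X_n$ into their two-term complexes, I obtain a convolution whose terms are planar compositions of $A$, $B$, and Temperley-Lieb diagrams in the middle. Most of these terms will contain a turnback between adjacent strands of $B$; since $K$ kills turnbacks, the only surviving contribution involves the identity resolution.

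First I would make this precise: using Theorem \ref{convSdrthm} to contract all terms of $F(K,1_n)$ (respectively $F(K,P_n)$) in which a turnback meets $K$, one finds that $F(K,1_n)$ is homotopy equivalent to a smaller complex — essentially $(K\sqcup 1_1)$ composed with whatever survives of the crossing complex after the turnbacks through $K$ are killed. The same contraction applied to $F(K,P_n)$ additionally allows turnbacks to be killed against $P_n$ (by axiom (CK2)). The crucial point is that the map $F(\Id_K,\iota)$ is compatible with these contractions (it only involves the $\iota:1_n\to P_n$ inserted on the $B$-slot, which is away from the region being simplified), so it descends to a map between the two simplified complexes. I would then argue that this descended map is a homotopy equivalence by recognizing it as an instance of $\Id\otimes\iota$ composed with a deformation retract, exactly as in the proof of the implication (2)$\Rightarrow$(3) in Proposition \ref{prop-turnbackTFAE}: a chain map whose cone is contractible because $K'\otimes\Cone(\iota)\simeq 0$ for the relevant turnback-killing complex $K'$.

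An alternative, cleaner route avoids the explicit crossing expansion: observe that $F(K,B)\cong G\otimes(1_{n-1}\sqcup 1_1\otimes B)$ for an appropriate fixed complex $G=(K\sqcup 1_1)\otimes X_n\in\Kom^-(n+1)$, and that $G$ kills turnbacks in the last $n$ strands because $K$ does and $X_n$ is a product of crossings (crossings slide through, and a crossing composed with a turnback-killing complex kills turnbacks, cf.\ the computation in the proof of Proposition \ref{prop-framingChange}). Then $F(\Id_K,\iota) = \Id_G\otimes(\Id\sqcup \iota)$, and since $G$ kills turnbacks on the strands where $\iota$ acts, Proposition \ref{prop-turnbackTFAE} (the projector-absorbing equivalence, applied in the appropriate slot) shows that tensoring with $\iota$ there is a homotopy equivalence. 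The statement for $F(\iota,\Id_K)$ is identical by the left--right symmetry of the turnback-killing axioms (Corollary \ref{cor-turnbackKillingSymmetry}), placing the projector on the $A$-slot instead.

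The main obstacle I anticipate is bookkeeping: making rigorous the claim that $G=(K\sqcup 1_1)\otimes X_n$ kills turnbacks precisely on the last $n$ strands (not all $n+1$), since the extra strand and the crossings mix the strands in a way that must be tracked carefully — in particular one must check a turnback between strand $n$ and strand $n+1$ of $G$ is killed, which uses that $X_n$ routes strand $n+1$ so that such a turnback becomes, after isotopy, a turnback against $K$. Once the correct ``turnback-killing region'' of $G$ is identified, the rest is a direct application of projector absorbing, so I expect no further difficulty.
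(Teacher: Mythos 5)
Your second (``cleaner'') route is essentially the paper's proof: the paper reduces to showing $\Cone(F(\Id_K,\iota))\cong F(K,\Cone(\iota))$ is contractible, and establishes this by sliding turnbacks in the $B$-slot through the crossings via Reidemeister II until they hit $K$ (i.e.\ $F(K,e_{i+1})\simeq F(K\otimes e_i,1_n)\simeq 0$), then invoking the argument of Proposition \ref{prop-turnbackTFAE} — exactly your observation that $(K\sqcup 1_1)\otimes X_n$ kills turnbacks on the strands where $\iota$ is inserted. The bookkeeping you flag (the bridge strand shifting the turnback index by one) is real but harmless, since every generator on the $B$-side lands on an honest generator against $K$.
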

In case $K=P_n$, we recover the fact that projectors slide past crossings:
\[
F(P_n,1_n)\simeq F(P_n,P_n) \simeq F(1_n,P_n)
\]
\begin{proof}
We will prove only the first statement.  The proof of the second is similar.

It is a standard fact in homotopy theory that a chain map $f$ is a homotopy equivalence if and only if $\Cone(f)$ is contractible.  Observe:
\[
\Cone(F(\Id_{K},\iota))\cong F(K,\Cone(\iota))
\]
by linearity of the functor $F(K,-)$.  We must show that this latter complex is contractible.

By invariance of the Khovanov tangle invariant under the Reidemeister II move, we have $F(K,e_{i+1})\simeq F(K\otimes e_i,1_n)$ for each $1\leq i\leq n-1$, where $e_i$ denotes the Temperley-Lieb ``cup-cap'' generator.  If $K$ kill turnbacks, then this implies that $F(K,e_{i+1})\simeq 0$ for each $1\leq i\leq n-1$.  An argument similar to the proof of Proposition \ref{prop-turnbackTFAE} now implies that $F(K,\Cone(\iota))\simeq 0$, as desired.
\end{proof}

\begin{lemma}\label{lemma-UslideHom}
There exist homotopy equivalences $\Phi_L,\Phi_R:\Endg(F(P_n,P_n))\simeq \Endg(P_n)\otimes \Endg(1_1)$ such that $\Phi_L(f,\Id_{P_n}) \simeq f\otimes \Id_1\simeq \Phi_R(\Id_{P_n},f)$ for all closed morphisms $f\in\Endg(P_n)$.
\end{lemma}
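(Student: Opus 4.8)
The plan is to transplant, into the setting of the bifunctor $F$, the argument behind Lemma \ref{lemma-mapMerging}. The key point is that \emph{each} of the two copies of $P_n$ in $F(P_n,P_n)$ can be slid away using Lemma \ref{lemma-Pslide}, leaving a complex which is essentially $P_n$ with one transparent parallel strand, and whose endomorphism complex is $\Endg(P_n)\otimes\Endg(1_1)$. I will build $\Phi_L$ by removing the \emph{right}-hand $P_n$ (the one not carrying $f$ in $\Phi_L(f,\Id_{P_n})$) and $\Phi_R$ by removing the left-hand one; the asserted compatibility with insertion of $f$ into the surviving slot then falls out of naturality, which is exactly what the lemma requires.

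In detail: by Lemma \ref{lemma-Pslide} with $K=P_n$, the map $F(\Id_{P_n},\iota)\colon F(P_n,1_n)\to F(P_n,P_n)$ is a homotopy equivalence, so conjugation by it gives an equivalence $\Endg(F(P_n,P_n))\simeq\Endg(F(P_n,1_n))$; since $F$ is a bifunctor, $F(f,\Id_{P_n})\circ F(\Id_{P_n},\iota)=F(\Id_{P_n},\iota)\circ F(f,\Id_{1_n})$, so this equivalence carries $F(f,\Id_{P_n})$ to a map homotopic to $F(f,\Id_{1_n})$. Next, $F(P_n,1_n)=(P_n\sqcup 1_1)\otimes X_n$, and I would show the over-strand of $X_n$ slides across the projector: $F(P_n,1_n)\simeq\lambda\,(P_n\sqcup 1_1)$ for a grading-shift functor $\lambda$ depending only on $n$, with the equivalence natural in the $P_n$-slot. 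This should follow by the paper's standard technique — expand $X_n$ into its cube of resolutions and use Theorem \ref{convSdrthm} together with Gaussian elimination (Proposition \ref{prop-gauss}) to contract the contractible summands produced once the turnbacks meet $P_n$, possibly with an induction on $n$ via the Cooper--Krushkal recursion — and $\lambda$ is computed by counting the positive and negative crossings of $X_n$ as in Proposition \ref{prop-framingChange}. Under this equivalence $F(f,\Id_{1_n})$ corresponds to $f\otimes\Id_1$ on $\lambda(P_n\sqcup 1_1)$. Finally, $\Endg(P_n\sqcup 1_1)\cong\Endg(P_n)\otimes\Endg(1_1)$ naturally: Theorem \ref{thm-partialTraceHom}(1) gives $\Homg_{\bn{n+1}}(P_n\sqcup 1_1,P_n\sqcup 1_1)\cong\Homg_{\bn{n}}(P_n,\,qT(P_n\sqcup 1_1))$, and $T(P_n\sqcup 1_1)\cong(q+q^{-1})P_n$ by delooping (Lemma \ref{lemma-delooping}), so the right-hand side is $\Endg(P_n)\otimes\Endg(1_1)$ since $\Endg(1_1)\cong\Z[x]/(x^2)$ with $\deg x=(0,2)$, and one checks that $f\otimes\Id_1$ is preserved. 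Composing these three equivalences defines $\Phi_L$, and tracking $F(f,\Id_{P_n})$ through them gives $\Phi_L(f,\Id_{P_n})\simeq f\otimes\Id_1$. The construction of $\Phi_R$ is identical with the two slots interchanged, using instead $F(\iota,\Id_{P_n})\colon F(1_n,P_n)\to F(P_n,P_n)$ and $F(1_n,P_n)=X_n\otimes(1_1\sqcup P_n)\simeq\lambda'(1_1\sqcup P_n)$ (then a left--right reflection reduces $\Endg(1_1\sqcup P_n)$ to the case already treated), yielding $\Phi_R(\Id_{P_n},f)\simeq f\otimes\Id_1$. The three homotopies claimed in the lemma are then immediate.

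I expect the main obstacle to be the sliding equivalence $F(P_n,1_n)\simeq\lambda(P_n\sqcup 1_1)$ together with its naturality in the $P_n$-slot. Unlike the contractions used inside Lemma \ref{lemma-Pslide}, a turnback produced by a resolution of $X_n$ does not immediately lie inside the projector's $n$ strands — it typically has one leg on the extra strand — so one must follow it through several braid generators of $X_n$ (or argue inductively) before it can be absorbed by $P_n$ or shown to contribute a contractible summand; and the homotopy inverse of the resulting equivalence must be kept natural in the $P_n$-slot throughout, so that the transported copy of $f$ really is $f\otimes\Id_1$ and not a conjugate of it. That bookkeeping is the analogue here of the role played by uniqueness of canonical maps in the proof of Lemma \ref{lemma-mapMerging}.
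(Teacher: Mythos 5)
Your first step (conjugating $\Endg(F(P_n,P_n))$ by the equivalence $F(\Id_{P_n},\iota)$ from Lemma \ref{lemma-Pslide}, and using bifunctoriality to carry $F(f,\Id_{P_n})$ to $F(f,\Id_{1_n})$) is exactly the paper's map $\a$, and your final identification $\Endg(P_n\sqcup 1_1)\cong\Endg(P_n)\otimes\Endg(1_1)$ is fine. The gap is the middle step: the claimed equivalence $F(P_n,1_n)=(P_n\sqcup 1_1)\otimes X_n\simeq\lambda\,(P_n\sqcup 1_1)$ is false, not merely delicate bookkeeping. Already for $n=1$ it asserts that the two-term complex of a single crossing is a shifted copy of $1_2$, which it is not; on Euler characteristics it would say $(p_n\otimes 1)\cdot x_n$ is a monomial multiple of $p_n\otimes 1$ in $\TL_{n+1}$, which fails because the turnbacks produced by resolving the crossings of $X_n$ have a leg on the $(n+1)$-st strand and are therefore \emph{not} killed by $P_n\sqcup 1_1$. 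The mechanism of Proposition \ref{prop-framingChange}, which you invoke to compute $\lambda$, requires a complex killing turnbacks on \emph{all} strands meeting the crossing; here it does not apply, and the obstruction you flag at the end (turnbacks straddling the extra strand) is exactly the point at which the surviving summands refuse to cancel. What sliding actually gives is $(P_n\sqcup 1_1)\otimes X_n\simeq X_n\otimes(1_1\sqcup P_n)$: the projector moves past the crossing, but $X_n$ itself never disappears.

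The paper avoids simplifying $F(P_n,1_n)$ altogether. It uses only that $X_n$ is \emph{invertible}: fixing an equivalence $\psi\colon X_n\otimes X_n\inv\to 1_{n+1}$, one gets a homotopy equivalence $\b\colon\Endg\bigl((P_n\sqcup 1)\otimes X_n\bigr)\to\Endg(P_n\sqcup 1)$ by sending $g$ to $(\Id_{P_n\sqcup 1}\otimes\psi)\circ(g\otimes\Id_{X_n\inv})\circ(\Id_{P_n\sqcup 1}\otimes\psi\inv)$, and this visibly carries $F(f,\Id_{1_n})=(f\sqcup\Id_1)\otimes\Id_{X_n}$ to something homotopic to $f\sqcup\Id_1$. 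So the endomorphism complexes are identified even though the underlying objects $(P_n\sqcup 1)\otimes X_n$ and $P_n\sqcup 1$ are not equivalent. If you replace your middle step with this conjugation-by-$X_n\inv$ argument, the rest of your outline goes through.
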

\begin{proof}
Let $\phi:F(\Id,\iota):F(P_n,1_n)\simeq F(P_n,P_n)$ denote the homotopy equivalence from Lemma \ref{lemma-Pslide}, and let $\phi\inv$ denote a homotopy inverse.  Note that
\[
\phi\circ F(f,\Id_{P_n}) = F(\Id,\iota)\circ F(f,\Id) = F(f,\Id)\circ F(Id,\iota) = F(f,\Id_{1_n})\circ \phi
\]
for all $f\in \Endg(P_n)$. Conjugating with $\phi$ gives a homotopy equivalence
\[
\a:\Endg(F(P_n,P_n)) \buildrel\simeq \over\longrightarrow \Endg(F(P_n,1_n))
\]
which satisfies
\[
\a(F(f,\Id_{P_n})) := \phi\circ F(f,\Id_{P_n})\circ \phi\inv = F(f,\Id_{1_n})\circ \phi \circ \phi\inv \simeq F(f,\Id_{1_n})
\]
for all closed morphisms $f\in\Endg(P_n)$.

Now, by definition, $F(P_n,1_n) = (P_n\sqcup 1)\otimes X_n$, where $X_n = \llbracket \pic{crossingsBridge} \rrbracket$ with some orientations.  There is an inverse complex $X_n\inv$ with $X_n\otimes X_n\inv \simeq 1_{n+1}$.  Using this inverse complex we can define a homotopy equivalence
\[
\b:\Endg(F(P_n,1_n))\rightarrow \Endg(P_n\sqcup 1)
\]  
as follows: let $\psi:X_n\otimes X_n\inv \rightarrow 1_{n+1}$ denote a fixed homotopy equivalence and $\psi\inv$ a homotopy inverse.  Now, we may define $\b$ by commutativity of the following square, for each $g\in \Endg(F(P_n,1_n)) = \Endg((P_n\sqcup 1)\otimes X_n)$:
\[
\begin{diagram}[size=3em]
 (P_n\sqcup 1)\otimes X_n\otimes X_n\inv & \rTo^{g\otimes\Id_{X_n\inv}} & (P_n\sqcup 1)\otimes X_n\otimes X_n\inv  \\
\uTo^{\Id_{P_n\sqcup 1}\otimes \psi\inv} && \dTo_{\Id_{P_n\sqcup 1}\otimes \psi} \\
(P_n\sqcup 1)\otimes 1_n & \rTo^{\b(g)\otimes \Id_{1_n}} & (P_n\sqcup 1)\otimes 1_n
\end{diagram}
\]

It is easy to see that $\b(F(f,\Id_{1_n})) \simeq f\sqcup \Id_1$ for all closed morphisms $f\in \Endg(P_n)$.

Finally, there is an isomorphism $\gamma:\Endg(P_n\sqcup 1)\cong \Endg(P_n)\otimes \Endg(1_1)$ uniquely characterized by $\gamma\inv(f\otimes g)=f\sqcup g$.  In particular, $\gamma(f\sqcup \Id_1) = f\otimes \Id_1$.

The composition $\Phi_L:=\gamma\circ \b\circ \a$ is a homotopy equivalence
\[
\Phi_L:\Endg(F(P_n,P_n))\simeq \Endg(P_n)\otimes \Endg(1_1)
\]
such that $\Phi_L(f,\Id) \simeq f\otimes \Id_1$ for all closed morphisms $f\in\Endg(P_n)$.  This constructs $\Phi_L$ as in the statement.  A similar argument shows that there is a $\Phi_R$ as in the statement as well.  This completes the proof.
\end{proof}

\begin{proposition}\label{prop-Uslide}
For each $1\leq k\leq n$, we have $F(U_k,\Id_{P_n})\simeq \pm F(\Id_{P_n},U_k)$.  Graphically this is
\[
\left\llbracket
\begin{minipage}{.6in}
\labellist
\small
\pinlabel $U_k$ at 55 113
\pinlabel $\Id_{P_n}$ at 55 37
\endlabellist
\begin{center}\includegraphics[scale=.3]{emptyboxesWithCrossing}\end{center} 
\end{minipage}
\right\rrbracket
\simeq \pm
\left\llbracket
\begin{minipage}{.6in}
\labellist
\small
\pinlabel $\Id_{P_n}$ at 53 115
\pinlabel $U_k$ at 55 37
\endlabellist
\begin{center}\includegraphics[scale=.3]{emptyboxesWithCrossing}\end{center} 
\end{minipage}
\right\rrbracket
\]
Each is a closed element of $\Endg(F(P_n,P_n))$.
\end{proposition}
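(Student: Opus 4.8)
The plan is to compare the homology classes of $F(U_k,\Id_{P_n})$ and $F(\Id_{P_n},U_k)$ in $\Endg(F(P_n,P_n))$ by transporting both to $\Endg(P_n)\otimes\Endg(1_1)$ via Lemma \ref{lemma-UslideHom} and then running a degree count with Corollary \ref{cor-extGroups}. First, both maps are closed: $U_k$ and the identity are chain maps, and $F(f,g)$ is assembled from $f$, $g$ and identity morphisms by the bilinear planar operations, hence is itself a chain map; so each of $F(U_k,\Id_{P_n})$ and $F(\Id_{P_n},U_k)$ is a cycle of bidegree $(2-2k,2k)$ in $\Endg(F(P_n,P_n))$, and what remains is to show their classes agree up to sign.

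By Lemma \ref{lemma-UslideHom} there are homotopy equivalences $\Phi_L,\Phi_R\colon\Endg(F(P_n,P_n))\xrightarrow{\simeq}\Endg(P_n)\otimes\Endg(1_1)$ with $\Phi_L(F(U_k,\Id_{P_n}))\simeq U_k\otimes\Id_1$ and $\Phi_R(F(\Id_{P_n},U_k))\simeq U_k\otimes\Id_1$. Passing to homology,
\[
\Phi_{L*}[F(U_k,\Id_{P_n})]=[U_k\otimes\Id_1]=\Phi_{R*}[F(\Id_{P_n},U_k)],
\]
so it suffices to show that the homotopy auto-equivalence $\Phi_L\Phi_R^{-1}$ of $\Endg(P_n)\otimes\Endg(1_1)$ carries $[U_k\otimes\Id_1]$ to $\pm[U_k\otimes\Id_1]$.

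For $k\geq 2$ this follows from a degree count. Since $1_1$ is a one-term complex, $\Endg(1_1)$ has zero differential, is concentrated in homological degree $0$, and has $(0,0)$-part $\Z\cdot\Id_1$; hence $\Endg(P_n)\otimes\Endg(1_1)$ is a direct sum of a single unshifted copy of $\Endg(P_n)$ and copies shifted into strictly positive $q$-degree. Taking homology and using that $\Ext^{h,q}(P_n,P_n)=0$ whenever $h+q<0$ and whenever $h+q=0$ outside bidegree $(0,0)$ (Corollary \ref{cor-extGroups}), every positively $q$-shifted summand contributes $0$ in bidegree $(2-2k,2k)$ once $k\geq 2$, so
\[
H^{2-2k,2k}(\Endg(P_n)\otimes\Endg(1_1))\cong\Ext^{2-2k,2k}(P_n,P_n)\cong\Z,
\]
generated by $[U_k\otimes\Id_1]$. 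Any automorphism of this infinite cyclic group is multiplication by $\pm1$, giving $F(U_k,\Id_{P_n})\simeq\pm F(\Id_{P_n},U_k)$ for $2\leq k\leq n$.

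The hard point is $k=1$: then $(2-2k,2k)=(0,2)$ and $H^{0,2}(\Endg(P_n)\otimes\Endg(1_1))\cong\Ext^{0,2}(P_n,P_n)\oplus\Ext^{0,0}(P_n,P_n)\cong\Z^2$ is no longer cyclic, so the bare degree count does not isolate the class of $U_1\otimes\Id_1$. For this case I would argue directly: $U_1^{(n)}$ is the dotted identity, so $F(U_1^{(n)},\Id_{P_n})$ and $F(\Id_{P_n},U_1^{(n)})$ are the two ways of dotting a fixed strand of $F(P_n,P_n)$, on the two sides of the crossing of $F$. These agree up to homotopy and sign by the elementary observation that a dot may be slid past a crossing — verified by inspecting the two-term crossing complex $q\,\pic{turnback}\to\pic{straightThrough}$, just as in the proof of Reidemeister invariance of Khovanov homology — together with Corollary \ref{cor-dotHop} to reconcile the various dot placements on $P_n$ up to sign. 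This settles all $1\leq k\leq n$.
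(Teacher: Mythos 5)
Your proof is correct and follows essentially the same route as the paper: transport both maps to $\Endg(P_n)\otimes\Endg(1_1)$ via $\Phi_L,\Phi_R$, use Corollary \ref{cor-extGroups} to show the relevant homology group is $\Z\oplus 0$ (generated by $[U_k\otimes\Id_1]$) for $k\geq 2$, and conclude since generators of $\Z$ agree up to sign. The paper likewise isolates $k=1$ and disposes of it by the elementary fact that a dot slides past a crossing up to sign, exactly as you do.
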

\begin{proof}
If $k=1$, the statement reduces to the well known fact that $\pic{crossing_NWdot} \simeq -\pic{crossing_SEdot}$.  So assume that $1<k\leq n$.  

Suppose $1< k\leq n$.  Consider the homotopy equivalences $\Phi_L,\Phi_R:\Endg(F(P_n,P_n))\rightarrow \Endg(P_n)\otimes \Endg(1_1)$ from Lemma \ref{lemma-UslideHom}.  We can compute the degree $(2-2k,2k)$ homology groups of these complexes as follows.  Recall that $\Endg(1_1)\cong \Z\oplus q^2\Z$ (generated by the identity and the dotted identity).  Then, by Corollary \ref{cor-extGroups}, the degree $(2-2k,2k)$ homology group of $\Endg(P_n)\otimes \Endg(1_1)$ is isomorphic to
\[
\Ext^{2-2k,2k}(P_n,P_n) \oplus \Ext^{2-2k,-2+2k}(P_n,P_n) \cong \Z\oplus 0
\]
generated by the class of $U_k\otimes \Id_1$.   Since an isomorphism must send generators to generators, it follows that the degree $(2-2k,2k)$ homology group of $\Endg(F(P_n,P_n))$ is isomorphic to $\Z$, generated by $[F(U_k,\Id_{P_n})]=\Phi_L\inv([U_k\otimes \Id_1])$.  This group is also generated by $[F(\Id_{P_n}, U_k)]=\Phi_R([U_k\otimes \Id_1])$.  Since any two generators of $\Z$ coincide up to sign, we must have $[F(\Id, U_k)] = \pm [F(U_k,\Id)]$.  This completes the proof.
\end{proof}

We are now ready to prove Proposition \ref{prop-slidingPoints}.
\begin{proof}[Proof of Proposition \ref{prop-slidingPoints}]
We will prove that $P_n(\mathbf{i})$ can be slid under crossings.  A similar argument will show that $P_n(\mathbf{i})$ can be slid over crossings.  We must show that $F(P_n(\mathbf{i}),1_n)\simeq F(1_n,P_n(\mathbf{i}))$ for each sequence $\mathbf{i}\in\{1,\ldots,n\}^r$.  If $\mathbf{i}$ is empty, then $P_n(\mathbf{i})=P_n$, and the result holds by Lemma \ref{lemma-Pslide} and the comments following.  So assume $\mathbf{i}=(i_1,\ldots,i_r)$ is non-empty.

By definition, $P_n(\mathbf{i}) = \Cone(U_{i_1})\otimes \cdots \otimes \Cone(U_{i_r})$, where $U_k$ is as in Definition \ref{def-polyDef}.  So it suffices to show that $F(\Cone(U_k),1_n)\simeq F(1_n,\Cone(U_k))$.

By Proposition \ref{prop-Uslide}, we have $F(U_k,\Id_{P_n})\simeq \pm F(\Id_{P_n},U_k)$.  Taking mapping cones, we have:
\[
F(\Cone(U_k),P_n) \cong F(P_n,\Cone(U_k))
\]
Since $\Cone(U_k)$ kills turnbacks, Lemma \ref{lemma-Pslide} says that the LHS above is equivalent to $F(\Cone(U_k),1_n)$, and the RHS is equivalent to $F(1_n,\Cone(U_k))$.  Therefore, $F(\Cone(U_k),1_n)$ is homotopy equivalent to $F(1_n,\Cone(U_k)$.  This completes the proof.
\end{proof}

\subsection{Connection to Cooper-Krushkal homology}
\label{subsec-CKhomology}
Let $\mathcal{K}=\{K_1,K_2,\ldots\}$ be a collection of quasi-projectors (Definition \ref{def-quasiProjectors}).  If $D$ is a colored, framed, oriented, marked link diagram, then $\llbracket D;\mathcal{K}\rrbracket$ is an object of $\Kom^-(0)$.  In order to  define homology, we must first apply the functor $\Homg(\emptyset,-)$, (called the tautological TQFT in \cite{B-N05}), which lands in the category of complexes of graded abelian groups.

\begin{definition}\label{def-linkHomology}
Suppose $L\subset \R^3$ is a colored, framed, oriented link, and let $\mathcal{K}$ be a collection of quasi-projectors.  Let $H_{\sl_2}(L;\mathcal{K})$ denote the homology of $\Homg(\emptyset, \llbracket D;\mathcal{K}\rrbracket)$, where $D$ is a diagram for $L$ with precisely one marked point on each component.  The link invariant $L\mapsto H_{\sl_2}(L;\mathcal{K})$ takes values in isomophism classes of bigraded abelian groups.
\end{definition}

\begin{observation}\label{obs-CKhomology}
In case $P=\{P_1,P_2,\ldots\}$ is the collection of Cooper-Krushkal projectors, the complex which computes $H_{\sl_2}(L;\mathcal{P})$ is dual to a complex which computes the homology theory defined in \cite{CK12a}, since our $P_n$ is dual to the projectors originally constructed by Cooper-Krushkal.  Thus, a universal coefficient theorem relates our homology with theirs.  Nonetheless, we continue to refer to our homology $H_{\sl_2}(L;\mathcal{P})$ as Cooper-Krushkal homology.
\end{observation}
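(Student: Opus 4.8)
The plan is to realize the chain complex computing $H_{\sl_2}(L;\mathcal{P})$ and the chain complex computing the homology of \cite{CK12a} as $\Z$-linear duals of one another, with all bigradings negated, and then to invoke the universal coefficient theorem degreewise. The tool is the contravariant functor $(-)^\vee\colon\Kom^{\pm}(n)\to\Kom^{\mp}(n)$ recalled in \S\ref{subsec-CKproj}, which negates both gradings and reflects every cobordism about a horizontal axis. Reflecting a planar tangle diagram about a horizontal axis is an isotopy, so $(-)^\vee$ respects Bar-Natan's canopoly structure: it carries $\sqcup$ to $\sqcup$, carries the vertical composition $\otimes$ to $\otimes$ with the two inputs interchanged, and carries the crossing complexes (\ref{eq-posCrossing}) and (\ref{eq-negCrossing}) to one another, consistently with the overall degree shift those complexes already incorporate. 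In particular $(-)^\vee$ exchanges our ``bounded above'' convention $\Kom^-$ with the ``bounded below'' convention used in \cite{CK12a}.

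First I would identify $(P_n)^\vee$ with the Cooper--Krushkal projector $\widetilde{P}_n\in\Kom^+(n)$ of \cite{CK12a}. Taking $P_n\simeq\Cone(N\to 1_n)$ with $\tau(N)<n$ as in (\ref{eq-CKequivStrongCK}) and applying $(-)^\vee$ yields, after the homological regrading noted in \S\ref{subsec-CKproj}, a complex with the defining properties of a universal projector in the sense of \cite{CK12a}; uniqueness of such projectors --- the $\Kom^+$ analogue of Corollary \ref{cor-CKprojAreUnique} --- gives $(P_n)^\vee\simeq\widetilde{P}_n$. This is the precise meaning of the phrase ``our $P_n$ is dual to the projectors originally constructed by Cooper--Krushkal''. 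Since the bracket complex is built from crossing complexes and inserted projectors purely by the canopoly operations, and since $(-)^\vee$ commutes with those operations by the previous paragraph, I would conclude that $\llbracket D\rrbracket_{\mathrm{CK}}\simeq\big(\llbracket D;\mathcal{P}\rrbracket\big)^\vee$ in $\Kom^+(0)$, where $\llbracket D\rrbracket_{\mathrm{CK}}$ denotes the Cooper--Krushkal bracket; the reversal of composition order is invisible because both complexes are read off the same fixed planar diagram $D$.

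Next I would push this through the tautological TQFT $\Homg(\emptyset,-)$. Because the Frobenius algebra $A=\Z[X]/(X^2)$ underlying Bar-Natan's category is symmetric, hence canonically self-dual, and because turning a cobordism upside down is exactly the operation of taking the $\Z$-linear dual of the map of tensor powers of $A$ that it represents, there is a natural isomorphism $\Homg(\emptyset,C^\vee)\cong\Hom_\Z\!\big(\Homg(\emptyset,C),\Z\big)$ of complexes of bigraded abelian groups, all gradings negated. Each chain group $\Homg^{i,j}(\emptyset,\llbracket D;\mathcal{P}\rrbracket)$ is a finitely generated free $\Z$-module: in each homological degree the relevant complexes involve only finitely many diagrams, with $q$-degrees confined to a bounded range, and $\Hom_{\Cob_0}(\emptyset,a)$ is a tensor power of $A$. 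Hence the universal coefficient theorem applies in each bidegree, and it relates $H_{\sl_2}(L;\mathcal{P})$ to the Cooper--Krushkal homology in the usual way: over $\Q$ the two are dual bigraded vector spaces with gradings negated, and integrally they differ only by the standard $\Ext^1_\Z$-torsion correction. (One also unwinds the definition in \cite{CK12a} to see that their homology is literally the homology of $\Homg(\emptyset,\llbracket D\rrbracket_{\mathrm{CK}})$; this is routine.)

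The step I expect to be the main obstacle is the third one: verifying cleanly that $\Homg(\emptyset,-)$ intertwines the cobordism-flip duality $(-)^\vee$ with $\Hom_\Z(-,\Z)$, keeping honest track of the several overall degree shifts --- which differ from the standard Khovanov normalization both for our crossing complexes (\ref{eq-posCrossing})--(\ref{eq-negCrossing}) and, potentially, in \cite{CK12a} --- and confirming the finite-rank freeness in each bidegree that legitimizes the universal coefficient argument for these unbounded complexes. The projector-duality input of the second step is, by contrast, essentially forced by the way Cooper--Krushkal projectors were set up in \S\ref{subsec-CKproj} and by their uniqueness, so it should present no real difficulty.
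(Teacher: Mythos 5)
Your argument is correct and follows exactly the route the paper itself sketches (the paper offers no further proof of this Observation beyond the one-line justification that $P_n$ is dual under $(-)^\vee$ to the Cooper--Krushkal universal projector, plus the appeal to the universal coefficient theorem): you identify $(P_n)^\vee$ with their projector via uniqueness, note that $(-)^\vee$ is compatible with the planar (canopoly) operations so the whole bracket complexes are dual, and then dualize through $\Homg(\emptyset,-)$ using the finite-rank freeness of each bidegree. No discrepancy with the paper's reasoning.
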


The purpose of this section is to show that the action of $\Z[u_1,\ldots,u_n]$ on $P_n$ descends to an action on Cooper-Krushkal homology, and that this homology is finitely generated over a tensor product of such rings.  That is to say, we show that Cooper-Krushkal can be lifted to an invariant taking values in isomorphism classes of finitely generated modules over a polynomial ring.

Two marked diagrams represent isotopic links if they are related by a sequence of Reidemeister moves (away from the crossings), merging and splitting of marked points, and sliding marked points past crossings.  We must show that the action of $\Z[u_1,\ldots,u_n]$ commutes with each of these equivalences.

\begin{proposition}\label{prop-UnEquivariance}
Let $U_k:t^{2-2k}q^{2k}P_n\rightarrow P_n$ be as in Definition \ref{def-polyDef}. The following equivalences are unique up to homotopy and sign, and each commutes with the action of $U_k$ up to homotopy and sign:
\[
\pic{projector-projector}\buildrel(1)\over \simeq \pic{projector} \hskip.5in \wwpic{projector_rotated} \buildrel(2)\over\simeq \pic{projector}\hskip.5in  \wpic{projectorAboveCrossing}    \buildrel(3)\over \simeq    \pic{projectorBelowCrossing} \hskip.5in \wpic{projectorAboveCrossing1}  \buildrel(4)\over \simeq    \wpic{projectorBelowCrossing1} 
\]
where $\pic{projector}=P_n$.
\end{proposition}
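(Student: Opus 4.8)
The plan is to prove each of (1)--(4) by the same general strategy: exhibit the two sides as mapping cones (or iterated cones, or absorbed products) built from canonical data, use the Hom-complex computations of \S\ref{subsec-endSimplification} to see that the relevant $\Ext$-groups are exactly $\Z$ (or are generated by $U_k$), invoke Lemma \ref{lemma-coneInvariance} and Theorem \ref{thm-canEquiv} for uniqueness, and finally feed everything through Proposition \ref{prop-Uslide} to track the $U_k$-action. Throughout we write $U_k=U_k^{(n)}$.

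For (1), the equivalence $\mu:P_n\otimes P_n\to P_n$ is the canonical one from Theorem \ref{thm-canEquiv} (it exists, is unique up to homotopy, and is an equivalence). To see $U_k$-equivariance: on the source, $P_n\otimes P_n$ carries two a priori different $U_k$-actions, namely $U_k\otimes\Id$ and $\Id\otimes U_k$; on the target, one action $U_k$. Lemma \ref{lemma-mapMerging} says precisely that the maps $\Phi_1(f)=\mu\circ(f\otimes\Id)\circ\mu^{-1}$ and $\Phi_2(f)=\mu\circ(\Id\otimes f)\circ\mu^{-1}$ are both homotopic to $\Id_{\Endg(P_n)}$; applied to $f=U_k$ this gives $\mu\circ(U_k\otimes\Id)\simeq U_k\circ\mu$ and $\mu\circ(\Id\otimes U_k)\simeq U_k\circ\mu$, which is the desired commutation (and shows the two actions on $P_n\otimes P_n$ agree up to homotopy). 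Uniqueness of the equivalence up to homotopy and sign follows from Corollary \ref{cor-extGroups}(2): any two degree-$(0,0)$ chain maps $P_n\otimes P_n\to P_n$ differ by an element of $\Ext^{0,0}(P_n,P_n)\cong\Z$, and an equivalence must correspond to $\pm1$.

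For (2), let $r:\Kom(n)\to\Kom(n)$ be rotation by $\pi$; then $r(P_n)$ is again a Cooper-Krushkal projector (its turnback-killing and $1_n$-summand properties are preserved by $r$ up to isotopy), so by Corollary \ref{cor-CKprojAreUnique} there is an equivalence $r(P_n)\simeq P_n$, unique up to homotopy and sign by Corollary \ref{cor-extGroups}(2) as above. For equivariance we need $r(U_k)\simeq\pm U_k$ under this identification: since $r$ is an automorphism of $\bn{n}$ commuting with grading shifts, $[r(U_k)]$ generates $\Ext^{2-2k,2k}(r(P_n),r(P_n))\cong\Z$, and transporting along the equivalence $r(P_n)\simeq P_n$ sends a generator to $\pm$ a generator of $\Ext^{2-2k,2k}(P_n,P_n)\cong\Z$ (Corollary \ref{cor-extGroups}(4)); hence $r(U_k)\simeq\pm U_k$, which is exactly the commutation claimed.

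For (3) and (4), the statements that $P_n$ slides past a crossing are Lemma \ref{lemma-Pslide} and the comments following it: $F(P_n,1_n)\simeq F(P_n,P_n)\simeq F(1_n,P_n)$, and these equivalences are induced by $F(\Id,\iota)$, $F(\iota,\Id)$, hence are unique up to homotopy and sign by the same $\Ext^{0,0}\cong\Z$ argument (now applied inside $\Endg(F(P_n,P_n))$, whose degree-$(0,0)$ homology is $\Z$ by Lemma \ref{lemma-UslideHom} and Corollary \ref{cor-extGroups}(2)). Equivariance is Proposition \ref{prop-Uslide}: $F(U_k,\Id_{P_n})\simeq\pm F(\Id_{P_n},U_k)$ as closed elements of $\Endg(F(P_n,P_n))$, and chasing these through the equivalences $F(P_n,1_n)\simeq F(P_n,P_n)\simeq F(1_n,P_n)$ (using that $F(\Id_K,\iota)$ intertwines $F(f,\Id)$ on source and target, exactly as in the proof of Lemma \ref{lemma-UslideHom}) transports the $U_k$-action on $F(P_n,1_n)$ to the $U_k$-action on $F(1_n,P_n)$ up to sign. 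The main obstacle is bookkeeping the signs and the direction of the intertwining maps consistently across (3)--(4); the conceptual input (that every relevant $\Ext$-group is $\Z$, so any two equivalences and any two generators agree up to sign) is already supplied by Corollary \ref{cor-extGroups}, and the $U_k$-tracking is already supplied by Proposition \ref{prop-Uslide} and Lemma \ref{lemma-mapMerging}, so no new ideas are needed.
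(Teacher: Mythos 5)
Your proposal is correct and follows essentially the same route as the paper: (1) via Lemma \ref{lemma-mapMerging} applied to $f=U_k$, (2) via the rotation functor together with the fact that $\Ext^{2-2k,2k}(P_n,P_n)\cong\Z$ forces $\phi\circ r(U_k)\circ\phi^{-1}\simeq\pm U_k$, and (3)--(4) by composing the absorption/sliding equivalences of Lemma \ref{lemma-Pslide} and tracking $U_k$ through Proposition \ref{prop-Uslide}. The paper writes out (3) as an explicit five-step chain (double the projector, slide one copy, re-absorb), but that chain is exactly the composite of the intertwining maps you describe, so the arguments coincide.
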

\begin{proof}
For the first statement one can show that the homology groups $\Homg^{0,0}$ are all isomorphic to $\Z$ using Theorem \hyperref[thm-partialTraceHom]{\ref{thm-partialTraceHom}} and the fact that $\Ext^{0,0}(P_n,P_n)\cong \Z$ (Corollary \ref{cor-extGroups}).  This implies that each homotopy equivalence is unique up to homotopy and sign.

Proof of (1).  Consider the map $\Psi:\Endg(P_n\otimes P_n)\rightarrow \Endg(P_n)$ given by conjugating by a canonical equivalence $\mu:P_n\otimes P_n\rightarrow P_n$ .  By Lemma \ref{lemma-mapMerging}, we have $\Psi(f\otimes \Id)\simeq f$ for all closed morphisms $f\in\Endg(P_n)$.   In particular $\mu\circ (f\otimes \Id) \simeq f\circ \mu$ for any closed morphism $f\in\Endg(P_n)$.  A similar argument shows that $\mu\circ(\Id\otimes f)\simeq f$ for all closed morphisms $f\in\Endg(P_n)$.  Taking $f=U_k$ gives the result for equivalence (1).

Proof of (2).  Let $r(-):\bn{n}\rightarrow \bn{n}$ denote the covariant functor which rotates diagrams by 180 degrees.  Then we have an equivalence $\phi:r(P_n)\simeq P_n$ by uniqueness of Cooper-Krushkal projectors.  Conjugating by this equivalence gives an isomorphism $\Ext(r(P_n))\simeq \Ext(P_n)$.  Note that $U_k$ and $\phi\circ r(U_k)\circ \phi\inv$ each represent generators of an $\Ext$ group which is isomorphic to $\Z$, by Corollary \ref{cor-extGroups}.  It must be that $\phi\circ r(U_k)\circ \phi\inv\simeq \pm U_k$, hence $\phi\circ r(U_k)\simeq \pm U_k\circ \phi$.  That is to say, $\phi$ commutes with the action of $U_k$ up to homotopy and sign.

Proof of (3). We have the following equivalences
\[
\wpic{projectorBelowCrossing_ast} \ \simeq \  \wpic{twoProjectorsBelowCrossing_ast} \ \simeq \  \wpic{projectorsStraddlingCrossing_ast1}\ = \  \wpic{projectorsStraddlingCrossing_ast2}\ \simeq \  \wpic{twoProjectorsAboveCrossing_ast} \ \simeq \  \wpic{projectorAboveCrossing_ast} 
\]
Each complex above is endowed with a chain map $U_k$ acting on the projector marked with a $\ast$.  The first and last equivalences commute with the maps $U_k$ up to homotopy and sign by statement (1) of the theorem.   The second and fourth maps honestly commute with $U_k$ since distant maps commute.  The map in the middle commutes with $U_k$ up to homotopy and sign by Proposition \ref{prop-Uslide}.   Thus the composition of these equivalences commutes with $U_k$ up to homotopy and sign.  Proof of (4) is similar.
\end{proof}

We now have our main result on Cooper-Krushkal homology:
\begin{theorem}\label{thm-CKhomology}
Let $L\subset \R^3$ be a framed, oriented link whose components are colored $n_1,\ldots,n_r$.  Let $R(L)$ denote the tensor product
\[
R(L):=\bigotimes_{i=1}^r\Z[u_1,\ldots,u_{n_i}]
\]
graded so that $\deg(u_k)=(2-2k,2k)$.  Then the Cooper-Krushkal homology $H_{\sl_2}(L;\mathcal{P})$ is a well-defined isomorphism class of finitely generated, bigraded $R$-modules.
\end{theorem}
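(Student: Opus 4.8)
The plan is to reduce the statement to results already in hand: the periodic presentation $P_n\simeq\Z[u_2,\dots,u_n]\otimes K_n$ of Theorem~\ref{thm-standardCx} (which makes the polynomial action transparent), the boundedness of $K_n$ from Theorem~\ref{thm-introKnBdd}, and the equivariance package of Proposition~\ref{prop-UnEquivariance}. First I would fix a diagram $D$ for $L$ with exactly one marked point per component, so that by Definition~\ref{def-linkHomology} the group $H_{\sl_2}(L;\mathcal{P})$ is the homology of $\Homg(\emptyset,\llbracket D;\mathcal{P}\rrbracket)$. On the component colored $n_i$ I let $u_1,\dots,u_{n_i}$ act by inserting at its marked point the maps $U_1^{(n_i)},\dots,U_{n_i}^{(n_i)}$ of Definition~\ref{def-polyDef}; these all have even homological degree, pairwise commute up to homotopy because $\Endg(P_{n_i})$ has graded-commutative homology, and maps attached to distinct components commute strictly. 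Hence $\Homg(\emptyset,\llbracket D;\mathcal{P}\rrbracket)$ becomes a complex of bigraded $R(L)$-modules with $\deg(u_k)=(2-2k,2k)$, and $H_{\sl_2}(L;\mathcal{P})$ a bigraded $R(L)$-module.

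Next I would check that the $R(L)$-module isomorphism class is independent of $D$. Any two diagrams for $L$ with one marked point per component are related by Reidemeister moves performed away from the marked points, sliding a marked point past a crossing, and framing or orientation changes; by Propositions~\ref{prop-reid2and3}, \ref{prop-framingChange}, \ref{prop-orientationChange}, \ref{prop-slidingPoints} and Theorem~\ref{thm-quasiLocalHomology} each induces a homotopy equivalence of bracket complexes, and by Proposition~\ref{prop-UnEquivariance} every such equivalence intertwines the actions of the $U_k^{(n)}$ up to homotopy and sign. Since each $U_k^{(n)}$ is the unique generator of $\Ext^{2-2k,2k}(P_n,P_n)\cong\Z$ up to homotopy and sign (Corollary~\ref{cor-extGroups}, Definition~\ref{def-polyDef}), the induced isomorphisms on homology are $R(L)$-linear up to the rescaling automorphisms $u_k\mapsto\pm u_k$; one either absorbs this into the notion of isomorphism class or pins it down using a compatibility with the units $\iota$, which is what the statement intends. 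Independence of the remaining choices---the $Q_k$, the canonical equivalences, and the location of a marked point within its component---follows from the uniqueness statements of Theorem~\ref{thm-QnUniqueness}, Corollary~\ref{cor-QnAreCones} and Corollary~\ref{cor-dotHop}.

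For finite generation I would put $S:=\bigotimes_{i=1}^r\Z[u_2,\dots,u_{n_i}]$, a polynomial ring over $\Z$ in finitely many variables, hence Noetherian by the Hilbert basis theorem. Because $u_1$ acts as a dotted identity and so squares to zero, $R(L)$ acts through the finite extension $S\hookrightarrow S\otimes_\Z\Lambda$ with $\Lambda=\bigotimes_i\Z[u_1^{(i)}]/((u_1^{(i)})^2)$, and finite generation over $R(L)$ is equivalent to finite generation over $S$. By Theorems~\ref{thm-standardCx} and~\ref{thm-introKnBdd} I may take $P_{n_i}\simeq\Z[u_2,\dots,u_{n_i}]\otimes K_{n_i}$ with $K_{n_i}$ a bounded complex and differential $1\otimes d_{K_{n_i}}+\sum_{f>1}f\otimes\partial_f$. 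Substituting this into the planar composition defining $\llbracket D;\mathcal{P}\rrbracket$ and pulling the bigraded-abelian-group factors $\Z[u_2,\dots,u_{n_i}]$ to the front by multilinearity of the planar operations gives an $S$-equivariant homotopy equivalence
\[
\llbracket D;\mathcal{P}\rrbracket\ \simeq\ S\otimes_\Z B,
\]
where $B$ is the planar composition of the bounded complexes $K_{n_i}$ with the bounded crossing complexes, hence bounded, and the differential is $1\otimes d_B+\sum_{f>1}f\otimes\partial_f$, which is $S$-linear. Applying the linear functor $\Homg(\emptyset,-)$ then yields $\Homg(\emptyset,\llbracket D;\mathcal{P}\rrbracket)\simeq S\otimes_\Z V$ with $V=\Homg(\emptyset,B)$ a bounded complex of finitely generated free $\Z$-modules and $S$-linear differential. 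As an $S$-module, $S\otimes_\Z V=\bigoplus_b S\otimes_\Z V_b$ is finitely generated and free (finitely many nonzero $V_b$, each of finite $\Z$-rank); since $S$ is Noetherian the module of cycles is finitely generated, so $H_{\sl_2}(L;\mathcal{P})$ is finitely generated over $S$, hence over $R(L)$, and the bigrading is preserved at every step.

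The genuine difficulties are not in this assembly, which is essentially bookkeeping, but in the inputs it rests on: everything hinges on Theorem~\ref{thm-introKnBdd} (without boundedness of $K_n$, $S\otimes_\Z V$ would have infinitely many free generators and the Noetherian argument collapses) and on the equivariance of Proposition~\ref{prop-UnEquivariance}, whose proofs carry the real weight. Within the present argument the one step that needs genuine care is verifying that the periodic presentation of Theorem~\ref{thm-standardCx} is compatible with the planar composition and with $\Homg(\emptyset,-)$ in an $R(L)$-equivariant manner---concretely, that the deformation retracts used to distribute the polynomial factors (via Theorem~\ref{thm-ringSDR}) may be, and have been, chosen to commute with the entire $\Z[u_1,\dots,u_n]$-action on each tensor factor, as in Remark~\ref{remark-equivariance}.
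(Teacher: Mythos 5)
Your proposal follows essentially the same route as the paper: both use the presentation $P_n\simeq\Z[u_2,\dots,u_n]\otimes K_n$ of Theorem \ref{thm-standardCx} together with boundedness of $K_n$, push the polynomial factors through the planar composition and $\Homg(\emptyset,-)$ to exhibit the chain complex as a finitely generated free module over a Noetherian polynomial ring, and invoke Proposition \ref{prop-UnEquivariance} for invariance under the moves relating marked diagrams.

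The one place you stop short of a complete argument is the sign ambiguity. You correctly observe that the equivalences of Proposition \ref{prop-UnEquivariance} intertwine the $U_k^{(n)}$ only up to sign, so the induced isomorphisms on homology are $R(L)$-linear only up to the rescaling $u_k\mapsto -u_k$, and you then offer two ways out: ``absorb this into the notion of isomorphism class'' or ``pin it down using a compatibility with the units.'' The first would weaken the theorem (the statement asserts a well-defined isomorphism class of $R(L)$-modules, not an orbit under sign-rescaling automorphisms of $R(L)$), and the second is not carried out. The paper closes this gap with a short but genuine argument: the chain complex $C_{\mathcal P}(D)$ is \emph{free} as an $R$-module, and for a free $\Z[x]$-module $M$ one can write down an explicit $\Z$-linear automorphism $\phi$ with $\phi(xm)=-x\phi(m)$ (negate the summands of odd $x$-degree); hence the two $R$-module structures on $C_{\mathcal P}(D)$ that differ by signs on some generators are isomorphic, and therefore so are their homologies. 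You should add this freeness argument (or an equivalent one) to make the well-definedness claim airtight; everything else in your write-up matches the paper's proof.
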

\begin{proof}
We will actually show that the Cooper-Krushkal homology is finitely generated over a tensor product $R$ of rings $\Z[u_2,\ldots,u_n]$, as $n$ ranges over the set of colors with multiplicity.

Let $\mathcal{P}=\{P_1,P_2,\ldots\}$ and $\mathcal{K}=\{K_1,K_2,\ldots\}$ denote the collection of projectors, respectively bounded complexes, from Theorem \ref{thm-standardCx}.  By definition, $\llbracket D;\mathcal{P}\rrbracket$ is the planar composition of a bounded complex (Khovanov complex of a tangle), together with the projectors $P_{i_1},\ldots,P_{i_r}$.  Multilinearity of planar composition, together with the construction of $P_n$ in terms of $K_n$ implies that:
\[
\llbracket D;\mathcal{P}\rrbracket \cong R\otimes\llbracket D ;\mathcal{K}\rrbracket \hskip.4in\text{with differential}\hskip.4in 1\otimes d+\sum_f f\otimes \partial_f
\]
where the sum is over non-constant monomials $f\in R$ and the $\partial_f\in \Endg(\llbracket D;\mathcal{P}\rrbracket)$ are some elements, all but finitely many of which are equal to zero.  Applying the functor $\Homg(\emptyset,-)$ gives
\[
C_{\mathcal{P}}(D) = R\otimes_\Z C_{\mathcal{K}}(D)
\]
with some $R$-equivariant differential.  We have abbreviated $C_{\mathcal{P}}(D):=\Homg(\emptyset,\llbracket D;\mathcal{P}\rrbracket)$ and similarly for $C_{\mathcal{K}}(D)$.  Since each $K_n$ is bounded, it follows that $C_\mathcal{K}(D)$ is a finitely generated $R$-module.  Since polynomial rings are Noetherian, this implies that the homology of $C_{\mathcal{P}}(D)$ is finitely generated over $R$ also.    Thus, $H_{\sl_2}(D;\mathcal{P})$ is finitely generated over $R$.

It remains to show that different diagrams $D$ produce isomorphic bigraded $R$-modules $H_{\sl_2}(D;\mathcal{P})$.  To see this, first note that Reidemeister equivalences occur away from the crossings, and induce isomorphisms in homology which commute with the $R$-action.

The only other equivalence which we must account for is the sliding of marked points over or under crossings.  To this end, suppose $D$ differs from $D'$ by an equivalence from Proposition \ref{prop-slidingPoints}.  Then the homotopy equivalence $C_\mathcal{P}(D)\simeq C_{\mathcal{P}}(D')$ commutes with the $R$ actions up to homotopy and sign, by Proposition \ref{prop-UnEquivariance}.  Thus, $H_{\sl_2}(D;\mathcal{P})$ is well-defined up to isomorphism of $R$-modules, and also replacing some of the polynomial generators by their negatives.

At this point, we appeal to the fact that $H_{\sl_2}(D;\mathcal{P})$ is the homology of a complex which is free as an $R$-module.  For a free $\Z[x]$-module $M$, one can construct an explicit $\Z$-module isomorphism $\phi:M\rightarrow M$ such that $\phi(xm)=-x\phi(m)$.  In the same way, any two $R$-module structures on $C_\mathcal{P}(D)$ which differ by a sign on some generators are isomorphic.  Thus, $H_{\sl_2}(D;\mathcal{P})$ does not depend on the choice of $D$ up to isomorphism of bigraded $R$-modules.  This completes the proof.
\end{proof}

\begin{remark}
Obviously if some $U_k\in \Endg(P_n)$ is nilpotent up to homotopy, then the generator $u_k$ can be omitted from the ring $\Z[u_1,\ldots,u_n]$, while retaining finite generation.  For example, $U_1^2=0$, so this variable is not needed for finite generation.  Also, Example \ref{example-Q3} can be used to show that $U_2$ acting on $P_3$ is nilpotent up to homotopy, so this variable can be omitted.  See Conjecture \ref{conj-nilpBdd} for what we expect to be true in general.
\end{remark}

\begin{theorem}\label{thm-spectralSequence}
Fix a family $\mathcal{K}=\{K_1,K_2,\ldots,\}$ of quasi-projectors, and let $L\subset S^3$ be a colored, framed, oriented link.  There is a polynomial ring $R(L)$ and a spectral sequence of bigraded $R(L)$-modules $R(L)\otimes_\Z H_{\sl_2}(L;\mathcal{K})\Rightarrow H_{\sl_2}(L;\mathcal{P})$.
\end{theorem}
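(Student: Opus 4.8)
Here is how I would prove Theorem~\ref{thm-spectralSequence}.

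The plan is to reduce the statement to an algebraic fact about a filtered complex, in the spirit of the proof of Theorem~\ref{thm-CKhomology}. Write $K_n=P_n(\mathbf i_n)$ for the members of $\mathcal K$; by Theorem~\ref{thm-QnProps} we may assume each $\mathbf i_n$ is a strictly increasing sequence, and for notational simplicity I will assume $1\notin\mathbf i_n$ (since $U_1^2=0$, the general case is absorbed by the same device as in the remark following Theorem~\ref{thm-CKhomology}). Put $R(L):=\bigotimes_{i=1}^r\Z[u_k\mid k\in\mathbf i_{n_i}]$ with $\deg(u_k)=(2-2k,2k)$. The structural input I need is the identity
\[
P_n\ \simeq\ \Z[u_k\mid k\in\mathbf i_n]\otimes_\Z P_n(\mathbf i_n)\quad\text{with differential}\quad 1\otimes d+\sum_{f>1}f\otimes\partial_f,
\]
the sum over nonconstant monomials. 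This is proved exactly as Theorem~\ref{thm-standardCx}: Corollary~\ref{cor-QnAreCones} gives $\Cone(U_k^{(n)})\simeq(Q_k\sqcup 1_{n-k})\otimes P_n$, so by Proposition~\ref{prop-projCommutingProp} and idempotency $P_n(\mathbf i_n)\simeq\bigl(\bigotimes_{k\in\mathbf i_n}(Q_k\sqcup 1_{n-k})\bigr)\otimes P_n$, and absorbing the factors $P_k'\sqcup 1_{n-k}=\Z[u_k]\otimes(Q_k\sqcup1_{n-k})$ into $P_n$ as in Construction~\ref{const-standardCx} identifies $\Z[u_k\mid k\in\mathbf i_n]\otimes P_n(\mathbf i_n)$, with perturbed differential, with $P_n$; the equivariant deformation‑retract bookkeeping uses Theorem~\ref{thm-ringSDR} verbatim. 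Since planar composition on Bar–Natan's categories is multilinear, substituting this into each coloured component of $D$ yields $\llbracket D;\mathcal P\rrbracket\simeq R(L)\otimes_\Z\llbracket D;\mathcal K\rrbracket$ with differential $1\otimes d+\sum_{f>1}f\otimes\partial_f$, all but finitely many $\partial_f$ zero; applying $\Homg(\emptyset,-)$ gives an isomorphism $C_{\mathcal P}(D)\cong R(L)\otimes_\Z C_{\mathcal K}(D)$ of complexes of bigraded abelian groups whose differential is $R(L)$‑equivariant and has the form $d_0+(\text{terms raising }R(L)\text{-degree})$ with $d_0=1\otimes d_{C_{\mathcal K}(D)}$.

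Next I would filter $C_{\mathcal P}(D)$ by powers of the augmentation ideal $\mathfrak m=(u_k)\subset R(L)$: set $F^pC_{\mathcal P}(D)=\mathfrak m^p\otimes_\Z C_{\mathcal K}(D)$. Every $\partial_f$-term strictly raises $\mathfrak m$-adic degree while $d_0$ preserves it, so each $F^p$ is a subcomplex, with $\gr^p=(\mathfrak m^p/\mathfrak m^{p+1})\otimes_\Z C_{\mathcal K}(D)$ carrying differential $1\otimes d_{C_{\mathcal K}(D)}$. Hence the associated spectral sequence has $E_1\cong\gr_{\mathfrak m}R(L)\otimes_\Z H_*(C_{\mathcal K}(D))\cong R(L)\otimes_\Z H_{\sl_2}(L;\mathcal K)$, the last step because $R(L)$ is polynomial so $\gr_{\mathfrak m}R(L)\cong R(L)$; all pages are spectral sequences of $R(L)$-modules because the filtration is by $R(L)$-submodules and $d$ is $R(L)$-linear. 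For convergence the filtration is clearly exhaustive ($F^0=C_{\mathcal P}(D)$), and it is \emph{finite in each bidegree}: $C_{\mathcal K}(D)$ is bounded above in homological degree, say by $h_0$ (it lives over $\Kom^-(0)$), while each generator $u_k$ with $k\ge2$ has $\deg_h(u_k)=2-2k\le-2$, so a monomial $f$ with $f\cdot C_{\mathcal K}(D)$ meeting homological degree $i$ must satisfy $\deg_h(f)\ge i-h_0$, which with $\deg_h(f)\le-2\lvert f\rvert$ forces $\lvert f\rvert\le(h_0-i)/2$; thus $F^p$ vanishes in any fixed bidegree for $p\gg0$ and the spectral sequence converges to $\gr_{\mathfrak m}H_{\sl_2}(L;\mathcal P)$.

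Finally, independence of the diagram $D$ (and of the choices entering $P_n$) follows exactly as in the proof of Theorem~\ref{thm-CKhomology}: the homotopy equivalences relating $\llbracket D;\mathcal P\rrbracket$, $\llbracket D';\mathcal P\rrbracket$ and $\llbracket D;\mathcal K\rrbracket$ are $R(L)$-equivariant up to homotopy and sign by Proposition~\ref{prop-UnEquivariance}, hence automatically filtered, and so induce isomorphisms of spectral sequences from the $E_1$-page on; the residual sign ambiguity is removed by the $R(L)$-module automorphism negating the offending generators, as there. The main obstacle is the first step — extracting the decomposition $\llbracket D;\mathcal P\rrbracket\simeq R(L)\otimes\llbracket D;\mathcal K\rrbracket$ with an $\mathfrak m$-adically filtered, $R(L)$-equivariant differential, together with the equivariant retract bookkeeping it rests on; once that is in hand the spectral sequence is formal, the only genuine subtleties being the bigrading estimate needed for convergence and the (routine but fiddly) handling of the degenerate generator $u_1$.
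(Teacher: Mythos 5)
Your proposal is correct and follows essentially the same route as the paper: decompose $C_{\mathcal P}(D)\cong R(L)\otimes_\Z C_{\mathcal K}(D)$ with an $R(L)$-equivariant differential of the form $1\otimes d+\sum_{f>1}f\otimes\partial_f$, filter by polynomial degree (your $\mathfrak m$-adic filtration is the same filtration), and take the spectral sequence of the filtered complex. You supply two details the paper elides — the extension of the Theorem~\ref{thm-standardCx} decomposition to arbitrary quasi-projectors and the bidegree estimate guaranteeing convergence — but the argument is the same.
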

\begin{proof}
Let $D$ be a diagram representing $L$, and assume $D$ is marked with exactly one marked point on each component, away from the crossings.  For each $n$, write $K_n=P_n(i_1,\ldots,i_r)$ with $1\leq i_1<\ldots<i_r\leq n$, and define rings $R_n=\Z[u_1,\ldots,u_r]$.  Put $R=R_{n_1}\otimes \cdots\otimes R_{n_k}$, where $n_1,\ldots, n_k$ are the colors appearing on the components of $L$.

From the proof of Theorem \ref{thm-CKhomology}, we see that there exist chain complexes $C_\mathcal{K}(D)$ and $C_\mathcal{P}(D)$ of abelian groups such that
\begin{enumerate}
\item The homology of $C_\mathcal{P}(D)$ and $C_\mathcal{K}(D)$ are $H_{\sl_2}(L;\mathcal{P})$ and $H_{\sl_2}(L;\mathcal{K})$, respectively.
\item $C_\mathcal{P}(D)= R\otimes C_{\mathcal{K}}(D)$ with an $R$-equivariant differential of the form $1\otimes d +\sum_f f\otimes \partial_f$, where the sum is over nonconstant monomials $f\in R$.
\end{enumerate} 
Regard $R$ as being filtered by degree of polynomials.  Then equip $R\otimes C_\mathcal{K}(D)$ with the tensor product filtration.  The above says that $C_\mathcal{P}(D)$ is equal to $R\otimes C_\mathcal{K}(D)$ with an $R$-equivariant, filtered differential whose filtration preserving part is precisely $1\otimes d$.  Standard arguments produce a spectral sequence with $E_2$ page $R\otimes H_{\sl_2}(L;\mathcal{K})$ and $E_\infty$ page the associated graded of $H_{\sl_2}(L;\mathcal{P})$.
\end{proof}

\section{Appendix: Convolutions and deformaton retracts}
\label{sec-homAlg}

A category $\mathscr{A}$ is called \emph{$\Z$-linear} if (1) the morphism spaces are abelian groups and (2) composition is bilinear.  A $\Z$-linear category $\mathscr{A}$ is called \emph{additive} if, in addition, (3) $\mathscr{A}$ is closed under finite direct sums (equivalently direct products).  For a $\Z$-linear category, let $\Kom(\mathscr{A})$ denote the category of possibly unbounded chain complexes over $\mathscr{A}$ with differentials of degree $+1$, with morphisms given by degree zero chain maps.

Let $t:\Kom(\mathscr{A})\rightarrow \Kom(\mathscr{A})$ denote the upward shift functor satisfying, $(tC)_k = C_{k-1}$ with differential $-d_C$.

\subsection{Convolutions}
\label{subsec-convolutions}
Suppose $A,B$ are chain complexes over an additive category $\mathscr{A}$, and $f:B\rightarrow A$ is a chain map.  The mapping cone on $f$ is the chain complex $\Cone(f)_i = B_{i+1}\oplus A_i$ with differential
\[
d_{\Cone(f)} = \matrix{-d_B & 0 \\ f & d_A}
\]
Note that, as graded objects $\Cone(f) = t\inv B\oplus A$.  Now, suppose that $g:C\rightarrow B$ is a chain map.  This $g$ extends to a chain map $\~g:t\inv C\rightarrow \Cone(f)$ if and only if $f\circ g\simeq 0$, in which case
\[
\Cone(\~g) = t^{-2}C\oplus t\inv B\oplus A \text{ with differential }
\left[\begin{tabular}{c: c c}
$d_C$ & 0 & 0 \\ \hdashline
$g$ & $-d_B$ & 0 \\
$-h$ & $f$ & $d_A$
\end{tabular}
\right]
\]
where $-h$ is the corresponding component of $\~g$.  Iterated mapping cones of this sort are called \emph{convolutions}, and can be desribed as follows:

\begin{definition}\label{def-convolutions2}
Let $E_i$ be chain complexes over an additive category and $\a_i:E_i\rightarrow E_{i+1}$ chain maps such that $\a_{i+1}\circ \a_i\simeq 0$ for all $i\in \Z$.  Any such sequence will be called a \emph{homotopy chain complex}, and will be denoted as
\begin{equation}\label{eq-Ebullet}
E_\bullet = \cdots \buildrel\a_{i-1}\over \longrightarrow E_i \buildrel\a_{i}\over \longrightarrow E_{i+1} \buildrel \a_{i+1}\over \longrightarrow\cdots
\end{equation}
A convolution of a homotopy chain complex $E_\bullet$ is any chain complex which, as a graded object equals $\bigoplus_{i\in \Z} t^i E_i$ and whose differential $d$ satisfies the following conditions: if $d_{ij}\in\Homg^{1-i+j}(E_j,E_i)$ is the corresponding component of $d$, then
\begin{itemize}
\item $d_{ii}=(-1)^{i}d_{E_i}$.
\item $d_{i+1,i}=\a_i$.
\item $d_{ij}=0$ for $i<j$.
\end{itemize}
We will denote a convolution using parenthesized notation in which we write all of the degree shifts explicitly: 
\[
M = (\cdots \buildrel\a_{i-1}\over \longrightarrow t^i E_i \buildrel\a_{i}\over \longrightarrow t^{i+1}E_{i+1} \buildrel \a_{i+1}\over \longrightarrow\cdots )
\]
We have a dual notion, in which $\bigoplus$ is replaced by $\prod$ in the above definition.  We will refer to convolutions using $\bigoplus$ (respectively $\prod$) as being of type I (respectively type II).
\end{definition}

\begin{example}
A bicomplex is a special case of homotopy complex, in which the ``horizontal'' chain maps $\a_i:E_i\rightarrow E_{i+1}$ satisfy $\a_{i+1}\circ \a_i = 0$.  The total complex of $E_\bullet$ is an example of a convolution, and exists precisely when the direct sum $\bigoplus_{i\in \Z} t^iE_i$ exists in $\Kom(\mathscr{A})$.
\end{example}
Even though most homotopy complexes are not bicomplexes, we will continue to use the suggestive notation $\Tot(E_\bullet)$ to denote a convolution of $E_\bullet$.  We warn the reader that there may exist many convolutions of a given $E_\bullet$, or none at all.  If the infinite direct sum $M:=\bigoplus_{i\in\Z}t^iE_i$ exists in $\Kom(\mathscr{A})$, then a convolution $\Tot(E_\bullet)$ exists if and only if all  of the Massey products \cite{May69} $\langle [\a_{i+r}], [\a_{i+r-1}],\ldots, [\a_i]\rangle $ vanish, where $\a_i$ is regarded as a degree 1 cycle in the differential graded algebra $\Endg(M)$.
 
The notion of convolution is standard in the theory of triangulated categories, where the term is used more generally to mean the following: if $E_\bullet\in\Kom(\mathscr{T})$ is a chain complex over a triangulated category, then a convolution $\Tot(E_\bullet)\in \mathscr{T}$ is an iterated mapping cone, which represents a ``flattening'' of $E_\bullet$; see \cite{Kap88}.

We give a special name to the ``convolution degree'' of a map $\Tot(E_\bullet)\rightarrow \Tot(F_\bullet)$:
\begin{definition}\label{def-morphismLength}
Suppose $M = \Tot(E_\bullet)$ and $N =\Tot(F_\bullet)$ are convolutions.  Say that an element $f\in \Homg_\mathscr{A}(M,N)$ of $M$ has \emph{length $k$} if the component
\[
f_{ij}\in \Homg_\mathscr{A}(E_j,F_i)
\]
vanishes unless $i-j = k$.
\end{definition}
We can write any element $f\in\Homg_\mathscr{A}(M,N)$ in terms of its length $k$ components, $f=\sum_{k\in\Z} f_k$, where $f_k := (f_{i+k,i})_i\in \prod_i\Homg(E_i,F_{i+k})$ is regarded as an element of $\Homg_\mathscr{A}(M,N)$ of length $k$.  Let us say that $f$ is a map of convolutions if $f_k=0$ for $k<0$.  Suppose $F_\bullet$ is bounded from above, i.e.~$F_i=0$ for $i\gg 0$, and $f_k\in \Homg(M,N)$ are any elements of length $k$, each of some fixed homological degree $r$.  Then any infinite sum $f_0+f_1+\cdots $ is finite on restriction to each $E_j$, hence is a well-defined element of $\Homg_\mathscr{A}(M,N)$ by the universal property of direct sums.

Moreover, length is additive under composition of morphisms, so that if $f= f_0+f_1+\cdots $, $g=g_0+g_1+\cdots$, and $f\circ g = (f\circ g)_0+(f\circ g)_1+\cdots$ are written in terms of length $k$ components, then $(f\circ g)_k = \sum_{i+j=k}f_i\circ g_j$.  We have proven:
\begin{lemma}\label{lemma-mapsFromSeries}
Let $M$ and $N$ be convolutions which are bounded above, fix $r\in\Z$, and suppose we have elements $f_k\in \Homg^r(M,N)$ of length $k$, for each $k\in \Z_{\geq 0}$.  Then the series $f = f_0+f_1+\cdots$ is a well defined element of $\Homg^r(M,N)$.  In particular, if $\a\in \Endg^0(M)$ has length $k>0$, then $\Id_M-\a$ and $\Id + \a +\a^2 +\cdots$ are mutual inverses.\qed
\end{lemma}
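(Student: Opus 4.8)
The plan is to deduce the lemma from the direct-sum (type I) description of a convolution in Definition~\ref{def-convolutions2}, together with the boundedness hypotheses, essentially formalizing the discussion preceding the statement. Write $M=\Tot(E_\bullet)$ and $N=\Tot(F_\bullet)$, so that as graded objects $M=\bigoplus_{j\in\Z}t^jE_j$ and $N=\bigoplus_{i\in\Z}t^iF_i$, and $F_i=0$ for $i\gg 0$ since $N$ is bounded above. A morphism $f_k$ of length $k$ (Definition~\ref{def-morphismLength}) carries the summand $t^jE_j$ of $M$ into $t^{j+k}F_{j+k}$, which vanishes once $j+k$ exceeds the bound on $N$; hence for each fixed $j$ only finitely many of the restrictions $(f_k)|_{t^jE_j}$ are nonzero, so $\sum_{k\geq 0}(f_k)|_{t^jE_j}$ is a finite sum and defines a morphism $t^jE_j\to N$. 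By the universal property of the direct sum $M=\bigoplus_j t^jE_j$ these assemble into one morphism $f=\sum_{k\geq 0}f_k\colon M\to N$, lying in $\Homg^r(M,N)$ because every $f_k$ does. First I would simply write out these two lines; there is nothing subtle here.

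For the ``in particular'' clause, let $\alpha\in\Endg^0(M)$ have length $k>0$. Length is additive under composition, as recorded before the lemma — the length-$\ell$ component of a composite $g'\circ g$ is $\sum_{a+b=\ell}g'_a\circ g_b$ — so an immediate induction gives that $\alpha^m$ has homological degree $0$ and length $mk$, its unique component being the full composite. Since $k>0$, $mk\to\infty$, so by the first part (with $N=M$, which is bounded above) $g:=\Id_M+\alpha+\alpha^2+\cdots$ is a well-defined element of $\Endg^0(M)$. To see $(\Id_M-\alpha)\circ g=\Id_M=g\circ(\Id_M-\alpha)$ I would use the length decomposition again: $\Id_M-\alpha$ has length-$0$ component $\Id_M$, length-$k$ component $-\alpha$, and no others, while $g$ has length-$mk$ component $\alpha^m$ for $m\geq 0$ and no others; hence the length-$0$ component of $(\Id_M-\alpha)\circ g$ is $\Id_M$, for $m\geq 1$ its length-$mk$ component is $\Id_M\circ\alpha^m+(-\alpha)\circ\alpha^{m-1}=\alpha^m-\alpha^m=0$, and all other components vanish, so $(\Id_M-\alpha)\circ g=\Id_M$; the other composite is computed identically.

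The only real point that needs care — the main obstacle, mild as it is — is the legitimacy of interchanging these infinite sums with composition, i.e.\ that $(g'\circ g)_\ell=\sum_{a+b=\ell}g'_a\circ g_b$ remains valid for the series in play rather than only for finite sums. This holds because, after restriction to any summand $t^jE_j$ of $M$, boundedness of the targets makes every sum that occurs finite, so the manipulations reduce to identities among finitely many morphisms; this is exactly the bookkeeping carried out in the paragraph before the lemma, of which the present statement is a summary.
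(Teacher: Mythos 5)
Your proposal is correct and is essentially the paper's own argument: the lemma is stated as a summary of the preceding discussion, which likewise restricts each $f_k$ to a summand $t^jE_j$, uses boundedness of $N$ from above to see only finitely many restrictions are nonzero, assembles $f$ via the universal property of the direct sum, and then invokes additivity of length under composition to verify the geometric-series inverse. Nothing in your write-up deviates from that route.
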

If $E_\bullet$ is a homotopy chain complex as in equation (\ref{eq-Ebullet}), then the differential on a convolution $M = \Tot(E_\bullet)$ can be written in terms of its length $k$ components as
\[
d_M = \Delta_0 + \Delta_1+\cdots
\]
where $\Delta_k\in \Endg^1(M)$ has length $k$.  In particular $\Delta_0|_{E_i} = (-1)^id_{E_i}$ and $\Delta_1|_{E_i} = \a_i$.  


Below, we will absorb the explict grading shifts into the complexes $E_i$, so that our convolutions will be $\bigoplus_i E_i$ with lower triangular differential, rather than $\bigoplus_i t^iE_i$.

\begin{theorem}[Perturbing the differentials]\label{thm-diffPerturbing}
Suppose we are given $E_i\in \Kom(\mathscr{A})$, $E_i=0$ for $i\gg 0$, and cycles $\a_i\in \Homg^1(E_i, E_{i+1})$ such that $\a_{i+1}\circ \a_{i}\simeq 0$ for all $i$.  Suppose $M = (\cdots \buildrel\a_{i-1}\over \longrightarrow E_i \buildrel\a_{i}\over \longrightarrow E_{i+1} \buildrel \a_{i+1}\over \longrightarrow\cdots )$ is a convolution of the corresponding homotopy chain complex.  Fix an integer $k\geq 1$ and assume there are elements $\phi_i\in\Homg^1(E_i,E_{i+k})$ such that
\[
d_{i+k,i}-\phi_i\simeq 0
\]
for all $i\in \Z$, where $d_{i+k,i}\in\Homg^{1}(E_i,E_{i+k})$ is the component of $d_M$.  Then up to isomorphism of convolutions, each $d_{i+k,i}$ can be replaced by $\phi_i$ at the expense of affecting only the length $>k$ components of $d_M$.
\end{theorem}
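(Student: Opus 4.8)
The plan is to build an explicit isomorphism of convolutions $\Psi\colon M \to M'$, where $M'$ has the same underlying graded object $\bigoplus_i E_i$ and the same differential as $M$ in lengths $<k$ and in length $k$, except that the length-$k$ component $d'_{i+k,i}$ equals $\phi_i$; the lengths $>k$ of $d'$ are whatever they must be to make $d'$ square to zero. First I would set $\psi_i := d_{i+k,i} - \phi_i$, which by hypothesis is a boundary in $\Homg^1(E_i,E_{i+k})$, say $\psi_i = d_{E_{i+k}}\circ g_i - (-1)^{?}g_i\circ d_{E_i}$ — more precisely $\psi_i = [\Delta_0, g_i]$ where $\Delta_0$ is the length-$0$ part of the differential (the signed internal differentials) and $g_i \in \Homg^0(E_i,E_{i+k})$. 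Assemble the $g_i$ into a single length-$k$ endomorphism $g = \sum_i g_i \in \Endg^0(M)$ of degree $0$. Since $E_i = 0$ for $i \gg 0$ and $g$ has strictly positive length $k\geq 1$, Lemma~\ref{lemma-mapsFromSeries} guarantees that $\Psi := \Id_M - g$ (equivalently $e^{-g}$, or $\Id - g$; either works since $g$ raises length) is an invertible element of $\Endg^0(M)$, with inverse $\Id + g + g^2 + \cdots$.

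Next I would \emph{define} $M'$ to be the same graded object with differential $d' := \Psi \circ d_M \circ \Psi^{-1}$. By construction $\Psi$ is then an isomorphism of chain complexes $M \to M'$, and $(d')^2 = 0$ automatically; moreover $d'$ is still lower-triangular (length $\geq 0$) because $d_M$ is and conjugation by a length-$\geq 0$ invertible map preserves this. The content is the computation of $d'$ length-by-length. Writing $d' = \Psi d_M \Psi^{-1} = (\Id - g)\,d_M\,(\Id + g + g^2 + \cdots)$ and collecting by length: in length $<k$ the only contribution is from $d_M$ itself (since $g$ has length exactly $k$, any term involving $g$ has length $\geq k$), so $d'$ agrees with $d_M$ there; in particular the length-$0$ part is unchanged, so the $E_i$ and their internal differentials are untouched, and the length-$1$ part is unchanged, so the $\a_i$ are untouched. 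In length $k$, the new contribution beyond $d_M$'s own $d_{i+k,i}$ is exactly $-g\Delta_0 + \Delta_0 g = -[\Delta_0,g]$, i.e. $-\psi_i$, turning $d_{i+k,i}$ into $d_{i+k,i} - \psi_i = \phi_i$ as desired. (One must check the sign: with $\Delta_0$ acting as $(-1)^i d_{E_i}$ on $E_i$, the length-$k$ part of $[g, d_M]$ is $[g, \Delta_0]$, and choosing $g$ so that $[\Delta_0, g_i] = \psi_i$ up to the correct sign makes the bookkeeping go through; this is a routine sign-chase that I would not grind through here.) All further changes land in length $>k$, since every remaining term in the expansion of $\Psi d_M \Psi^{-1}$ involves at least one factor $g$ contributing in addition to a length-$\geq 0$ piece — actually at least two factors of $g$ or one $g$ composed with a length-$\geq 1$ part of $d_M$ — hence has total length $>k$.

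The main obstacle is the sign bookkeeping in the expansion of the conjugated differential together with verifying that $\psi_i = d_{i+k,i} - \phi_i$ being a boundary in the $\Homg$ complex really does produce a $g$ with $[\Delta_0,g] = \psi_i$ \emph{and that the cross-terms with the higher differentials $\Delta_1,\Delta_2,\dots$ of $d_M$ only pollute lengths $>k$} — this last point is what makes the ``at the expense of affecting only the length $>k$ components'' clause true, and it relies precisely on $g$ having length exactly $k\geq 1$ so that $g\circ \Delta_j$ and $\Delta_j\circ g$ have length $k+j \geq k+1 > k$ for $j\geq 1$. A secondary technical point is that the infinite series defining $\Psi^{-1}$ and $d'$ converge as morphisms, which is exactly Lemma~\ref{lemma-mapsFromSeries} applied using the hypothesis $E_i = 0$ for $i \gg 0$; I would invoke that lemma rather than reprove convergence.
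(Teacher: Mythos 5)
Your proposal is correct and follows essentially the same route as the paper: conjugate $d_M$ by $\Id_M - g$ where $g$ is a length-$k$, degree-$0$ element assembled from nullhomotopies of $d_{i+k,i}-\phi_i$, invert the series via Lemma \ref{lemma-mapsFromSeries} using $E_i=0$ for $i\gg 0$, and observe that the conjugation changes length $k$ exactly by $[\Delta_0,g]$ while leaving lengths $<k$ untouched. The one sign discrepancy you flag ($-g\Delta_0+\Delta_0 g$ versus $-[\Delta_0,g]$) is indeed routine and is resolved by replacing $g$ with $-g$.
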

\begin{proof}
Fix an integer $k\geq 1$; we will perturb the length $k$ component of $d_M$.  Write the differential on $M$ as $\Delta = \Delta_0 + \Delta_1 +\Delta_2+ \cdots$ in terms of its length $l$ components, so that in particular $\Delta_0|_{E_i} = d_{E_i}$ and $\Delta_1|_{E_i} = \a_i$.   Now, fix an element $H \in \Endg^0(M)$ of length $k$.   By Lemma \ref{lemma-mapsFromSeries}, the infinite sum $\Id_M+H+ H^2+\cdots $ is a well defined element of $\Endg^0(C)$, and is a two sided inverse for $(\Id_M-H)$.  Conjugating the differential $\Delta$ by $(\Id_M-H)$ gives
\begin{equation}\label{eq-diffConjing}
 \Delta':=(\Id_M-H) \circ (\Delta_0 + \Delta_1 + \Delta_2 +\cdots )\circ(\Id_M +H+H^2 +\cdots) \ =\  d_0 + \Delta_1' + \Delta_2'+ \cdots
\end{equation}
Recall that length is additive under function composition and $H$ has length $k$, so $\Delta_l' = \Delta_l$ for $0\leq l<k$ and $\Delta_k' = \Delta_k - \Delta_0 H + H \Delta_0$.   This is to say, a perturbation of the length $k$ part of $\Delta$ up to homotopy is realized by the isomorphism $(\Id_M - H):(M, \Delta)\buildrel \cong \over\rightarrow (M,\Delta')$ of convolutions, where the length $l$ components of $\Delta$ and $\Delta'$ agree for $0\leq l<k$. 
\end{proof}

\subsection{Deformation retracts}
\label{subsec-sdr}
Here we recall the standard notion of (strong) deformation retracts, which are a particular nice class of chain homotopy equivalences which interact nicely with convolutions.
\begin{figure}[ht]
\[
\begin{tikzpicture}
\tikzstyle{every node}=[font=\small]
\node (a) at (0,0) {$M$};
\node (b) at (3,0) {$N$};
\path[->,>=stealth',shorten >=1pt,auto,node distance=1.8cm,
  thick]
(a) edge [loop left, looseness = 5,in=145,out=215] node[left] {$h$}	(a)
([yshift=3pt] a.east) edge node[above] {$\pi$}		([yshift=3pt] b.west)
([yshift=-2pt] b.west) edge node[below] {$\sigma$}		([yshift=-2pt] a.east);
\end{tikzpicture}
\]
\caption{The data $(\pi,\sigma,h)$ of a deformation retract $M\rightarrow N$}
\end{figure}

\begin{definition}\label{def-sdr}
Let $\mathscr{A}$ be a $\Z$-linear category, and $M,N$ chain complexes over $\mathscr{A}$.  A chain map $\pi:M\rightarrow N$ is called a \emph{deformation retract} if there exist a chain map $\sigma:N\rightarrow M$ and a homotopy $h\in \Endg^{-1}_\mathscr{A}(M)$ such that
\begin{itemize}
\item $h\circ \sigma = \pi\circ h = 0$.
\item $\pi\circ \sigma = \Id_N$.
\item $\Id_M - \sigma\circ \pi = d_M\circ h+h\circ d_M$.
\end{itemize}
In this case we say $(\pi,\sigma,h)$ give the data of the deformation retract.
\end{definition}

\begin{lemma}\label{lemma-sdr}
Suppose $A,B\in\Kom(\mathscr{A})$ and $(\pi,\sigma,h)$ give the data of a strong deformation retract $A\rightarrow B$.  Then:
\begin{enumerate}
\item $\Id_A = \sigma \circ  \pi + d_A\circ h+h\circ d_A$ is a decomposition of $\Id_A$ into mutually orthogonal idempotents.
\item $h$ may be assumed to satisfy $h^2=0$.
\end{enumerate}
\end{lemma}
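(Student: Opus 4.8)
The plan is to prove (1) by a direct computation from the three defining relations of a deformation retract, and (2) by writing down an explicit square-zero replacement for $h$, keeping $\pi$ and $\sigma$ fixed.

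For (1), I would set $p := \sigma\circ\pi$ and $g := d_A\circ h + h\circ d_A$, so that the third relation in Definition \ref{def-sdr} reads $\Id_A = p + g$. First check that $p$ is idempotent: $p\circ p = \sigma\circ(\pi\circ\sigma)\circ\pi = \sigma\circ\pi = p$, using $\pi\circ\sigma = \Id_N$. Next, since $\pi$ and $\sigma$ are chain maps, $\pi\circ d_A = d_N\circ\pi$ and $d_A\circ\sigma = \sigma\circ d_N$; combining the first with $\pi\circ h = 0$ gives $p\circ g = 0$, and combining the second with $h\circ\sigma = 0$ gives $g\circ p = 0$. Finally $g = \Id_A - p$ together with $p\circ p = p$ forces $g\circ g = g$. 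Hence $p$ and $g$ are mutually orthogonal idempotents summing to $\Id_A$, which is (1).

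For (2), I claim $h' := h\circ d_A\circ h$ does the job. I would first record two consequences of the side conditions already present in the definition: $\pi\circ h = 0 = h\circ\sigma$ is equivalent to $g\circ h = h = h\circ g$ (expand $g\circ h = h - \sigma\circ\pi\circ h$, and symmetrically), and $d_A\circ h\circ d_A = d_A\circ g = g\circ d_A$ since $d_A^2 = 0$. Using these together with $d_A\circ h = g - h\circ d_A$ and $g\circ g = g$, one computes $d_A\circ h' + h'\circ d_A = (d_A\circ h\circ d_A)\circ h + h\circ(d_A\circ h\circ d_A) = g\circ(g - h\circ d_A) + h\circ d_A = (g - h\circ d_A) + h\circ d_A = g = \Id_A - \sigma\circ\pi$, so the homotopy relation persists. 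The side conditions for $h'$ are immediate: $\pi\circ h' = (\pi\circ h)\circ d_A\circ h = 0$ and $h'\circ\sigma = h\circ d_A\circ(h\circ\sigma) = 0$. For nilpotency, a short computation using $g\circ h = h = h\circ g$ and $d_A\circ h\circ d_A = g\circ d_A$ gives $d_A\circ h^2\circ d_A = (h - h\circ d_A\circ h)\circ d_A = h\circ d_A - h\circ g\circ d_A = 0$, whence $(h')^2 = h\circ(d_A\circ h^2\circ d_A)\circ h = 0$.

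I do not anticipate a genuine obstacle: everything reduces to manipulating the four given identities. The one load-bearing point is the vanishing $d_A\circ h^2\circ d_A = 0$, which is exactly what makes the cubic expression $h\circ d_A\circ h$ simultaneously a legitimate homotopy for $\Id_A - \sigma\circ\pi$ and square-zero; it follows cleanly from the two side conditions $\pi\circ h = 0 = h\circ\sigma$ built into Definition \ref{def-sdr}. Finally I would note that part (1) applies verbatim to the modified data $(\pi,\sigma,h')$, so both conclusions hold for the replacement as well.
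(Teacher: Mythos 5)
Your proof is correct and takes essentially the same route as the paper: the replacement is the same $h' = h\circ d_A\circ h$, and your key vanishing $d_A\circ h^2\circ d_A = 0$ is literally the paper's orthogonality relation $(d_A\circ h)\circ(h\circ d_A)=0$, so the two verifications are the same computation in slightly different packaging.

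One small caveat on (1): the statement, as the paper's own proof of (2) uses it, is the \emph{three}-term decomposition --- $\sigma\circ\pi$, $d_A\circ h$, and $h\circ d_A$ are each idempotent and pairwise orthogonal --- whereas you only verify the coarser two-term grouping with $g = d_A\circ h + h\circ d_A$. The finer statement follows immediately from identities you already have: $(h\circ d_A)\circ(d_A\circ h)=0$ since $d_A^2=0$; $(d_A\circ h)\circ(h\circ d_A)=d_A\circ h^2\circ d_A=0$ is your key vanishing; and $(d_A\circ h)^2=(d_A\circ h\circ d_A)\circ h=g\circ(d_A\circ h)=g\circ(g-h\circ d_A)=g-h\circ d_A=d_A\circ h$, with $(h\circ d_A)^2=h\circ d_A$ symmetrically. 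Worth adding that line so that (1) as stated is fully proved.
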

\begin{proof}
The proof of (1) is straightforward.  For part (2), put $h'=hdh$.  Then
\begin{itemize}
\item $(h')^2 = (hdh)(hdh) = h(dh)(hd)h=0$ since $hd$ and $dh$ are orthogonal, and
\item $dh'+h'd = d(hdh)+(hdh)d = (dh)^2+(hd)^2 = dh+hd = \Id_A - \sigma\pi$ since $hd$ and $dh$ are idempotent.
\end{itemize}
Thus, $h'$ has the desired properties.
\end{proof}

\begin{proposition}[Gaussian elimination]\label{prop-gauss}
Suppose we have graded objects $A = (A^k)_{k\in\Z}$ and $B = (B^k)_{k\in\Z}$ over an additive category $\mathscr{A}$, and suppose $C = A\oplus B$ is a chain complex with differential $\smMatrix{{}_Ad_A & {}_A d_B \\ {}_B d_A & {}_B d_B})$.  
Suppose also that ${}_Bd_B^2=0$.  If $(B,{}_Bd_B)$ is a contractible chain complex, then there is a deformation retract $C\rightarrow A'$ where $ A'= A$ with differential $d_A' = {}_A d_A - {}_A d_B\circ h \circ {}_B d_A$, where $h$ is a nulhomotopy for $B$ which satisfies $h^2=0$.
\end{proposition}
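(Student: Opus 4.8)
The plan is to write down the data $(\pi,\sigma,H)$ of the deformation retract explicitly and verify the axioms of Definition~\ref{def-sdr} by a direct matrix computation. By Lemma~\ref{lemma-sdr}(2) we may fix a nullhomotopy $h\in\Endg^{-1}_{\mathscr{A}}(B)$ for $({}_Bd_B)$ satisfying ${}_Bd_B\circ h+h\circ {}_Bd_B=\Id_B$ and $h^2=0$. I would then take $A'=A$ with differential $d_{A'}={}_Ad_A-{}_Ad_B\circ h\circ {}_Bd_A$ as in the statement, and set
\[
\pi=\begin{bmatrix}\Id_A & -{}_Ad_B h\end{bmatrix}\colon C\to A',\qquad \sigma=\begin{bmatrix}\Id_A\\ -h\,{}_Bd_A\end{bmatrix}\colon A'\to C,\qquad H=\begin{bmatrix}0 & 0\\ 0 & h\end{bmatrix}\colon C\to C.
\]

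The next step is to record the component identities coming from $d_C^2=0$: namely ${}_Ad_A^2=-{}_Ad_B{}_Bd_A$, ${}_Ad_A{}_Ad_B=-{}_Ad_B{}_Bd_B$, ${}_Bd_A{}_Ad_A=-{}_Bd_B{}_Bd_A$, and ${}_Bd_A{}_Ad_B=-{}_Bd_B^2$. The hypothesis ${}_Bd_B^2=0$ applied to the last of these gives ${}_Bd_A\circ {}_Ad_B=0$; this is the one place the extra hypothesis enters. Granting this, the verification is routine bookkeeping: $\pi\sigma=\Id_{A'}$, $\pi H=0$, and $H\sigma=0$ are immediate from $h^2=0$; one computes $d_CH+Hd_C=\begin{bmatrix}0 & {}_Ad_B h\\ h\,{}_Bd_A & \Id_B\end{bmatrix}$ using the nullhomotopy equation, and matching this against $\Id_C-\sigma\pi=\begin{bmatrix}0 & {}_Ad_B h\\ h\,{}_Bd_A & \Id_B-h\,{}_Bd_A\,{}_Ad_B\,h\end{bmatrix}$ requires exactly $h\,{}_Bd_A\,{}_Ad_B\,h=0$, which holds since ${}_Bd_A{}_Ad_B=0$. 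The chain map conditions $\pi d_C=d_{A'}\pi$ and $d_C\sigma=\sigma d_{A'}$ reduce, after deleting the terms containing the vanishing factor ${}_Bd_A{}_Ad_B$ and substituting the identities above, to ${}_Ad_B({}_Bd_B h+h\,{}_Bd_B)={}_Ad_B$ and $({}_Bd_B h+h\,{}_Bd_B){}_Bd_A={}_Bd_A$, i.e.\ again to the nullhomotopy equation; likewise $d_{A'}^2=0$ follows by the same substitutions.

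I do not expect a genuine obstacle here: this is the standard Gaussian elimination / homological perturbation computation, and the only subtlety worth flagging in writing is the role of the hypothesis ${}_Bd_B^2=0$, which is precisely what forces ${}_Bd_A\circ {}_Ad_B=0$ and hence makes the naive diagonal homotopy $H$, together with $\pi$ and $\sigma$, satisfy the side conditions $\pi H=H\sigma=0$ and $\Id_C-\sigma\pi=d_CH+Hd_C$ on the nose rather than only up to a further homotopy. Signs cause no trouble since every map in sight has even homological degree except $h$, which occurs quadratically only in expressions that vanish.
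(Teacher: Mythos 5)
Your proof is correct and follows exactly the paper's approach: the paper defines the same $\pi$, $\sigma$, and $H$ and asserts the verification is straightforward, while you carry out the matrix computations explicitly (correctly identifying that the hypothesis ${}_Bd_B^2=0$ forces ${}_Bd_A\circ{}_Ad_B=0$, which is what makes the side conditions hold on the nose). No gaps.
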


\begin{proof}
By hypotheses there is some nulhomotopy $h\in\Endg^1(B)$, and by Lemma \ref{lemma-sdr} we may assume $h^2=0$.  The relevant maps are defined in the following diagram:
\[
\begin{tikzpicture}
\tikzstyle{every node}=[font=\small]
\node at (0,.35) {$A$};
\node at (0,-.35) {$B$};
\node (a) at (0,0) {$\oplus $};
\node (b) at (4,0) {$A$};
\path[->,>=stealth',shorten >=1pt,auto,node distance=1.8cm,
  thick]
(a) edge [loop left, looseness = 5,in=145,out=215] node[left] {$H=\matrix{0 & 0 \\ 0 & h}$}	(a)
([yshift=3pt] a.east) edge node[above] {$\pi = \matrix{\Id&-{}_A d_B\circ h}$}		([yshift=3pt] b.west)
([yshift=-2pt] b.west) edge node[below] {$\sigma = \matrix{\Id\\-h\circ {}_B d_A}$}		([yshift=-2pt] a.east);
\end{tikzpicture}
\]
It is straightforward to check that (1) $\pi$ and $\sigma$ are chain maps, (2) $\pi\circ \sigma = \Id_A$, (3) $\Id_{A\oplus B}= \sigma\circ \pi + \smMatrix{{}_Ad_A & {}_A d_B \\ {}_B d_A & {}_B d_B} H+ H\smMatrix{{}_Ad_A & {}_A d_B \\ {}_B d_A & {}_B d_B}$, (3) $\pi\circ H =0$, (4) $H\circ \sigma =0$.  That is, $(\pi,\sigma,H)$ give the data of a strong deformation retract.
\end{proof}

We want to see how convolutions interact with deformation retracts.  It is useful to first make a definition:
\begin{definition}\label{def-assGrad}
If $M$ is a convolution with differential $d$ then the \emph{associated graded} complex is the complex $\gr(M)=(M,d_0)$ where $d_0$ is the length zero part of $d$ (Definition \ref{def-morphismLength}).  In other words, the associated graded of $M = (\cdots \rightarrow E_i\rightarrow E_{i+1}\rightarrow \cdots)$ is $\bigoplus_{i\in\Z} E_i$.
\end{definition}
Suppose we are given the data $(\pi,\sigma, h)$ of a deformation retract $M\rightarrow N$, and that $\pi,\sigma,h$ respect the convolution filtration.  Then the length zero parts $(\pi_0,\sigma_0,h_0)$ give the data of a deformation retract $\gr(M)\rightarrow \gr(N)$.  Under a certain finiteness assumption on $M$, the converse also holds:
\begin{theorem}\label{convSdrthm}
Suppose $M$ is a convolution with $\gr(M)=\bigoplus_i E_i$, and that each $E_i$ deformation retracts onto some $F_i$.  If $E_i=0$ for $i\gg 0$, then there is a convolution $N$ with $\gr(N)=\bigoplus_i F_i$ onto which $M$ deformation retracts.  Further:
\begin{enumerate}
\item The length 1 part of the differential on $N$ is induced by the composition $F_i\simeq E_i\buildrel\a_i\over\longrightarrow E_{i+1}\simeq F_{i+1}$, where $\a_i$ is the component of $d_M$.
\item For each integer $k$, the length $k$ components of the data $(\pi,\sigma,h)$ of the retract $M\rightarrow N$ are zero for $k<0$, and in general are polynomials in the $\pi_0,\sigma_0,h_0$, and the components $d_l$ of $d_M$.  These polynomials are universal in the sense that they do not depend on any of the initial data.
\end{enumerate}
\end{theorem}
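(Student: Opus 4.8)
The plan is to prove this by \emph{homological perturbation}: we transfer the convolution differential of $M$ along the pointwise retracts $E_i\to F_i$. By Lemma \ref{lemma-sdr}(2) we may choose the data $(\pi_0^{(i)},\sigma_0^{(i)},h_0^{(i)})$ of each retract $E_i\to F_i$ with $(h_0^{(i)})^2=0$. Since $\gr(M)=\bigoplus_i E_i$ (Definition \ref{def-assGrad}) carries the differential $d_0=\bigoplus_i(-1)^id_{E_i}$ (the length-$0$ part of $d_M$, in the notation introduced before Theorem \ref{thm-diffPerturbing}), the signs are harmless and the $(\pi_0^{(i)},\sigma_0^{(i)},h_0^{(i)})$ assemble into the data $(\pi_0,\sigma_0,h_0)$ of a deformation retract $\gr(M)\to P:=\bigoplus_iF_i$, with source differential $d_0$ and target differential $d_P=\bigoplus_i(-1)^id_{F_i}$; note that $\pi_0 d_0\sigma_0=d_P$ and that $\pi_0,\sigma_0,h_0$ all have length $0$. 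Write $d_M=d_0+\delta$, where $\delta=\sum_{l\ge1}\Delta_l$ is the part of length $\ge1$; from $d_M^2=0=d_0^2$ we get the key identity $d_0\delta+\delta d_0+\delta^2=0$.

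Next I would introduce the perturbation series $A:=\sum_{k\ge0}(\delta h_0)^k\delta$. Each summand $(\delta h_0)^k\delta$ has length $\ge k+1$, so, because $E_i=0$ for $i\gg 0$, its restriction to any $E_j$ vanishes for all but finitely many $k$; hence $A$ is a well-defined degree-$1$ self-map of $\gr(M)$ by Lemma \ref{lemma-mapsFromSeries}, and likewise all the series below converge. Following the basic perturbation lemma, set $d_N:=d_P+\pi_0 A\sigma_0$, $\sigma_N:=\sigma_0+h_0A\sigma_0$, $\pi_N:=\pi_0+\pi_0Ah_0$, and $h_N:=h_0+h_0Ah_0$, and put $N:=(P,d_N)$. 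Since $A$ has length $\ge1$ and $d_P$ length $0$, the length-$0$ part of $d_N$ is $d_P$, so $\gr(N)=\bigoplus_iF_i$ as required, and $N$ is a convolution of the homotopy chain complex $(\cdots\to F_i\to F_{i+1}\to\cdots)$ once we know $d_N^2=0$. I would then verify, using only $d_0\delta+\delta d_0+\delta^2=0$ together with the retract identities of Definition \ref{def-sdr} for $(\pi_0,\sigma_0,h_0)$ (including $h_0^2=0$ and $h_0\sigma_0=0=\pi_0h_0$), that $d_N^2=0$ and that $(\pi_N,\sigma_N,h_N)$ is the data of a deformation retract $M=(\gr(M),d_M)\to N$. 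These are the standard (if lengthy) computations behind the perturbation lemma; each collapses to a finite identity after expanding $A$ and using the side conditions, and convergence at each stage is again guaranteed by Lemma \ref{lemma-mapsFromSeries}.

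Claims (1) and (2) then follow by tracking lengths in these explicit formulas. The length-$1$ part of $d_N=d_P+\pi_0A\sigma_0$ equals $\pi_0\Delta_1\sigma_0$, since every term of $A$ past $\delta$ has length $\ge2$; restricted to $F_i$ this is $\pi_0^{(i+1)}\circ\a_i\circ\sigma_0^{(i)}$, i.e.\ the composite $F_i\simeq E_i\xrightarrow{\a_i}E_{i+1}\simeq F_{i+1}$, which is (1). For (2): by construction $d_N,\pi_N,\sigma_N,h_N$ are noncommutative polynomials in $\pi_0,\sigma_0,h_0$ and the components $\Delta_l$ of $d_M$ (with $l\ge1$), written down without any reference to the particular $E_i,F_i$; since $\pi_0,\sigma_0,h_0$ have length $0$ and each $\Delta_l$ has length $l\ge1$, every monomial has length $\ge0$, so the length-$k$ component of the data vanishes for $k<0$, and only finitely many monomials have any given length $k$, so that component is a finite, universal polynomial in the indicated quantities, as claimed.

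The main obstacle is not conceptual but organizational: setting up the length/convergence bookkeeping cleanly (so that Lemma \ref{lemma-mapsFromSeries} applies at every step), and then grinding through the perturbation-lemma verifications of $d_N^2=0$ and of the deformation-retract identities — with some care for the signs $(-1)^i$ dictated by the convolution convention $d_{ii}=(-1)^id_{E_i}$, which must be propagated consistently into $d_0$, $d_P$, and the pointwise retract data.
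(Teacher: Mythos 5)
Your proposal is correct, and it reaches the theorem by a genuinely different route from the paper's. You invoke the closed-form Basic Perturbation Lemma: writing $d_M=d_0+\delta$ with $\delta=\sum_{l\ge 1}\Delta_l$ and forming the series $A=\sum_{k\ge 0}(\delta h_0)^k\delta$, you transfer the differential and the retract data in one shot via the standard formulas $d_N=d_P+\pi_0A\sigma_0$, $\pi_N=\pi_0+\pi_0Ah_0$, $\sigma_N=\sigma_0+h_0A\sigma_0$, $h_N=h_0+h_0Ah_0$, with convergence supplied exactly by the boundedness hypothesis $E_i=0$ for $i\gg 0$ (your length count for $(\delta h_0)^k\delta$ is right, and the side conditions $\pi_0h_0=h_0\sigma_0=0$, $h_0^2=0$ needed for these formulas are available from Definition \ref{def-sdr} and Lemma \ref{lemma-sdr}). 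The paper instead argues by an inductive conjugation: setting $e=\sigma_0\pi_0$, it constructs length-$k$ correction terms $\a_k=h_0\circ\Delta_k-e\circ\Delta_k\circ h_0$ so that conjugating $d_M$ by the infinite composition $\Phi=\cdots(\Id+\a_2)(\Id+\a_1)$ yields a differential all of whose positive-length components satisfy $\Delta_k=e\circ\Delta_k\circ e$; the retract data are then simply the conjugates $\pi_0\circ\Phi$, $\Phi\inv\circ\sigma_0$, $\Phi\inv\circ h_0\circ\Phi$, and the verification collapses because $h_0\circ\Delta_k=\Delta_k\circ h_0=0$ for $k\ge 1$. Your version buys explicit universal formulas, which makes claim (2) and the equivariance statements of Remark \ref{remark-sdrPreservesStruct} immediate, at the cost of the lengthier direct verification of $d_N^2=0$ and the retract identities; the paper's version buys a short final verification at the cost of the induction establishing \textbf{Hyp}$(k)$. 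One small point you wave at but should make explicit: with the convolution convention $d_{ii}=(-1)^id_{E_i}$, the pointwise homotopies must be assembled with signs, $h_0|_{E_i}=(-1)^ih_0^{(i)}$, in order for $(\pi_0,\sigma_0,h_0)$ to satisfy the retract identities against $d_0$ and $d_P$ (the maps $\pi_0,\sigma_0$ need no adjustment).
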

Before proving, we note that there are various extensions of this theorem:

\begin{remark}\label{remark-generalizedConv}
One could allow generalized convolutions, in which the terms are indexed by any partially ordered set $X$, rather than $\Z_{\leq 0}$.  If each $x\in X$ has finitely many descendants, then the theorem holds as is.  If instead each $x\in X$ has finitely many ancestors, then the theorem holds provided we replace $\bigoplus$ with $\prod$ in the definition of convolution.  
Here a \emph{descendant} (resp.~\emph{ancestor}) of $x\in X$ is a $y\in X$ such that $x<y$ (resp.~$y<x$).
\end{remark}
\begin{remark}\label{remark-sdrPreservesStruct}
Informally speaking if $\pi_0,\sigma_0,h_0$, and $d_M$ preserve some additional structure on $M$, then $N$ inherits a similar structure, and $\pi,\sigma,h, d_N$ preserve this structure.  This is because the components of the latter morhisms are certain polynomials evaluated on the components of the former.  For example, suppose $M$ is a dg $A$-module for some dg-algebra $A$, and suppose $\pi_0,\sigma_0,h_0$ commute with the $A$-action.  Then $N$ is a dg $A$-module, and the maps $\pi,\sigma,h$ are $A$-equivariant.
\end{remark}

\begin{proof}
Let $M, E_k$, $F_k$ be as in the hypotheses, and put $N= \bigoplus_k F_k$.  We can write the differential $d_M$ in terms of its length $k$ components as $d_M = d_0+d_1+\cdots $.  By hypothesis we have length zero maps $\pi_0  \in \Homg^0(M,N)$, $\sigma_0  \in \Homg^0(N,M)$, and $h_0\in \Homg^{-1}(M,M)$ such that
\begin{itemize}
\item[(i)] $\Id_N=\pi_0\circ \sigma_0$.
\item[(ii)] $\Id_M-\sigma_0\circ \pi_0 = d_0\circ h_0 +h_0\circ d_0$
\item[(iii)] $\pi_0\circ h_0 = 0$.
\item[(iv)] $h_0\circ \sigma_0=0$.
\end{itemize}
Put $e: = \sigma_0\circ \pi_0$, and consider following statement, where $k\in\N\cup\{\infty\}$:
\vskip8pt
\begin{hypk}
There exist elements $\a_l\in \Endg^0(M)$ of length $l$, for $1\leq l < k$ such that (a) each $\a_l$ is a polynomial in $h_0, e, d_0,d_1,d_2,\ldots$, and (b) the length $l$ components of
\begin{equation}\label{eq-conjugatedDiff}
\Delta:=(\Id_M + \a_k)\circ \cdots \circ (\Id_M+\a_1) \circ d_M\circ (\Id_M+\a_1)\inv \circ \cdots \circ (\Id_M+\a_k)\inv 
\end{equation}
satisfy $\Delta_0 = d_0$ and $\Delta_l=e\circ  \Delta_l\circ e$ for all $1\leq l< k$.
\end{hypk}
Let us assume that \textbf{Hyp}$(\infty)$ holds.  Then define $\Phi$ to be the infinite composition $\Phi: = \cdots \circ (\Id_M+\a_2)\circ (\Id_M+\a_1)$, which is a well defined series $\Phi = \Id_M +\Phi_1+\Phi_2+\cdots$.  By Lemma \ref{lemma-mapsFromSeries} $\Phi$ and $\Phi\inv$ are well defined elements of $\Endg^0(M)$.   Put
\[
\pi:=\pi_0\circ \Phi, \hskip.3in \sigma:=\Phi\inv \circ \sigma_0, \hskip.3in h=\Phi\inv \circ h_0\circ \Phi,  \hskip.3in d_N = \pi_0\circ \Phi\circ d_M \circ \Phi\inv \circ \sigma_0
\]
An elementary calculation shows that $(\pi,\sigma,h)$ give the data of a deformation retract $(M,d_M)\rightarrow (N, d_N)$.  All of the computations are immediate except for the following:
\begin{eqnarray*}
d_M\circ h+ h \circ d_M
&=& d_M\circ (\Phi\inv \circ h_0 \circ\Phi ) + (\Phi\inv\circ h_0\circ \Phi)\circ d_M\\
&=&\Phi\inv\circ(\Delta \circ h_0 + h_0 \circ\Delta) \circ\Phi \\
&=& \Phi\inv\circ( d_0\circ h_0 + h_0\circ d_0)\circ\Phi\\
&=& \Phi\inv\circ (\Id_M - \sigma_0\circ \pi_0)\circ \Phi\\
&=& \Id_M - \sigma \circ \pi
\end{eqnarray*}
The first, second, and fifth equalities follow from the definitions. The fourth holds since $(\pi_0,\sigma_0,h_0)$ are the data of a deformation retract $(M,d_0)\rightarrow (N,\pi_0\circ d_0\circ \sigma_0)$.  Let us convince ourselves that the third equality holds.  By statement (b) of \textbf{Hyp}$(\infty)$ the conjugated differential $\Delta:=\Phi\circ d_M\circ \Phi\inv$ satisfies $\Delta = d_0 +\Delta_1+\Delta_2+\cdots $ with $\Delta_k = e\circ \Delta_k\circ e$ for all $k\geq 1$.  Since $h_0\circ e =0 = e\circ h_0$, we have $h_0\circ \Delta_k = 0 = \Delta_k\circ h_0$ for all $k\geq 1$.   The length zero part of $\Delta$ is $\Delta_0=d_0$, hence $h_0\circ \Delta = h_0\circ d_0$, and $\Delta\circ h_0 = d_0\circ h_0$.  The third equality above follows, and we have a deformation retract, as claimed.

Note also that the length $k$ components of $\pi,\sigma,h$, and $d_N$ are polynomials in $\pi_0,\sigma_0,h_0$ and the $d_k$ since the same is true of $\Id_M + \a_k$ and $(\Id_M + \a_k)\inv$.  Thus, we have proven the theorem, assuming that \textbf{Hyp}$(\infty)$ holds.  The remainder of the proof is taken care of by the following lemma.
\end{proof}

\begin{lemma}
The statement \textbf{Hyp}$(\infty)$ holds.
\end{lemma}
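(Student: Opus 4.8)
The statement to prove is \textbf{Hyp}$(\infty)$, which is an inductive assertion: we build morphisms $\a_l$ of length $l$, polynomial in $h_0, e, d_0, d_1, \ldots$, so that conjugating $d_M$ by the partial products $\prod(\Id_M + \a_l)$ progressively ``diagonalizes'' the differential, forcing each positive-length component to satisfy $\Delta_l = e\circ \Delta_l \circ e$. The plan is to prove \textbf{Hyp}$(k)$ by induction on $k$, with the base case $k=1$ (and $k=0$) being vacuous since there is nothing to construct and $\Delta_0 = d_0$ holds automatically by definition of the length-zero part.

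For the inductive step, I would assume \textbf{Hyp}$(k)$ holds, so we have $\a_1,\ldots,\a_{k-1}$ and the conjugated differential $\Delta$ from equation (\ref{eq-conjugatedDiff}) (using exactly these) satisfies $\Delta_0 = d_0$ and $\Delta_l = e\circ \Delta_l\circ e$ for $1\leq l < k$. I need to produce $\a_k$ of length $k$. The key computation: conjugating $\Delta$ by $(\Id_M + \a_k)$ changes the length-$k$ component to $\Delta_k - d_0\circ \a_k + \a_k \circ d_0 = \Delta_k + [\a_k, d_0]_{\pm}$ (the super-commutator with $d_0$, up to sign conventions matching Theorem \ref{thm-diffPerturbing}), while leaving components of length $< k$ untouched, since length is additive under composition and $\a_k$ has length $k \geq 1$. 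So I must choose $\a_k$ with $\Delta_k + [\,d_0, \a_k\,] = e \circ \Delta_k \circ e$ — equivalently, $\a_k$ should ``absorb'' the part of $\Delta_k$ that is not sandwiched by $e$. The natural candidate, following the Gaussian-elimination pattern in Proposition \ref{prop-gauss} and the definition $h' = hdh$ used there, is to take $\a_k := h_0 \circ \Delta_k \circ (\Id_M) $ plus symmetric corrections — more precisely one sets $\a_k$ to be a combination such as $-h_0\Delta_k - \Delta_k h_0 + h_0\Delta_k h_0 d_0 + \cdots$ built so that $[d_0,\a_k]$ cancels $(\Id - e)\Delta_k$ from both sides. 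Since $\Id_M - e = d_0 h_0 + h_0 d_0$ and $h_0^2 = 0$ (Lemma \ref{lemma-sdr}), the identity $[d_0, h_0 \Delta_k + \Delta_k h_0] = (\Id - e)\Delta_k + \Delta_k(\Id - e) - 2 h_0 d_0 \Delta_k d_0 h_0 + \cdots$ can be manipulated, using $d_0\Delta_k = -\Delta_k d_0 - (\text{lower length boundary terms})$ from $d_M^2 = 0$, to show the residual is exactly $e\Delta_k e$. I would record $\a_k$ explicitly and verify it is a polynomial in $h_0, e, d_0, d_1,\ldots$ (indeed only in $h_0$, $e=\sigma_0\pi_0$, and $\Delta_k$, which by induction is itself such a polynomial).

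The main obstacle is the bookkeeping in this commutator identity: one must use $d_M^2 = 0$ to control $d_0\Delta_k + \Delta_k d_0$ (it equals minus a sum of products $\Delta_i\Delta_j$ with $i+j = k$, $i,j\geq 1$, all of which are already $e$-sandwiched by the induction hypothesis, hence annihilated by $h_0$ on either side), combined with the four deformation-retract relations (i)--(iv) for $(\pi_0,\sigma_0,h_0)$ and $h_0^2 = 0$. This is the same calculation that underlies Gaussian elimination but carried out one length-degree at a time; it is routine but genuinely fiddly, and getting the signs consistent with the Koszul convention $d_{tA} = -d_A$ requires care. Finally, \textbf{Hyp}$(\infty)$ follows formally: the $\a_l$ are defined for all $l \geq 1$ compatibly (each step does not disturb earlier-length components), so the infinite composition $\Phi = \cdots(\Id_M + \a_2)(\Id_M + \a_1)$ is well-defined by Lemma \ref{lemma-mapsFromSeries} — here the hypothesis $E_i = 0$ for $i \gg 0$ is what guarantees the series converges on each $E_j$ — and the conjugated differential $\Delta = \Phi d_M \Phi^{-1}$ has $\Delta_0 = d_0$ and $\Delta_l = e\Delta_l e$ for every $l\geq 1$, which is precisely \textbf{Hyp}$(\infty)$.
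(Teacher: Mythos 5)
Your overall strategy is exactly the paper's: induct on $k$, use the length-$k$ part of $\Delta^2=0$ to write $d_0\circ\Delta_k+\Delta_k\circ d_0=-\sum_{i+j=k}\Delta_i\circ\Delta_j$, observe that the right-hand side is $e$-sandwiched by the induction hypothesis and hence killed by $h_0$ on either side (so $h_0\circ[d_0,\Delta_k]=0=[d_0,\Delta_k]\circ h_0$), and then conjugate by $\Id_M+\a_k$, which perturbs only the components of length $\geq k$ and shifts $\Delta_k$ by $-[d_0,\a_k]$. All of that is correct and is what the paper does.

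The one genuine gap is that you never actually produce $\a_k$: the candidate you float, ``$-h_0\Delta_k-\Delta_k h_0+h_0\Delta_k h_0 d_0+\cdots$,'' is not the right expression, and the ellipsis hides precisely the content that makes the induction close. The correct choice is
\begin{equation*}
\a_k:=h_0\circ\Delta_k-e\circ\Delta_k\circ h_0 ,
\end{equation*}
which is visibly a polynomial in $h_0,e,d_0,d_1,\ldots$ since $\Delta_k$ is. The verification is then a two-line Leibniz computation rather than the ``fiddly'' manipulation you anticipate: using $[d_0,h_0]=\Id_M-e$, $[d_0,e]=0$, and the two vanishing statements above, one gets $[d_0,\a_k]=(\Id_M-e)\circ\Delta_k+e\circ\Delta_k\circ(\Id_M-e)=\Delta_k-e\circ\Delta_k\circ e$, hence $\Delta_k'=\Delta_k-[d_0,\a_k]=e\circ\Delta_k\circ e$ as required. (Note also that what is needed here is $h_0\circ e=e\circ h_0=0$, which follows from the side conditions $\pi_0\circ h_0=0$ and $h_0\circ\sigma_0=0$; the normalization $h_0^2=0$ that you invoke plays no role in this lemma.) With that formula supplied, your argument is complete and coincides with the paper's proof.
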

\begin{proof}
We will construct by induction on $k\geq 1$ a stable family of elements $\{\a_1,\ldots,\a_{k-1}\}$ for which \textbf{Hyp}$(k)$ holds.  The base case $k=1$ is vacuous.  Assume by induction that $\{\a_1,\ldots,\a_{k-1}\}$ satisfy \textbf{Hyp}$(k)$, and define $\Delta:=d_0+\Delta_1 + \Delta_2+ \ldots$ to be the differential $d_M$ conjugated by $\prod_{k> l\geq 1}(\Id_M+\a_l)$ as in equation (\ref{eq-conjugatedDiff}).  By the induction hypothesis, the $\a_l$ are polynomial in the $h_0,e,d_0,d_1,\ldots$, hence so are the $\Delta_l$.

Taking the length $k$ part of the equation $\Delta^2=0$ gives
\begin{equation}\label{eq-deltaCommutator}
d_0\circ \Delta_{k} +\Delta_{k}\circ d_0 = - \sum_{i,j}\Delta_i\circ \Delta_j
\end{equation}
where the sum on the right-hand side is over $1\leq i,j< k$ such that $i+j=k$.  Now, by the induction hypothesis we have $\Delta_l = e\Delta_le$ for $1\leq l < k$.  Since $h_0e = eh_0=0$, composing equation (\ref{eq-deltaCommutator}) on the left (resp.~right) with $h_0$ gives
\begin{equation}\label{h0eq}
h_0\circ [d_0,\Delta_{k}] = 0 \ \ \ \ \ \ \ \ (\text{resp.~}  [d_0,\Delta_{k}]\circ h_0=0).
\end{equation}
Define
\[\a_{k}:=h_0\circ \Delta_{k}-e\circ \Delta_{k}\circ h_0
\]
Since $\Delta_k$ is polynomial in $h_0,e,d_0,d_1,\ldots$, the same is true of $\a_k$.  Compute
\begin{eqnarray*}
[d_0,\a_{k}]
&=& [d_0,h_0]\circ \Delta_{k} - h_0\circ [d_0,\Delta_{k}] - e\circ [d_0,\Delta_{k}]\circ h_0+e\circ \Delta_{k}\circ [d_0,h_0]\\
&=&  (\Id_M-e)\circ \Delta_{k} + e\circ \Delta_{k}\circ (\Id_M-e)\\
& = & \Delta_{k} - e\Delta_{k}e
\end{eqnarray*}
Here we have used that the super-commutator $[d_0,-]$ satisfies the graded Leibniz rule with respect to function composition, together with (\hyperref[h0eq]{\ref{h0eq}}) and the facts that $[d_0,h_0] = \Id_M - e$ and $[d_0,e]=0$.  Therefore
\[
\Delta':=(\Id_M + \a_{k})\circ \Delta \circ (\Id_M -\a_{k}+\a_{k}^2-\cdots) = d_0' +\Delta_1'+\Delta_2'+\cdots
\]
with $\Delta_l' = \Delta_l$ for $1\leq l < k$ and $\Delta_{k}' = \Delta_{k} + \a_{k}\circ d_0 - d_0\circ \a_{k} = e\circ \Delta_{k} \circ e$.  This shows that $\a_1,\ldots,\a_{k}$, satisfy the conditions of \textbf{Hyp}$(k+1)$.  This completes the inductive step and completes the proof.
\end{proof}

We conclude with a useful application of Theorem \ref{convSdrthm}.  Put a bigrading on $R=Z[x_1,\ldots,x_r]$ by declaring that $\deg(x_i)=(a_i,b_i)$.  Let $S\subset R$ denote the set of monomials $x_1^{i_1}\cdots x_r^{i_r}$.  For any $K\in\Kom(n)$, we put
\begin{equation}\label{eq-TensorIsSum}
R\otimes K := \bigoplus_{f\in S} f\otimes K
\end{equation}
whenever this infinite direct sum exists in $\Kom(n)$.  Put a partial order on $S$: say $f\leq g$ if $f$ divides $g$.  We often consider complexes of the form
\[
M = \Z[x_1,\ldots,x_r]\otimes K \ \ \text{with differential } 1\otimes d_K + \sum_{f>1} f\otimes \partial_f
\]
Such a complex can be regarded as a convolution with indexing set $\Z_{\geq 0}^r$, and associated graded $R\otimes K$.  Thus in good situations, a simplification of $K$ up to homotopy equivalence will produce a simplification of $M$.

\begin{observation}\label{obs-contracibleWarning}
One must be careful when attempting to simplify complexes as described above.  For example, suppose $C$ is some complex, and put $K := \Cone(\Id_C)$.  There is a closed morphism $\partial\in\Endg^{1,0}(K)$ such that
\[
C \simeq \Z[x]\otimes K \text{ with differential } 1\otimes d_K + x\otimes \partial
\]
where $x$ is an indeterminate of bidegree $(0,0)$.  Since $K$ is the mapping cone on an isomorphism, we have $K\simeq 0$.  On the other hand $C$ was arbitrary, so the above complex is certainly not always contractible.
\end{observation}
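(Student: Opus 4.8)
The plan is to write $\partial$ down explicitly and then recognize the resulting complex as the mapping cone of multiplication by $1+x$. As a graded object $K=\Cone(\Id_C)=t^{-1}C\oplus C$, with differential $\smMatrix{-d_C&0\\ \Id&d_C}$; it is contractible by Lemma \ref{lemma-mappingConeEquiv}, an explicit contracting homotopy being $\smMatrix{0&\Id\\ 0&0}$. Define $\partial\in\Endg^{1,0}(K)$ to be the composite ``project onto the $t^{-1}C$ summand, apply the degree $(1,0)$ identity map $t^{-1}C\to C$, include the $C$ summand'', i.e.\ $\partial=\smMatrix{0&0\\ \Id&0}$. A one-line matrix computation gives $d_K\partial=\smMatrix{0&0\\ d_C&0}$ and $\partial\, d_K=\smMatrix{0&0\\ -d_C&0}$, so $[d_K,\partial]=d_K\partial+\partial\, d_K=0$ and $\partial$ is a closed morphism of the asserted bidegree.

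Next I would expand $\Z[x]\otimes K$ in the decomposition $\Z[x]\otimes K=(\Z[x]\otimes t^{-1}C)\oplus(\Z[x]\otimes C)$. Since $x$ has bidegree $(0,0)$, the operator $x\otimes\partial$ multiplies the $x$-power by one and carries the first summand to the second by the identity; together with $1\otimes d_K$ the total differential becomes $\smMatrix{-1\otimes d_C&0\\ (1+x)\otimes\Id&1\otimes d_C}$. Identifying $\Z[x]\otimes t^{-1}C=t^{-1}(\Z[x]\otimes C)$, this is precisely the differential of $\Cone\big((1+x)\otimes\Id_C:\Z[x]\otimes C\to\Z[x]\otimes C\big)$, where $(1+x)\otimes\Id_C$ is multiplication by the ring element $1+x$ (a degree $(0,0)$ chain map, since scalar multiplication commutes with every differential). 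Hence $\Z[x]\otimes K$ with the stated differential is $\big(\Z[x]\xrightarrow{\,1+x\,}\Z[x]\big)\otimes_\Z C$.

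It then remains to observe that the two-term complex $L:=\big(\Z[x]\xrightarrow{1+x}\Z[x]\big)$ of free $\Z$-modules is homotopy equivalent to $\Z$ concentrated in degree $0$: $1+x$ is a non-zero-divisor, so $L$ has homology only in degree $0$, equal to $\Z[x]/(1+x)\cong\Z$, and a bounded complex of projective $\Z$-modules with projective homology is homotopy equivalent to its homology. (Concretely the quotient $\Z[x]\to\Z$, $x\mapsto-1$, is the equivalence, with homotopy $x^k\mapsto (x^k-(-1)^k)/(x+1)$, which is a polynomial since $x^k-(-1)^k$ vanishes at $x=-1$.) Tensoring this homotopy equivalence over $\Z$ with $C$ yields $\Z[x]\otimes K\simeq\Z\otimes_\Z C=C$, as claimed. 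I expect no real obstacle; the only delicate point is the bookkeeping of the cone direction and of the sense of $x\otimes\partial$, so that one genuinely lands on multiplication by $1+x$ rather than on an isomorphism. This is exactly what makes the observation a warning: $1+x$ is a unit in $\Z\llbracket x\rrbracket$ but \emph{not} in $\Z[x]$, so $\Cone\big((1+x)\otimes\Id_C\big)$ need not be contractible. Equivalently, the convolution $\cdots\to K\to K\to K$ here extends infinitely in the direction in which Theorem \ref{convSdrthm} has no finiteness hypothesis to offer (cf.\ Remark \ref{remark-generalizedConv}): replacing $\bigoplus$ by $\prod$, i.e.\ passing to $\Z\llbracket x\rrbracket$, \emph{would} make the term-by-term contraction of $K\simeq0$ legitimate and give $0$, whereas with $\bigoplus$ the correct answer is the generally non-contractible complex $C$.
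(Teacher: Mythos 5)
Your construction is correct, and it fills in an argument that the paper simply asserts: the Observation is stated without proof. Your $\partial=\smMatrix{0&0\\ \Id&0}$ is the natural choice, the matrix computation showing $[d_K,\partial]=0$ is right, and the identification of $\Z[x]\otimes K$ (with differential $1\otimes d_K+x\otimes\partial$) with $\Cone\bigl((1+x)\otimes\Id_C\bigr)=\bigl(\Z[x]\xrightarrow{1+x}\Z[x]\bigr)\otimes_\Z C$ is exactly what makes the example work; the contraction of $\bigl(\Z[x]\xrightarrow{1+x}\Z[x]\bigr)$ onto $\Z[x]/(1+x)\cong\Z$ via the explicit homotopy $x^k\mapsto (x^k-(-1)^k)/(x+1)$ then gives $\Z[x]\otimes K\simeq C$. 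This is, if anything, more informative than what the paper's surrounding machinery suggests (one could alternatively cancel the identity components $x^k\otimes t^{-1}C\to x^{k+1}\otimes C$ by an infinite Gaussian elimination, leaving the single surviving summand $x^0\otimes C$, but that requires the same care about indexing sets that the Observation is warning against). Your closing diagnosis — that $1+x$ is a unit in $\Z\llbracket x\rrbracket$ but not in $\Z[x]$, matching the $\bigoplus$ versus $\prod$ dichotomy of Remark \ref{remark-generalizedConv} — is precisely the content of the warning.
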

\begin{theorem}\label{thm-ringSDR}
Let $R=\Z[x_1,\ldots,x_r]$, $K$, and $M$ be as in the discussion preceding Observation \ref{obs-contracibleWarning}.  Assume that the direct sum $\Z[x_1,\ldots,x_r]\otimes K = \bigoplus_{f} f\otimes K$ is isomorphic to the categorical direct product $\prod_f f\otimes K$, then there is a deformation retract $M\rightarrow N$, such that
\begin{enumerate}
\item $N=\Z[x_1,\ldots,x_r]\otimes L$ with differential $1\otimes d_L+\sum_{f>1} f\otimes \partial_f'$.
\item the data of the deformation retract $M\rightarrow N$ are equivariant with respect to the $\Z[x_1,\ldots,x_r]$-action
\end{enumerate}
\end{theorem}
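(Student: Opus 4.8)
The statement is an application of the general machinery of Theorem~\ref{convSdrthm}, so the whole proof consists of setting up $M$ as a convolution in the correct way and then quoting that theorem together with Remark~\ref{remark-sdrPreservesStruct}. First I would recall that a complex of the form $M = \Z[x_1,\ldots,x_r]\otimes K$ with differential $1\otimes d_K + \sum_{f>1} f\otimes \partial_f$ is, as explained in the paragraph preceding Observation~\ref{obs-contracibleWarning}, naturally a convolution (of type II, i.e.\ using $\prod$) indexed by the poset $\Z_{\geq 0}^r$ of monomials $S$, with associated graded $\gr(M) = \bigoplus_{f\in S} f\otimes K$. The hypothesis that $\bigoplus_f f\otimes K \cong \prod_f f\otimes K$ is exactly what allows us to regard $M$ equivalently as a type~I convolution, so that the finiteness condition needed to invoke Theorem~\ref{convSdrthm} (each element of the indexing poset has finitely many descendants, see Remark~\ref{remark-generalizedConv}) is met: each monomial has only finitely many divisors, so in the ``finitely many ancestors'' version of Remark~\ref{remark-generalizedConv} the theorem applies directly with $\prod$.

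Next I would choose, once and for all, the data $(\pi_0,\sigma_0,h_0)$ of a deformation retract $K \rightarrow L$; this is the given simplification of $K$. Tensoring with $\Z$, these induce length-zero data of a deformation retract $\bigoplus_f f\otimes K \rightarrow \bigoplus_f f\otimes L$, i.e.\ a deformation retract $\gr(M)\rightarrow \bigoplus_f f\otimes L$, where each summand $f\otimes K$ retracts onto $f\otimes L$ by (a shifted copy of) $(\pi_0,\sigma_0,h_0)$. Now Theorem~\ref{convSdrthm} produces a convolution $N$ with $\gr(N) = \bigoplus_f f\otimes L$ onto which $M$ deformation retracts, together with the data $(\pi,\sigma,h)$. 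By part~(1) of that theorem the length-one part of $d_N$ is induced by $1\otimes d_L$ composed with the retract maps, which is $1\otimes d_L$ itself after the identifications; and all higher-length components of $d_N$ are of the form (monomial)$\otimes(\text{some map})$, so that $N = \Z[x_1,\ldots,x_r]\otimes L$ with differential $1\otimes d_L + \sum_{f>1} f\otimes \partial_f'$ for suitable $\partial_f'$. This gives conclusion~(1).

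For conclusion~(2), the point is that $\gr(M)$, and the maps $\pi_0,\sigma_0,h_0, d_M$ on it, are all manifestly $\Z[x_1,\ldots,x_r]$-equivariant: multiplication by $x_i$ simply shifts the monomial index, it commutes with $1\otimes(-)$ applied to any map of $K$, and $\pi_0,\sigma_0,h_0$ were obtained by applying $1\otimes(-)$ to data on $K$. By part~(2) of Theorem~\ref{convSdrthm} the components of $(\pi,\sigma,h)$ and $d_N$ are universal polynomials in the components $\pi_0,\sigma_0,h_0$ and $d_l$ of $d_M$; since each of these building blocks commutes with the $\Z[x_1,\ldots,x_r]$-action, so does every polynomial in them. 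This is precisely the content of Remark~\ref{remark-sdrPreservesStruct} in the case of the dg-algebra $A = \Z[x_1,\ldots,x_r]$ (with zero differential), so the data of the deformation retract $M\rightarrow N$ are $\Z[x_1,\ldots,x_r]$-equivariant.

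**Main obstacle.** There is no serious difficulty; the only thing to be careful about is the bookkeeping that makes $M$ genuinely a convolution in the sense of Definition~\ref{def-convolutions2}, in particular verifying that the hypothesis $\bigoplus_f f\otimes K \cong \prod_f f\otimes K$ is what bridges the ``descendants'' and ``ancestors'' versions of the finiteness condition in Remark~\ref{remark-generalizedConv} so that Theorem~\ref{convSdrthm} applies. Once that identification is in place, everything else is a direct citation. I would keep the write-up short, essentially to the three paragraphs above, since grinding through the universal polynomials is exactly what Theorem~\ref{convSdrthm} was designed to avoid repeating.
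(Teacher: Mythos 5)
Your argument is correct and follows the same route as the paper's own proof: regard $M$ as a convolution over the poset of monomials ordered by divisibility, note that each monomial has only finitely many divisors so that Theorem \ref{convSdrthm} applies via Remark \ref{remark-generalizedConv} (with the $\bigoplus\cong\prod$ hypothesis reconciling the two finiteness variants), and deduce equivariance from the universal-polynomial statement together with Remark \ref{remark-sdrPreservesStruct}. No substantive differences from the paper's proof.
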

\begin{proof}
Recall that $S\subset R$ denotes the set of monomials $x_1^{i_1}\cdots x_r^{i_r}$, and order the monomials by declaring that $f\leq g$ if $f$ divides $g$.  By hypothesis,
\begin{equation}\label{eq-sumEqualsProd}
R\otimes K = \bigoplus_{f\in S} f\otimes K\cong \prod_{f\in S} f\otimes K
\end{equation}
Now, suppose we have $M=R\otimes K$ with differential $1\otimes d_K +\sum_{f>1}f\otimes \partial_f$ as in the hypotheses.  This differential respects the partial ordering on the direct summands (or factors) of (\ref{eq-sumEqualsProd}), hence $M$ can be regarded as a convolution over the indexing set $S$.  Note that the part of the differential on $M$ which preserves the $S$-degree, rather than raises it, is precisely $1\otimes d_K$.  Hence, the associated graded complex is $R\otimes K$.   Any monomial $f\in S$ is divisible by only finitely many distinct monomials, hence we are in a situation to which Theorem \ref{convSdrthm} applies, by Remark \ref{remark-generalizedConv}.  

By hypothesis, we have the data $(\pi,\sigma,h)$  of a deformation retract $K\rightarrow L$.  These give the $R$-equivariant data $(1\otimes\pi, 1\otimes\sigma, 1\otimes h)$ of a deformation retract $R\otimes K\rightarrow R\otimes L$.  Since $R\otimes K$ is the associated graded part of $M$, Theorem \ref{convSdrthm} gives a deformation retract of $M$ onto some $S$-indexed convolution with associated graded part $R\otimes L$.  The data of this deformation retract commute with the $R$ action by Remark \ref{remark-equivariance}, hence $N = R\otimes L$ with some $R$-equivariant differential.  The \emph{only} possibility is $d_N = 1\otimes d_L + \sum_{f>1}f\otimes \partial_f$ for some $\partial_f'\in\Endg(L)$.  This completes the proof.
\end{proof}

\bibliographystyle{hep}
\bibliography{../gen}
 \end{document}